\tikzstyle{box1} = [rectangle, rounded corners, minimum width=7cm, minimum height=1cm, text centered, text width=6.5cm, draw=black]
\tikzstyle{box2} = [rectangle, minimum width=7cm, minimum height=1cm, text centered, text width=6.8cm, draw=black]
\tikzstyle{arrow} = [thick,->,>=stealth]
\tikzstyle{arrow2} = [thick,->,>=stealth,dotted]
\theoremstyle{plain}
\newtheorem{Th}{Theorem}[section]
\newtheorem{Lemma}[Th]{Lemma}
\newtheorem{Cor}[Th]{Corollary}
\newtheorem{Prop}[Th]{Proposition}
\newtheorem{Claim}[Th]{Claim}
\newtheorem{thmx}{Theorem}
\newtheorem{conjx}{Conjecture}
 \theoremstyle{definition}
\newtheorem{Def}[Th]{Definition}
\newtheorem{Rem}[Th]{Remark}
\newtheorem{Rec}[Th]{Recollection}
\newtheorem{?}[Th]{Problem}
\newtheorem{Ex}[Th]{Example}
\newtheorem{convention}[Th]{Convention}
\newtheorem{construction}[Th]{Construction}
\DeclareMathOperator{\coker}{coker}
\DeclareMathOperator{\im}{im}
\DeclareMathOperator{\id}{id}
\DeclareMathOperator{\Hom}{Hom}
\DeclareMathOperator{\Nat}{Nat}
\DeclareMathOperator{\Aut}{Aut}
\DeclareMathOperator{\Ext}{Ext}
\DeclareMathOperator{\Spec}{Spec}
\DeclareMathOperator{\Ann}{Ann}
\DeclareMathOperator{\ind}{ind}
\DeclareMathOperator{\End}{End}
\DeclareMathOperator{\even}{even}
\DeclareMathOperator{\Sym}{Sym}
\DeclareMathOperator{\tors}{tors}
\DeclareMathOperator{\SHom}{\mathscr{H}\text{\kern -3pt {\calligra\large om}}\,}
\DeclareMathOperator{\SEnd}{\mathscr{E}\text{\kern -3pt {\calligra\large nd}}\,}
\DeclareMathOperator{\res}{res}
\newcommand{\mf}{\mathfrak}
\newcommand{\GL}{\mathrm{GL}}
\definecolor{codegreen}{rgb}{0,0.6,0}
\definecolor{codegray}{rgb}{0.5,0.5,0.5}
\definecolor{codepurple}{rgb}{0.58,0,0.82}
\definecolor{backcolour}{rgb}{0.95,0.95,0.92}
\lstdefinestyle{mystyle}{
    backgroundcolor=\color{backcolour},   
    commentstyle=\color{codegreen},
    keywordstyle=\color{magenta},
    numberstyle=\tiny\color{codegray},
    stringstyle=\color{codepurple},
    basicstyle=\footnotesize,
    breakatwhitespace=false,         
    breaklines=true,                 
    captionpos=b,                    
    keepspaces=true,                 
    numbers=left,                    
    numbersep=5pt,                  
    showspaces=false,                
    showstringspaces=false,
    showtabs=false,                  
    tabsize=2
}
\renewcommand*\l@section{\@dottedtocline{1}{1.5em}{1.5em}}
\begin{document}

\title{Notes on Cohomological Finite Generation for Finite Group Schemes}

\author{ Juan Omar G\'omez \& Chris J. Parker}
\maketitle

\begin{abstract}
These are extended notes based on lectures given by Vincent Franjou, Paul Sobaje, Peter Symonds and Antoine Touz\'e at the Master Class on \textit{New Developments in Finite Generation of Cohomology} that took place at Bielefeld University in September 2023. Their aim to give a panoramic overview of van der Kallen’s recent result on the finite generation of cohomology for finite group schemes over an arbitrary Noetherian base, bringing together the many papers that this theorem relies on, and supplying the necessary background and exposition.
\end{abstract}

\vspace*{\fill}
\begin{figure}[h]
\centering
\includegraphics[scale=0.7]{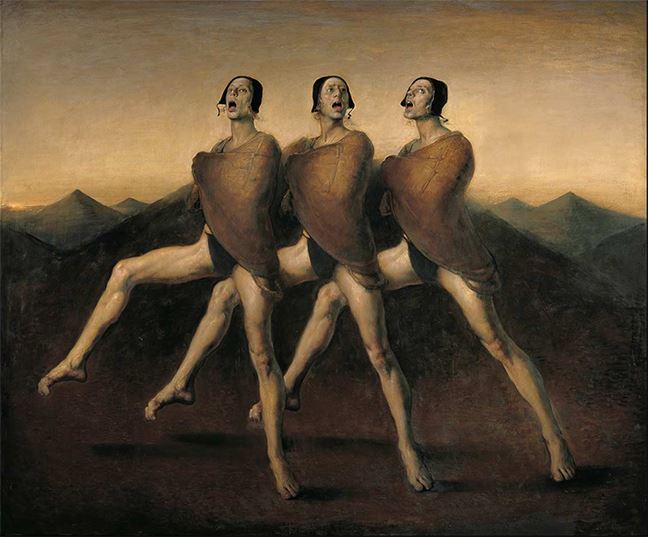}
\caption{Odd Nerdrum, \textit{Three Singers}, Oil on Canvas}
\end{figure}
\vspace*{\fill}

\pagebreak

\tableofcontents

\pagebreak
\part*{Introduction}
\addcontentsline{toc}{part}{Introduction}

Let $k$ be a commutative ring. Given a flat affine group scheme $G$ acting on a finitely generated $k$--algebra $A$, one asks if the ring of invariants $A^G$ is always finitely generated over $k$. This is known as \textit{Hilbert's fourteenth problem}, and has been extensively studied. Indeed, the answer to the above question is negative without any restrictions on $G$ or $k$ - in particular, Nagata \cite{Nag59} (see Example \ref{Ex: Nagata}) showed that the additive group scheme $\mathbb{G}_a$ gives us a counterexample. 

Such a $G$ with the property that for all finitely generated $G$--algebras $A$, the ring of invariants $A^G$ is finitely generated over $k$, will be said to have the \textit{finite generation (FG)} property. In spite of Nagata's counterexample, Franjou and van der Kallen \cite{FvdK10,vdK15} showed that when the base ring $k$ is Noetherian, two large classes of these group schemes satisfy the (FG) property, the \textit{Chevalley group schemes} and the \textit{finite group schemes}.

More generally, for $G$ and $A$ as above, one can ask whether the entire graded cohomology algebra $H^*(G, A)$ is finitely generated. Similarly, such a $G$ with the property that for all finitely generated $G$--algebras $A$, the graded cohomology $H^*(G, A)$ is finitely generated over $k$, will be said to have the \textit{cohomological finite generation (CFG)} property. Of course, (CFG) is a stronger property than (FG), since $A^G=H^0(G,A)$, which is a quotient of $H^\ast(G,A)$, and thus: 
\[  \left\{ \begin{array}{l}
          \mbox{Group schemes over $k$}\\
         \mbox{satisfying (CFG)}.\end{array} \right\} \subseteq \left\{ \begin{array}{l}
          \mbox{Group schemes over $k$}\\
         \mbox{satisfying (FG)}.\end{array} \right\}.  \] 
Once again, the counterexample of Nagata shows us that one needs to place some restrictions on the group scheme $G$ for the (CFG) property to hold, even over a field. On the other hand, one immediately sees that some finiteness assumptions on our base ring $k$ are also needed (see \cite[Section 4.2]{BenII}):

   \textit{ Let $C_p$ be the cyclic group with $p$-elements, $k=\mathbb{Z}\rtimes (\mathbb{Z}/p)^\infty$ and $A=\mathbb{Z}\times (\mathbb{Z}/p)^\infty$ with trivial $C_p$ action. In this case,  
   \[
   H^1(C_p,A)=\mathrm{Hom}_{\mathbb{Z}}(C_p,A)=(\mathbb{Z}/p)^\infty
   \]
   which is not  finitely generated over $k$. Hence $H^\ast(C_p,A)$ cannot be a finitely generated $k$--algebra. }

In fact, it is a remarkably arduous task to determine whether a given group scheme $G$ has the (CFG) property. Notably, Evens \cite{Eve61} has shown that this property holds for \textit{finite groups} over a Noetherian base ring. Later, Friedlander and Suslin \cite{FS97} extended the (CFG) property to \textit{finite group schemes} over a field, and a further generalization was given by Touzé and van der Kallen \cite{TvdK10} for \textit{reductive linear algebraic group schemes} over a field. Indeed, over a field Touz\'e and van der Kallen even showed that (FG) and (CFG) are equivalent properties. Hence, it is natural to ask how general our group scheme $G$ can be in order to have the (CFG) property.

Recently and most generally, a striking result of van der Kallen \cite{vdK23} gives us a positive answer in the case of finite group schemes over any Noetherian ring: 

\begin{thmx}\label{Thm A}
    Let $G$ be a finite flat group scheme over a commutative Noetherian ring $k$. Then $G$ has the (CFG) property.
\end{thmx}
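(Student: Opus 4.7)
My plan is to adapt the Friedlander--Suslin strategy --- which proves (CFG) for finite group schemes over a \emph{field} --- to a general Noetherian base, using as inputs (i) the (FG) theorem for finite group schemes over a Noetherian base, recalled above, and (ii) Touz\'e's construction of universal cohomology classes for $GL_n$, which is inherently integral.

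First, I would embed $G$ as a closed subgroup scheme of $GL_{n,k}$ for some $n$, exploiting finite flatness. Touz\'e's universal classes live in $H^{2p^r}\bigl(GL_n,\, \Gamma^{p^r}(\mathfrak{gl}_n^{(r)})\bigr)$ and are built from strict polynomial functors --- a construction defined over $\mathbb{Z}$. Restricting along $G \hookrightarrow GL_n$ and then base-changing would yield a supply of higher cohomology classes for $G$ that is uniformly defined across all Noetherian bases $k$.

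Next, given a finitely generated $G$-algebra $A$, I would exhibit a finitely generated graded $k$-subalgebra $R$ of $H^*(G,A)$. The degree-zero piece $A^G$ is finitely generated by (FG). The higher-degree generators of $R$ would come from (a) finitely many generators of $H^{\even}(G,k)$ --- itself shown to be a finitely generated $k$-algebra by applying the universal-class construction to the trivial coefficient algebra $A=k$ --- and (b) the restricted Touz\'e classes evaluated on the representations arising in a $G$-stable filtration of $A$. Mirroring the FS argument, I would then prove that $H^*(G,A)$ is a finitely generated $R$-module by a Noetherian induction on a $G$-stable filtration of $A$ by finitely generated $k$-submodules, using that the universal classes produce the necessary Ext generators at each graded piece.

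The main obstacle is the passage from a field to a Noetherian base in the module-finiteness step. Over a field, finite-dimensional representations give uniform dimension control; over $k$, representations of $G$ are finitely generated $k$-modules that need not be free, so torsion phenomena complicate the vanishing-range arguments. My plan would be a d\'evissage on $\Spec(k)$: reduce first to $k$ finitely generated over $\mathbb{Z}$ by a spreading-out argument (noting that cohomology of finite flat group schemes commutes with filtered colimits on the base in suitable senses), then work prime-by-prime, combining the residue-field case (where FS applies directly) with Artin--Rees and Nakayama-type arguments to lift finite generation of $H^*(G_\kappa, A\otimes_k\kappa)$ over $R\otimes_k\kappa$ back to finite generation of $H^*(G,A)$ over $R$. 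Carefully controlling the $\Tor^k_*$-contributions in the base-change spectral sequence, while maintaining the Noetherian hypothesis throughout, is in my view the deepest technical hurdle.
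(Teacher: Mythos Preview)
Your proposal has a genuine structural gap, and it differs from the paper's route in a way that matters.

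The Friedlander--Suslin/Touz\'e machinery you want to invoke is fundamentally a characteristic-$p$ story: its power comes from the Frobenius-kernel filtration $1 \to G_r \to \GL_n \to \GL_n/G_r \to 1$ and the associated Lyndon--Hochschild--Serre spectral sequence. Touz\'e's classes are indeed defined integrally, but over a base like $\mathbb{Z}$ or $\mathbb{Z}_{(p)}$ there is no Frobenius morphism on $\GL_n$ and hence no Frobenius kernel, so the entire mechanism that makes those classes \emph{useful} (producing a Noetherian map onto $H^*(G_r,-)$) is unavailable. Restricting the classes along $G \hookrightarrow \GL_n$ does not help: there is no reason the restricted classes generate enough of $H^*(G,-)$ over a mixed-characteristic base, and your claim that $H^{\even}(G,k)$ is finitely generated ``by applying the universal-class construction to $A=k$'' is circular --- that is the statement you are trying to prove.

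The paper avoids your d\'evissage entirely. Instead of working with $G$ directly, it passes to $\GL_n$ via \emph{induction} (not restriction): since $G$ is finite, $\GL_n/G$ is affine, so $\ind_G^{\GL_n}$ is exact and Shapiro's lemma gives $H^*(G,A) \cong H^*(\GL_n, \ind_G^{\GL_n} A)$. The crucial new ingredient is then a completely elementary \emph{bounded torsion} theorem: there is a positive integer $n$ (in fact the rank of $k[G]$) annihilating $H^{>0}(G,M)$ for all $M$, proved using nothing more than the trace map $\mathrm{tr}_{k[G]/k}$ (or the integral element in the dual Hopf algebra). This bounded torsion, combined with a ``provisional (CFG)'' theorem for $\GL_n$ --- which says that over any Noetherian base, $H^*(\GL_n,A)$ is finitely generated iff it has bounded $\mathbb{Z}$-torsion --- finishes the argument. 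The provisional (CFG) theorem is where the FS/Touz\'e input is actually used, but only after reducing to $A/pA$, whose base $k/pk$ contains $\mathbb{F}_p$, so Frobenius is available. Your prime-by-prime gluing via $\Tor$-control and Artin--Rees is exactly the step the paper sidesteps by proving bounded torsion first and then using power surjectivity of $H^{\even}(G,A) \to H^{\even}(G,A/pA)$.
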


These notes provide a gentle introduction to the subject, and the main aim is to give a panoramic overview of van der Kallen's proof of cohomological finite generation for finite group schemes over an arbitrary Noetherian base. The only originality we claim is in the mistakes.

Let us now give the outline of this document, as well as highlight the main steps in the proof of van der Kallen's theorem. Part I of this document contains the general theory we will need to cover to even approach the above theorem. In Part II  we begin the actual proof of van der Kallen's theorem, which requires many intermediate results, each comprising their own section.

Let $G$ be a finite flat group scheme over $k$, and fix a finitely generated $G$--algebra $A$, where the base $k$ is Noetherian. In Section \ref{section: reduction lemma} we show that one can reduce the (CFG) problem to the general linear group $\GL_n$ as follows: First, the  Embedding Lemma \ref{embedding of G}, tells us that we can consider $G$ as closed subgroup of $\GL_n$ for some $n$, and then we may use the Reduction Lemma \ref{vdK's Reduction Lemma} to obtain that \[H^\ast(\GL_n,\mathrm{ind}_G^{\GL_n}A)\cong H^\ast(G,A)\] and since $\mathrm{ind}_G^{\GL_n}A$ is a finitely generated $\GL_n$--algebra, we obtain the desired reduction. 

So now van der Kallen's theorem would be proved if we could show (CFG) for $\GL_n$ over a Noetherian base, but it is not quite that simple. We do not actually prove full blown (CFG) for $\GL_n$ over an arbitrary Noetherian base, but rather a provisional form of (CFG) which gives us a criteria for detecting the finite generation of the cohomology ring via the existence of a uniform bound on its torsion as an abelian group. This provisional (CFG) theorem appears in Section \ref{section:provisionalCFG} as Theorem \ref{torsion}, and the precise formulation is as follows:

\begin{thmx}\label{Thm B}
    Let $k$ be a Noetherian ring, $G'$ be a Chevalley group scheme over $k$, and $A$ be a $G'$--algebra over $k$ which is finitely generated over $k$. Then $H^\ast(G',A)$ is a finitely generated algebra if and only if the underlying abelian group $H^\ast(G',A)$ has bounded $\mathbb{Z}$--torsion.
\end{thmx}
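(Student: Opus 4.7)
The plan is to handle the two implications separately, with the forward direction being essentially formal and the reverse direction being the substantive half.

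For the $(\Rightarrow)$ direction, the argument is pure commutative algebra. If $H^\ast(G',A)$ is a finitely generated $k$-algebra with $k$ Noetherian, then $H^\ast(G',A)$ itself is Noetherian as a ring. The $\mathbb{Z}$-torsion $T\subseteq H^\ast(G',A)$ is a homogeneous ideal, hence finitely generated; pick homogeneous generators $t_1,\dots,t_m$ with $\ord(t_i)=n_i$. Then $N=\mathrm{lcm}(n_1,\dots,n_m)$ annihilates every $t_i$, and because any element of $T$ is an $H^\ast(G',A)$-linear combination of the $t_i$, it annihilates all of $T$. This yields the required uniform torsion bound.

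For the $(\Leftarrow)$ direction, fix $N\in\mathbb{N}$ annihilating the $\mathbb{Z}$-torsion. I would first separate $H^\ast(G',A)$ into its torsion and torsion-free pieces via the short exact sequence of graded $H^\ast(G',A)$-modules
\[
0 \longrightarrow T \longrightarrow H^\ast(G',A) \longrightarrow F \longrightarrow 0,
\]
where $T$ is $N$-torsion and $F$ is a torsion-free quotient ring embedding into $H^\ast(G',A)\otimes_{\mathbb{Z}}\mathbb{Q}$. Finite generation of $H^\ast(G',A)$ as a $k$-algebra then reduces to (i) finite generation of $F$ as a $k$-algebra and (ii) finite generation of $T$ as an $F$-module. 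For (i), flat base change identifies $H^\ast(G',A)[1/N]$ with $H^\ast(G'_{k[1/N]}, A_{k[1/N]})$, and I would run a Noetherian induction on $\Spec(k[1/N])$ whose fibers are residue fields in which primes dividing $N$ have been removed; at each such fiber the Touz\'e--van der Kallen theorem for Chevalley groups over fields delivers (CFG). For (ii), $T$ is a graded module over $H^\ast(G',A)/N$ and may be analyzed by a parallel Noetherian induction on $\Spec(k/N)$, whose fibers are fields of the residue characteristics dividing $N$, where Touz\'e--van der Kallen again furnishes (CFG) on each fiber.

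The main obstacle will be the descent step: Touz\'e--van der Kallen gives only fiberwise finite generation, whereas the conclusion we want is finite generation of the graded $k$-algebra $H^\ast(G',A)$ globally. This requires uniform bounds on the degrees of algebra generators as one varies over $\Spec(k)$, and it is precisely here that the bounded-torsion hypothesis becomes indispensable: without it, the integral part of cohomology could exhibit $p$-power obstructions of unbounded order as the residue characteristic varies, defeating any attempt to uniformize the fiberwise bounds. Organizing the Noetherian induction with the requisite uniformity---likely by combining the Embedding and Reduction Lemmas with a generic-flatness style argument relating the cohomology over $k$ to its fibers---is the technical heart of the proof and is what I would expect to occupy the bulk of the argument.
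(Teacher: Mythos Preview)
Your forward direction is fine and matches what the paper leaves implicit.

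Your reverse direction, however, takes a different route from the paper and leaves the decisive step unresolved. The paper does \emph{not} argue fiberwise over $\Spec(k)$ via Touz\'e--van der Kallen over residue fields and then attempt a descent. Two structural inputs you are missing make the paper's argument work without any descent:

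\begin{itemize}
\item From the Grosshans filtration machinery (Corollary~\ref{Cor: A1/m is acyclic for some m}) one already knows that $H^{>0}(G',A)\otimes\mathbb{Z}[1/m]=0$ for some $m>0$. Thus all of $H^{>0}(G',A)$ is torsion, your torsion-free quotient $F$ is just $A^{G'}$ (handled by (FG), Corollary~\ref{FG for Chevalley}), and it suffices to localize at each prime $p\mid m$ in turn.
\item After localizing at $p$, the paper reduces modulo $p$ and invokes Theorem~\ref{CFG for GLn} (CFG for $\GL_n$ over Noetherian rings \emph{containing a field}), applied over $k/pk\supseteq\mathbb{F}_p$. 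This is strictly stronger than CFG over fields and is exactly what eliminates the descent problem you anticipate.
\end{itemize}

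The precise role of bounded torsion is not ``uniformity across fibers'' but rather Proposition~\ref{Prop: power surjectivity on cohomology at a prime}: a computation with connecting maps and the Leibniz rule shows that the reduction $H^{\mathrm{even}}(G',A)\to H^{\mathrm{even}}(G',A/pA)$ is power surjective. One then chooses a finitely generated $A^{G'}$-subalgebra $\mathscr{A}\subseteq H^\ast(G',A)$ that maps Noetherianly to $H^\ast(G',A/pA)$, and a Nakayama-type lifting (together with the finite $p$-power filtration on $A_{\mathrm{tors}}$) shows that $H^\ast(G',A)$ is Noetherian over $\mathscr{A}$.

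Your proposed Noetherian induction with fiberwise TvdK would require uniform control on generator degrees across all residue fields; you identify this correctly as the obstacle but supply no mechanism to overcome it. The paper's mechanism is the combination of Theorem~\ref{CFG for GLn} (over $k/pk$, not over fields) and power surjectivity; without these, your outline does not close.
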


Thus, the (CFG) property for $G$ has been reduced to showing that $H^\ast(\GL_n,\mathrm{ind}_G^{\GL_n}A)\cong H^\ast(G,A)$ has bounded torsion when considered as an abelian group. Showing the existence of a uniform bound on its torsion is the aim of Section \ref{section:boundedtorsion}, and corresponds to the main result of that section, Theorem \ref{bounded torsion}:

\begin{thmx}\label{Thm C}
    Let $G$ be a finite group scheme over a field, and $A$ be a finitely generated $G$--algebra. Then $H^\ast(G,A)$ has bounded torsion. 
\end{thmx}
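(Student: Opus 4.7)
My plan is to observe that, since $k$ is a field, the conclusion is essentially automatic from the vector-space structure on $H^\ast(G,A)$; the strategic value of Theorem C is in how it combines with Theorem B rather than in its intrinsic depth. I would split into two cases by the characteristic of $k$.

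In characteristic zero, $A$ is a $k$-algebra, hence a $\mathbb{Q}$-vector space, so $H^\ast(G,A)\cong \Ext^\ast_G(k,A)$ inherits a $\mathbb{Q}$-vector space structure and is in particular torsion-free as an abelian group; the torsion is bounded (trivially) by $1$. In positive characteristic $p$, the cohomology $H^\ast(G,A)$ is naturally an $\mathbb{F}_p$-vector space (inheriting the $k$-module structure from $A$), so every element is annihilated by $p$ and the $\mathbb{Z}$-torsion is uniformly bounded by $p$, independently of $A$, $G$, and the cohomological degree.

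I do not anticipate any obstacle within the proof of the statement as formulated: it is essentially tautological once the definitions are unpacked, and in particular one does not need to invoke the Friedlander--Suslin finite generation theorem for this bound (although that theorem gives the sharper conclusion that each graded piece is finite dimensional over $k$). What is delicate, and presumably the real technical content in the remainder of the section, is the passage from this field-case bound to the bounded $\mathbb{Z}$-torsion of $H^\ast(\GL_n,\ind_G^{\GL_n}A)$ over a Noetherian base $k$; this would be carried out by a reduction-to-residue-fields argument, combining Theorem C with Theorem B and the embedding and reduction lemmas for $\GL_n$ to yield Theorem A.
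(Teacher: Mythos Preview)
Your observation is correct for the statement exactly as written: over a field $k$, the cohomology $H^\ast(G,A)$ is a $k$--vector space, hence torsion-free as an abelian group in characteristic zero and annihilated by $p$ in characteristic $p$. No further argument is needed.

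However, the statement as printed in the introduction appears to be a slip. The theorem it points to, Theorem~\ref{bounded torsion}, is stated and proved for a finite flat group scheme $G$ over an arbitrary Noetherian ring $k$, and asserts that a single positive integer $n$ annihilates $H^i(G,M)$ for all $i>0$ and all $G$--modules $M$. That is the version actually needed to feed into provisional (CFG), and it is not trivial.

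The paper's proof of the Noetherian case is quite different from the reduction-to-residue-fields argument you anticipate. It proceeds as follows. A dimension shift reduces to bounding $H^1(G,M)$ uniformly in $M$. One then works with the Hopf algebra $H=k[G]$ and its dual $H^\ast$; the coinvariants $(H^\ast)^{\mathrm{co}H}$ form a rank-one projective $k$--summand, and after a Zariski localisation one may take it to be free with generator $\psi$. The key step is to show that $\psi(1)$ becomes a unit in $\mathbb{Q}\otimes k$; this is checked at geometric points of $\Spec(\mathbb{Q}\otimes k)$, where Cartier's theorem forces $G$ to be constant and $\psi(1)$ to equal (a nonzero scalar times) $|G(F)|$. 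One then extracts a positive integer $n$ and an element $\phi\in (H^\ast)^{\mathrm{co}H}$ with $\phi(1)=n$; since left integrals satisfy $\phi\cdot A\subseteq A^G$, a direct computation with an injective embedding of $M$ shows that $n$ kills $H^1(G,M)$. An alternative, shorter route (also given in the paper, due to Lau) uses the trace map $\mathrm{tr}_{k[G]/k}\colon k[G]\to k$, which is $G$--equivariant and whose composite with the unit is multiplication by $d=\mathrm{rk}_k(k[G])$; since $H^{>0}(G,A\otimes k[G])=0$, this factors multiplication by $d$ on $H^{>0}(G,A)$ through zero.

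So your argument is a correct proof of the literal statement, but the paper's own proof addresses the substantially harder Noetherian version, and does so by Hopf-algebraic and trace-map techniques rather than by reducing to the field case.
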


These ingredients now essentially complete the proof of Theorem \ref{Thm A}! In fact, in Section \ref{sec: alternative bounded}, we present an alternative proof of this theorem that is due to Eike Lau and appears in \cite{Lau2023}. 

The hard part in this story is really in proving Theorem \ref{Thm B}, so let us say a bit about its proof: Sections \ref{section: FG}, \ref{section:GrosshansFiltrations}, and \ref{section:CFGforGL} establish the intermediate results needed to approach this theorem, in particular, the main ingredient in Theorem \ref{Thm B} is proving (CFG) for $\GL_n$ over a Noetherian base $k$ which contains a field, as seen in Theorem \ref{CFG for GLn}:

\begin{thmx}\label{Thm D}
    Let $k$ be a commutative Noetherian ring containing a field. Then $\GL_n$ over $k$  satisfies (CFG).  
\end{thmx}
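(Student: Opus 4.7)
The plan is to deduce this theorem directly from Theorem \ref{Thm B}, which reduces the (CFG) property for Chevalley group schemes over a Noetherian base to the existence of a uniform bound on the $\mathbb{Z}$--torsion of the cohomology ring. Since $\GL_n$ is a split reductive group scheme over $\mathbb{Z}$, and hence a Chevalley group scheme after any base change, and since $A$ is assumed to be a finitely generated $\GL_n$--algebra over the Noetherian ring $k$, it will suffice to check that the underlying abelian group of $H^\ast(\GL_n, A)$ has bounded torsion.

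Here the hypothesis that $k$ contains a field $F$ is exactly what makes the torsion bound accessible. The prime subfield of $F$ is either $\mathbb{Q}$ or $\mathbb{F}_p$, so $k$ is either a $\mathbb{Q}$--algebra or an $\mathbb{F}_p$--algebra. Since $A$ is a $k$--module and $H^\ast(\GL_n, A) \cong \Ext^\ast_{\GL_n}(k, A)$ inherits a natural $k$--module structure, the entire cohomology ring becomes a module over this prime subfield.

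I would then split into the two cases. If $\mathbb{Q} \subseteq k$, then $H^\ast(\GL_n, A)$ is a $\mathbb{Q}$--vector space, hence uniquely divisible and in particular torsion-free as an abelian group, so the torsion bound is trivially $1$. If $\mathbb{F}_p \subseteq k$, then $p \cdot 1_k = 0$ in $k$, so every element of $H^\ast(\GL_n, A)$ is annihilated by $p$, giving a uniform torsion bound of $p$. In either case, the hypotheses of Theorem \ref{Thm B} are met, and the (CFG) property for $\GL_n$ over $k$ follows.

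The main obstacle is not this reduction itself, which is essentially formal once the ingredients are in place, but rather the proof of Theorem \ref{Thm B}, which relies on the (FG) input of Franjou--van der Kallen together with the Grosshans filtration technology developed in Sections \ref{section: FG}, \ref{section:GrosshansFiltrations}, and \ref{section:CFGforGL}. The present statement is essentially the payoff of assembling those tools in the special situation where the base contains a field and the torsion question trivializes.
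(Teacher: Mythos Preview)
Your reduction to Theorem~\ref{Thm B} is circular. In the paper's logical structure, Theorem~\ref{Thm D} (which appears in the body as Theorem~\ref{CFG for GLn}) is an \emph{input} to the proof of Theorem~\ref{Thm B} (which appears as Theorem~\ref{torsion}), not a consequence of it. Concretely, the proof of Theorem~\ref{torsion} localizes at a prime $p$, passes to the quotient $A/pA$, and then invokes (CFG) for $\GL_n$ over the ring $k/pk$, which contains $\mathbb{F}_p$ --- that is, it invokes precisely Theorem~\ref{CFG for GLn}. The flowchart in the introduction makes this dependency explicit: the arrow runs from Theorem~\ref{Thm D} to Theorem~\ref{Thm B}, not the other way.

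Your observation that bounded torsion is automatic when $k$ contains a field is correct, and it is morally why the two statements fit together, but the hard work all lives inside Theorem~\ref{Thm D} itself. The paper's actual proof of Theorem~\ref{CFG for GLn} treats the two cases directly. When $\mathbb{Q}\subseteq k$, base change to $\GL_{n,\mathbb{Q}}$ and linear reductivity show the higher cohomology vanishes, and (FG) finishes the argument. When $\mathbb{F}_p\subseteq k$, the argument is substantial: one equips $A$ with its Grosshans filtration, proves via the Lyndon--Hochschild--Serre spectral sequence for a Frobenius kernel together with the Friedlander--Suslin map and good-filtration results that $H^\ast(\GL_n,\mathrm{gr}\,A)$ is finitely generated (Theorem~\ref{cohomology of grA is finitely generated}), and then runs a second spectral-sequence argument using Touz\'e's universal cohomology classes (Theorem~\ref{Thm Touze}) to transfer finite generation from $\mathrm{gr}\,A$ to $A$. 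None of this is bypassed by invoking Theorem~\ref{Thm B}.
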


To prove this result, in Section \ref{section: FG} we crucially establish (FG) for $\GL_n$ over a Noetherian base, and in Section \ref{section:GrosshansFiltrations} we set up the main technical tools needed for Theorem \ref{Thm D}. To proceed, we must recall for ourselves Friedlander and Suslin's result \cite{FS97}:

\begin{thmx}\label{Thm E}
    Finite group schemes over a field satisfy (CFG). 
\end{thmx}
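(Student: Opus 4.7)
The plan is to imitate the reduction strategy of Section \ref{section: reduction lemma} inside the infinitesimal setting over a field. By the Embedding Lemma, $G$ embeds as a closed subgroup of some $\GL_n$; in characteristic zero $G$ is automatically étale, hence a finite group, and Evens' theorem already delivers (CFG). In positive characteristic the connected-étale sequence presents $G$ as an extension $1 \to G^0 \to G \to \pi_0(G) \to 1$ of a finite étale group scheme by its infinitesimal component, and the Lyndon--Hochschild--Serre spectral sequence (with Evens' theorem applied to the étale quotient) reduces the problem to the case where $G$ is infinitesimal. An infinitesimal finite group scheme of height $\le r$ is a closed subgroup of the Frobenius kernel $\GL_{n(r)}$ for some $n$, and a Shapiro-type version of the Reduction Lemma then shows it suffices to prove (CFG) for $\GL_{n(r)}$ acting on finitely generated $\GL_{n(r)}$-algebras.

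For $\GL_{n(r)}$ the decisive ingredient is Friedlander--Suslin's construction of universal cohomology classes via strict polynomial functors. Working in the category of strict polynomial functors over $k$, one would produce non-vanishing $\Ext$-classes between Frobenius twists $I^{(r)}$ of the identity functor; evaluated on $\mf{gl}_n$ these yield canonical classes $e_r \in H^{2p^{r-1}}(\GL_n, \mf{gl}_n^{(r)})$ whose restrictions generate a Noetherian polynomial subalgebra $S \subseteq H^*(\GL_{n(r)}, k)$. The key intermediate statement then reads: for every finite-dimensional rational $\GL_n$-module $M$, the cohomology $H^*(\GL_{n(r)}, M)$ is a finitely generated $S$-module, proved by dévissage along a composition series of $M$ combined with the naturality of the universal classes. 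The principal obstacle lives precisely here: both the construction of the Friedlander--Suslin classes and the verification that they are non-zero rest on a delicate $\Ext$-computation in strict polynomial functors, which is the single hardest step of the argument.

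Finally, to promote this finite-dimensional-module statement to (CFG) for finitely generated $\GL_{n(r)}$-algebras $A$, I would appeal to the (FG) property for finite group schemes over a field (classical, via Hilbert's integrality argument for finite Hopf algebras) to deduce that $A^{\GL_{n(r)}}$ is a finitely generated $k$-algebra and that $A$ is module-finite over $A^{\GL_{n(r)}}$. A Cartan--Eilenberg spectral sequence argument in the style of Evens, fed by the previous module finite-generation result and convergent by the Noetherian $A^{\GL_{n(r)}}$-module structure of $A$, then exhibits $H^*(\GL_{n(r)}, A)$ as a finitely generated module over $H^*(\GL_{n(r)}, k) \otimes_k A^{\GL_{n(r)}}$, hence a finitely generated $k$-algebra, completing the proof.
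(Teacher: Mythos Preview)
The paper does not give its own proof of this statement: Theorem~E is attributed to Friedlander--Suslin \cite{FS97} and treated as input. What the paper actually uses is the sharper Theorem~\ref{FS full strength}, concerning only the Frobenius kernels $(\GL_n)_r$, and this too is quoted without proof. Your outline is essentially the original Friedlander--Suslin argument, and the structural reductions (connected--\'etale sequence plus LHS, embedding an infinitesimal $G$ into $(\GL_n)_r$, Shapiro's lemma, construction of the universal classes $e_r$ via strict polynomial functors, d\'evissage over composition series) are all correct in spirit.

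Two small technical points. First, the restricted classes do not literally cut out a polynomial \emph{subalgebra} of $H^*((\GL_n)_r,k)$; rather one has a Noetherian ring map $\upphi_{FS}\colon \mathbb{S}_r \to H^*((\GL_n)_r,k)$ from an external polynomial algebra, which is the form in which the paper records it. Second, the passage from finite-dimensional modules to finitely generated algebras is not really a Cartan--Eilenberg spectral sequence: once you know $A$ is module-finite over $A^{(\GL_n)_r}$ (which follows from (Int) and Artin--Tate, as in Lemma~\ref{Lemma Artin-Tate}), the statement you want is precisely Theorem~\ref{FS full strength} with $C=D=A$ and $f=\id$, namely that $\mathbb{S}_r \otimes A^{(\GL_n)_r} \to H^*((\GL_n)_r,A)$, $x\otimes a \mapsto \upphi_{FS}(x)\cup a$, is Noetherian. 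This is proved by local finiteness and d\'evissage rather than a spectral sequence, though your conclusion is the right one.

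There is also an interesting difference in logical structure worth noting. Your proof of Theorem~E invokes Evens' theorem for the \'etale part, exactly as Friedlander--Suslin do. The paper, by contrast, uses only the infinitesimal input Theorem~\ref{FS full strength} to prove the general Theorem~A over a Noetherian base, and then recovers \emph{both} Theorem~E and Evens' theorem (Theorem~G) as special cases, so its route to Theorem~E is independent of Evens.
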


In reality, we do not use Friedlander and Suslin's  result as stated above, but rather a stronger version of it. A precise formulation appears later as Theorem \ref{FS full strength}. 

As we already mentioned, Touzé and van der Kallen \cite{TvdK10} proved that (CFG) and (FG) are equivalent over a field. In particular, the main ingredient in their proof is the existence of \textit{universal classes in cohomology} \cite{Tou10}. Let us stress that their method was not extended to prove Theorem \ref{Thm D}; instead, a different approach was proposed by van der Kallen. The following theorem is discussed in Section \ref{section:GrosshansFiltrations}:

\begin{thmx}\label{Thm F}
    Let $k$ be a Noetherian ring.  Suppose that there is a morphism of $\GL_n$--algebras $f\colon B\to A$ such that both $B$ and $A$ are  finitely generated $k$--algebras,
         $B$ has a good filtration, and
         $f$ makes $A$ a Noetherian $B$--module.
     Then the following properties hold.
\begin{itemize}
    \item[$(a)$] $H^i(\GL_n,A)=0$ for $i$ large enough. 
    \item[$(b)$] $H^i(\GL_n,A)$ is a Noetherian $A^G$--module for every $i$.
\end{itemize}
In particular, this implies that $H^\ast(\GL_n,A)$ is a finitely generated $k$--algebra.
\end{thmx}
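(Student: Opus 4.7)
The plan is to exploit the hypothesis that $B$ has a good filtration in order to obtain cohomological control over $B$, and then propagate this to $A$ via the Noetherian $B$-module structure. First, since $B$ has a good filtration, the standard cohomological vanishing for $\nabla$-modules (costandard modules) for $\GL_n$ gives $H^i(\GL_n, B) = 0$ for all $i > 0$, while $B^{\GL_n} = H^0(\GL_n, B)$ is a finitely generated $k$-algebra by the (FG) property for $\GL_n$ established in Section~\ref{section: FG}. Thus the desired conclusions hold trivially in the base case $A = B$.

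For general $A$, I would construct a suitable $\GL_n$-equivariant filtration of $A$ whose subquotients have good cohomological properties; this is where the Grosshans filtration machinery of Section~\ref{section:GrosshansFiltrations} enters. Specifically, one builds a finite filtration $0 = A_0 \subset A_1 \subset \cdots \subset A_r = A$ by $\GL_n$-stable $B$-submodules (which exists since $A$ is Noetherian over $B$) so that the associated graded object has a good filtration as a $\GL_n$-module. With such a filtration in hand, an induction on $r$ together with the long exact sequences in $\GL_n$-cohomology coming from
\[
0 \to A_{i-1} \to A_i \to A_i/A_{i-1} \to 0
\]
propagates both (a) and (b) from the subquotients to $A$: for (a), the subquotients have vanishing higher cohomology, and the long exact sequences force $H^i(\GL_n, A) = 0$ for $i$ beyond a finite bound; for (b), each $H^\ast(\GL_n, A_i/A_{i-1})$ is Noetherian over $(A_i/A_{i-1})^{\GL_n}$, and hence over $A^{\GL_n}$, so the long exact sequences yield Noetherianness of $H^\ast(\GL_n, A)$ over $A^{\GL_n}$.

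Combining (a) and (b), the graded algebra $H^\ast(\GL_n, A) = \bigoplus_{0 \le i \le N} H^i(\GL_n, A)$ is a finite direct sum of Noetherian $A^{\GL_n}$-modules, hence itself Noetherian over $A^{\GL_n}$. Since $A^{\GL_n}$ is finitely generated over $k$ by the (FG) theorem applied to $A$ (Section~\ref{section: FG}), and $H^\ast(\GL_n, A)$ is a Noetherian $A^{\GL_n}$-module carrying a compatible graded algebra structure, we conclude that $H^\ast(\GL_n, A)$ is a finitely generated $k$-algebra.

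The principal obstacle will be the construction and analysis of the Grosshans-type filtration of $A$: one must show that $A$ admits a finite $\GL_n$-stable filtration whose associated graded has a good filtration, so that the cohomological vanishing of $\nabla$-modules can be brought to bear. The subtlety is that a crude $B$-module dévissage of $A$ need not produce subquotients with good filtrations, and one typically has to pass to an associated graded construction, working either with a convergent spectral sequence or with a careful transfer of the filtration. It is precisely here that the hypotheses ``$B$ has a good filtration'' and ``$A$ is Noetherian over $B$'' interact nontrivially, and all the machinery of Section~\ref{section:GrosshansFiltrations} is required.
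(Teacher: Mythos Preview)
Your deduction of the ``in particular'' clause from (a) and (b) is fine, and your instinct that the heart of the matter lies in producing acyclic pieces whose invariants are Noetherian over $A^{\GL_n}$ is correct. But the concrete mechanism you propose --- a finite $\GL_n$-stable filtration $0=A_0\subset\cdots\subset A_r=A$ whose subquotients have good filtrations --- cannot work as stated. By the closure of the class of modules with good filtrations under extensions (Lemma~\ref{properties of good filtrations}), the existence of such a filtration would force $A$ itself to have a good filtration, hence $H^i(\GL_n,A)=0$ for all $i>0$. That is strictly stronger than what is being claimed, and is simply false for general $A$ Noetherian over a $B$ with good filtration. So the ``principal obstacle'' you flag is not just a technical step to be filled in; it is a genuine obstruction to the filtration strategy.

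The paper's route is different in kind: rather than filtering $A$ from below, one \emph{coresolves} it to the right. Theorem~\ref{finite resolution by acyclic modules} produces a finite exact sequence
\[
0\to A\to N_0\to N_1\to\cdots\to N_d\to 0
\]
of Noetherian $B\GL_n$-modules in which each $N_i$ has a good Grosshans filtration. Each $N_i$ is then acyclic (Theorem~\ref{cohomological characterization of good Grosshans}) with $N_i^{\GL_n}$ Noetherian over $B^{\GL_n}$, hence over $A^{\GL_n}$. This acyclic coresolution computes $H^\ast(\GL_n,A)$ directly: one gets $H^i(\GL_n,A)=0$ for $i>d$ and each $H^i(\GL_n,A)$ is a subquotient of $N_i^{\GL_n}$, hence Noetherian over $A^{\GL_n}$. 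The conceptual point is that $A$ has \emph{finite good-filtration dimension} rather than a good filtration outright; this is exactly what a coresolution captures and a filtration cannot.
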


This is, in fact, the key step to adapting the proof of (CFG) for $\GL_n$ over a field given in \cite{TvdK10} to the case of a Noetherian base ring containing a field. The rough idea to prove Theorem \ref{Thm D} is as follows: First, let $A$ be a finitely generated $\GL_n$--algebra, one considers the so-called \textit{Grosshans filtration} on $A$, and with respect to this filtration, one proceeds by analysis on the associated spectral sequence 
\[
E_1^{s,t}(A)=H^{s+t}(\GL_n,\mathrm{gr}_{-s}A)\Rightarrow H^{s+t}(\GL_n,A).
\]
The abstract result on spectral sequences that we need in order to show that $H^\ast(\GL_n,A)$ is finitely generated is Lemma \ref{Lemma analysis of ss} due to Evens. In particular, one needs to verify that  $E^{\ast,\ast}_1(A)$ is a finitely generated $k$--algebra and that and that the spectral sequence collapses at a finite stage. Showing the collapse of this spectral sequence involves the \textit{full strength} of Theorem \ref{Thm E} together with the existence of universal classes of Touz\'e \cite{Tou10}. And to show that $E^{\ast,\ast}_1(A)$ is a finitely generated $k$--algebra, one shows first that $H^\ast(\GL_n,\mathrm{gr}A)$  is a finitely generated $k$--algebra, where $\mathrm{gr}A$ denotes associated graded with respect to the Grosshans filtration of $A$, this is done in Theorem \ref{cohomology of grA is finitely generated}. This itself requires another layer of analysis on the spectral sequence 
\[
{}^{LHS}E^{s,t}_2(\mathrm{gr}A)=H^s(\GL_n/G_r,H^t(G_r,\mathrm{gr}A))\Rightarrow H^{s+t}(\GL_n,\mathrm{gr}A)
\] associated to a high enough $r$th Frobenius kernel $G_r$ of $\GL_n$. The collapse on a finite stage of this spectral sequence is a consequence of Theorem \ref{Thm F}, the rest of the proof uses again  the \textit{full strength} of Theorem \ref{Thm E}.  

In particular, we have shown that the proof of Theorem \ref{Thm A} is independent from the classic result of Evens, and so we recover it as a corollary: 

\begin{thmx}\label{Thm G}
    Finite groups satisfy (CFG) over commutative Noetherian rings. 
\end{thmx}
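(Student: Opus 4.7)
The plan is to realize every finite group $G$ as a finite flat group scheme over $k$ and invoke Theorem \ref{Thm A} directly. Given a finite group $G$ and a commutative Noetherian ring $k$, one forms the constant group scheme $\underline{G}_k := \Spec(k^G)$, where $k^G = \prod_{g \in G} k$ is equipped with the $k$--Hopf algebra structure dual to the group multiplication of $G$. Since $k^G$ is a free $k$--module of rank $|G|$, the group scheme $\underline{G}_k$ is finite and flat over $k$, and in particular satisfies the hypotheses of Theorem \ref{Thm A}.

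The next step is to check that $G$--actions and $\underline{G}_k$--actions carry the same data. A $k$--linear coaction $M \to M \otimes_k k^G$ is equivalent to an assignment $g \mapsto \rho(g) \in \End_k(M)$, and the coassociativity and counit axioms translate precisely into the statement that $\rho$ is a group homomorphism $G \to \Aut_k(M)$. Under this identification the categories of (classical) $G$--algebras over $k$ and of $\underline{G}_k$--algebras coincide. Moreover, the cobar resolution of $k$ by cofree $\underline{G}_k$--comodules computing rational cohomology matches, via this dictionary, the classical bar resolution of $k$ by free $kG$--modules, so there is a natural isomorphism of graded $k$--algebras
\[
H^\ast(G, A) \;\cong\; H^\ast(\underline{G}_k, A)
\]
for every $G$--algebra $A$, where the left-hand side is ordinary group cohomology.

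Now let $A$ be any finitely generated $G$--algebra over $k$ in the classical sense. Via the above equivalence, $A$ is also a finitely generated $\underline{G}_k$--algebra, and Theorem \ref{Thm A} yields that $H^\ast(\underline{G}_k, A)$ is a finitely generated $k$--algebra. Combined with the displayed isomorphism, this gives that $H^\ast(G, A)$ is finitely generated over $k$, establishing (CFG) for the finite group $G$. There is no genuine obstacle at this stage: the substantive content of the corollary is entirely absorbed into Theorem \ref{Thm A}, and what remains are the standard verifications that $\underline{G}_k$ is finite and flat and that the two cohomology theories agree, both of which are routine.
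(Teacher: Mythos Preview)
Your proposal is correct and follows exactly the paper's approach: Theorem~G is stated in the paper as a corollary of Theorem~A, obtained by viewing a finite group as the constant finite flat group scheme $\underline{G}_k$ (as in Example~\ref{main examples}(v)) and invoking Theorem~A. You have simply spelled out the routine identifications (of comodule categories and cohomology theories) that the paper leaves implicit.
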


Let us conclude with a flowchart roughly containing the logical dependence of the aforementioned results:\\

\begin{figure}[h]
\centering
\begin{tikzpicture}[node distance=2cm]
\node (A1) [box2] { Analysis of LHS-spectral sequence, \\ Grosshans filtration and properties (Section \ref{section:GrosshansFiltrations}), and Theorems \ref{Thm E} and \ref{Thm F}.};

\node (A2) [box2, below of=A1] {(FG) for $\GL_n$ over a Noetherian base (Corollary \ref{FG for Chevalley}), Existence of universal classes (Theorem \ref{Thm Touze}), \\  and Theorems \ref{Thm E} and \ref{Thm F}.};

\node (A3) [box2, below of=A2] {(FG) for $\GL_n$ over a Noetherian base (Corollary \ref{FG for Chevalley}), and Proposition \ref{Prop: power surjectivity on cohomology at a prime}.};

\node (A4) [box2, below of=A3] {Reduction and Embedding Lemma (Section \ref{section: reduction lemma}), and Theorem \ref{Thm C}.
};

\node (B1) [box1, right of=A1, xshift=6.5cm] {$\forall$ $\GL_n$--alg $A$, $H^\ast(\GL_n,\mathrm{gr}A)$ is f.g over $k$  \\ (Theorem \ref{cohomology of grA is finitely generated}) };

\node (B2) [box1, below of=B1] {(CFG) for $\GL_n$ over a Noetherian ring containing a field\\ (Theorem \ref{Thm D}). };

\node (B3) [box1, below of=B2] {$H^\ast(\GL_n,A)$ is a f.g. $k$--alg. $\Leftrightarrow$ it has bounded $\mathbb{Z}$--torsion \\ (Theorem \ref{Thm B}).};

\node (B4) [box1, below of=B3] {Finite group schemes over a Noetherian ring satisfy (CFG) \\ (Theorem \ref{Thm A}).};

\node (B5) [box1, below of=B4] {Finite groups over a Noetherian base satisfy (CFG) \\ (Theorem \ref{Thm G}).};

\draw [arrow] (B1) -- (B2);
\draw [arrow] (B2) -- (B3);
\draw [arrow] (B3) -- (B4);
\draw [arrow] (B4) -- (B5);
\draw [arrow] (A1) -- (B1);
\draw [arrow] (A2) -- (B2);
\draw [arrow] (A3) -- (B3);
\draw [arrow] (A4) -- (B4);

\end{tikzpicture}
\end{figure}

 \subsection*{Further directions and applications}
As mentioned above, over a field the cohomological finite generation property is equivalent to the finite generation of invariants property. In particular, this equivalence holds for reductive algebraic groups. It is therefore natural to ask whether the reductive algebraic groups over a Noetherian base satisfy the cohomological finite generation property. Antoine Touzé has conjectured that this is indeed the case, and there is supporting evidence: for example, the conjecture has been verified for the groups $\mathrm{SL}_2$ and $\mathrm{SL}_3$ (see \cite[Section 4]{AiM}).

\begin{conjx}[Touz\'e]
    Let $G$ be a reductive  algebraic group scheme over a commutative Noetherian ring $k$. Then $G$ satisfies  the cohomological finite generation property.
\end{conjx}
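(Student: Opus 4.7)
The plan is to attempt this conjecture by following, as closely as possible, the template set by the proof of Theorem \ref{Thm D} for $\GL_n$. First I would reduce to the case of a split reductive (Chevalley) group scheme over $k$: every reductive group scheme $G$ over a Noetherian ring $k$ becomes split after a finite étale base change $k\to k'$, so one would hope to descend (CFG) along such extensions. The descent step itself is a nontrivial preliminary - one needs that finite generation of $H^\ast(G_{k'},A_{k'})$ over $k'$ implies finite generation of $H^\ast(G,A)$ over $k$. This would plausibly be handled via the Hochschild--Serre spectral sequence associated to the (pro-)Galois cover $\mathrm{Spec}(k')\to\mathrm{Spec}(k)$, combined with a Noether-style descent of finite generation from $k'$ to $k$ along a faithfully flat finite extension.

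Once reduced to Chevalley group schemes $G'$, Theorem \ref{Thm B} applies directly, and the conjecture is reformulated as a bounded torsion statement: it suffices to show that for every finitely generated $G'$--algebra $A$, the cohomology $H^\ast(G',A)$ has bounded $\mathbb{Z}$--torsion. Splitting into the equicharacteristic case (where $k$ contains a field) and the mixed-characteristic case, I would in the former invoke Theorem \ref{Thm D}--style arguments adapted beyond $\GL_n$, and in the latter proceed prime-by-prime, seeking a uniform bound on $p^N$--torsion as $p$ varies.

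To establish bounded torsion for a Chevalley group $G'$ over a Noetherian base, I would mimic the Grosshans filtration strategy: filter $A$ by the Grosshans filtration and analyse the spectral sequence
\[
E_1^{s,t}(A)=H^{s+t}(G',\mathrm{gr}_{-s}A)\Rightarrow H^{s+t}(G',A).
\]
Using the (FG) property for Chevalley groups over a Noetherian base (Corollary \ref{FG for Chevalley}), together with Theorem \ref{Thm F} and the full strength of Friedlander--Suslin (Theorem \ref{FS full strength}) applied to a sufficiently large Frobenius kernel $G'_r\subset G'$, one would hope to prove as in Section \ref{section:CFGforGL} that $H^\ast(G',\mathrm{gr}\,A)$ is finitely generated and that the Grosshans spectral sequence collapses at a finite page. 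The LHS spectral sequence
\[
{}^{LHS}E_2^{s,t}(\mathrm{gr}A)=H^s(G'/G'_r,H^t(G'_r,\mathrm{gr}A))\Rightarrow H^{s+t}(G',\mathrm{gr}A)
\]
would play the same auxiliary role as in the $\GL_n$ case.

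The hard part will be the input that currently exists only for $\GL_n$: Touzé's construction of universal cohomology classes (Theorem \ref{Thm Touze}), which in \cite{TvdK10} is used decisively to force collapse of the relevant spectral sequences. Adapting this construction to all Chevalley types - and in particular to exceptional groups, where the matrix-theoretic input used for $\GL_n$ is unavailable - is the main obstacle and likely requires a genuinely new idea, perhaps via a Schur-algebra or strict-polynomial-functor formalism intrinsic to the reductive group. A secondary but serious obstacle is the mixed-characteristic case, for which even Theorem \ref{Thm D} is presently only available when $k$ contains a field; here one would need to combine the bounded torsion reformulation with a uniform control over residue characteristics, which the supporting low-rank evidence of \cite[Section 4]{AiM} for $\mathrm{SL}_2$ and $\mathrm{SL}_3$ suggests is feasible but not at all automatic.
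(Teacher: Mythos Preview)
This statement is presented in the paper as an open \emph{conjecture}, not a theorem: the paper offers no proof, only the remark that it has been verified for $\mathrm{SL}_2$ and $\mathrm{SL}_3$. There is therefore nothing to compare your proposal against.

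That said, your write-up is not a proof either, and you know it: you correctly identify the two genuine obstructions. First, the collapse of the Grosshans spectral sequence for $\GL_n$ hinges on Touz\'e's universal classes (Theorem~\ref{Thm Touze}), whose construction via strict polynomial functors is tied to $\GL_n$; no analogue is currently available for arbitrary Chevalley types. Second, even for $\GL_n$ the paper only establishes (CFG) when $k$ contains a field (Theorem~\ref{CFG for GLn}), and handles general Noetherian $k$ only via the bounded-torsion criterion (Theorem~\ref{torsion}) combined with the specific bounded-torsion result for \emph{finite} group schemes (Theorem~\ref{bounded torsion}); there is no known bounded-torsion statement for $H^\ast(G',A)$ when $G'$ is an arbitrary Chevalley group and $A$ is an arbitrary finitely generated $G'$--algebra. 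Your proposal is a sensible research outline, but it does not close either gap, and the paper does not claim to either.
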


Let us now highlight some applications of the cohomological finite generation property. Bounded derived categories of rational representations of finite group schemes are arguably one of the most illuminating examples in tensor-triangular geometry, making them a rich subject of study. Of special interest is  the bounded derived category $\mathbf{D}^b(\mathrm{lat}(G,k))$ of finitely generated lattices, together with its Ind-completion $\mathbf{Rep}(G,k)$, which can be realized as a subcategory of the homotopy category of projective representations. The monoidal structure is given by tensoring over the base ring, with the diagonal group scheme action induced by the Hopf algebra structure of its coordinate algebra.

Recently, Barthel–Benson–Iyengar–Krause–Pevtsova \cite{BBIKP} established a classification of localizing tensor ideals in $\mathbf{Rep}(G,k)$ and of thick tensor ideals in $\mathbf{D}^b(\mathrm{lat}(G,k))$ in terms of subsets of the homogeneous spectrum of the cohomology ring $H^\ast(G,k)$. A key ingredient in their work is van der Kallen’s theorem, which ensures that $H^\ast(G,k)$ is Noetherian, thereby enabling the application of BIK-stratification \cite{BIK11}. A more tt-geometric proof of this classification was later obtained by the first-named author \cite{Gom25}, which again relies on the cohomological finite generation property for finite group schemes over Noetherian bases.

We also mention the forthcoming work of Eike Lau, announced in \cite{Lau2023}, where van der Kallen’s provisional cohomological finite generation theorem is used to prove that for an algebraic stack $X$ admitting a surjective finite flat morphism $Y \to X$ with $Y$ affine, the cohomology ring $H^\ast(X,\mathcal{O}_X)$ is Noetherian. This result serves as a crucial input in Lau’s classification of thick tensor ideals of $\mathbf{Perf}(X)$, the category of perfect complexes on $X$. In the special case where $X = BG = [\mathrm{Spec}(k)/G]$ for a finite group or a finite group scheme $G$ over a Noetherian ring $k$, the category $\mathbf{Perf}(BG)$ coincides with $\mathbf{D}^b(\mathrm{lat}(G,k))$.

\subsection*{Notation and conventions}
 Let us now highlight some of the most frequently used notation, terminology, and conventions in this text: 
 
 \begin{enumerate} 
     \item All rings in this document are unital.  We use $k$ to denote a commutative ring. All algebras over $k$ are assumed to be commutative.
     \item When the context is clear, we suppress $k$ from the notation. For instance, we write $\otimes$ instead of $\otimes_k$.
     \item We freely use standard terminology from commutative algebra. In particular, we write $\Spec(-)$ to denote the Zariski spectrum functor.
     \item  All group schemes are assumed to be flat and affine, except in the first part of the document, where we allow more generality.
     \item By default, we work with right comodules and left modules, referring to them simply as comodules and modules, respectively. When left comodules or right modules are involved, we indicate this explicitly. 
     \item For a Hopf algebra $H$ and an $H$--module $M$, we denote the invariants by $M^H$. For an $H$--comodule $N$, the coinvariants are denoted by $N^{\text{co}H}$. If $H = k[G]$ for a group scheme $G$, we also write $N^G$ for $N^{\text{co}k[G]}$ and refer to them as invariants. While potentially confusing, this is justified since, in practice, $k[G]$--comodules and $G$--representations are used interchangeably.  
     \item Categories are denoted in bold. For example, the category of groups is written as $\textbf{Grps}$; the category of left (resp. right) $k$--modules is denoted by $_k\textbf{Mod}$ (resp. $\textbf{Mod}_k$); and the category of affine schemes over $k$ is written as $\textbf{AffSch}_k$.
    \end{enumerate}

\subsection*{Acknowledgments} 
We are deeply grateful to Eike Lau for explaining to us his proof on bounded torsion, and to Antoine Touz\'e for  helpful comments on an earlier version of this manuscript. We would also like to thank the organizers of the master class \textit{New Developments in Finite Generation of Cohomology}, and especially Henning Krause and Julia Pevtsova for their encouragement in writing these notes. While preparing them, both  JOG and CJP were   supported   by the Deutsche Forschungsgemeinschaft (Project-ID 491392403 – TRR 358).

\pagebreak

\part*{Part I - General Theory}\label{part I}
\addcontentsline{toc}{part}{Part I - General Theory}

\section{Outline of the First Part}

In this part we cover some baseline background material needed for the rest of the document. We introduce basic notions on group schemes, coalgebras, Hopf algebras and their relations. In particular, for a finite group scheme $G$ over a commutative ring $k$, we  make explicit the relation between comodules over  $k[G]$, modules over $k[G]^\ast$, and $G$--representations, and define invariants and (rational) cohomology right after.  In the last section, we provide a quick overview of costandard modules, primarily with the intention of reusing several of the notions and terminology in later sections.

It is worth highlighting that we aim to keep the first part as general as possible, indicating where restrictions on the group schemes, algebras, or base ring are necessary.

Throughout this notes, we write $k$ to denote a commutative ring, and $\textbf{Mod}_k$ to denote the category of $k$--modules equipped with the symmetric monoidal structure given by the tensor product $\otimes_k$ of $k$--modules and monoidal unit $k$. In general, we simply write $\otimes$ and $\mathrm{Hom}(-,-)$ to denote $\otimes_k$ and $\mathrm{Hom}_k(-,-)$, respectively, unless there might be confusion.

\section{Comodules and Coalgebras}

\begin{Def}
    An \textit{associative unital $k$--algebra $A$} is an associative and unital monoid object $(A,m,\eta)$ in $\textbf{Mod}_k$. Explicitly, $(A,m,\eta)$ consist of  a $k$--module $A$ together with a \textit{multiplication} $m\colon A\otimes A\to A$ and a \textit{unit map} $\eta\colon k\to A$, both $k$--linear,  satisfying the \textit{associative axiom} $m\circ (m\otimes\mathrm{id}_A)=m\circ(\mathrm{id}_A\otimes m)$ and \textit{right and left unital axioms} $m\circ (\eta \otimes \mathrm{id}_A)=\mathrm{id}_A=m\circ(\mathrm{id}_A\otimes \eta)$ which amounts to the commutativity of the following diagrams 
     \begin{center}
        \begin{tikzcd}
            &A\otimes A\otimes A \arrow[r, "m\otimes \mathrm{id}_A"]\arrow[d, "\mathrm{id}_A\otimes m"']&A\otimes A\arrow[d, "m"] &A\otimes k \arrow[d, "\mathrm{id}_A\otimes \eta"']\arrow[r, "\eta\otimes\mathrm{id}_A"] \arrow[dr, equals]&A\otimes A \arrow[d, "m"] \\
            &A\otimes A \arrow[r, "m"']&A &A\otimes A \arrow[r, "m"'] &A. 
        \end{tikzcd}
    \end{center}
Moreover, we say that $A$ is \textit{commutative} if the following diagram is commutative
\begin{center}
        \begin{tikzcd}
            A\otimes A \arrow[rr,"(12)"] \arrow[rd,"m"'] & & A\otimes A \arrow[dl,"m"]\\ & A &
        \end{tikzcd}
    \end{center}
    where $(12)$ means swapping the factors, that is, $(12)(a\otimes a')=a'\otimes a$.  
\end{Def}

\begin{Ex}
    Let $k=\mathbb{Z}$. In this case, an associative unital $\mathbb{Z}$--algebra is simply and associative ring with unit.
\end{Ex}

\begin{Def}
    An \textit{associative unital $k$--coalgebra $C$} is an associative unital monoid $(C,\Delta, \epsilon)$ in the category $\textbf{Mod}_k^\textrm{op}$. Unpacking the definition, the monoid $(C,\Delta,\epsilon)$ consists of a $k$--module $C$ together with two linear maps $\Delta\colon C\to C\otimes C$ and $C\to k$ called the \textit{comultiplication} and \textit{counit}, respectively, making the following diagrams commutative 
    
        \begin{center}
        \begin{tikzcd}
            &C \arrow[d, "\Delta"']\arrow[r, "\Delta"]&C\otimes C\arrow[d, "\Delta\otimes \id_C"] &C \arrow[d, "\Delta"']\arrow[r, "\Delta"] \arrow[dr, equals]&C\otimes C \arrow[d, "\id_C\otimes \varepsilon"] \\
            &C\otimes C \arrow[r, "\id_C \otimes \Delta"']&C\otimes C \otimes C &C\otimes C \arrow[r, "\varepsilon \otimes \id_C"'] &C\otimes k. 
        \end{tikzcd}
    \end{center}
    In this case, we refer to the left square as  the  \textit{coassociative axiom}  and to the right diagrams as the  \textit{right and left counital axioms}. We say that an associative unital $k$--algebra $C$ is \textit{cocommutative} if the following diagram is commutative 
    \begin{center}
        \begin{tikzcd}
           & C \arrow[ld,"\Delta"'] \arrow[rd,"\Delta"] & \\ C\otimes C \arrow[rr,"(12)"']  & & C\otimes C.
        \end{tikzcd}
    \end{center}
\end{Def}

\begin{convention}
We will work mainly with associative and unital $k$--(co)algebras, hence from now on, we refer to them simply as $k$--(co)algebras.     
\end{convention}

\begin{Ex}\label{dual of a coalgebra}
Let $A$ be a $k$--algebra with multiplication $m:A\otimes A\to A$ and unit $\eta: k\to A$, and let $C$ be a $k$--coalgebra with comultiplication $\Delta:C\to C\otimes C$ and counit $\varepsilon:C\to k$. Then $\Hom_k(C, A)$ can be given the structure of a $k$--algebra as follows: let $f, g\in \Hom_k(C, A)$, and let $x\in C$ and write $\Delta(x)=\sum_i a_{i}\otimes b_{i}$. We define the \textit{convolution product} $f\ast g$ of $f$ and $g$ by the composite
\[
C\xrightarrow[]{\Delta} C\otimes C \xrightarrow[]{f\otimes g} A\otimes A\xrightarrow[]{m} A
\]
in elements, this is given by  
\[
(f \ast g)(x)=\sum_{i=i}^l m(f(a_i)\otimes g(b_i))
\]
    One checks that this product is associative using the associativity of the multiplication on $A$ and the coassociativity of the comultiplication on $C$. Moreover, it follows from the diagrams in the definition of an algebra and coalgebra that composite map $C\xrightarrow{\varepsilon}{} k \xrightarrow{\eta}{}A$ is a left and right identity element for this product operation. That is, 
    \[
    f\ast (\eta\circ\varepsilon) = (\eta\circ\varepsilon) \ast f = f.
    \]
    Moreover, if $C$ is cocommutative and $A$ is commutative, then $\Hom_k(C,A)$ is commutative. 
    
    In particular, the dual of a  $k$--coalgebra $C^*=\Hom_k(C, k)$ always can be given an algebra structure via the convolution product, moreover, the dual of the unit $\eta^*:C^*\to k^*$ composed with the natural isomorphism $k^*\simeq k$ gives an augmentation map $C^* \to k$, which is explicitly defined by $f\mapsto f(1)$, making $A^\ast$ into an augmented $k$--algebra. Recall that an \textit{augmented $k$--algebra} is simply a $k$--algebra $A$ together with $k$--linear map $A\to k$ which is called \textit{augmentation map}.
\end{Ex}

    \begin{Rem}
    It is not true in general that the dual of an associative unital $k$--algebra $A$ can be given the structure of a $k$--coalgebra. The key point is that the dual of the multiplication map $m:A\otimes A\to A$ may not induce a comultiplication $m^*: A^*\to A^*\otimes A^*$. However,  when $A$ is finitely generated and projective over $k$,  there is an  isomorphism of $k$--modules
    \[
    A^*\otimes A^* \xrightarrow[]{\varphi} (A\otimes A)^* \mbox{ given by } \varphi(f\otimes g)(\sum_i a_i\otimes b_i)=\sum_i f(a_i)g(b_i)
    \]
       which together with $m^*$ does indeed induce a comultiplication. In other words, $m^*(f)(a\otimes b)=f(ab)$ and the augmentation map defined above becomes the counit, making $A^*$ into a coalgebra. Moreover, $A^\ast$ is cocommutative provided that $m$ is commutative. 
\end{Rem}

\begin{Ex}\label{tensor coalgebra}
    Let $(C_1,\Delta_1,\varepsilon_1)$ and $(C_2,\Delta_2,\varepsilon_2)$ be $k$--coalgebras. We can equip the tensor product $C_1\otimes C_2$ with an structure of $k$--coalgebra as follows. Consider the map $\Delta_{1,2}$ given as the composition 
    \[
    C_1\otimes C_2 \xrightarrow[]{\Delta_1\otimes \Delta_2} C_1\otimes C_1\otimes C_2 \otimes C_2 \xrightarrow[]{\mathrm{id}_{C_1}\otimes (12)\otimes \mathrm{id}_{C_2}} C_1\otimes C_2 \otimes C_1\otimes C_2.
    \]
    It is an interesting exercise to verify that indeed $(C_1\otimes C_2,\Delta_{1,2},\varepsilon_{1,2})$ is a $k$--coalgebra with $\varepsilon_{1,2}=\varepsilon_1\otimes \varepsilon_2$. In this case, we refer to $C_1\otimes C_2$ as the tensor product of the coalgebras $C_1$ and $C_2$. 
\end{Ex}

\begin{Def}
    Let $(C_1,\Delta_1,\varepsilon_1)$ and $(C_2,\Delta_2,\varepsilon_2)$ be $k$--coalgebras. A \textit{a morphism of coalgebras from $C_1$ to $C_2$} is a $k$--linear map $f\colon C_1\to C_2$ such that the following diagrams are commutative 
    \begin{center}
        \begin{tikzcd}
           C_1 \arrow[r,"f"] \arrow [d,"\Delta_1"'] & C_2 \arrow[d,"\Delta_2"] & C_1\arrow[r,"f"] \arrow[rd,"\varepsilon_1"'] & C_2 \arrow[d,"\varepsilon_2"] \\
           C_1\otimes C_1 \arrow[r,"f\otimes f"'] & C_2\otimes C_2 &  & k.
        \end{tikzcd}
    \end{center}
    A morphism $f\colon A_1\to A_2$ of algebras is defined in a similar fashion. 
\end{Def}

It is easy to verify that the composition of  morphisms of coalgebras is itself a morphisms of coalgebras. In particular, one obtains that the coalgebras and morphisms of coalgebras  can be organized into a category which we will call the \textit{category of $k$--coalgebras} and denote it by $\textbf{CoAlg}_k$. In the same fashion, we can define the category of $k$--algebras $\mathbf{Alg}_k$. 

\begin{Prop}
    Let $A$ be a $k$--algebra. Then we obtain a contravariant functor 
    \[
    \Hom_k(-,A)\colon \mathbf{CoAlg}_k\to \mathbf{Alg}_k
    \]
    where we endow $\Hom_k(C,A)$ with the algebra structure given by the convolution product (see Example \ref{dual of a coalgebra}). 
\end{Prop}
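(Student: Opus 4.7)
The plan is to define the functor on morphisms by precomposition and then verify that this assignment respects the algebra structure on $\Hom$-spaces as well as composition. Concretely, for a morphism of coalgebras $f\colon C_1 \to C_2$, I would define
\[
f^{\ast}\colon \Hom_k(C_2, A) \to \Hom_k(C_1, A), \qquad f^{\ast}(\phi) = \phi\circ f.
\]
This is automatically $k$-linear, and the usual identities $(\id_C)^{\ast} = \id_{\Hom_k(C,A)}$ and $(g\circ f)^{\ast} = f^{\ast}\circ g^{\ast}$ for composable coalgebra morphisms are immediate from associativity of composition in $\mathbf{Mod}_k$, so the only real content is checking that $f^{\ast}$ is a morphism of $k$-algebras.

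For preservation of the convolution product, given $\phi,\psi \in \Hom_k(C_2,A)$, I unfold both sides as compositions in $\mathbf{Mod}_k$. On one hand,
\[
f^{\ast}(\phi \ast \psi) \;=\; m_A \circ (\phi\otimes \psi)\circ \Delta_{C_2}\circ f.
\]
On the other hand, the defining square of a coalgebra morphism (see the first diagram in the definition above) gives $\Delta_{C_2}\circ f = (f\otimes f)\circ \Delta_{C_1}$, and hence
\[
f^{\ast}(\phi\ast \psi) \;=\; m_A\circ ((\phi\circ f)\otimes (\psi\circ f))\circ \Delta_{C_1} \;=\; f^{\ast}(\phi)\ast f^{\ast}(\psi).
\]
For the unit, recall from Example \ref{dual of a coalgebra} that the unit of $\Hom_k(C_i,A)$ is $\eta_A\circ \varepsilon_{C_i}$; the second diagram defining a coalgebra morphism gives $\varepsilon_{C_2}\circ f = \varepsilon_{C_1}$, so $f^{\ast}(\eta_A\circ \varepsilon_{C_2}) = \eta_A\circ \varepsilon_{C_1}$ as required.

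There is no serious obstacle here: the verification is entirely diagrammatic bookkeeping, and the coalgebra morphism axioms are precisely the conditions needed to make precomposition compatible with the convolution multiplication and its unit. The only small subtlety is to resist confusing variances; since $f^{\ast}$ goes from $\Hom_k(C_2,A)$ to $\Hom_k(C_1,A)$, the functor is contravariant, consistent with the statement.
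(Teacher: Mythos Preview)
Your proof is correct and follows essentially the same approach as the paper: both verify that precomposition with a coalgebra morphism is multiplicative with respect to convolution by using the compatibility $\Delta_{C_2}\circ f = (f\otimes f)\circ \Delta_{C_1}$. Your version is in fact slightly more thorough, since you also check preservation of the unit and record the functoriality identities explicitly, whereas the paper leaves these as ``the rest is easy.''
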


\begin{proof}
    We only need to verify that for a morphism $f\colon C_1\to C_2$ of coalgebras, the map 
    \[
    f^\ast:=\Hom(f,A)\colon \Hom(C_2,A)\to \Hom(C_1,A)
    \]
    is a morphisms of algebras. Let $g,g'\in \Hom(C_2,A)$. We have the following commutative diagram 
     \begin{center}
        \begin{tikzcd}
           C_1 \arrow[r,"f"] \arrow [d,"\Delta_1"'] & C_2 \arrow[d,"\Delta_2"] \arrow[r,"\Delta_2"] & C_2\otimes C_2 \arrow[r,"g\otimes g'"] \arrow[d,equal] & A\otimes A \arrow[r] \arrow[d,equal] & A \arrow[d,equal] \\
           C_1\otimes C_1 \arrow[r,"f\otimes f"'] & C_2\otimes C_2 \arrow[r,equal] & C_2\otimes C_2 \arrow[r,"g\otimes g'"'] & A\otimes A \arrow[r] & A
        \end{tikzcd}
    \end{center}
    It follows that $f^\ast(g\ast g')=f^\ast(g)\ast f^\ast(g')$. The rest is easy. 
\end{proof}

\begin{Def}
    Let $(C, \Delta, \varepsilon)$ be a $k$--coalgebra. A \textit{(right) $C$--comodule} is a $k$--module $V$ equipped with a $k$--linear map $\Delta_V\colon  V\to V\otimes_k C$ called the \textit{coaction}, such that the following diagrams commute:
    \begin{center}
        \begin{tikzcd}
            &V \arrow[d, "\Delta_V"']\arrow[r, "\Delta_V"]&V\otimes C\arrow[d, "\Delta_V\otimes \id_C"] &V \arrow[r, "\Delta_V"] \arrow[dr, equals]&V\otimes C \arrow[d, "\id_V\otimes \varepsilon"] \\
            &V\otimes C \arrow[r, "\id_V \otimes \Delta"']&V\otimes C \otimes C & &V\otimes k. 
        \end{tikzcd}
    \end{center}

    Let $(W, \Delta_W)$ be another $C$--comodule, a $k$--linear map $f\colon V\to W$ is called a $C$-\textit{comodule homomorphism} if the following diagram commutes:
    \begin{center}
        \begin{tikzcd}
            &V \arrow[r, "\Delta_V"]\arrow[d, "f"']&V\otimes C \arrow[d, "f\otimes \id_C"] \\
            &W\arrow[r, "\Delta_W"'] &W\otimes C .
        \end{tikzcd}
    \end{center}

We will denote the set of $C$--comodule homomorphisms from $(V, \Delta_V)$ to $(W, \Delta_W)$ by $\Hom^C(V, W)$. Furthermore, we will denote the category of $C$--comodules and $C$--comodule homomorphisms by $\textbf{coMod}_C$. In a similar way, one defines the category of left $C$--comodules and homomorphisms, it will be denoted by  ${}_{C}\textbf{coMod}$. 

Furthermore, let $(V, \Delta_V)$ be any $C$--comodule. If $W\subseteq V$ is a $k$--submodule of $V$, we will say that it is a \textit{subcomodule} if the inclusion map is a $C$--comodule homomorphism. 
\end{Def}

\begin{convention}
   In practice, one often suppresses the coaction in the notation of a comodule $(V, \Delta_V)$ and simply says that $V$ is a $C$--comodule, with the coaction $\Delta_V$ understood.
\end{convention}

\begin{Rem}
   Of course, there are dual notions of right and left modules over an algebra $A$ and morphisms of $A$--modules; we leave the details to the reader. In particular, we write $\Hom_A(M, N)$ to denote the set of $A$--module homomorphisms from $M$ to $N$, and write $\mathbf{Mod}_A$  (resp. ${}_A\mathbf{Mod}$) to denote the category of right (resp. left)   $A$--modules   together with homomorphisms of $A$--modules.
\end{Rem}

There is the obvious forgetful functor from the category of $C$--comodules to the category of $k$--modules, the next proposition shows that this forgetful functor is part of an adjoint pair.

\begin{Th}\label{adjunction}
    Let $C$ be a $k$--coalgebra and $W$ be a $k$--module. Then the map $\id_W \otimes \Delta$ turns $W\otimes_k C$ into  a $C$--comodule. Moreover, let $(V, \Delta_V)$ be a $C$--comodule, then we have an isomorphism 
    \[
    \Hom_k(V, W)\simeq \Hom^C(V, W\otimes_k C)
    \]
    which is natural in $V$ and $W$. In other words, the forgetful functor is left adjoint to 
    \[
    -\otimes_k C\colon \mathbf{Mod}_k\to  \mathbf{coMod}_C.
    \]
\end{Th}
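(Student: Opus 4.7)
The plan is to construct an explicit bijection on hom-sets and then verify naturality. Before that, the comodule structure on $W \otimes_k C$ must be checked: the map $\mathrm{id}_W \otimes \Delta \colon W \otimes C \to W \otimes C \otimes C$ satisfies coassociativity because $(\mathrm{id} \otimes \Delta) \circ (\mathrm{id} \otimes \Delta) = \mathrm{id}_W \otimes ((\mathrm{id}_C \otimes \Delta) \circ \Delta) = \mathrm{id}_W \otimes ((\Delta \otimes \mathrm{id}_C) \circ \Delta)$ by coassociativity of $C$, and the counit axiom follows analogously from the counit axiom for $C$.

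For the bijection, given a $k$-linear map $f \colon V \to W$, define
\[ \Phi(f) = (f \otimes \mathrm{id}_C) \circ \Delta_V \colon V \to W \otimes C, \]
and in the other direction, given a comodule homomorphism $g \colon V \to W \otimes C$, define
\[ \Psi(g) = (\mathrm{id}_W \otimes \varepsilon) \circ g \colon V \to W \otimes k \cong W. \]
First I would verify that $\Phi(f)$ is genuinely a comodule homomorphism: the required identity $(\Phi(f) \otimes \mathrm{id}_C) \circ \Delta_V = (\mathrm{id}_W \otimes \Delta) \circ \Phi(f)$ reduces, after unpacking, to the coassociativity of $\Delta_V$ on $V$.

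Next I would check the two composites. For $\Psi \circ \Phi$, we have
\[ \Psi(\Phi(f)) = (\mathrm{id}_W \otimes \varepsilon) \circ (f \otimes \mathrm{id}_C) \circ \Delta_V = (f \otimes \varepsilon) \circ \Delta_V = f \circ \big((\mathrm{id}_V \otimes \varepsilon) \circ \Delta_V\big) = f, \]
using the right counital axiom for the comodule $V$. For $\Phi \circ \Psi$ applied to a comodule map $g$,
\[ \Phi(\Psi(g)) = ((\mathrm{id}_W \otimes \varepsilon) \otimes \mathrm{id}_C) \circ (g \otimes \mathrm{id}_C) \circ \Delta_V = ((\mathrm{id}_W \otimes \varepsilon) \otimes \mathrm{id}_C) \circ (\mathrm{id}_W \otimes \Delta) \circ g = g, \]
where in the middle equality I use that $g$ is a comodule map, and in the last one the counit axiom for $C$.

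Finally, naturality in $V$ and $W$ follows formally: precomposing with a comodule map $V' \to V$ commutes with $\Phi$ because $\Delta$ is natural with respect to comodule maps, and postcomposing with $W \to W'$ visibly commutes with the construction of $\Phi$ in the $W$-slot. The only real work is the string of identities above, so the main obstacle is purely bookkeeping — there is no hidden idea beyond writing the correct unit and counit of the adjunction and invoking the comodule axioms. In particular, $\Phi$ is the unit $\Delta_V \colon V \to V \otimes C$ followed by $f \otimes \mathrm{id}_C$, and $\Psi$ is the evident counit induced by $\varepsilon$, so the adjunction is identified concretely.
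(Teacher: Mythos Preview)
Your proof is correct and follows exactly the same approach as the paper: the paper defines the same two maps $f \mapsto (\id_W \otimes \varepsilon)\circ f$ and $g' \mapsto (g'\otimes \id_C)\circ \Delta_V$, then simply asserts that they are mutually inverse and natural, whereas you have actually spelled out those verifications. If anything, you have provided more detail than the paper does.
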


\begin{proof}
    The first claim is an easy exercise. For the second claim, let $f\in \Hom^C(V, W\otimes_k C)$, then the composite 
    \[
    f' \colon\quad V\xrightarrow[]{f} W\otimes_k C \xrightarrow[]{\id_W \otimes \varepsilon} W\otimes_k k=W
    \]
    defines a map of $k$--modules $f'\in \Hom_{k}(V, W)$. Moreover, given a $k$--linear map  $g'\colon V\to W$ the composition  
    \[
    g\colon \quad V\xrightarrow[]{\Delta_V} V\otimes_k C\xrightarrow[]{g'\otimes \id_C} W\otimes_k C
    \]
    defines a morphisms of $C$--comodules. One checks easily that these assignments are mutually inverse to each other, and that they are natural in $V$ and $W$. Therefore, we obtain a natural isomorphism between  $\Hom^C(V, W\otimes_k C)$ and  $\Hom_k(V, W)$ as we wanted. 
\end{proof}

\begin{Cor}
    Let $C$ be a $k$--coalgebra. Then $\End^C(C)\simeq C^*$.
\end{Cor}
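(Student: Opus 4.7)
The plan is to apply Theorem \ref{adjunction} directly, specializing to $V = C$ (regarded as a right comodule over itself via $\Delta$) and $W = k$. The left-hand side of the natural isomorphism becomes $\Hom_k(C,k) = C^\ast$, while the right-hand side becomes $\Hom^C(C, k\otimes_k C)$. Since $k \otimes_k C \cong C$ canonically as $k$-modules, and one checks that this scalar-multiplication isomorphism intertwines the coaction $\id_k \otimes \Delta$ on $k \otimes_k C$ with the coaction $\Delta$ on $C$ (this is immediate from $k$-linearity of $\Delta$), it is in fact an isomorphism of $C$-comodules. Thus $\Hom^C(C, k \otimes_k C) \cong \Hom^C(C,C) = \End^C(C)$, and the corollary follows.

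It is worth spelling out the explicit bijection coming from the proof of Theorem \ref{adjunction}: an endomorphism $\phi \in \End^C(C)$ is sent to the functional $\varepsilon \circ \phi \in C^\ast$, while a functional $g \in C^\ast$ is sent to the composite
\[
C \xrightarrow{\Delta} C \otimes C \xrightarrow{g \otimes \id_C} k \otimes C \cong C.
\]
That this latter map is a morphism of $C$-comodules is immediate from coassociativity, and the two assignments being mutually inverse uses the counit axiom.

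There is no real obstacle here; the statement is essentially a dictionary entry for the adjunction just established. The only thing one might want to add (though the corollary does not claim it) is that this bijection is an isomorphism of $k$-algebras when $\End^C(C)$ is equipped with composition and $C^\ast$ with the convolution product of Example \ref{dual of a coalgebra}. Verifying this amounts to a short Sweedler-notation calculation unwinding $\varepsilon \circ (\phi_1 \circ \phi_2)$ using the fact that $\phi_2$ is a comodule map, and it recovers (possibly up to the opposite product, depending on left/right conventions) the convolution of the corresponding functionals.
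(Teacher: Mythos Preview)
Your proof is correct and follows exactly the same approach as the paper: apply Theorem \ref{adjunction} with $V=C$ and $W=k$. The paper's proof is a single sentence to this effect, so your additional explication of the comodule isomorphism $k\otimes_k C\cong C$, the explicit formulas, and the remark about the algebra structure are all extra detail beyond what the paper provides.
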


\begin{proof}
    Using the above adjunction and taking $V=C$ and $W=k$ we get $\Hom^C(C, C)\simeq \Hom_k(C, k)$ as desired.
\end{proof}

\begin{Prop}\label{kernels}
    Let ${C}$ be a $k$--coalgebra. Then the category \emph{\textbf{coMod}}$_C$ has coproducts and cokernels. Moreover, if $C$ is flat as a $k$--module, then \emph{\textbf{coMod}}$_C$ has kernels.
\end{Prop}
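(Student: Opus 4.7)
The plan is to construct each universal object on the underlying $k$--module level, then equip it with a canonical coaction inherited from the given ones, and finally check the universal property in $\mathbf{coMod}_C$.

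For coproducts, given a family $\{(V_i,\Delta_{V_i})\}_{i\in I}$, I would take $V := \bigoplus_i V_i$ (the $k$--module coproduct) and define $\Delta_V$ as the map induced by the compositions $V_i \xrightarrow{\Delta_{V_i}} V_i\otimes C \hookrightarrow V\otimes C$, using that $\otimes_k$ commutes with direct sums (so $V\otimes C \cong \bigoplus_i (V_i\otimes C)$ naturally). Coassociativity and the counital identity follow componentwise from those of each $\Delta_{V_i}$. The universal property is then immediate from the universal property of $\bigoplus_i V_i$ in $\mathbf{Mod}_k$ once one verifies that a $k$--linear map out of $V$ is a comodule map iff its restriction to each $V_i$ is.

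For cokernels, given $f\colon(V,\Delta_V)\to(W,\Delta_W)$ in $\mathbf{coMod}_C$, set $Q := W/f(V)$ and consider the composite $W \xrightarrow{\Delta_W} W\otimes C \xrightarrow{\pi\otimes \id_C} Q\otimes C$. The key observation is that for $v\in V$,
\[
\Delta_W(f(v)) = (f\otimes \id_C)(\Delta_V(v)) \in f(V)\otimes C,
\]
so the composite vanishes on $f(V)$ and factors through a unique $\Delta_Q\colon Q\to Q\otimes C$. Right-exactness of $-\otimes C$ makes $\pi\otimes\id_C$ surjective, and a quick diagram chase (using $\pi\otimes\id_C\otimes\id_C$ surjective in the coassociativity square) shows $\Delta_Q$ satisfies the coaction axioms. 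Notice that \emph{no flatness is needed here}, because $f(V)\otimes C \to W\otimes C$ need not be injective for the argument to work — we only use that its image dies in $Q\otimes C$. The universal property is inherited directly from the cokernel $W\to Q$ in $\mathbf{Mod}_k$.

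For kernels, given $f\colon(V,\Delta_V)\to(W,\Delta_W)$, set $K := \ker(f)$ in $\mathbf{Mod}_k$. Here is where flatness of $C$ enters: it ensures that $K\otimes C \hookrightarrow V\otimes C$ is injective and that $K\otimes C = \ker(f\otimes \id_C)$. For $v\in K$ we compute
\[
(f\otimes\id_C)(\Delta_V(v)) = \Delta_W(f(v)) = 0,
\]
so $\Delta_V(v) \in \ker(f\otimes \id_C) = K\otimes C$. Thus $\Delta_V$ restricts to a $k$--linear map $\Delta_K\colon K\to K\otimes C$, and the coaction axioms for $\Delta_K$ are inherited from those for $\Delta_V$ by restriction (again using the flatness-induced injection $K\otimes C\otimes C \hookrightarrow V\otimes C\otimes C$ to cancel). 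The universal property follows: if $g\colon(U,\Delta_U)\to(V,\Delta_V)$ satisfies $f\circ g = 0$, then $g$ factors through $K$ as a $k$--linear map, and the induced map is automatically a comodule morphism because $\Delta_K$ is the restriction of $\Delta_V$.

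The only genuinely non-formal step is the kernel construction, and the main obstacle there is exactly what flatness addresses: without it, one cannot identify $K\otimes C$ with a submodule of $V\otimes C$, nor with $\ker(f\otimes\id_C)$, so the would-be coaction on $K$ has no natural target. Everything else is a straightforward transfer of the $\mathbf{Mod}_k$ constructions via the forgetful/cofree adjunction of Theorem \ref{adjunction}.
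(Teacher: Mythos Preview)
Your proposal is correct and follows essentially the same approach as the paper: construct each object at the level of $k$--modules, endow it with the induced coaction, and for kernels use flatness of $C$ to identify $\ker(f\otimes\id_C)$ with $(\ker f)\otimes C$ so that $\Delta_V$ restricts. Your write-up is in fact more detailed than the paper's, which dismisses the cokernel case as ``routine'' and gives the same flatness argument for kernels that you do.
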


\begin{proof}
    Let $\{(V_i, \Delta_{V_i}) \}_{i\in I}$ be a family of $C$--comodules. Then $\bigoplus_i V_i$ has the structure of a $C$--comodule via the map 
    \[
    \bigoplus_i V_i  \xrightarrow{\bigoplus_i \Delta_{V_i}}\bigoplus_i\left(V_i\otimes C\right) \xrightarrow{\sim} \left(\bigoplus_i V_i\right) \otimes C.
    \]
    One also easily checks that it satisfies the universal property of a coproduct. 
    
    Now let $f\colon V\to W$ be a map of $C$--comodules, it is routine to see that the cokernel of this map in $\textbf{Mod}_k$ inherits the structure of a $C$--comodule and the required universal property of a cokernel. 
    
    For kernels, we need to check that the coaction $\Delta_V$ on $V$ restricts to the kernel of $f$ in ${\textbf{{Mod}}}_k$, that is, that $\Delta_V|_{\ker f}$ lands in $\ker f\otimes \id_C$. Let $x\in \ker f$, and let $\Delta_V(x)=\sum_i x_i \otimes c_i$, we want to check that $x_i\in \ker f$ whenever $C$ is flat. Pushing $x$ around the commutative square
       \begin{center}
        \begin{tikzcd}
            &V \arrow[r, "\Delta_V"]\arrow[d, "f"']&V\otimes C \arrow[d, "f\otimes \id_C"] \\
            &W\arrow[r, "\Delta_W"'] &W\otimes C .
        \end{tikzcd}
    \end{center}
    we get that  
    \[
    \sum f(x_i)\otimes c_i=0 \mbox{, that is, } \sum_i x_i\otimes c_i\in \ker(f\otimes \id_C). 
    \]
     In particular, whenever $C$ is flat, we have that $\ker(f\otimes \id_C)= \ker f\otimes C$ by tensoring the left exact sequence $0\to\ker f\to M\to N$ everywhere with $C$. Thus we have that $x_i\in \ker f$ as desired. The remaining properties of the kernel are easily checked.
\end{proof}

\begin{Th}\label{abelian}
    Let $C$ be a $k$--coalgebra which is flat as a $k$--module. Then \emph{\textbf{coMod}}$_C$ is an abelian category. 
\end{Th}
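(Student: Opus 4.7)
The plan is to verify the abelian category axioms by reducing to the known abelian structure on $\mathbf{Mod}_k$, using the forgetful functor $U\colon \mathbf{coMod}_C\to \mathbf{Mod}_k$ as the pivot. The previous propositions already give coproducts, kernels, and cokernels (where flatness of $C$ is exactly what allows the restriction of the coaction to a $k$-module kernel), so it remains to assemble an additive structure and check the ``normality'' condition.

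First, I would assemble the additive structure. The zero $k$-module, equipped with its unique coaction, is a zero object. The set $\Hom^C(V,W)$ is an abelian group: if $f,g\in \Hom^C(V,W)$, then $f+g$ is $k$-linear and the compatibility square with the coactions is preserved because $(f+g)\otimes \id_C = f\otimes\id_C + g\otimes\id_C$ and $\Delta_W\circ(f+g) = \Delta_W\circ f + \Delta_W\circ g$. Composition is $\mathbb{Z}$-bilinear by inheritance from $\mathbf{Mod}_k$. Finite biproducts exist because the coproduct of a finite family $\{V_i\}$ constructed in Proposition \ref{kernels} coincides with the corresponding $k$-module product, which is then checked directly to satisfy the universal property of the product in $\mathbf{coMod}_C$.

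The structural observation that drives the rest of the proof is that $U$ is faithful, and preserves and reflects kernels and cokernels, since the constructions in Proposition \ref{kernels} are literally the $\mathbf{Mod}_k$ kernel/cokernel endowed with canonical induced coactions (flatness of $C$ being precisely what is needed in the kernel case). In particular $U$ preserves and reflects monomorphisms and epimorphisms. Combined with this, I would use the elementary fact that the inverse of a bijective comodule map is again a comodule map: if $f\colon V\to W$ is $k$-linear bijective and $(f\otimes \id_C)\Delta_V=\Delta_W\circ f$, then pre- and post-composing with $f^{-1}$ and $f^{-1}\otimes \id_C$ yields $\Delta_V\circ f^{-1}=(f^{-1}\otimes \id_C)\circ \Delta_W$.

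With these ingredients, the main remaining step is normality. Given a mono $f\colon V\to W$ in $\mathbf{coMod}_C$, let $\pi\colon W\to \coker f$ be its cokernel and $\iota\colon K\to W$ the kernel of $\pi$; the universal property provides a unique comodule map $\tilde f\colon V\to K$ with $\iota\circ \tilde f=f$. Applying $U$ and using that $\mathbf{Mod}_k$ is abelian, $U(\tilde f)$ is a $k$-linear isomorphism, hence $\tilde f$ is an isomorphism in $\mathbf{coMod}_C$ by the preceding remark; thus $f$ is the kernel of its cokernel. The argument for epimorphisms is formally dual, using that cokernels in $\mathbf{coMod}_C$ are also computed on underlying $k$-modules. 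The main obstacle is conceptual rather than technical: one must recognize that flatness of $C$ is exactly the hypothesis that makes $U$ exact, at which point the abelian structure is inherited essentially for free from $\mathbf{Mod}_k$.
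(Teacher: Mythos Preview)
Your proof is correct and follows essentially the same approach as the paper: both arguments verify the additive structure and then establish normality by using the forgetful functor to $\mathbf{Mod}_k$ (which is exact precisely because $C$ is flat) to reduce to the abelian structure on $k$--modules. The only cosmetic difference is that the paper shows a monomorphism $f$ is the kernel of the explicit quotient $W\to W/\im f$, whereas you show $f$ is isomorphic to the kernel of its cokernel via the canonical comparison map; these are two phrasings of the same computation.
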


\begin{proof}
    We have seen that whenever $C$ is flat, the category $\textbf{coMod}_C$ has coproducts, cokernels, and kernels. Moreover, it is easy to see that this category has finite biproducts and is enriched over abelian groups.

    To see that this category is abelian it remains to check that the category is normal, that is, that every monomorphism is a kernel and every epimorphism is a cokernel. Whenever $C$ is flat we can see that the forgetful functor respects monomorphisms. Thus we see that monomorphisms in $\textbf{coMod}_C$ are kernels, since if $f:V\to W$ is a monomorphism in $\textbf{coMod}_C$, then it is also a monomorphism of $k$--modules, and in particular it is the kernel of the map $g:W\to W/\im f$. Moreover, $W/\im f$ inherits the comodule structure from $W$ in a way such that $g$ is a comodule homomorphism. Thus $f$ is the kernel of $g$ in the category $\textbf{coMod}_C$. Similarly, the forgetful functor always respects epimorphisms since it is a left adjoint, and thus we see that epimorphisms in $\textbf{coMod}_C$ are cokernels.
\end{proof}

\begin{Th}\label{injectives}
    Let $C$ be a $k$--coalgebra which is flat as a $k$--module. Then the following properties hold. 
    \begin{enumerate}[(i)]
        \item If $J$ is an injective $k$--module, then $(J\otimes_k C, \id_J\otimes \Delta)$ is an injective $C$--comodule.
        \item The category \emph{\textbf{coMod}}$_C$ has enough injectives.
    \end{enumerate} 
\end{Th}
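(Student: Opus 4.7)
The plan is to use the adjunction of Theorem \ref{adjunction} to transfer injectivity from $\textbf{Mod}_k$ to $\textbf{coMod}_C$, and then use the coaction map to embed any comodule into a cofree one.

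For part $(i)$, I would argue that $\Hom^C(-, J\otimes_k C)$ is an exact functor on $\textbf{coMod}_C$. By Theorem \ref{adjunction}, there is a natural isomorphism
\[
\Hom^C(V, J\otimes_k C)\;\cong\; \Hom_k(U V, J),
\]
where $U\colon\textbf{coMod}_C\to\textbf{Mod}_k$ denotes the forgetful functor. Being a left adjoint, $U$ preserves cokernels; and, since $C$ is flat over $k$, the argument in Proposition \ref{kernels} shows that $U$ also preserves kernels. Hence $U$ is exact. Composing with $\Hom_k(-,J)$, which is exact because $J$ is an injective $k$--module, shows that $\Hom^C(-, J\otimes_k C)$ is exact, i.e.\ $J\otimes_k C$ is injective in $\textbf{coMod}_C$.

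For part $(ii)$, let $(V,\Delta_V)$ be an arbitrary $C$--comodule. I would use the coaction itself as the first step of the embedding: by coassociativity, $\Delta_V\colon V\to V\otimes_k C$ is a homomorphism of $C$--comodules (with the cofree structure on the target), and by the counit axiom $(\id_V\otimes\varepsilon)\circ\Delta_V=\id_V$, so $\Delta_V$ is a (split) monomorphism of $k$--modules and hence of comodules. Next, since $\textbf{Mod}_k$ has enough injectives, choose a $k$--linear monomorphism $i\colon V\hookrightarrow J$ with $J$ injective in $\textbf{Mod}_k$. The map $i\otimes\id_C\colon V\otimes_k C\to J\otimes_k C$ is a comodule homomorphism (both sides carry the cofree structure, and the relevant square commutes on the nose), and it is injective because $C$ is flat. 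Composing gives a comodule embedding
\[
V\;\xrightarrow{\Delta_V}\; V\otimes_k C\;\xrightarrow{i\otimes\id_C}\; J\otimes_k C,
\]
whose target is injective by part $(i)$.

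I do not foresee a genuine obstacle; the argument is essentially formal once one has the adjunction and the flatness hypothesis. The only point worth flagging is where flatness is actually used: in part $(i)$ it is needed to ensure exactness of the forgetful functor (so that injectivity is preserved by the right adjoint), and in part $(ii)$ it is needed to keep $i\otimes\id_C$ injective. Without flatness the same strategy breaks down, which is consistent with the hypothesis appearing in Proposition \ref{kernels} and Theorem \ref{abelian}.
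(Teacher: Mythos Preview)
Your proposal is correct and follows essentially the same approach as the paper: both use the adjunction of Theorem \ref{adjunction} together with exactness of the forgetful functor to prove part~(i), and both embed a comodule via $\Delta_V$ followed by $i\otimes\id_C$ for part~(ii). If anything, you are slightly more explicit than the paper in verifying that $\Delta_V$ and $i\otimes\id_C$ are comodule maps and in explaining why $\Delta_V$ is injective (via the counit splitting rather than just ``trivial kernel'').
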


\begin{proof}
    To prove that $J\otimes_k C$ is injective we can show that $\Hom^C(-, J\otimes_k C)$ is an exact functor. Note that the forgetful functor $\mathbf{coMod}_C\to \mathbf{Mod}_k$ is exact, and Theorem \ref{adjunction} gives us a natural isomorphism $\Hom^C(-, J\otimes_k C)\simeq \Hom_k(-, J)$, and the latter functor is exact by assumption, so we are done. 
    
    Now will will show that $\textbf{coMod}_C$ has enough injectives. Let $(W, \Delta_W)$ be any $C$--comodule, then viewed as a $k$--module it embeds into an injective $k$--module $J_W$ via some $k$--linear monomorphism $i\colon W\to J_W$. Now, we claim that  the composite map 
    \[
    W\xrightarrow[]{\Delta_W} W\otimes_k C \xrightarrow[]{i\otimes \id_C} J_W\otimes_k C
    \]
    is injective. Indeed, it is a composite of injective maps. Note that the second map is injective because it is an injective map tensored with a flat $k$--module, and flat modules preserve injective maps when tensoring. Moreover, it is easy to see that the first map has trivial kernel.
\end{proof}

\begin{Rem}
    An alternative, and perhaps more enlightened, proof is possible here. In particular, whenever $C$ is flat as a $k$--module, the category $\textbf{coMod}_C$ is actually a Grothendieck abelian category, so that the existence of enough injectives follows immediately. The advantage of the proof given here is that we are actually showing how to construct these injectives explicitly.
\end{Rem}

\begin{Rem}\label{comodC embedding into modC*}
    Moving on, one sees that $\textbf{coMod}_C$ is always a subcategory of ${}_{C^*}\textbf{Mod}$. Given a $C$--comodule $(V, \Delta_V)$, it inherits the structure of a $C^*$--module via:
\[
C^* \otimes_k V \to V, \quad f\otimes x \mapsto (f \otimes \id_V)(v(x)).
\]
 In particular, one sees that a morphisms of 
$C$--comodules is in fact a morphism of $C$--modules with respect to the action defined above. 
Moreover, whenever $C$ is finitely generated and projective, the next theorem shows that these categories even agree.
\end{Rem}

\begin{Th}\label{equivalence}
    Let $C$ be a $k$--coalgebra which is finitely generated and projective as a $k$--module. Then there is an equivalence of categories 
    \[
    \mathbf{coMod}_C \simeq {}_{C^*}\mathbf{Mod}.
    \]
\end{Th}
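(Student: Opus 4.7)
The remark preceding the theorem already constructs a functor $F \colon \mathbf{coMod}_C \to {}_{C^*}\mathbf{Mod}$, which is the identity on underlying $k$--modules and sends a coaction $\Delta_V$ to the $C^*$--action $f \cdot v = (\id_V \otimes f)(\Delta_V(v))$. My plan is to build a functor $G$ in the opposite direction and show that $F$ and $G$ are mutually inverse. The only nontrivial input needed is the hypothesis that $C$ is finitely generated and projective, which guarantees that the canonical $k$--linear map
\[
\psi_M \colon M \otimes_k C \longrightarrow \Hom_k(C^*, M), \qquad \psi_M(m \otimes c)(f) = f(c)\, m,
\]
is a natural isomorphism for every $k$--module $M$, and similarly that $C^* \otimes C^* \cong (C \otimes C)^*$ (as already observed after Example \ref{dual of a coalgebra}).

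Given a $C^*$--module $(M, \mu)$ with $\mu \colon C^* \otimes_k M \to M$, let $\tilde\mu \colon M \to \Hom_k(C^*, M)$ be its tensor-hom adjoint, and set
\[
\Delta_M \;:=\; \psi_M^{-1} \circ \tilde\mu \colon M \longrightarrow M \otimes_k C.
\]
Define $G(M) := (M, \Delta_M)$, and on morphisms let $G$ act as the identity on the underlying $k$--linear maps. The first substantive step is to verify that $\Delta_M$ satisfies the comodule axioms: coassociativity should follow from the associativity of $\mu$, and the counit axiom should follow from the unitality of $\mu$. This is a diagram chase using naturality of $\psi$ together with the isomorphism $C^* \otimes C^* \cong (C \otimes C)^*$ to transport the module axioms (which live in $\Hom_k(C^* \otimes C^*, M)$) over to the comodule axioms (which live in $\Hom_k(M, M \otimes C \otimes C)$). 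A parallel but easier chase shows that any $C^*$--linear map automatically commutes with the constructed coactions, so $G$ is well-defined on morphisms.

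Finally, I would check that $F \circ G = \id$ and $G \circ F = \id$, which amounts to the statement that the two passages between $\mu$ and $\Delta_M$ are inverse to each other. This is formal once $\psi$ is identified with the evaluation adjunction and one unwinds definitions. The main obstacle I anticipate is the coassociativity verification: one must simultaneously juggle the adjunction $\tilde\mu \leftrightarrow \mu$ and the isomorphism $\psi$, and be careful that the iterated application of $\psi^{-1}$ used to produce $(\Delta_M \otimes \id_C) \circ \Delta_M$ corresponds on the dual side to the iterated action $\mu \circ (\id_{C^*} \otimes \mu)$. Once this compatibility is recorded as a commutative diagram, both the comodule axioms and the verification $F \circ G = \id$ fall out simultaneously, and the equivalence is established.
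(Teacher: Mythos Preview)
Your proposal is correct and takes essentially the same approach as the paper. The paper packages your isomorphism $\psi_M \colon M \otimes_k C \to \Hom_k(C^*, M)$ together with tensor--hom adjunction into the single dualisability isomorphism $\Hom_k(C^* \otimes_k V, V) \simeq \Hom_k(V, V \otimes_k C)$ and then, like you, asserts that the verifications of the comodule axioms and of mutual inverseness are routine; your version simply unpacks this adjunction more explicitly.
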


\begin{proof}
    In the category $\textbf{Mod}_k$, the dualisable objects are precisely the finitely generated projective $k$--modules, thus we see that $C$ is a dualisable $k$--module. In other words, we have an adjunction:
    \[
    \Hom_k(C^*\otimes_k V, W) \simeq \Hom_k(V, W\otimes_k C ),
    \]
     where $V$ and $W$ are arbitrary $k$--modules. In particular, if $V=W$ we get
    \[
    \Hom_k(C^* \otimes_k V, V) \simeq \Hom_k(V, V\otimes_k C ).
    \]
     Now let $V$ be a $k$--module with the additional structure of a $C^*$--module, that is, we have an action map $\lambda\colon C^*\otimes_k V \to V$ turning $(V,\lambda)$ into a $C^\ast$--module. Then the image of $\lambda$ under this adjunction is some map $\lambda^\ast\colon  V\to V\otimes_k C$, which can be seen to satisfy the properties of a $C$--coaction. So what we have done is shown that a $C^*$--module can be given the structure of a $C$--comodule via $\lambda^\ast$. Conversely given a $C$--comodule, the image of the coaction map under this adjunction is also easily seen to give us the $C^*$--action map defined above. One can verify that these two associations are functorial and mutually inverse, giving us an equivalence of categories, so we are done.
\end{proof}

\begin{Rem}\label{projectives are injectives}
In many cases of interest, the category $\mathbf{coMod}_C$ also has enough projectives. Generally, projective and injective objects are quite distinct; however, when the coalgebra $C$ is finitely generated and projective over $k$, a strong relationship between projective and injective objects emerges provided that $C$ possesses additional structure as a \textit{commutative Hopf algebra}. In this case, it has been shown in \cite{Par71} that $C^\ast$ forms a \textit{quasi-Frobenius algebra} over $k$, as defined in \cite{Mul64}. Consequently, the regular representation $C^\ast$ is injective. Therefore, any $C^\ast$--module that is projective as a $k$--module is also injective as a $C^\ast$--module. For a detailed treatment when $k$ is a field, see \cite[Chapter 9]{DV94}.

A pertinent example is the group algebra $kG$, where $G$ is a finite group and $k$ is a field. In this case, $kG$ is indeed a Frobenius algebra, implying that $kG\cong (kG)^\ast$, and hence projective modules agree with injective modules.
    \end{Rem}

The following lemma is very easy, and is better left as an exercise:

\begin{Lemma}
    Let $(C, \Delta, \varepsilon)$ be a $k$-coalgebra and $(V, \Delta_V)$ be any $C$-comodule. Then the intersection of $C$-subcomodules of $V$ is again a $C$-subcomodule.
\end{Lemma}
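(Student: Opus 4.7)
The plan is to produce, for $W := \bigcap_{i \in I} W_i$, a $k$-linear map $\Delta_W \colon W \to W \otimes C$ such that the inclusion $\iota \colon W \hookrightarrow V$ intertwines $\Delta_W$ with $\Delta_V$. Whenever $\iota \otimes \id_C$ is injective (for instance when $C$ is flat over $k$, the setting of most of the surrounding theory), this reduces to showing that $\Delta_V(w) \in V \otimes C$ lies in the image of $W \otimes C$ for every $w \in W$; the unique lift will then define $\Delta_W$.

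I would run the argument most transparently when $C$ is a free $k$-module with basis $\{c_\alpha\}_{\alpha \in \Lambda}$ (in particular when $k$ is a field). In that situation, every $y \in V \otimes C$ has a unique expression $y = \sum_\alpha v_\alpha \otimes c_\alpha$ with almost all $v_\alpha = 0$. Given $w \in W$ and the expression $\Delta_V(w) = \sum_\alpha v_\alpha \otimes c_\alpha$, the subcomodule condition on each $W_i$ together with the uniqueness of this expression forces $v_\alpha \in W_i$ for every $\alpha$ and every $i \in I$; hence $v_\alpha \in \bigcap_i W_i = W$, so $\Delta_V(w) \in W \otimes C$. The coassociativity and counit axioms for $\Delta_W$ are then automatic, obtained by postcomposing the two relevant diagrams with $\iota \otimes \id_{C \otimes C}$ and $\iota \otimes \id_k$ respectively and invoking the corresponding axioms for $\Delta_V$ restricted to $W$.

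In the general case, the same idea goes through with a little more care. Given $w \in W$ with some finite expression $\Delta_V(w) = \sum_{j=1}^n v_j \otimes c_j$, one embeds the problem into $V_0 \otimes C$ for the finitely generated submodule $V_0 \subseteq V$ spanned by the $v_j$, and uses flatness of $C$ together with the short exact sequences $0 \to W_i/W \to V/W \to V/W_i \to 0$ to deduce that the image of $\Delta_V(w)$ in $(V/W) \otimes C$ vanishes, hence that $\Delta_V(w) \in W \otimes C$.

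The main obstacle is precisely this last point: ensuring that $-\otimes C$ interacts well with the intersection $\bigcap_i W_i$, which is the reason flatness (or the existence of a $k$-basis) is needed. Over a field this step is trivial thanks to the basis argument above; once $\Delta_V(w) \in W \otimes C$ is established, the verification of the coalgebra axioms for $\Delta_W$ is routine, and the lemma follows.
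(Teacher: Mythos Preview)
The paper does not give a proof here---it declares the lemma ``very easy'' and leaves it as an exercise---so there is no argument to compare yours against. Your basis argument when $C$ is free over $k$ (in particular over a field) is clean and correct, and it handles arbitrary intersections: writing $\Delta_V(w)=\sum_\alpha v_\alpha\otimes c_\alpha$ uniquely and reading off $v_\alpha\in W_i$ for every $i$ is exactly the right move.

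Your sketch for the general flat case, however, has a gap when the family $\{W_i\}_{i\in I}$ is infinite. From the exact sequences $0 \to W_i/W \to V/W \to V/W_i \to 0$ and flatness of $C$ you can conclude that the image of $\Delta_V(w)$ in $(V/W)\otimes C$ lies in every $(W_i/W)\otimes C$; but to conclude it vanishes you would need $\bigcap_i\bigl((W_i/W)\otimes C\bigr)=0$, and tensoring with a flat module does \emph{not} commute with infinite intersections in general. For finite $I$ your argument is fine, since an exact functor on an abelian category preserves finite limits, hence finite intersections. In the paper's actual applications this gap causes no trouble: the relevant coalgebras $k[G]$ are finitely generated projective over $k$, hence locally free, so your basis argument can be run Zariski-locally; and in any case the very next proposition (local finiteness) constructs $\langle v\rangle$ explicitly as a concrete finitely generated submodule, so the infinite-intersection form of the lemma is never truly needed.
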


\begin{Def}
    Let $(C, \Delta, \varepsilon)$ be a $k$-coalgebra, and $(V, \Delta_V)$ be any $C$--comodule. Let $v\in V$, then we denote by $\langle v \rangle$ the smallest $C$--subcomodule of $V$ containing $v$. Note that a smallest such subcomodule exists by the previous lemma, it is just the intersection of all subcomodules of $V$ containing $v$.
\end{Def}

The next proposition is usually referred to as ``local finiteness", and it will be crucial for us later on.

\begin{Prop}[\textbf{Local finiteness}]
    Let $(C, \Delta, \varepsilon)$ be a $k$--coalgebra which is flat as a $k$-module, and $(V, \Delta_V)$ be any $C$--comodule. Then for any $v\in V$, the $C$--comodule  $\langle v \rangle$ is finitely generated as a $k$-module. 
\end{Prop}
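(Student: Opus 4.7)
The strategy is to exhibit a finitely generated $k$-submodule $M \subseteq V$ that contains $v$ and is closed under the coaction $\Delta_V$; any such $M$ is automatically a $C$-subcomodule of $V$, so $\langle v\rangle \subseteq M$ and the proposition will follow. First, I would fix any finite expression $\Delta_V(v) = \sum_{i=1}^{n} v_i \otimes c_i$ and set $M := \sum_{i=1}^{n} k v_i$. Applying the counit axiom to $v$ gives
\[
v \;=\; (\id_V \otimes \varepsilon)\Delta_V(v) \;=\; \sum_{i=1}^{n} \varepsilon(c_i)\, v_i \;\in\; M,
\]
so $M$ already contains $v$, and only the coaction-stability of $M$ remains to be verified.

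Next, I would invoke coassociativity in order to control the $\Delta_V(v_i)$. Writing out $(\Delta_V \otimes \id_C)\Delta_V(v) = (\id_V \otimes \Delta)\Delta_V(v)$ yields the identity
\[
\sum_{i=1}^{n} \Delta_V(v_i) \otimes c_i \;=\; \sum_{i=1}^{n} v_i \otimes \Delta(c_i)
\]
in $V \otimes C \otimes C$. The right-hand side visibly lies in $M \otimes C \otimes C$. Since $C$, and hence $C \otimes C$, is flat over $k$, the inclusion $M \hookrightarrow V$ induces an injection $M \otimes C \otimes C \hookrightarrow V \otimes C \otimes C$, so the left-hand side lies in $M \otimes C \otimes C$ as well.

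The main obstacle—and the step I expect to require the most care—is extracting from this that \emph{each} $\Delta_V(v_i)$ individually belongs to $M \otimes C$. When $k$ is a field, one circumvents the issue by choosing the initial expression with $n$ minimal, which forces the $c_i$ to be $k$-linearly independent: extending $\{c_i\}$ to a basis of $C$ and pairing with the corresponding dual functionals reads off each $\Delta_V(v_i)$ directly. Over an arbitrary commutative base ring this linear independence may fail, and one has to compensate by exploiting the flatness of $C$ in a more delicate way—for example, by using that $-\otimes_k C$ preserves inclusions to transport the argument down to a minimal finitely generated subcoalgebra containing the $c_i$, or by reducing the desired inclusion to its analogue over residue fields of $k$ and using that flatness detects the descent. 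Once this separation is achieved, $M$ is a finitely generated subcomodule of $V$ containing $v$, and hence $\langle v \rangle \subseteq M$ is finitely generated as a $k$-module, completing the proof.
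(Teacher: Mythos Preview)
Your setup matches the paper's exactly: define $M=\sum_i k v_i$ from an expression $\Delta_V(v)=\sum_i v_i\otimes c_i$, observe $v\in M$ via the counit axiom, and write out coassociativity. The gap you flag is real, and the workarounds you sketch do not close it. From
\[
\sum_i \Delta_V(v_i)\otimes c_i \;\in\; M\otimes C\otimes C
\]
there is no way, over a general commutative ring $k$, to conclude that each $\Delta_V(v_i)$ lies in $M\otimes C$: the $c_i$ need not be linearly independent, passing to a subcoalgebra generated by the $c_i$ does not create independence, and a reduction to residue fields would require some purity statement about the inclusion $M\otimes C\hookrightarrow V\otimes C$ that you have not supplied. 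So as written the argument is incomplete.

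The paper sidesteps this separation problem by \emph{not} attempting to show that $M$ itself is a subcomodule. Instead it introduces the auxiliary submodule
\[
W \;:=\; \Delta_V^{-1}(M\otimes C)\;\subseteq\; V
\]
and proves that $W$ is a subcomodule. The counit axiom gives $W\subseteq M$ and clearly $v\in W$. For coaction-stability, flatness of $C$ identifies $W\otimes C$ with $(\Delta_V\otimes\id_C)^{-1}(M\otimes C\otimes C)$ inside $V\otimes C$, so showing $\Delta_V(W)\subseteq W\otimes C$ amounts to checking
\[
(\Delta_V\otimes\id_C)\bigl(\Delta_V(W)\bigr)\;\subseteq\; M\otimes C\otimes C.
\]
Coassociativity rewrites the left side as $(\id_V\otimes\Delta)(\Delta_V(W))$, and since $\Delta_V(W)\subseteq M\otimes C$ by the very definition of $W$, this inclusion is immediate. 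Thus $W$ is a subcomodule with $v\in W\subseteq M$, so $\langle v\rangle\subseteq M$; the paper then concludes $\langle v\rangle=M$. The point is that the single coassociativity identity you derived is exactly what is needed---but it must be applied uniformly to all of $W$ via the preimage description, rather than to the individual generators $v_i$.
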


\begin{proof}
    Let $v\in V$ be arbitrary, and write $\Delta_V(v)=\sum_{i=1}^n v_i\otimes c_i$, where $v_i\in V$ and $c_i\in C$. Set $V'=\sum_i kv_i$. The counital axiom shows that $v\in V'$, since $v=\sum_i \varepsilon(c_i)v_i$. 
    
    We will now show that $\langle v \rangle \subseteq V'$. Set $W=\Delta_V^{-1}(V'\otimes C)$, in other words, $W$ consists of the elements $w\in V$ such that $\Delta_V(w)\in V'\otimes C$. Clearly $v\in W$, and moreover, as above, the counital axiom also proves that $W\subseteq V'$.

    Now we will prove that $W$ is a $C$-subcomodule of $V$, which will therefore imply that $\langle v \rangle \subseteq W$, proving that $\langle v \rangle \subseteq V'$ as desired. Now, using the flatness of $C$, we see that 
    \[
    W\otimes C=(\Delta_V\otimes \id_C)^{-1}(V'\otimes C \otimes C).
    \]
    To show that $W$ is a subcomodule of $V$ we must show that $\Delta_V(W)\subseteq W\otimes C$. As above, we see that the right hand side is equal to $ (\Delta_V\otimes \id_C)^{-1}(V'\otimes C \otimes C)$  therefore, equivalently we must show that 
    \[
    (\Delta_V\otimes \id_C)(\Delta_V(W))\subseteq V'\otimes C \otimes C.
    \]
      From the comodule axioms, we have that $(\Delta_V\otimes \id_C)\circ \Delta_V=(\id_V\otimes \Delta)\circ \Delta_V$, thus the left hand side is equal to $(\id_V \otimes \Delta)(\Delta_V(W))$, which, from the definition of $W$, is clearly contained in $V'\otimes C \otimes C$, so we are done.

    Thus $\langle v \rangle \subseteq V'$. Furthermore, $v_i\in \langle v \rangle$ for all $i$, and thus $V'=\sum_i kv_i \subseteq \langle v \rangle$. Thus we have shown that $\langle v \rangle = V'$, which is clearly finitely generated.
\end{proof}

\begin{Cor}\label{locallyfinite}
    Let $(C, \Delta, \varepsilon)$ be a $k$--coalgebra which is flat as a $k$-module, and $(V, \Delta_V)$ be any $C$--comodule. Then any finitely generated $k$--submodule of $V$ is contained in a finitely generated ${C}$--subcomodule of $V$.
\end{Cor}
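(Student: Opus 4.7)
The plan is to deduce this essentially formally from the local finiteness proposition just proved. The key observation is that a finite sum of subcomodules is again a subcomodule, and a finite sum of finitely generated $k$-modules is finitely generated.

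More concretely, let $M \subseteq V$ be a finitely generated $k$-submodule, and choose a finite generating set $v_1, \ldots, v_n \in M$ for $M$ as a $k$-module. For each $i$, the local finiteness proposition supplies a $C$-subcomodule $\langle v_i \rangle \subseteq V$ containing $v_i$ which is finitely generated as a $k$-module. Set
\[
W = \langle v_1 \rangle + \langle v_2 \rangle + \cdots + \langle v_n \rangle \subseteq V.
\]
I would then check the two required properties of $W$. First, $W$ is a $C$-subcomodule: for each $i$, the coaction $\Delta_V$ sends $\langle v_i \rangle$ into $\langle v_i \rangle \otimes C \subseteq W \otimes C$, hence $\Delta_V(W) \subseteq W \otimes C$; here one uses that the natural map $\bigoplus_i (\langle v_i \rangle \otimes C) \to W \otimes C$ is surjective, which is automatic from right-exactness of the tensor product. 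Second, $W$ is finitely generated as a $k$-module, being a finite sum of finitely generated $k$-modules. Finally, since each $v_i \in \langle v_i \rangle \subseteq W$, we have $M = \sum_i k v_i \subseteq W$.

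There is really no obstacle here; the content of the statement is entirely carried by the previous proposition, and the corollary is the standard passage from the cyclic case to the finitely generated case via taking sums. The only mildly subtle point, had we not had flatness already in hand for the previous proposition, would be checking that $W \otimes C$ behaves well, but since flatness of $C$ is a blanket hypothesis here and we only need right-exactness of $- \otimes C$ for the sum argument, this is immediate.
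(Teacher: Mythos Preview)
Your proof is correct and is exactly the intended argument: the paper states this corollary without proof precisely because it follows from the preceding local finiteness proposition by taking the sum of the subcomodules $\langle v_i\rangle$ over a finite generating set, just as you do.
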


\subsection*{Sweedler Notation}

 We introduce the \textit{Sweedler notation}, which serves to simplify our notation when dealing with coalgebras and comodules, and makes computations involving the comultiplication map much easier.
 
Whenever we have a coalgebra $C$, the comultiplication map $\Delta$ goes from $C$ to $C\otimes C$, meaning that the image of any $c\in C$ under $\Delta$ is of the form $\sum_i a_i\otimes b_i$ for some $a_i, b_i\in C$. Instead of picking new symbols $a$ and $b$ for the image of $c$, we should really try to reuse the symbol $c$ in some capacity. Something like $\Delta(c)=\sum_i c_{(1)i} \otimes c_{(2)i}$ is an improvement, but for practical purposes, computations only involve linear operations, so we can forget about the summation symbol and the index $i$ entirely and write: 
\[
\Delta(c)=c_{(1)}\otimes c_{(2)},
\]
remembering that the summation over $i$ is implicit in this notation. This is the \textit{Sweedler notation}.

It may not be apparent immediately the utility of this notation, but it will end up greatly simplifying the mess of symbols that appear during more involved computations. The coassociativity axiom becomes much easier to state in this formulation. The coassociativity axiom is that 
\[
(\Delta\otimes \id_C)\circ \Delta=(\id_C\otimes \Delta)\circ \Delta,
\]
 in the standard notation, this means: 
 \[
 \sum_i \Delta(c_{(1)i}) \otimes c_{(2)i}=\sum_i c_{(1)i} \otimes \Delta(c_{(2)i})
 \]
Expanded out, we would write:
 \[
  \sum_i \left(\sum_j (c_{(1)i})_{(1)j} \otimes (c_{(1)i})_{(2)j}\right) \otimes  c_{(2)i}=\sum_i c_{(1)i}\otimes \left(\sum_{j'} (c_{(2)i})_{(1)j'} \otimes (c_{(2)i})_{(2)j'}\right).
 \]
 It is clear that this notation is becoming almost too cumbersome to use. If we now denote $(c_{(i)})_{(j)}$ by $c_{(i)(j)}$, then in Sweedler's notation this equality would read: 
 \[
 c_{(1)(1)} \otimes c_{(1)(2)} \otimes c_{(2)} = c_{(1)} \otimes c_{(2)(1)} \otimes c_{(2)(2)}.
 \]

 Since the left and right side are equal above, we extend the Sweedler notation and simply write both sides of the equality as: 
 \[
 c_{(1)}\otimes c_{(2)} \otimes c_{(3)}.
 \]
Keep in mind that we are not literally saying that $c_{(1)(2)} = c_{(2)(1)}=c_{(2)}$ for example, since everything here is over completely different indexing sets. What we are saying is that we can make appropriate linear replacements. More precisely, if you see an expression in terms of $c_{(1)}\otimes c_{(2)}\otimes c_{(3)}$, you can replace it to be in terms of either of the two desired expressions above, depending on what best suits your current situation.

Similarly to the coassociativity axiom, the axiom for the counit map $\varepsilon: C\to k$ can be written as: 
\[
c=\varepsilon(c_{(1)})c_{(2)}=c_{(1)}\varepsilon(c_{(2)}). 
\]
As another example, let $C'$ be another coalgebra over $k$, then a $k$--linear map $f:C\to C'$ is a map of coalgebras if $(f\otimes f)\circ \Delta =\Delta' \circ f$. In Sweedler notation this can be expressed as the equality: 
\[
f(c_{(1)})\otimes f(c_{(2)}) = f(c)_{(1)}\otimes f(c)_{(2)}.
\]
Note that we are not literally asserting that $f(c_{(1)})=f(c)_{(1)}$ here, the left hand side and the right hand side may be summations over completely different indexing sets. 

The Sweedler notation for comodules is slightly different. Let $V$ be a $C$--comodule and let $v\in V$. Then the Sweedler notation for the coaction is written as: 
\[
\Delta_V(v)=v_{(0)} \otimes v_{(1)}.
\]
 We have used $(0)$ instead of $(1)$ for the first element to signify that it is an element of $V$ and not of $C$, that is, that the elements $v_{(0)i}$ are in $V$ for all $i$. Moreover, note that when using the Sweedler notation the $v_{(j)}$ for $j> 0$ are always elements of $C$.

\pagebreak

\section{Hopf Algebras}\label{Sec: Hopf algebras}

We first recall the definition of a bialgebra:
\begin{Def}
    Let $k$ be a ring. A \textit{bialgebra} $B$ over $k$ is a $k$--module which is both a unital associative algebra and a counital coassociative coalgebra over $k$ in a compatible way. 
    
    To spell this out more precisely: 
    \begin{enumerate}[(i)]
        \item $B$ is equipped with a $k$--linear map $\nabla\colon B\otimes_k B\to B $ called the \textit{multiplication}, and a $k$--linear map $\eta\colon k\to B$ called the \textit{unit}, such that $(B, \nabla, \eta)$ is a unital associative $k$--algebra.
        \item $B$ is equipped with a $k$--linear map $\Delta\colon B\to B \otimes_k B$ called the \textit{comultiplication}, and a $k$--linear map $\varepsilon\colon B\to k$ called the \textit{counit}, such that $(B, \Delta, \varepsilon)$ is a counital coassociative $k$--coalgebra.
        \item These structures are compatible, that is, $\eta$ and $\nabla$ are morphisms of counital coalgebras, and $\varepsilon$ and $\Delta$ are a morphisms of unital algebras. In diagrammatic form this means that the following diagrams commute:
\begin{center}
    \begin{tikzcd}[column sep=small]
        &B\otimes B \arrow[dr, "\varepsilon\otimes \varepsilon"']\arrow[rr, "\nabla"]& &B\arrow[dl, "\varepsilon"] & &k \arrow[rr, "\id"]\arrow[dr, "\eta"']& &k &  &k\otimes k\simeq k \arrow[dr, "\eta"] \arrow[dl, "\eta\otimes\eta"'] &\\
        
        & &k\otimes k\simeq k & & & &B \arrow[ur, "\varepsilon"'] &   &B\otimes B & &B \arrow[ll, "\Delta"]
    \end{tikzcd}
\end{center}
\begin{center}
\begin{tikzcd}
    &B\otimes B \arrow[r, "\nabla"]\arrow[d, "\Delta\otimes\Delta"']&B\arrow[r, "\Delta"] &B\otimes B \\
    &B\otimes B\otimes B\otimes B \arrow[rr, "\id\otimes (12)\otimes \id"']& &B\otimes B\otimes B\otimes B \arrow[u, "\nabla\otimes\nabla"']
\end{tikzcd}
\end{center}
where $(12)(x\otimes y)=y\otimes x$ is the swapping morphism. A $k$--linear morphism between bialgebras is said to be a \textit{bialgebra homomorphism} if it is both a map of algebras and coalgebras.
    \end{enumerate}

From Remark \ref{dual of a coalgebra}, we can see that for any bialgebra $B$, we can always give $\Hom_k(B, B)$ the structure of a $k$--algebra via the convolution product. A \textit{Hopf algebra} is a bialgebra $H$ over $k$, equipped with a $k$--linear map $S\colon H\to H$ called the \textit{antipode} which is both a left and right convolution inverse to the identity map $\id_H\colon H\to H$ in $\Hom_k(H, H)$. In other words, $S$ makes the following diagram commute:
        \begin{center}
            \begin{tikzcd}[column sep=tiny]
                & &H\otimes H \arrow[rr, "S\otimes \id"]& &H\otimes H \arrow[ddr, "\nabla"] & \\
                & & & & & \\
                &H \arrow[ddr, "\Delta"']\arrow[uur, "\Delta"]\arrow[rr, "\varepsilon"]& &k \arrow[rr, "\eta"]& &H \\
                & & & & & \\
                & &H\otimes H \arrow[rr, "\id \otimes S"'] & &H\otimes H \arrow[uur, "\nabla"']& 
            \end{tikzcd}
        \end{center}
       A \textit{morphism of Hopf algebras} is just a morphism of bialgebras which commutes with the antipodes. In fact, any morphism of bialgebras will automatically respect the antipode (see Lemma \ref{bialgebra maps}). We will denote the category of Hopf algebras over $k$ by $\textbf{Hopf}_k$.
 \end{Def}

\begin{Rem}
 Note that in an associative algebra, two sided inverses are necessarily unique, since if $S$ and $S'$ are both two sided inverses to $\id_H$ then: 
    \[
    S = S \ast (\id_H \ast 
 S') = (S \ast \id_H) \ast S' = S'.
    \]
    That is to say that being a Hopf algebra is a property of a bialgebra, rather than a structure.    
\end{Rem}

\begin{Lemma}\label{Properties of the antipode}
        Let $H$ be a Hopf algebra. Then the antipode $S$ is an antimorphism of the algebra and coalgebra structures on $H$, in other words:
        \begin{enumerate}[(i)]
            \item $S$ is an antialgebra morphism: $S(gh)=S(h)S(g)$, and $S(1)=1$.
            \item $S$ is an anticoalgebra morphism: $\Delta(S(h))= S(h_{(2)})\otimes S(h_{(1)})$ and $\varepsilon(S(h))=\varepsilon(h)$.
        \end{enumerate}
\end{Lemma}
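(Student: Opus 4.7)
The plan is to exploit the uniqueness of convolution inverses, already recorded in the remark just above the lemma, applied to two different convolution algebras. In any associative unital algebra, if $a$ has a left inverse $b$ and a right inverse $c$, then $b = b\ast(a\ast c) = (b\ast a)\ast c = c$, so it suffices for each claim to exhibit one candidate as a left inverse and another as a right inverse to the appropriate structure map.

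For part (i), I would work in $\mathrm{Hom}_k(H\otimes H,H)$, where $H\otimes H$ carries its tensor-coalgebra structure from Example \ref{tensor coalgebra} and $H$ its algebra structure; this is a $k$--algebra under convolution with unit $\eta\circ\varepsilon_{H\otimes H} = \eta\circ(\varepsilon\otimes\varepsilon)$, and the multiplication $\nabla\colon H\otimes H\to H$ is an element of it. I would check that $S\circ\nabla$ is a right convolution inverse to $\nabla$: unpacking the convolution and using the antipode identity applied to the element $gh$, together with the fact that $\Delta$ is an algebra map so $\Delta(gh)=g_{(1)}h_{(1)}\otimes g_{(2)}h_{(2)}$, one obtains $\sum g_{(1)}h_{(1)}\,S(g_{(2)}h_{(2)}) = \varepsilon(gh)\cdot 1 = \varepsilon(g)\varepsilon(h)\cdot 1$. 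Next, $\nabla\circ(S\otimes S)\circ\tau$ (with $\tau$ the swap) is a left convolution inverse, since
\[
\sum S(h_{(1)})S(g_{(1)})\,g_{(2)}h_{(2)} \;=\; \sum S(h_{(1)})\,\varepsilon(g)\,h_{(2)} \;=\; \varepsilon(g)\varepsilon(h)\cdot 1,
\]
using the antipode identity for $g$ and then for $h$. Uniqueness then forces $S(gh)=S(h)S(g)$. For $S(1)=1$, apply the antipode axiom to $1\in H$: since $\Delta(1)=1\otimes 1$, it reads $S(1)\cdot 1 = \varepsilon(1)\cdot 1 = 1$.

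For part (ii), I would run the dual argument in the convolution algebra $\mathrm{Hom}_k(H,H\otimes H)$, with $H$ as source coalgebra and $H\otimes H$ as target algebra; the comultiplication $\Delta$ lies in this algebra, and I would show that $\Delta\circ S$ and $\tau\circ(S\otimes S)\circ\Delta$ are respectively a right and a left convolution inverse to $\Delta$, again invoking the two antipode identities together with the bialgebra compatibility that makes $\Delta$ an algebra map. Uniqueness of two-sided inverses then gives the anticoalgebra identity for $\Delta\circ S$. Finally, for $\varepsilon\circ S=\varepsilon$, apply $\varepsilon$ to the antipode identity $\sum S(h_{(1)})h_{(2)}=\varepsilon(h)\cdot 1$: the left side equals $\sum \varepsilon(S(h_{(1)}))\varepsilon(h_{(2)})=\varepsilon\bigl(S(\sum\varepsilon(h_{(2)})h_{(1)})\bigr)=\varepsilon(S(h))$ by the counit axiom, yielding the equality. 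The main obstacle is purely bookkeeping in Sweedler notation, especially the swap $(12)$ in the middle of $\Delta_{H\otimes H}$, which has to be tracked so that the two antipode applications land on the correct factors; there is no conceptual difficulty beyond this.
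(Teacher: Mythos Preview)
Your proposal is correct and follows essentially the same approach as the paper: both work in the convolution algebra $\Hom_k(H\otimes H,H)$ for part (i), exhibiting $S\circ\nabla$ and $\nabla\circ(S\otimes S)\circ\tau$ as one-sided convolution inverses to $\nabla$ and invoking uniqueness. The only cosmetic difference is that the paper checks the opposite pair of one-sided identities (it shows $\varphi\ast\nabla$ and $\nabla\ast\psi$ are the unit, whereas you compute $\nabla\ast\varphi$ and $\psi\ast\nabla$), and the paper leaves (ii) entirely to the reader while you sketch the dual argument in $\Hom_k(H,H\otimes H)$.
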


\begin{proof}
    We will prove only the first point, the second one is similar. 
    
    Recall there is an associative algebra structure on $\Hom_k(H\otimes H, H)$ as given in Example \ref{dual of a coalgebra} by convolution, where $H\otimes H$ has the tensor product coalgebra structure from Example \ref{tensor coalgebra}. The identity element for convolution is the composite map $\eta\circ\varepsilon\circ \nabla$. To show that $S(gh)=S(h)S(g)$ we will show that the maps 
    \[
    \varphi\colon  g\otimes h \mapsto S(gh) \mbox{ and } \psi\colon g\otimes h \mapsto S(h)S(g)
    \]
      are both two sided convolution inverses to the multiplication map $\nabla\colon g\otimes h \mapsto gh$ in $\Hom_k(H\otimes H, H)$. Firstly we multiply $\varphi$ on the right by $\nabla$:
    \begin{align*}
        (\varphi\ast \nabla)(g\otimes h)&=\psi(g_{(1)} \otimes h_{(1)})\nabla(g_{(2)}\otimes h_{(1)}) \\
        &=S(g_{(1)}h_{(1)})g_{(2)}h_{(2)} \\
        &=S((gh)_{(1)})(gh)_{(2)} \\
        &=\eta\varepsilon(gh).
    \end{align*}
Now we calculate $\nabla \ast \psi$:
\begin{align*}
    (\nabla \ast \psi)(g\otimes h)&= g_{(1)}h_{(1)} S(h_{(2)})S(g_{(2)}) \\
    &= g_{(1)}(\eta\varepsilon(h))S(g_{(2)}) \\
    &=g_{(1)}S(g_{(2)})\varepsilon(h) \\
    &=\eta(\varepsilon(g))\varepsilon(h)\\
    &=\eta\varepsilon(gh).
\end{align*}
Then we have $\varphi=\varphi \ast (\nabla \ast\psi)=(\varphi \ast \nabla) \ast\psi =\psi$ as desired. To show that $S(1)=1$, note that $S(1)=(\id_H \ast S)(1)=\eta\varepsilon(1)=1$.
\end{proof}

\begin{Lemma}\label{bialgebra maps}
    Let $H$ and $H'$ be two Hopf algebras with antipodes $S_H$ and $S_{H'}$ respectively. Then for any map of bialgebras $f:H\to H'$,  we have $f \circ S_H = S_{H'}\circ f$.
\end{Lemma}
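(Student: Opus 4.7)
The plan is to exploit the uniqueness of convolution inverses, in direct analogy with the argument used in Lemma \ref{Properties of the antipode}. Specifically, I would consider the convolution algebra structure on $\Hom_k(H, H')$, where $H$ is viewed as a coalgebra and $H'$ as an algebra, with product $(\varphi \ast \psi)(h) = \varphi(h_{(1)})\psi(h_{(2)})$ and identity $\eta_{H'} \circ \varepsilon_H$. The strategy is to show that both $f \circ S_H$ and $S_{H'} \circ f$ are two-sided convolution inverses of $f$ in this algebra; since inverses in an associative algebra are unique, they must be equal.

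First I would verify that $f \circ S_H$ is a convolution inverse of $f$. Computing in Sweedler notation:
\[
\bigl(f \ast (f \circ S_H)\bigr)(h) = f(h_{(1)}) \, f(S_H(h_{(2)})) = f\bigl(h_{(1)} S_H(h_{(2)})\bigr) = f\bigl(\eta_H \varepsilon_H(h)\bigr) = \eta_{H'}\varepsilon_H(h),
\]
where the second equality uses that $f$ is an algebra map, the third uses the antipode axiom for $H$, and the last uses that $f$ is a unital algebra map, so $f \circ \eta_H = \eta_{H'}$. The computation on the other side is symmetric.

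Next I would check that $S_{H'} \circ f$ is also a convolution inverse of $f$. Here the crucial input is that $f$ is a coalgebra morphism, so that $f(h)_{(1)} \otimes f(h)_{(2)} = f(h_{(1)}) \otimes f(h_{(2)})$ and $\varepsilon_{H'} \circ f = \varepsilon_H$. Then
\[
\bigl(f \ast (S_{H'} \circ f)\bigr)(h) = f(h_{(1)}) \, S_{H'}(f(h_{(2)})) = f(h)_{(1)} \, S_{H'}(f(h)_{(2)}) = \eta_{H'}\varepsilon_{H'}(f(h)) = \eta_{H'}\varepsilon_H(h),
\]
using the antipode axiom for $H'$ and then $\varepsilon_{H'} \circ f = \varepsilon_H$. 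Again the other side is analogous.

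Having established that both $f \circ S_H$ and $S_{H'} \circ f$ are two-sided convolution inverses of $f$, the uniqueness argument recalled in the remark just before Lemma \ref{Properties of the antipode} forces $f \circ S_H = S_{H'} \circ f$. There is no real obstacle here; the only subtlety is making sure to invoke the algebra-morphism property of $f$ in one computation and the coalgebra-morphism property in the other, which is precisely why a bialgebra morphism automatically commutes with antipodes even though no compatibility with $S$ is assumed a priori.
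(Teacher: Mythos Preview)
Your proposal is correct and follows essentially the same approach as the paper: both arguments work in the convolution algebra $\Hom_k(H,H')$ and show that $f\circ S_H$ and $S_{H'}\circ f$ are convolution inverses of $f$. The only cosmetic difference is that the paper checks one-sided inverses (showing $f\circ S_H$ is a left inverse and $S_{H'}\circ f$ a right inverse) and then applies the associativity trick directly, whereas you verify both sides for each; the content is the same.
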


\begin{proof}
    As in the previous lemma, we will prove this equality by showing that $f \circ S_H$ and $ S_{H'}\circ f$ are both left and right convolution inverse to $f$ in $\Hom_k(H, H')$, so that they must be equal. Recall that the identity object of $\Hom_k(H, H')$ under convolution is the map $H\xrightarrow[]{\varepsilon} k \xrightarrow[]{\eta'} H'$. Since $f$ is a map of coalgebras we have: 
    \[
    f \ast (S_{H'} \circ f)=(\id_{H'} \ast S_{H'}) \circ f =  \eta' \circ \varepsilon' \circ f = \eta' \circ \varepsilon.
    \]
    Similarly, since $f$ is a map of algebras we have: 
    \[
    (f \circ S_H) \ast f = f \circ (S_{H} \ast \id_H) = f \circ \eta \circ \varepsilon = \eta' \circ \varepsilon.
    \]
    Now repeating the standard argument using the associativity of $\ast$: 
    \[
    (f \circ S_H)=(f \circ S_H) \ast (f \ast (S_{H'} \circ f))=((f \circ S_H) \ast f) \ast (S_{H'} \circ f) = (S_{H'} \circ f).
    \]
    In particular, $f$ is morphism of Hopf algebras.   
\end{proof}

\begin{Rem}\label{dual of a Hopf algebra}
   Note that if $H$ is a Hopf algebra, then the counit $\varepsilon\colon  H \to k$ is an augmentation. As observed in Remark \ref{dual of a coalgebra}, if $H$ is a Hopf algebra, then $H^\ast$ is an algebra. Furthermore, even when $H$ is not finitely generated and projective, the dual $H^\ast$ is still an augmented algebra - though it may lack a comultiplication. In this case, the augmentation is induced from the unit by dualising $\eta^\ast\colon  H^\ast \to k^\ast$ and composing with the natural isomorphism $k^\ast \simeq k$. More explicitly, the augmentation
    \[
    H^*\to k \mbox{ is defined by } f\mapsto f(1). 
    \]
     In the case where $H$ is finitely generated and projective over $k$, we saw in Remark \ref{dual of a coalgebra} that $\nabla^*$ indeed induces a comultiplication on  $H^\ast$, where the above augmentation is the counit, making $H^\ast$ into a $k$--bialgebra. Moreover, it is easy to see that $S^*$ is actually an antipode for $H^\ast$, turning $H^\ast$ into a Hopf algebra.
\end{Rem}

\begin{Ex}
    Let $G$ be a group. Recall that the \textit{group algebra} $kG$ is the free $k$--module with basis $\{e_g\}_{g\in G}$ indexed by the elements of $G$, and $k$--algebra structure induced by the group structure of $G$, that is, $e_ge_h=e_{gh}$. One easily sees that the element $e_1$ is the identity element of $kG$. This is actually a Hopf algebra, with comultiplication 
    \[
    \Delta\colon kG\to kG\otimes kG \mbox{ defined by } \Delta(e_g)=e_g\otimes e_g,
    \]
    antipode $S(e_g)=e_{g^{-1}}$, and counit $\varepsilon(e_g)=1$. As we have seen in Remark \ref{dual of a coalgebra}, its dual $kG^*$, which we will denote by $k[G]$, will naturally have the structure of an augmented algebra with multiplication given by convolution. We know that $k[G]$ contains the distinguished set of linear functionals $\{f_g\}_{g\in G}$ with $f_g(e_h)=0$ for $h\neq g$ and $f_g(e_g)=1$, which will not be a basis for $k[G]$ in general. To see how the augmented algebra structure acts on this distinguished set of functionals, we compute 
    \[
    (f_g \ast f_h)(e_\ell) = (f_g\otimes f_h)(\Delta(e_\ell))=f_g(e_\ell)f_h(e_\ell),
    \]
    therefore, for  $g\neq h$ we have $f_g\ast f_h =0$, and $f_g\ast f_g = f_g$. The augmentation is just the dualisation of the unit map in $kG$, so we see that
\[
\eta^*(f_g)=f_g(e_1)=\begin{cases} 1 \text{ if $g=1$}, \\
    0 \text{ otherwise}.
    \end{cases}
\]
    In the case where $G$ is a finite group, then $kG$ is a finite free module of rank $|G|$, and this collection $\{f_g\}_{g\in G}$ actually form a basis for $k[G]$. As in Remark \ref{dual of a Hopf algebra} we see that $k[G]$ inherits the full structure of a Hopf algebra, and the augmentation defined above becomes the counit. Similarly to the above calculation, we see that the comultiplication is defined by 
    \[
    \nabla^*(f_g)=\sum_{g', g'' | g'g''=g}f_{g'}\otimes f_{g''}.
    \]
    And lastly, the antipode for $k[G]$ is given by the dual of the antipode of $kG$, that is $S^*(f_g)=f_{g^{-1}}$. In this case, $k[G]$ is also often known as the \textit{coordinate algebra}, for reasons we will see in the next section.
\end{Ex}

\begin{Def}
    A right (resp. left) \textit{Hopf module} $M$ over a Hopf algebra $H$ is a $k$--module which is both a right (resp. left) $H$--module and a right (resp. left) $H$--comodule such that the coaction $\Delta_M$ is a morphism of right (resp. left) $H$--modules. More precisely, this means that for $m\in M$ with we have 
    \[
    \Delta_M(mh)=m_{(0)}h_{(1)}\otimes m_{(1)}h_{(2)}.
    \]
\end{Def}

\begin{Def}\label{def:coinvariants}
We will now define invariants and coinvariants:
\begin{enumerate}[(i)]
    \item Let $M$ be a left module over an augmented $k$--algebra $A$ with augmentation map $\varepsilon\colon A\to k$. We define its $k$--module of \textit{invariants} by 
    \[
    M^A\coloneqq\{m\in M\mid am=\varepsilon(a)m \text{ for all }a\in A\}.
    \]
    In the case $M=A$, elements of $A^A$ are called \textit{left integrals} of $A$. It is easy to see that $A^A$ is stable under right multiplication from $A$ using the associativity of multiplication, actually even more is true, see Lemma \ref{right mult}. Invariants for a right $A$--module are defined in a similar way.

    \item    Let $M$ be a right comodule over a $k$--coalgebra $C$. We define its $k$--module of \textit{coinvariants} by 
    \[
    M^{\text{co}C}\coloneqq \{m\in M \mid \Delta_M(m)=m\otimes 1_C\}.
    \]
    Moreover, for a map $f\colon M\to N$ of right $C$--comodules, we define $f^{\text{co}C}$ as the restriction of $f$ to $M^{\text{co}C}$. It is easy to see that the image of $f^{\text{co}C}$ is contained in $N^{\text{co}C}$, and thus that $(-)^{\text{co}C}$ is a functor from the category of right $C$--comodules to the category of $k$--modules. Furthermore, it is easy to see that the coinvariants form a subcomodule of $M$.
\end{enumerate}
\end{Def}

\begin{Lemma}\label{right mult}
    Let $M$ be a left module over an augmented algebra $A$ , then $A^A \cdot M \subseteq M^{A}$.
\end{Lemma}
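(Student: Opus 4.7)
The plan is to verify the defining condition of $M^A$ directly on elements of the form $xm$ with $x \in A^A$ and $m \in M$, using associativity of the module action together with the defining property of $A^A$. Concretely, I need to show that for every $b \in A$, $b(xm) = \varepsilon(b)\,(xm)$.

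First, I would unpack the hypothesis $x \in A^A$: by definition this means $bx = \varepsilon(b)\,x$ for every $b \in A$ (where we view $A$ as a left $A$-module over itself by multiplication). Then for arbitrary $m \in M$ and $b \in A$, associativity of the module action gives $b(xm) = (bx)\,m$, and substituting the integral relation gives $(bx)\,m = (\varepsilon(b)\,x)\,m = \varepsilon(b)\,(xm)$, since $\varepsilon(b) \in k$ and the action is $k$-linear. Chaining these equalities shows $xm \in M^A$.

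There is really no obstacle here: the statement reduces to a two-line calculation combining associativity with the definition of a left integral. The only subtlety worth flagging is keeping straight that the condition defining $A^A$ is applied with respect to the left regular action of $A$ on itself, and that this is precisely what converts the inner multiplication $bx$ into the scalar $\varepsilon(b)$ that we need to pull outside the action on $m$.
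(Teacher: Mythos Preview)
Your proof is correct and follows essentially the same approach as the paper: both use associativity of the module action together with the defining property $bx = \varepsilon(b)x$ of a left integral to conclude $b(xm) = \varepsilon(b)(xm)$. Your write-up is in fact cleaner than the paper's, which has some confusing variable reuse.
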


\begin{proof}
Let $x\in A^A$ and $m\in M$ be arbitrary. We want to show that $x \cdot (a \cdot m)=\varepsilon(x)(a\cdot m)$ for all $x\in A$. Since the multiplication is associative, we have 
\[
x \cdot (a \cdot m)=(x \cdot a) \cdot m = (\varepsilon(x)a)\cdot m = \varepsilon(x)(a\cdot m)
\]
where we have used the fact that $a\in A^A$ in the second equality.
\end{proof}

\begin{Lemma}\label{invariants and coinvariants} Let $M$ be a right comodule over $H$. Then, viewing $M$ as a left module over $H^*$ via Remark \ref{comodC embedding into modC*}, we have $M^{\text{co}H}\subseteq M^{H^*}$. If in addition $H$ is finitely generated and projective over $k$, then $M^{\text{co}H}= M^{H^*}$.
\end{Lemma}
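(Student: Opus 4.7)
The plan is to unpack both sides using Sweedler notation to obtain the forward containment directly, then deploy the dual basis lemma to reverse it when $H$ is finitely generated and projective. First I would recall from Remark \ref{comodC embedding into modC*} that the induced $H^*$-action on $M$ is $f\cdot m = m_{(0)}\, f(m_{(1)})$, and from Remark \ref{dual of a coalgebra} that the augmentation of $H^*$ is $\varepsilon_{H^*}(f) = f(1_H)$.

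For the inclusion $M^{\text{co}H} \subseteq M^{H^*}$, take $m \in M^{\text{co}H}$, so that $\Delta_M(m) = m \otimes 1_H$. Then for every $f \in H^*$,
\[
f \cdot m = (\id_M \otimes f)(m \otimes 1_H) = f(1_H)\, m = \varepsilon_{H^*}(f)\, m,
\]
so $m \in M^{H^*}$.

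For the reverse inclusion assuming $H$ is finitely generated and projective, take $m \in M^{H^*}$ and set $z := \Delta_M(m) - m \otimes 1_H \in M \otimes H$. For any $f \in H^*$,
\[
(\id_M \otimes f)(z) = f \cdot m - f(1_H)\, m = \varepsilon_{H^*}(f)\, m - \varepsilon_{H^*}(f)\, m = 0.
\]
The main obstacle is passing from ``$z$ is killed by every $\id_M \otimes f$'' to $z = 0$, and this is exactly where the finitely generated projective hypothesis enters: it supplies a dual basis $\{(h_i, f_i)\}_{i=1}^n \subset H \times H^*$ satisfying $h = \sum_i f_i(h)\, h_i$ for all $h \in H$. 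Tensoring this identity with $\id_M$ gives the reconstruction formula $w = \sum_i (\id_M \otimes f_i)(w) \otimes h_i$ for any $w \in M \otimes H$. Applied to our $z$, every summand vanishes, so $z = 0$ and $\Delta_M(m) = m \otimes 1_H$, placing $m$ in $M^{\text{co}H}$. The only genuine step is this dual basis argument; everything else is immediate from the definitions.
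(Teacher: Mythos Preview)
Your proof is correct and follows essentially the same approach as the paper: both arguments reduce the reverse inclusion to the dual basis lemma for finitely generated projective modules. The paper organizes the computation slightly differently, invoking the coevaluation map from the dualisability of $H$ to write the coaction explicitly as $\Delta_M(m)=\sum_i(\tau_i\cdot m)\otimes h_i$ and then evaluating, whereas you package the same identity as a reconstruction formula $w=\sum_i(\id_M\otimes f_i)(w)\otimes h_i$ applied to the difference $z=\Delta_M(m)-m\otimes 1_H$; your version is a bit more streamlined but the mathematical content is identical.
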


\begin{proof}
For the first claim, fix some $m\in M^{\text{co}H}$ and let $f\in H^*$ be arbitrary. Recall that the augmentation map on $H^*$ is defined by $f\mapsto f(1_H)$. Let $\mu$ denote the multiplication map $\mu\colon M\otimes k \to M$. We want to show that $m\cdot f =m f(1_H)$. By definition of the induced module action we have 
\[
m\cdot f=\mu((\id_M\otimes f)(\Delta_M(m)))=\mu((\id_M\otimes f)(m\otimes 1_H))=\mu(m\otimes f(1_H))=mf(1_H),
\]
which proves the first claim.

For the second claim: Since $H$ is projective it admits a basis, that is, a pair $((h_i)_{i\in I}, (\tau_i)_{i\in I})$ with $h_i\in H$ and $\tau_i\in H^*$ such that any $h\in H$ can be written as $h=\sum_i \tau_i(h)h_i$, with $\tau_i(h)=0$ for almost all $i$. Moreover, since $H$ is finitely generated and projective, so is $H^*$, and in particular it admits a dual basis of the form $((\tau_i)_{i\in I}, (c_{h_i})_{i\in I})$, where the maps $c_{h_i} \in (H^*)^*$ are the evaluation maps at the various $h_i$, that is $c_{h_i}(f)=f(h_i)$. Moreover, the map 
\[
\Psi\colon  (H^*)^*\to H, \quad f \mapsto \sum_i f(\tau_i)h_i
\]
 is an isomorphism, and sends $c_{h_j}$ to $h_j$, for all $j$. There is now a coevaluation map
 \[
 \text{coev}:k\to H^*\otimes (H^*)^* \simeq H^* \otimes H, \quad r \mapsto r\sum_i \tau_i\otimes h_i
 \]
 Theorem \ref{equivalence} gives an alternative characterisation of the coaction $\Delta_M\colon M\to M\otimes H$ in terms of the action $\lambda_M\colon H^*\otimes M\to M$. To get the coaction from the action, we consider the composite: 
 \[
\Delta_M\colon  M\simeq M\otimes k\xrightarrow[]{\id_M\otimes \text{coev}} M \otimes H^*\otimes H \xrightarrow[]{\lambda_M \otimes \id_H} M\otimes H
 \]
Now for $m\in M$ we may write $\Delta_M(m)$ as its image under this composite: 
\[\Delta_M(m)=\sum_i (\tau_i \cdot m) \otimes h_i\]
Now let $m\in M^{H^*}$, that is $m \cdot f=mf(1_H)$ for all $f\in H^*$. Thus we see that 
\[
\Delta(m)=\sum_i (\tau_i \cdot m) \otimes h_i = \sum_i (\tau_i(1_H)m) \otimes h_i =\sum_i m \otimes \tau_i(1_H) h_i =m\otimes 1
\]
where we have used the fact that $1_H=\sum_i \tau_i(1_H)h_i$.
\end{proof}

\begin{Lemma}\label{H* is a right Hopf Module}
    If $H$ is a finitely generated projective Hopf algebra, then $H^*$ is a right Hopf module over $H$.
\end{Lemma}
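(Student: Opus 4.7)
The plan is to exhibit $H^{\ast}$ with compatible right $H$-module and right $H$-comodule structures, and then verify the Hopf module compatibility.

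For the module structure, I would define, for $f \in H^{\ast}$ and $h \in H$, the element $f \cdot h \in H^{\ast}$ by the formula $(f \cdot h)(x) = f(x S(h))$. The antipode is essential here: it forces this to be a genuine \emph{right} action, since by Lemma \ref{Properties of the antipode} we have $S(hh') = S(h')S(h)$, and hence
\[((f \cdot h) \cdot h')(x) = f(x S(h') S(h)) = f(x S(hh')) = (f \cdot (hh'))(x).\]
For the comodule structure, I would invoke Theorem \ref{equivalence}: since $H$ is finitely generated and projective, right $H$-comodules are the same as left $H^{\ast}$-modules. By Example \ref{dual of a coalgebra}, $H^{\ast}$ is an algebra under convolution, hence carries its canonical left regular $H^{\ast}$-module structure, and the equivalence translates this into a right $H$-coaction $\Delta_{H^{\ast}} \colon H^{\ast} \to H^{\ast} \otimes H$. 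Picking a dual basis $\bigl((h_i),(\tau_i)\bigr)$ of the finitely generated projective module $H$, the explicit formula (read off as in the proof of Lemma \ref{invariants and coinvariants}) is $\Delta_{H^{\ast}}(f) = \sum_i (\tau_i \ast f) \otimes h_i$.

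The substantive work is to verify the compatibility $\Delta_{H^{\ast}}(f \cdot h) = f_{(0)} \cdot h_{(1)} \otimes f_{(1)} h_{(2)}$. My approach is to pair both sides with an arbitrary $x \in H$ in the first tensor slot, thereby reducing the identity to one in $H$. Using the convolution formula $(g \ast g')(x) = g(x_{(1)}) g'(x_{(2)})$ and the dual basis identity $\sum_i \tau_i(a) h_i = a$, the left-hand side quickly collapses to $x_{(1)} f(x_{(2)} S(h))$. For the right-hand side, one expands $(\tau_i \ast f) \cdot h_{(1)}$ at $x$, applies the anticoalgebra property $\Delta(S(y)) = S(y_{(2)}) \otimes S(y_{(1)})$ from Lemma \ref{Properties of the antipode} together with coassociativity to turn $\Delta$ applied twice to $h$ into three-fold Sweedler components $h_{(1)} \otimes h_{(2)} \otimes h_{(3)}$, and then rewrites the dual basis sum to arrive at $x_{(1)} S(h_{(2)}) h_{(3)} \cdot f(x_{(2)} S(h_{(1)}))$. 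The antipode relation $S(y_{(1)}) y_{(2)} = \varepsilon(y) 1$ collapses $S(h_{(2)}) h_{(3)}$ to $\varepsilon(h_{(2)}) 1$, and the counit axiom $h_{(1)} \varepsilon(h_{(2)}) = h$ then recovers exactly $x_{(1)} f(x_{(2)} S(h))$, matching the left-hand side.

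The main obstacle is purely bookkeeping: tracking Sweedler indices through repeated applications of coassociativity, of $\Delta \circ S$, and of the antipode relation. No idea beyond the Hopf algebra axioms themselves is required, but the calculation must be carried out carefully enough that the required cancellations are visible.
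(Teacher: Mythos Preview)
Your proposal is correct and follows exactly the same approach as the paper: the paper defines the right $H$-module structure by $(f\cdot h)(a)=f(aS(h))$, obtains the right $H$-comodule structure from the left $H^{\ast}$-module structure via Theorem \ref{equivalence}, and then simply asserts that ``one checks'' the Hopf module compatibility. Your proposal actually carries out that compatibility check in detail, so it is a strict elaboration of the paper's proof rather than a different argument.
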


\begin{proof}
As we have seen in Remark \ref{dual of a coalgebra}, the dual of a coalgebra can be given the structure of an algebra. In particular, this means that $H^*$ has a product structure $H^*\otimes H^* \to H^*$ given by convolution, making $H^*$ into a left $H^*$--module via left multiplication. Since $H$ is finitely generated and projective over $k$ we may apply Theorem \ref{equivalence}, which gives $H^*$ the structure of a right $H$--comodule. 

Moreover, we can make $H^*$ into a right $H$--module in the following way: for $h\in H$ and $f\in H^*$ we define a map $f \cdot h\in H^*$ by $(f\cdot h)(a)=f(aS(h))$ for $a\in H$. One may check that this indeed gives $H^*$ a right $H$--module structure.

Now one checks that this right $H$--comodule structure and right $H$--module structure on $H^*$ are compatible in such a way that makes $H^*$ a right $H$-Hopf module as desired.
\end{proof}

\begin{Lemma}\label{summand}
    Let $M$ be a right Hopf module over $H$. Then as $k$--modules $M^{\text{co}H}$ is a direct summand of $M$.
\end{Lemma}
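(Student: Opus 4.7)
The plan is to exhibit an explicit $k$-linear retraction of the inclusion $M^{\text{co}H}\hookrightarrow M$. Guided by the Fundamental Theorem of Hopf Modules, the right candidate is
\[
\phi\colon M\to M,\qquad \phi(m):= m_{(0)}\cdot S(m_{(1)}),
\]
in Sweedler notation for the coaction $\Delta_M(m)=m_{(0)}\otimes m_{(1)}$. The proof then reduces to two claims: (a) $\phi(m)\in M^{\text{co}H}$ for every $m\in M$, and (b) $\phi|_{M^{\text{co}H}}=\mathrm{id}$. Together these exhibit $\phi$ as a $k$-linear idempotent on $M$ with image exactly $M^{\text{co}H}$, yielding the $k$-module decomposition $M=M^{\text{co}H}\oplus\ker(\phi)$.

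Claim (b) is immediate: if $n\in M^{\text{co}H}$, then $\Delta_M(n)=n\otimes 1_H$, so $\phi(n)=n\cdot S(1_H)=n\cdot 1_H=n$, using the identity $S(1_H)=1_H$ from Lemma~\ref{Properties of the antipode}.

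The substance of the argument is claim (a). Applying the Hopf-module compatibility $\Delta_M(n\cdot h)=n_{(0)}h_{(1)}\otimes n_{(1)}h_{(2)}$ with $n=m_{(0)}$ and $h=S(m_{(1)})$, and then substituting the anti-coalgebra identity $\Delta(S(h))=S(h_{(2)})\otimes S(h_{(1)})$ from Lemma~\ref{Properties of the antipode}, I obtain
\[
\Delta_M(\phi(m))=(m_{(0)})_{(0)}\cdot S\bigl((m_{(1)})_{(2)}\bigr)\otimes(m_{(0)})_{(1)}\cdot S\bigl((m_{(1)})_{(1)}\bigr).
\]
Coassociativity of the coaction $(\Delta_M\otimes\mathrm{id}_H)\circ\Delta_M=(\mathrm{id}_M\otimes\Delta)\circ\Delta_M$, together with coassociativity of $\Delta$, then lets me rewrite this four-fold Sweedler expression as $m_{(0)}\cdot S(h_{(3)})\otimes h_{(1)}\cdot S(h_{(2)})$, where $h:=m_{(1)}$ and $h_{(1)}\otimes h_{(2)}\otimes h_{(3)}$ is the iterated Sweedler notation for $(\Delta\otimes\mathrm{id})\Delta(h)=(\mathrm{id}\otimes\Delta)\Delta(h)$. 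Now the antipode axiom $h_{(1)}S(h_{(2)})=\varepsilon(h)\cdot 1_H$ combined with the counit axiom yields the identity $h_{(1)}S(h_{(2)})\otimes h_{(3)}=1_H\otimes h$ in $H\otimes H$; feeding this into the right tensor factor of the displayed expression collapses it to $1_H$ while reconstituting $S(m_{(1)})$ on the left, giving $\Delta_M(\phi(m))=\phi(m)\otimes 1_H$ as required.

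The main obstacle is the Sweedler bookkeeping in claim (a): one has to track several iterated indices through applications of $\Delta_M$ and $\Delta$, and invoke coassociativity in precisely the right form so that the antipode axiom lands on the correct tensor slot. Conceptually, $\phi=\mu_M\circ(\mathrm{id}_M\otimes S)\circ\Delta_M$ is engineered as \emph{coact, then contract via the antipode}, so that the antipode axiom is exactly what forces the output to be coaction-invariant, while the counit axiom restores the original coalgebra structure on the surviving factor.
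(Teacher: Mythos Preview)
Your proposal is correct and follows essentially the same approach as the paper: both define the retraction $\phi(m)=m_{(0)}\cdot S(m_{(1)})$, verify it restricts to the identity on $M^{\text{co}H}$ via $S(1_H)=1_H$, and prove the image lands in $M^{\text{co}H}$ by expanding $\Delta_M(\phi(m))$ using the Hopf-module compatibility, the anticoalgebra property of $S$, coassociativity, the antipode axiom, and the counit axiom. The paper spells out the Sweedler bookkeeping line by line, whereas you package it via the identity $h_{(1)}S(h_{(2)})\otimes h_{(3)}=1_H\otimes h$, but the arguments are the same in substance.
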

 \begin{proof}
There is a map $\varphi: M\to M^{\text{co}H}$ defined by $m\mapsto  m_{(0)}S(m_{(1)})$. This map is easily seen to be a $k$--linear retraction of the inclusion $M^{\text{co}H}\hookrightarrow M$.  To see that the image of this map lands in $M^{\text{co}H}$, we calculate the coaction on an element in the image of this map and see that it is coinvariant:
\begin{align*}
    (\Delta_M \circ \varphi)(m)&= \Delta_M(m_{(0)}S(m_{(1)})) \\
    &=m_{(0)(0)}S(m_{(1)})_{(1)} \otimes m_{(0)(1)}S(m_{(1)})_{(2)} &\text{($M$ is a Hopf Module)}\\ 
    &=m_{(0)(0)}S(m_{(1)(2)}) \otimes m_{(0)(1)}S(m_{(1)(1)}) &\text{(Lemma \ref{Properties of the antipode}(ii))}\\
    &=m_{(0)(0)(0)}S(m_{(1)})\otimes m_{(0)(0)(1)}S(m_{(0)(1)}) &\text{(coassociativity of $\Delta$)} \\
    &=m_{(0)(0)}S(m_{(1)}) \otimes m_{(0)(1)(0)}S(m_{(0)(1)(1)}) &\text{(coassociativity of $\Delta$)}\\
    &=m_{(0)(0)}S(m_{(1)}) \otimes \eta \varepsilon(m_{(0)(1)}) &\text{(definition of $S$)}\\
    &=m_{(0)(0)}\varepsilon(m_{(0)(1)}) S(m_{(1)}) \otimes 1 \\
    &=m_{(0)}S(m_{(1)}) \otimes 1. &\text{(counit axiom for comodules)}
\end{align*}

Where we have repeatedly made use of the coassocaitivity axiom for the coaction. To explain the agonizing steps, keep in mind that the coaction is happening twice, so in each line we are making one of the following replacements 
\begin{equation*}
    m_{(0)(0)}\otimes m_{(0)(1)} \otimes m_{(1)(1)} \otimes m_{(1)(2)} \mapsto m_{(0)(0)(0)}\otimes m_{(0)(0)(1)} \otimes m_{(0)(1)} \otimes m_{(1)}
\end{equation*}
for line 3 to line 4, and the replacement 
\begin{equation*}    
m_{(0)(0)(0)}\otimes m_{(0)(0)(1)} \otimes m_{(0)(1)} \otimes m_{(1)} \mapsto m_{(0)(0)}\otimes m_{(0)(1)(0)} \otimes m_{(0)(1)(1)} \otimes m_{(1)}
\end{equation*}
for line 4 to line 5.
 \end{proof}

 \begin{Rem}
     In the previous proof, we could have simplified the notation by using extended Sweedler notation, 
     but we chose to spell out the steps in detail at least once for clarity.
 \end{Rem}

\begin{Lemma}\label{tensor identity}
    Let $M$ be a right Hopf module over $H$. Then $M\simeq M^{\text{co}H}\otimes H$ as right Hopf modules, where $ M^{\text{co}H}\otimes H$ has the trivial Hopf module structure.
\end{Lemma}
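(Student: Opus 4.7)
\emph{Plan of proof.} The idea is the classical ``fundamental theorem of Hopf modules'': produce the isomorphism $\alpha \colon M^{\mathrm{co}H} \otimes H \to M$ given by the action, namely
\[
\alpha(n \otimes h) = n \cdot h,
\]
and construct an inverse using the projection $\varphi \colon M \to M^{\mathrm{co}H}$, $\varphi(m) = m_{(0)} S(m_{(1)})$, from Lemma \ref{summand}. Specifically, set
\[
\beta \colon M \to M^{\mathrm{co}H} \otimes H, \qquad \beta(m) = \varphi(m_{(0)}) \otimes m_{(1)} = m_{(0)(0)} S(m_{(0)(1)}) \otimes m_{(1)}.
\]
That $\beta$ lands in $M^{\mathrm{co}H}\otimes H$ is exactly the content of Lemma \ref{summand}, since its first tensor factor is in the image of $\varphi$.

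Next I would check that $\alpha$ and $\beta$ are mutually inverse. For $\alpha \circ \beta = \mathrm{id}_M$, apply coassociativity of the coaction to rewrite $\beta(m)$ in the extended Sweedler form $m_{(0)} S(m_{(1)}) \otimes m_{(2)}$, so that
\[
(\alpha \circ \beta)(m) = m_{(0)} \bigl( S(m_{(1)}) \, m_{(2)} \bigr) = m_{(0)} \, \eta\varepsilon(m_{(1)}) = m_{(0)} \varepsilon(m_{(1)}) = m,
\]
using the antipode axiom for $H$ and the right counit axiom for comodules. For $\beta \circ \alpha = \mathrm{id}$, take $n \in M^{\mathrm{co}H}$ and $h \in H$; the Hopf module axiom combined with $\Delta_M(n) = n \otimes 1_H$ gives $\Delta_M(nh) = n h_{(1)} \otimes h_{(2)}$, whence
\[
\beta(nh) = n h_{(1)} S(h_{(2)}) \otimes h_{(3)} = n \otimes \varepsilon(h_{(1)}) h_{(2)} = n \otimes h,
\]
again by the antipode and counit axioms of $H$.

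Finally, I would verify that $\alpha$ is a morphism of right Hopf modules. Right $H$-linearity is immediate from associativity of the action: $\alpha((n\otimes h)\cdot h') = n(hh') = (nh)h'$. For right $H$-colinearity, the trivial Hopf module structure on $M^{\mathrm{co}H} \otimes H$ is given by $\mathrm{id} \otimes \Delta$, and the Hopf module axiom on $M$ together with $n \in M^{\mathrm{co}H}$ yields
\[
\Delta_M(\alpha(n \otimes h)) = \Delta_M(nh) = nh_{(1)} \otimes h_{(2)} = (\alpha \otimes \mathrm{id}_H)(n \otimes h_{(1)} \otimes h_{(2)}),
\]
matching the trivial coaction. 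Thus $\alpha$ is an isomorphism of right Hopf modules with inverse $\beta$.

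The mild obstacle is purely notational: the computation for $\beta \circ \alpha$ requires one to expand $\Delta_M$ twice and to recognize that $\sum h_{(1)} S(h_{(2)}) \otimes h_{(3)}$ equals $1_H \otimes h$, which follows from coassociativity reindexed as $\sum h_{(1)(1)} S(h_{(1)(2)}) \otimes h_{(2)}$ together with the antipode and counit axioms for $H$. No deeper ideas are required; all the heavy lifting has already been done in Lemmas \ref{Properties of the antipode} and \ref{summand}.
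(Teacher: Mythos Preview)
Your proof is correct and follows essentially the same approach as the paper: the paper's maps $\rho$ and $\theta$ are exactly your $\alpha$ and $\beta$, and the verifications (Hopf module morphism, mutual inverses via the antipode and counit axioms) are carried out in the same way, only in a slightly different order.
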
 

\begin{proof}
    We view $M^{\text{co}H}\otimes H$ as the Hopf module with action $(m\otimes h)\cdot h'=m\otimes hh'$ and coaction given by $\Delta'\coloneqq\id_M\otimes \Delta$, one checks that this makes $M^{\text{co}H}\otimes H$ into a Hopf module over $H$. We will show that the map
    \[
    \rho\colon M^{\text{co}H}\otimes H\to M, \quad m\otimes h\mapsto mh
    \]
    is an isomorphism of Hopf modules. In particular, that is it an isomorphism of right $H$--modules and right $H$--comodules. The map $\rho$ is a morphism of right $H$--modules since 
    \[
    \rho(m\otimes h) \cdot h'=mhh'= \rho((m\otimes h)\cdot h')).
    \]
    To check that the map $\rho$ is a morphism of right $H$--comodules we must check that $\Delta_M\circ \rho=(\rho\otimes \id_H)\circ\Delta'$. We have:
    \begin{align*}
        (\Delta_M\circ \rho) (m\otimes h) &= \Delta_M(mh) \\
        &= mh_{(1)} \otimes h_{(2)} \\
        &=(\rho \otimes 
        \id_H)(  m 
        \otimes h_{(1)} \otimes h_{(2)}) \\
        &=((\rho \otimes 
        id_H) \circ \Delta')(m\otimes h).
    \end{align*}
    Where in the second equality we have used that fact that $\Delta_M(m)=m\otimes 1$ and that $\Delta_M$ is a morphism of right $H$--comodules. We have seen that the map 
    \[
    \varphi\colon M\to M, \quad m\mapsto m_{(0)}S(m_{(1)})
    \]
    from  Lemma \ref{summand} has image contained in $M^{\text{co}H}$. Then we now define a map $\theta\colon M\to M^{\text{co}H}\otimes H$ by the composite: 
    \[
    M\xrightarrow{\Delta_M}{} M\otimes H \xrightarrow{\varphi \otimes \id_H}{} M^{\text{co}H}\otimes H.
    \]
    That is $\theta(m)=m_{(0)}S(m_{(1)})\otimes m_{(2)}$. Similarly, we can easily check that $\theta$ is a map of both right $H$--modules and right $H$--comodules. Moreover, these two maps $\theta$ and $\rho$ are $H$--linear inverses to each other. Indeed,  we calculate: 
    \begin{align*}
       (\theta\circ\rho)(m\otimes h) &= \theta(mh) \\
       &=\theta(m)\cdot h \\
       &=(m\otimes S(1)\otimes 1) \cdot h \\
       &=m\otimes h.
    \end{align*}
    Here we have used the fact that $\theta$ is $H$--linear, and that $S(1)=1$ from Lemma \ref{Properties of the antipode} (i), and the fact that $m\in M^{\text{co}H}$. For the other composite we have:
    \begin{align*}
        (\rho \circ \theta)(m) &= m_{(0)}S(m_{(1)})m_{(2)} \\
        &=m_{(0)}S(m_{(1)(0)})m_{(1)(1)} \\
        &=m_{(0)} \eta\varepsilon (m_{(1)}) \\
        &=m.
    \end{align*}
    Here we have used the coassociativity of $\Delta_M$, as well as the axiom defining the antipode $S$. We see that $\rho$ and $\theta$ are mutually inverse maps of Hopf modules over $H$, and so define an isomorphism as desired.
\end{proof}
 
\begin{Lemma}\label{coinvariants are rank 1}
    If $H$ is finitely generated and projective over $k$, then $(H^*)^{\text{co}H}$ is finitely generated and projective of rank $1$ over $k$.
\end{Lemma}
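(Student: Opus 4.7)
The plan is to combine Lemma \ref{tensor identity} with Lemma \ref{summand}, and then extract the rank by a local argument. First, by Lemma \ref{H* is a right Hopf Module}, $H^\ast$ is a right Hopf module over $H$. Applying Lemma \ref{tensor identity} to it gives an isomorphism of right Hopf modules
\[
H^\ast \;\cong\; (H^\ast)^{\mathrm{co}H}\otimes_k H,
\]
which in particular is an isomorphism of $k$--modules.

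Next I would deduce that $(H^\ast)^{\mathrm{co}H}$ is finitely generated and projective as a $k$--module. Since $H$ is assumed finitely generated and projective, so is its $k$--linear dual $H^\ast$. The retraction $\varphi\colon H^\ast\to (H^\ast)^{\mathrm{co}H}$ constructed in the proof of Lemma \ref{summand} is $k$--linear, so $(H^\ast)^{\mathrm{co}H}$ is a $k$--module direct summand of $H^\ast$. Direct summands of finitely generated projective modules are finitely generated projective, giving the first half of the claim.

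For the rank statement I would argue locally. Note that the unit and counit of $H$ satisfy $\varepsilon\circ\eta=\mathrm{id}_k$, so $k$ is a $k$--module summand of $H$; thus $H$ has positive rank at every prime of $k$ and is faithfully flat. Fix $\mathfrak{p}\in\Spec(k)$: then $H_\mathfrak{p}$ is free of some rank $n\ge 1$ over $k_\mathfrak{p}$, hence so is $(H^\ast)_\mathfrak{p}\cong \Hom_{k_\mathfrak{p}}(H_\mathfrak{p},k_\mathfrak{p})$. Localising the isomorphism $H^\ast\cong(H^\ast)^{\mathrm{co}H}\otimes_k H$ at $\mathfrak{p}$ and comparing ranks yields
\[
n \;=\; \mathrm{rank}_{k_\mathfrak{p}}\!\bigl((H^\ast)^{\mathrm{co}H}_\mathfrak{p}\bigr)\cdot n,
\]
so $(H^\ast)^{\mathrm{co}H}$ has rank $1$ at every prime, and therefore is projective of constant rank $1$.

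I do not anticipate a serious obstacle: Lemma \ref{tensor identity} does essentially all of the work, and the only remaining subtlety is the rank computation, which is clean once one observes that $H$ is faithfully flat of positive local rank. The one thing I would be careful about is that the splitting from Lemma \ref{summand} is $k$--linear (which it is, being built from $\Delta_{H^\ast}$, $S$ and the multiplication of $H^\ast$), so that projectivity of $(H^\ast)^{\mathrm{co}H}$ as a $k$--module really does follow from projectivity of $H^\ast$.
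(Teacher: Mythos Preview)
Your proof is correct and follows essentially the same approach as the paper: both use Lemma~\ref{summand} to exhibit $(H^\ast)^{\mathrm{co}H}$ as a summand of $H^\ast$, then invoke Lemma~\ref{tensor identity} and compare ranks locally. Your version is in fact slightly tidier, since by applying the tensor identity globally and then localising you sidestep the paper's check that coinvariants commute with localisation, and you make explicit (via the $\varepsilon\circ\eta=\mathrm{id}_k$ splitting) that $H$ has positive rank at every prime, a point the paper uses but leaves implicit.
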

\begin{proof}
    Firstly, $(H^*)^{\text{co}H}$ is a $k$--summand of $H^*$ which is finitely generated and projective over $k$, and therefore $(H^*)^{\text{co}H}$ is itself finitely generated and projective over $k$.

    Now let $\mf{p}\in \Spec(k)$. It is a standard fact that localization and $\Hom$'s commute for finitely generated $k$--modules $M$. Therefore we see that $(M^*)_\mf{p}=(M_\mf{p})^*$. Moreover, coinvariants commute with this localization since for a right $H$--comodule $M$ we have 
    \[
    M^{\text{co}H}=\ker(\Delta_M-(1_M\otimes 1_H)),
    \]
    and $k_\mf{p}$ is flat over $k$ so that localization is an exact functor and preserves kernels. Therefore we may write $M^*_\mf{p}$ and $M^{\text{co}H}_\mf{p}$ without possibility of confusion. Since localization and tensor products commute, by Lemma \ref{tensor identity}, we can see that 
    \[
    H^*_\mf{p} \simeq (H^*)^{\text{co}H}_\mf{p} \otimes H_\mf{p}.
    \]
   Now $H_\mf{p}$ is a finitely generated projective module over the local ring $k_\mf{p}$, and thus is it free, and therefore isomorphic over $k_\mf{p}$ to its dual $H^*_\mf{p}$. Since rank is multiplicative over tensor products we must have that $\rank((H^*)^{\text{co}H}_\mf{p})=1$ for all primes $\mf{p}$. Moreover, the rank is a local property on $\Spec(k)$ we are done.
\end{proof}

\begin{Rem}
If in addition to $H$ being finitely generated and projective we have that $(H^*)^{\text{co}H}$ is actually free over $k$, then $H$ is a so-called Frobenius algebra. We will prove this in Theorem \ref{some Hopf algebras are Frobenius}.    
\end{Rem}

\begin{Def}\label{Frobenius Algebra}
    Let $A$ be a $k$--algebra which is finitely generated and projective. We say that $A$ is a \textit{Frobenius algebra} if there is an isomorphism of left $A$--modules $\Phi\colon A\to A^*$ where we consider $A^*$ as a left $A$--module via $(x\cdot f)(a)=f(ax)$ for all $a, x\in A$ and $f\in A^*$. We will call the morphism $\Phi$ a \textit{Frobenius isomorphism}.
\end{Def}

\begin{Rem}
    If $A$ is a Frobenius algebra, note that since $1$ freely generates $A$ as a left $A$--module, then the Frobenius isomorphism then $\Phi$ tells us that $\Phi(1)\coloneqq\psi$ freely generates $A^*$ as a left $A$--module.
\end{Rem}

\begin{Lemma}\label{antipode is bijective}
    If $H$ is finitely generated and projective over $k$, then the antipode $S$ is bijective.
\end{Lemma}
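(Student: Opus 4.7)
The plan is to localize the statement and then exhibit a two-sided inverse to $S$ using the integral-style generator of $(H^*)^{\text{co}H}$ furnished by Lemmas \ref{H* is a right Hopf Module}--\ref{coinvariants are rank 1}.

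First I would reduce to the case where $k$ is a local ring. Bijectivity of a $k$-linear map between finitely generated $k$-modules may be verified after localizing at each maximal ideal, and formation of $H$, $H^*$ and the Hopf module structure on $H^*$ all commute with localization. Over such a local $k$, the module $H$ is free of finite rank, and by Lemma \ref{coinvariants are rank 1} the module $(H^*)^{\text{co}H}$ is free of rank one; I fix a free generator $\lambda$.

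Next I would introduce the Frobenius-type map $\Phi\colon H \to H^*$ defined by $\Phi(h)(a)=\lambda(ah)$. Combining this with the right $H$-action on $H^*$ from Lemma \ref{H* is a right Hopf Module}, namely $(f\cdot h)(a)=f(aS(h))$, a direct unravelling gives $\Phi\circ S(h) = \lambda\cdot h$. Under the identification $(H^*)^{\text{co}H}\otimes H \cong H$ coming from the free generator $\lambda$, this is precisely the restriction of the Hopf module isomorphism $(H^*)^{\text{co}H}\otimes H \xrightarrow{\sim} H^*$ of Lemma \ref{tensor identity}. Consequently $\Phi\circ S$ is a $k$-linear isomorphism, and in particular $\Phi$ itself is surjective.

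To finish, I would observe that $\Phi$ is a surjection between two free $k$-modules of the same finite rank; any such map over a commutative ring is automatically bijective, since the determinant of a representing matrix has a right inverse and hence is a unit. Thus $\Phi$ is a $k$-linear isomorphism, and so $S = \Phi^{-1}\circ(\Phi\circ S)$ is bijective, whence the same holds after assembling the localizations. The main obstacle is the identification carried out in the second paragraph: one must check with some care that, once $\lambda$ is chosen as a free generator of $(H^*)^{\text{co}H}$, the Hopf module isomorphism extracted from Lemma \ref{tensor identity} genuinely coincides with $\Phi\circ S$. This is a bookkeeping exercise that turns on the fact that the right $H$-action on $H^*$ has $S$ built into its very definition, and it is precisely this step that ties the integral-style object $\lambda$ back to the antipode we wish to invert.
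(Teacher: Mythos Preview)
Your argument is correct, and it takes a somewhat different route from the paper's.

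The paper treats injectivity and surjectivity of $S$ separately. For injectivity it uses the Hopf module isomorphism $H^*\cong (H^*)^{\text{co}H}\otimes H$ of right $H$-modules (with the $S$-twisted action) to show that $S(h)=0$ forces right multiplication by $h$ to be zero after localizing, whence $h=0$. For surjectivity it passes to the residue field $k_{\mathfrak p}/\mathfrak p k_{\mathfrak p}$, observes that the induced antipode is an injective endomorphism of a finite-dimensional vector space and therefore surjective, and then invokes Nakayama's lemma to kill $\coker S$.

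Your approach instead packages everything through the Frobenius-type map $\Phi$: after localizing so that $\lambda$ freely generates $(H^*)^{\text{co}H}$, you recognise $\Phi\circ S$ as exactly the isomorphism $h\mapsto \lambda\cdot h$ coming from Lemma~\ref{tensor identity}, conclude that $\Phi$ is surjective, and then use the elementary fact that a surjection between free modules of equal finite rank is bijective to invert $\Phi$ and hence $S$. This sidesteps the Nakayama argument entirely and in effect proves the Frobenius property of $H$ (Theorem~\ref{some Hopf algebras are Frobenius}) simultaneously with bijectivity of $S$, whereas the paper establishes bijectivity of $S$ first and only afterwards derives the Frobenius structure using it. Your route is a bit more streamlined; the paper's has the mild advantage of keeping the two conclusions logically separated.
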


\begin{proof}
    To show that $S$ is injective, consider the isomorphism of right $H$--modules 
\[
\varphi\colon H^* \to (H^*)^{\text{co}H}\otimes H,
\]
 where the right $H$--module structure is given by $(f\cdot h)(a)=f(aS(h))$ for all $a, h\in H$. Now suppose $S(h)=0$, then for any $a\otimes b\in  (H^*)^{\text{co}H}\otimes H$ we have 
\[
\varphi^{-1}((a\otimes b)h)=\varphi^{-1}(a\otimes b) \cdot h = 0
\]
since it is multiplication inside the argument by $S(h)=0$. Now, since $\varphi^{-1}$ is a bijection, we have $(a\otimes b)h=0$, since $a$ and $b$ were arbitrary that is to say that multiplication by $h$ on  $(H^*)^{\text{co}H}\otimes H$ is just zero. Let $\mf{p}\in \Spec(k)$, since  $(H^*)^{\text{co}H}$ is finitely generated and projective of rank 1 over $k$ by Lemma \ref{coinvariants are rank 1}, over the local ring $k_\mf{p}$ is it free of rank $1$, therefore $(H^*)^{\text{co}H}_\mf{p}\simeq k_\mf{p}$. Hence the tensor identity gives us that 
\[
(H^*)^{\text{co}H}_\mf{p}\otimes H_\mf{p} \simeq H_\mf{p}. 
\]
Thus multiplication on the right by $h$ on $H_\mf{p}$ is zero for all $\mf{p}$, and hence multiplication on the right by $h$ must be zero on $H$, which implies $h=0$ as desired.

To show that $S$ is surjective, consider $Q=\coker S$, then $Q_\mf{p}=\coker(S_\mf{p})$ since localization is exact. It follows that $S$ induces a map 
\[
S'\colon H_\mf{p}/\mf{p}H_{\mf{p}}\to H_\mf{p}/\mf{p}H_{\mf{p}}
\]
with cokernel $Q_\mf{p}/\mf{p}Q_{\mf{p}}$. This holds since the tensor is right exact and $M/IM\simeq k/I \otimes M$.  We get that $S'$ is an antipode for the Hopf algebra $H_\mf{p}/\mf{p}H_{\mf{p}}$ over the residue field $F=k_\mf{p}/\mf{p}k_{\mf{p}}$. The above proof shows that this map is an injective map of $F$--modules. It is a standard fact that injective endomorphism of vector spaces are surjective. Therefore we see that $S'$ is surjective so that $Q_\mf{p}/\mf{p}Q_{\mf{p}}=0$ . The finite generation of $Q$ implies the finite generation of $Q_\mf{p}$. Nakayama's Lemma then implies that $Q_\mf{p}=0$ for all $\mf{p}$. Since being zero is a local property on $\Spec(k)$, we see that $Q=0$ as desired.
\end{proof}

\begin{Th}\label{some Hopf algebras are Frobenius}
    Let $H$ be a finitely generated projective Hopf algebra over $k$, and $(H^*)^{\text{co}H} \simeq k$. Then $H$ is a Frobenius algebra.
\end{Th}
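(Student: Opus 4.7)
The plan is to construct the desired Frobenius isomorphism explicitly from a generator of $(H^*)^{\text{co}H}$, combining Lemmas \ref{H* is a right Hopf Module}, \ref{tensor identity}, and \ref{antipode is bijective}. Since $H^*$ is a right Hopf module over $H$ by Lemma \ref{H* is a right Hopf Module}, with right $H$--action $(f \cdot h)(a) = f(aS(h))$, Lemma \ref{tensor identity} yields a right $H$--module isomorphism $(H^*)^{\text{co}H} \otimes H \to H^*$ sending $m \otimes h \mapsto m \cdot h$. Under the hypothesis $(H^*)^{\text{co}H} \simeq k$, I would fix a generator $\psi$ corresponding to $1 \in k$, and thereby obtain a $k$--linear bijection $H \to H^*$, $h \mapsto \psi \cdot h$, i.e.\ $h \mapsto \bigl(a \mapsto \psi(aS(h))\bigr)$.

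Next, I would define the candidate Frobenius isomorphism $\Phi\colon H \to H^*$ by $\Phi(h)(a) = \psi(ah)$, rather than $\psi(aS(h))$. A direct computation confirms that $\Phi$ is a homomorphism of left $H$--modules, with $H^*$ equipped with the left action of Definition \ref{Frobenius Algebra}: for all $x,h,a \in H$,
\[
\Phi(xh)(a) = \psi(axh) = \Phi(h)(ax) = (x \cdot \Phi(h))(a).
\]

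Finally, to see that $\Phi$ is bijective, I would observe that $\Phi$ factors as the composition $H \xrightarrow{S^{-1}} H \xrightarrow{h \mapsto \psi \cdot h} H^*$, since $(\psi \cdot S^{-1}(h))(a) = \psi(aS(S^{-1}(h))) = \psi(ah) = \Phi(h)(a)$. The second map is the bijection constructed above, and the first is a bijection by Lemma \ref{antipode is bijective}. Hence $\Phi$ is a left $H$--module isomorphism, exhibiting $H$ as a Frobenius algebra. The only subtle point is the mismatch of module conventions: the right Hopf module structure on $H^*$ from Lemma \ref{H* is a right Hopf Module} already involves the antipode, so to reconcile it with the left $H$--module structure of Definition \ref{Frobenius Algebra} one must twist by $S^{-1}$, which is precisely where the bijectivity of $S$ is required.
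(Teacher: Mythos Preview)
Your proof is correct and follows essentially the same approach as the paper: both construct $\Phi$ as the composite of the tensor-identity isomorphism $(H^*)^{\text{co}H}\otimes H \xrightarrow{\sim} H^*$ with $S^{-1}$, invoking the same three lemmas. The only cosmetic difference is that you write down the explicit formula $\Phi(h)(a)=\psi(ah)$ first and verify left $H$--linearity directly from it, whereas the paper defines $\Phi=\varphi^{-1}\circ S^{-1}$ as a composite of bijections and then checks left $H$--linearity via the anti-homomorphism property of $S^{-1}$; your route makes that verification a one-liner.
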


\begin{proof}
    From Lemma \ref{tensor identity} we have an isomorphism $\varphi \colon H^* \to (H^*)^{\text{co}H} \otimes H$ of right $H$--modules, where $H^*$ has the right $H$--module structure of Lemma \ref{H* is a right Hopf Module} defined by $(f\cdot h)(a)=f(aS(h))$ for all $a, h\in H$. From our assumption that $(H^*)^{\text{co}H}\simeq k$, this is actually an isomorphism $\varphi \colon H^*\to H$ of right $H$--modules. Now the antipode $S$ is bijective via Lemma \ref{antipode is bijective}, so we have a bijection 
    \[
    \Phi\coloneqq \varphi^{-1}\circ S^{-1}\colon  H\to H^*.
    \]
     To show that $H$ is a Frobenius algebra, it remains to show that $\Phi$ is a map of left $H$--modules, that is, that $h\cdot \Phi(a)=\Phi(ha)$ for all $h, a\in H$, where the left $H$--module structure on $H^*$ is given as in Definition \ref{Frobenius Algebra}, that is $(h\cdot f)(a)=f(ah)$. Let $b\in H$ be arbitrary, then we have:\begin{align*}(\Phi(ha))(b)&=(\varphi^{-1}S^{-1}(ha))(b)\\
    &=(\varphi^{-1}(S^{-1}(a)S^{-1}(h)))(b) &\text{(Lemma \ref{Properties of the antipode})}\\
    &=(\varphi^{-1}(S^{-1}(a)) \cdot S^{-1}(h))(b) &(\varphi^{-1} \text{ is a map of right $H$--modules})\\
    &=(\varphi^{-1}(S^{-1}(a))(bh) &\text{(the definition of the right module action on $H^*$)}\\
     &=(\Phi(a))(bh) \\
     &=(h\cdot \Phi(a))(b)&\text{(the definition of the left module action on $H^*$)}
    \end{align*} 
    It follows that $\Phi$ is an isomorphism of left $H$--modules. 
\end{proof}

\begin{Lemma}\label{left norm}
    Let $A$ be an augmented Frobenius algebra with augmentation $\varepsilon$, and Frobenius isomorphism $\Phi$.  Then there exists a unique element $N\in A^A$ with $N\cdot \psi =\varepsilon$, where $\psi$ denotes $\Phi(1)$.
\end{Lemma}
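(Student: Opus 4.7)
The plan is to exploit the fact that $\psi = \Phi(1)$ freely generates $A^*$ as a left $A$-module, which is a consequence of $\Phi$ being a left $A$-module isomorphism together with $1$ freely generating $A$. This trivialises both existence and uniqueness of an $N \in A$ with $N \cdot \psi = \varepsilon$; the actual content of the lemma is then verifying that this $N$ lies in the invariants $A^A$.

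To be concrete, since $\Phi$ is left $A$-linear and sends $1$ to $\psi$, we have $\Phi(a) = a \cdot \Phi(1) = a \cdot \psi$ for every $a \in A$, so that $a \mapsto a \cdot \psi$ coincides with $\Phi$ and is in particular a bijection $A \to A^*$. Hence there is a unique $N \in A$ with $N \cdot \psi = \varepsilon$, which forces uniqueness within $A^A$ as well. What remains is the invariance identity $aN = \varepsilon(a)\, N$ for every $a \in A$. I would establish it by applying $\Phi$ to both sides: by $A$-linearity, $\Phi(aN) = a \cdot \Phi(N) = a \cdot (N \cdot \psi) = a \cdot \varepsilon$, while $\Phi(\varepsilon(a) N) = \varepsilon(a)\,\Phi(N) = \varepsilon(a)\, \varepsilon$. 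By injectivity of $\Phi$, the required identity is therefore equivalent to $a \cdot \varepsilon = \varepsilon(a)\, \varepsilon$ in $A^*$.

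Finally, unpacking the left $A$-action on $A^*$ as in Definition \ref{Frobenius Algebra}, for any $b \in A$ one has
\[
(a \cdot \varepsilon)(b) \;=\; \varepsilon(ba) \;=\; \varepsilon(a)\, \varepsilon(b) \;=\; (\varepsilon(a)\, \varepsilon)(b),
\]
where the middle equality uses multiplicativity of the augmentation $\varepsilon$. This is the only mildly subtle point: although the paper's literal definition of an augmented algebra only asks for $k$-linearity, the augmentations in play (counits of Hopf algebras) are genuine algebra homomorphisms, which is the convention implicit throughout. Granting this, there is no real obstacle in the argument, and the lemma follows immediately.
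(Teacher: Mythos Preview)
Your proof is correct and follows essentially the same approach as the paper: both use the bijectivity of $\Phi$ (equivalently, that $\psi$ freely generates $A^*$) to get existence and uniqueness, then verify $aN=\varepsilon(a)N$ by pushing through $\Phi$ and reducing to the identity $\varepsilon(ba)=\varepsilon(b)\varepsilon(a)$. Your explicit remark that this step uses multiplicativity of $\varepsilon$ (which the paper's own definition of ``augmented algebra'' does not literally require, but which holds for the counits actually in play) is a point the paper glosses over.
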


\begin{proof}
    The existence of $N$ is simply the fact the $\psi$ freely generates $A^*$ as a left $A$--module, so it must hit $\varepsilon \in A^*$. More precisely, $\Phi$ is surjective and $A$--linear on the left, so that $\varepsilon$ has a preimage in $x\in A$. That is, $x\cdot \Phi(1) =\Phi(x)=\varepsilon$, so $x=N$ works. This also shows unicity since $\Phi$ is injective by assumption; so if $N, N'$ are both inverse images under $\Phi$ of $\varepsilon$, then they must be the same.

    It remains to show that $N$ is a left integral in $A$. For all $a, b\in A$ we have 
    \[
    (aN\cdot \psi)(b)=\psi(baN)=(N\cdot \psi)(ba)=\varepsilon(ba)=\varepsilon(b)\varepsilon(a)=(N\cdot \psi)(b)\varepsilon(a)=(\varepsilon(a)N\cdot \psi)(b).
    \]
    Thus we see that for all $a$ we have $\Phi(aN)=\Phi(\varepsilon(a)N)$, and the injectivity of $\Phi$ tells us that $aN=\varepsilon(a)N$. In other words, we get that $N\in A^A$.
\end{proof}

\begin{Th}
    If $H$ is a Hopf algebra which is also a Frobenius algebra, then, as $k$--modules, the ideal of left integrals $H^H$ is a direct summand of $H$ which is free of rank 1. 
\end{Th}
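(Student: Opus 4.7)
The plan is to use the Frobenius isomorphism $\Phi\colon H \to H^*$ together with the distinguished element $N \in H^H$ provided by Lemma \ref{left norm}. Write $\psi = \Phi(1)$, so that $\Phi(h) = h \cdot \psi$, meaning $\Phi(h)(a) = \psi(ah)$. By Lemma \ref{left norm} we have $\Phi(N) = N \cdot \psi = \varepsilon$. The strategy is to show the inclusion $kN \hookrightarrow H^H$ is an equality, that $N$ is a free generator, and then to write down an explicit $k$-linear retraction $H \twoheadrightarrow H^H$.

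First I would show $H^H = kN$. Let $h \in H^H$. For any $a \in H$, using the left integral property,
\[
\Phi(h)(a) = \psi(ah) = \psi(\varepsilon(a) h) = \varepsilon(a)\,\psi(h),
\]
so $\Phi(h) = \psi(h)\,\varepsilon = \psi(h)\,\Phi(N) = \Phi(\psi(h)\,N)$, where the last equality uses that $\Phi$ is $k$-linear (and that $\eta(c)\cdot N = cN$ since $N$ is a left integral). Injectivity of $\Phi$ then gives $h = \psi(h)\,N$, proving $H^H \subseteq kN$. The reverse inclusion is immediate since $N \in H^H$ and $H^H$ is closed under the $k$-action. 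Freeness of $kN$ over $k$ follows at once: if $cN = 0$, then $0 = \Phi(cN) = c\varepsilon$, and evaluating at $1 \in H$ gives $c = c\,\varepsilon(1) = 0$.

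Second I would exhibit a $k$-linear splitting. Define
\[
\pi\colon H \longrightarrow H^H,\qquad \pi(h) = \psi(h)\,N,
\]
which is $k$-linear (since $\psi \in H^*$) and well-defined (since $kN = H^H$). To see that $\pi$ retracts the inclusion, I only need to check that $\pi(N) = N$, i.e.\ that $\psi(N) = 1$. This is the one step that requires a small computation: from $N \cdot \psi = \varepsilon$, evaluating both sides at $1_H$ gives $\psi(N) = \psi(1_H \cdot N) = (N\cdot \psi)(1_H) = \varepsilon(1_H) = 1$. Hence $\pi(cN) = c\,\psi(N)\,N = cN$ for all $c \in k$, so $\pi$ splits the inclusion $H^H \hookrightarrow H$, exhibiting $H^H$ as a direct $k$-summand of $H$ that is free of rank $1$ with basis $N$.

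I do not anticipate a genuine obstacle here; the only mild subtlety is keeping track of which $H$-action on $H^*$ is in play (the one used in Definition \ref{Frobenius Algebra}, namely $(h \cdot f)(a) = f(ah)$) and confirming the normalization $\psi(N) = 1$ that makes the retraction work. Everything else is formal once Lemma \ref{left norm} is invoked.
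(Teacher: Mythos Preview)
Your proof is correct and follows the same overall approach as the paper: both use Lemma \ref{left norm} to obtain $N$, show every left integral $h$ satisfies $h = \psi(h)N$ via injectivity of $\Phi$, and take $h \mapsto \psi(h)N$ as the retraction. Your freeness argument is in fact cleaner than the paper's: you simply observe that $cN = 0$ implies $c\varepsilon = \Phi(cN) = 0$, whence $c = c\,\varepsilon(1) = 0$, whereas the paper routes through the injectivity of $\varepsilon^*\colon k^* \to H^*$ and a composite $k \simeq k^* \to H^* \xrightarrow{\Phi^{-1}} H$ to reach the same conclusion. Your explicit verification that $\psi(N) = 1$ is also a nice touch, though strictly speaking it is redundant once you have $h = \psi(h)N$ for all $h \in H^H$ (apply this to $h = N$ to get $\pi|_{H^H} = \mathrm{id}$ directly).
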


\begin{proof}
    A Hopf algebra is in particular augmented, so we will show that the element $N$ of Lemma \ref{left norm} is a free generator for $H^H$. Let $h\in H^H$ be any left integral. Then for any $a\in H$, we have 
    \[
    (h\cdot \psi)(a)=\psi(ah)=\psi(\varepsilon(a)h)=\psi(h)\varepsilon(a)=\psi(h)\psi(aN)=\psi(a\psi(h)N)=(\psi(h)N\cdot \psi)(a).
    \]
     Once again the injectivity of $\Phi$ tells us that $h=\psi(h)N$ for all $h$. Thus $N$ does indeed generate $H^H$ as a $k$--module. The map $H\to H^H$ defined by $h\mapsto \psi(h)N$ is then easily seen to be a retraction of the inclusion $H^H\hookrightarrow H$.

    It remains to show that $N$ freely generates $H^H$. That is, we want to show that the map
    \[
    k\to kN=H^H, \quad r\mapsto rN
    \]
      is injective. The map $\varepsilon$ is surjective, and $\Hom_k(-, k)$ is left exact, thus $\varepsilon^*$ is injective. Therefore, we see that the composite map $\rho: k\simeq k^* \to H^* \simeq H$ given by the natural isomorphism $k\simeq k^*$ composed with $\varepsilon^*$ and then finally composed with $\Phi^{-1}$, is injective, since all the constituents are injective. Let $r\in k$, then its image in $k^*$ under the first map is the $k$--linear function $f_r$ which sends $r$ to $1$. Now for all $h\in H$ we have 
      \[
      \varepsilon^*(f_r)(h)=r\varepsilon^*(f_1)(h)=r\varepsilon(h)=r(N\cdot \psi)(h)=(rN\cdot \psi)(h).
      \]
      Since this holds for all $h$, we get that the functions $\varepsilon^*(f_r)$ and $rN\cdot \psi$ agree. Now, we have 
      \[
      \rho(r)=\Phi^{-1}(\varepsilon^*(f_r))=\Phi^{-1}((rN\cdot \psi) = rN\Phi^{-1}(\psi) =rN
      \]
      so we are done.
\end{proof}

\begin{Cor}\label{cor:left-invariants-freesummand}
    Let $H$ be a Hopf algebra which is finitely generated and projective over $k$ with $(H^*)^{\text{co}H}\simeq k$. Then, as $k$--modules $H^H$, is a rank one free summand of $H$.
\end{Cor}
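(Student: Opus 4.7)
This corollary is essentially a direct composition of two results already established in the excerpt, so the plan is simply to chain them together in the correct order.

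First, I would invoke Theorem \ref{some Hopf algebras are Frobenius}: the hypotheses that $H$ is finitely generated and projective over $k$ and that $(H^*)^{\text{co}H}\simeq k$ are precisely the assumptions of that theorem, which concludes that $H$ is a Frobenius algebra. This step is immediate and needs no additional work beyond citing the result; the Frobenius isomorphism $\Phi\colon H\to H^*$ is the one constructed explicitly there, namely $\Phi = \varphi^{-1}\circ S^{-1}$, using that $S$ is bijective (Lemma \ref{antipode is bijective}) and that the tensor identity (Lemma \ref{tensor identity}) collapses $H^*$ to $H$ as right $H$-modules under the hypothesis on coinvariants.

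Next, I would apply the preceding theorem: any Hopf algebra which is also a Frobenius algebra has its ideal of left integrals $H^H$ realised as a free rank-one $k$-summand of $H$. The proof of that theorem produced an explicit generator $N$ (coming from Lemma \ref{left norm}) and an explicit retraction $h\mapsto \psi(h)N$, so nothing further is needed beyond combining the two invocations.

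\textbf{Main obstacle.} There is no real obstacle here since the two ingredients have already been assembled. The only thing worth double-checking is that a Hopf algebra is augmented (so that the notion of $H^H$ makes sense and Lemma \ref{left norm} applies), but this is immediate because the counit $\varepsilon\colon H\to k$ is an algebra map and serves as the augmentation. Thus the corollary follows by a one-line composition: the hypotheses imply $H$ is Frobenius by Theorem \ref{some Hopf algebras are Frobenius}, and then the previous theorem delivers exactly the stated conclusion about $H^H$.
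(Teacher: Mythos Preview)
Your proposal is correct and is exactly the intended argument: the paper states this as a corollary with no proof because it follows immediately by combining Theorem~\ref{some Hopf algebras are Frobenius} with the theorem immediately preceding the corollary, precisely as you describe.
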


\begin{Rem}
We can apply Corollary \ref{cor:left-invariants-freesummand}, for example, whenever $H$ is finitely generated and projective over a local ring $k$, so that $(H^*)^{\text{co}H}\simeq k$ since projective modules are free over local rings. We will make use of this fact later on.    
\end{Rem}

\pagebreak

\pagebreak
\section{Affine Group Schemes}

We denote the category of affine schemes over $k$ by $\textbf{AffSch}_k$, which is equivalent to the opposite category of the category of commutative $k$--algebras $\textbf{CAlg}_k$, that is, $(\textbf{AffSch}_k)\simeq (\textbf{CAlg}_k)^{\text{op}}$. Moreover, we will denote the category of groups by $\textbf{Grps}$.

We recall the definition of a group object internal to a category $\mathcal{C}$.
\begin{Def}\label{def:group object}
    Let $\mathcal{C}$ be a category with binary products and a terminal object $\ast$. A \textit{group object} in $\mathcal{C}$ is an object $G$ of 
    $\mathcal{C}$, together with three morphisms: a \textit{multiplication map}  $m\colon G\times G\to G$, a \textit{unit map}  $e\colon\ast \to G$, and an \textit{inverse map} $i\colon G\to G$, such that the following diagrams commute:
    \begin{center}
        \begin{tikzcd}
            &G \times G\times G \arrow[d, "m\times \id"'] \arrow[r, "\id \times m"]&G\times G \arrow[d, "m"] \\
            &G\times G \arrow[r, "m"]&G 
        \end{tikzcd}
        \quad 
        \begin{tikzcd}
            &G \arrow[r, "{(\id, e)}"] \arrow[d, "{(e, \id)}"'] \arrow[dr, "\id"]&G\times G \arrow[d, "m"] \\
            &G\times G \arrow[r, "m"] &G 
        \end{tikzcd}

           \begin{tikzcd}
            &G \arrow[r, "{(i, \id)}"] \arrow[d, "{(\id, i)}"'] \arrow[dr, "e'"]&G\times G \arrow[d, "m"] \\
            &G\times G \arrow[r, "m"] &G 
        \end{tikzcd}
\end{center}
Where the pairing $(f, g)$ means the map $(f\times g)\circ d$, where $d\colon G\to G\times G$ is the diagonal morphism. Moreover, the map $e'$ is the unique map $G\to \ast\xrightarrow[]{e}G$. These diagrams respectively encode the associativity of the multiplication map, the identity element, and the existence of inverses. 

Let $G$ and $G'$ be group objects in to $\mathcal{C}$. A morphism $f\colon  G\to G'$ is a \textit{morphism of group objects} if the following diagrams commute:
\begin{center}
    \begin{tikzcd}
        &G\times G \arrow[d, "f\times f"']\arrow[r, "m"]&G \arrow[d, "f"] &\ast \arrow[dr, "e'"']\arrow[r, "e"]&G \arrow[d, "f"]&G \arrow[d, "f"]\arrow[r, "i"]&G \arrow[d, "f"]\\
        &G'\times G' \arrow[r, "{m'}"] &G' & &G' &G' \arrow[r, "i'"]&G' 
    \end{tikzcd}
\end{center}
The group objects in $\mathcal{C}$ together with their morphisms form a category which we will denote by $\textbf{Grp}(\mathcal{C})$.
\end{Def}

\begin{Def}\label{def:gp scheme as a group object}
    Let $k$ be a ring. Recall that $\Spec(k)$ is a terminal object in the category of affine schemes over $k$. We define an \textit{affine group scheme over} $k$ as a group object in the category of affine schemes over $k$. In other words, affine group schemes are objects of $\textbf{Grp}(\textbf{AffSch}_k) \simeq \textbf{Grp}((\textbf{CAlg}_k)^{\text{op}})$.
\end{Def}

As we will see, there is an alternative definition of an affine group scheme over $k$. Fortunately, both definitions are equivalent (see Remark \ref{equivalence of definitions}).

\begin{Def}[\textbf{Alternative definition}]
A \textit{$k$-group functor} is a covariant  functor $G\colon \textbf{CAlg}_k \to \textbf{Grps}$. An \textit{affine group scheme over $k$} is a $k$-group functor that is representable. 
If $G$ is an affine group scheme, then we will denote its representing object by $k[G]$, called its \textit{coordinate algebra}. In other words, for all commutative $k$--algebras $A$ we have 
\[
G(A)\simeq \Hom_{\textbf{CAlg}_k}(k[G], A).
\]
\end{Def}

\begin{Rem}[\textbf{Equivalence of the two definitions}]\label{equivalence of definitions} It is not so hard to see that these two definitions of affine group schemes are equivalent, via the Yoneda embedding. One easily sees that the category of affine schemes over $k$ is equivalent to the full subcategory of $\text{Fun}(\textbf{CAlg}_k, \textbf{Set})$ consisting of the representable functors with values in $\mathbf{Grp}$. In other words, for some commutative $k$--algebra $A$, the affine scheme $\Spec(A)$ corresponds to the representable functor $h^A=\Hom(A, -)$. In particular, if $G=\Spec(A)$ is an affine group scheme, then one applies the Yoneda embedding the diagrams defining the group structure in Definition \ref{def:group object}. Indeed, recall that products of affine schemes over $k$ are still affine over $k$, since they correspond to taking the tensor product over $k$ of their coordinate rings, and that the Yoneda embedding preserves these finite products. Hence the Yoneda embedding gives us the same diagrams in $\text{Fun}(\textbf{CAlg}_k, \textbf{Set})$ where $G$ has been replaced with $h^A$. For example:
\begin{center}
\begin{tikzcd}
      &h^A\times h^A \times h^A \arrow[r]\arrow[d]&h^A \times h^A \arrow[d]\\
  &h^A\times h^A \arrow[r]&h^A 
\end{tikzcd}
\end{center}
Since the arrows in these new diagrams are natural transformations, then we can feed these diagrams any other commutative $k$--algebra $B$ to get a commutative diagram in the category $\textbf{Set}$:
\begin{center}
\begin{tikzcd}
      &h^A(B)\times h^A(B) \times h^A(B) \arrow[r]\arrow[d]&h^A(B) \times h^A(B) \arrow[d]\\
  &h^A(B)\times h^A(B) \arrow[r]&h^A(B) 
\end{tikzcd}
\end{center}
But these new diagrams will precisely define $h^A(B)=\Hom(A, B)$ as a group object in \textbf{Set}. In other words, $\Hom(A, B)$ is just a group. This is to say that the representable functor $h^A\colon \textbf{CAlg}_k\to \textbf{Set}$ actually factors through the category of groups whenever $G=\Spec(A)$ is an affine group scheme. The converse equivalence is much the same. Let $G=\Spec(A)$ be any affine scheme over $k$. Then whenever $h^A$ factors through the category of groups we can easily see that indeed $G$ is an affine group scheme.
\end{Rem}

\begin{Rem}
  Let $G=\Spec (k[G])$ be an affine group scheme, where $k[G]$ is its coordinate $k$--algebra. From the perspective of $k[G]$, by duality, the multiplication map $m\colon G\times G\to G$ corresponds to a map $\Delta\colon k[G]\to k[G]\otimes_k k[G]$, the identity map $e\colon\Spec k\to G$ corresponds to a map $\varepsilon\colon k[G]\to k$, and the inverse map $i\colon G\to G$ corresponds to a map $S\colon k[G]\to k[G]$. One easily sees that the diagrams which are dual to those in Definition \ref{def:group object} are the same as those in the definition of a Hopf algebra. These maps therefore endow $k[G]$ with the structure of a (commutative) Hopf algebra. This extends to a duality:
  \[
  \textbf{Grp}(\textbf{AffSch}_k) \simeq (\textbf{CHopf}_k)^{\text{op}}.
  \]
 Let $A$ be some commutative $k$--algebra with multiplication map $\alpha$. We can now easily describe the group structure on $G(A)=\Hom_{\textbf{CAlg}_k}(k[G], A)$ in terms of the Hopf algebra structure on $k[G]$. Note that the isomorphism 
 \[
 h^{k[G]}(A) \times h^{k[G]}(A) \simeq h^{k[G]\otimes k[G]}(A)
 \]
  is given via the composition $(f, g) \mapsto \alpha \circ (f\otimes g)$. In fact, this isomorphism is natural in $A$.  Now, we compose with the map 
  \[
  \Delta^*\colon h^{k[G]\otimes k[G]}(A) \to h^{k[G]}(A)
  \]
  given by precomposition with $\Delta$. This is exactly the group structure on $G(A)$, to be precise, let $g, h\in G(A)$, then the product $gh$ is defined as the composite 
  \[
  k[G]\xrightarrow{\Delta} k[G] \otimes_k k[G] \xrightarrow{g\otimes h} A\otimes_k A \xrightarrow{\alpha} A.
  \]

\end{Rem}
\begin{Ex}\label{main examples} We give a few elementary examples:
    \begin{enumerate}[(i)]
    \item The \textit{additive group scheme} $\mathbb{G}_a=\Spec k[t]$. When viewed as the functor $\Hom(k[t], -)$, it sends a commutative $k$--algebra $A$ to the group $(A, +)$. One can see this easily since any $k$--linear map $k[t]\to A$ is defined by where it sends $t$. For some $a\in A$ we denote the map defined by $t\mapsto a$ by $f_a$, then $\Hom(k[t], A)$ has a natural group operation defined by $f_a + f_b=f_{a+b}$. Thus as groups we have an isomorphism $\Hom(k[t], A)\simeq (A, +)$. Let us now highlight the Hopf algebra structure on $k[t]$: The comultiplication is given by $\Delta(t)= t\otimes 1 + 1\otimes t$, the counit is given by $\varepsilon(t)=0$, and the antipode is given by $S(t)=-t$.
    \item The \textit{multiplicative group scheme} $\mathbb{G}_m=\Spec k[t, t^{-1}]$. Similar to the above example, when viewed as a functor $\Hom(k[t, t^{-1}], -)$ it is clear that this functor sends any commutative $k$--algebra $A$ to the group $(A^\times, \cdot)$, since any map $k[t, t^{-1}]\to A$ is determined by the image of $t$. The Hopf algebra structure on $k[t, t^{-1}]$ is given by the maps $\Delta(t)=t\otimes t$, $\varepsilon(t)=1$, $S(t)=t^{-1}$.
    
    \item Let $V$ be a $k$--module. The $k$-group functor $V_a$ assigns to a $k$--algebra $A$ the group underlying the module $V\otimes_k A$, and to a homomorphism of $k$--algebras $f\colon A\to B$ the group homomorphism 
    \[
    \id_V \otimes f \colon V\otimes A\to V\otimes B. 
    \]
    If $W$ is another $k$--module, and $f\colon V\to W$ is a $k$--linear map, then we obtain a morphism 
    $\Tilde{f}\colon V_a\to W_a $  by   $\Tilde{f}_A=f\otimes \id_A$.
    If $V$ is projective over $k$, then $V_a$ is an affine group scheme. Indeed,  we can use the universal property of the symmetric algebra to find the coordinate algebra for this functor. Note that 
    \[
    V\otimes_k A \simeq\Hom_{\textbf{Mod}_k}(V^*, A) \simeq \Hom_{\textbf{CAlg}_k}(\Sym(V^*), A).
    \]
    Therefore, we can see that $\Sym(V^*)$ is the coordinate algebra for $V_a$. In particular, one easily sees that when $V$ is the free module of rank $n$, $V=k^n$, we have that the coordinate algebra for $V_a$ is $k[t_1, \dots, t_n]$, and thus $V_a\simeq \mathbb{G}_a^n$. 
    
    \item Let $V$ be a projective $k$--module. The \textit{general linear group} $\GL_V$ assigns to a $k$--algebra $A$ the group of $A$--linear automorphisms $\Aut_A(V\otimes_k A)$. Note that we have isomorphisms: 
    \[
    \Hom_{\textbf{Mod}_A}(V\otimes A, V\otimes A)\simeq \Hom_{\textbf{Mod}_k}(V, \Hom_{\textbf{Mod}_A}(A, V\otimes A)) \simeq \Hom_{\textbf{Mod}_k}(V, V\otimes A)
    \]

    Given a morphism of $k$--algebras $f\colon A\to B$, we get an induced morphism 
    \[
    \Tilde{f}\colon \Aut_A(V\otimes A)\to \Aut_B(V\otimes B)
    \]
    defined by the map 
    \[
    \Hom_{\textbf{Mod}_A}(V\otimes A, V\otimes A)\simeq\Hom_{\textbf{Mod}_k}(V, V\otimes A) \to \Hom_{\textbf{Mod}_k}(V, V\otimes B)\simeq\Hom_{\textbf{Mod}_B}(V\otimes B, V\otimes B).
    \]
    This map indeed sends automorphisms to automorphisms. In the case where $V=k^n$, then one easily sees that $\Aut_A(V\otimes A)$ is the group of $n\times n$ matrices with entries in $A$. In this case, the map on morphisms just corresponds to the pushforward under $f$ of the constants which appear in the matrix i.e. the invertible matrix $(a_{ij})$ maps to the invertible matrix $(f(a_{ij}))$, and  moreover we have that the coordinate algebra of $\GL_{k^n}$ is simply given by $k[t_{11}, t_{12}, \dots, t_{nn}, s]/(\det(t_{ij})s-1)$.
    
    \item Let $\Gamma$ be any finite group. We will define the \textit{constant group scheme} associated to $\Gamma$. We have a contravariant functor $G_\Gamma\colon \textbf{AffSch}_k \to \textbf{Grps}$ defined by sending an affine scheme $X$ to the set of locally constant functions $|X|\to \Gamma$. The group structure on $\Gamma$ induces the group structure on $G_\Gamma(X)$ by the multiplication of functions. Indeed, this functor isn't quite constant, but rather we have $G_\Gamma(X)\simeq \Gamma^{\pi_0(X)}$. Moreover, we can see that this functor is representable with coordinate algebra given by $k^{|\Gamma|}$. Let $X=\Spec(A)$ and let $f\in \Hom(k^{|\Gamma|}, A)$, then any such $f$ can be identified with the data of a product decomposition $A=\prod_{|\Gamma|}A_i$ of $A$ by looking at the images of the idempotents $(0, \dots, 1, \dots, 0)$, or equivalently, the data of a decomposition of $X$ into $|\Gamma |$ disjoint components (the decomposition can possibly repeat components). Such a decomposition of $X$ is easily seen to be equivalent to the data of a locally constant function $|X|\to \Gamma$.
    \item Let $G$ be an affine group scheme. A \textit{subgroup scheme} $H$ of $G$ is a closed subscheme of $G$ which is also a subgroup of $G$. This means that the coordinate algebra of $H$ is of the form $k[H]=k[G]/I$ where $I$ is a \textit{Hopf ideal}. A Hopf ideal $I\subset k[G]$ is an ideal such that the following three conditions hold: 
    \[
    \Delta(I)\subset I\otimes k[G] + k[G]\otimes I, \quad S(I)\subseteq I, \quad \mbox{and} \quad \varepsilon(I)=0.
    \]
    Let us denote the quotient map $k[G]\to k[H]$ by $\phi$, one sees that the quotient of a Hopf algebra by a Hopf ideal is once again a Hopf algebra with the Hopf algebra structure induced by $\phi$ (and therefore the group structures on $H$ and $G$ are compatible). The three conditions in the definition of a Hopf ideal correspond respectively to the fact that $H(A)$ is closed under multiplication, inversion, and contains the unit element, in other words, $H$ is a subgroup functor of $G$, i.e. that $H(A)=\Hom_{\textbf{CAlg}_k}(k[H], A)$ is a subgroup of $G(A)=\Hom_{\textbf{CAlg}_k}(k[G], A)$ for all $A$. 
    
    Let us show this. Firstly, let $g, h \in H(A)$, we need to show that their product (in $G(A)$) lives in $H(A)$. We can regard $g, h$ as elements of $G(A)$ by identifying them with the composites $k[G]\xrightarrow{\phi} k[H] \xrightarrow{g} A$ which vanish on $I$. Conversely, any map $k[G]\to A$ which vanishes on $I$ identifies bijectively with a map $k[H]\to A$. The product $gh$ (in $G(A)$) is the map: 
    \[
    k[G]\xrightarrow{\Delta} k[G] \otimes_k k[G] \xrightarrow{g\otimes h} A\otimes_k A \xrightarrow{\alpha} A.
    \]
    Our goal is to show that this composite vanishes on $I$, so that it corresponds to a map $k[H]\to A$, i.e. an element of $H(A)$. Note that $\Delta(I)\subset I\otimes k[G] + k[G]\otimes I$ by assumption that $I$ is a Hopf ideal, and since both $g$ and $h$ vanish on $I$ therefore $g\otimes h$ vanishes on $\im \Delta|_I$. Therefore the composite $(g\otimes h)\circ \Delta$ vanishes on $I$, and so does the product $gh$, as desired. We will now show that $H(A)$ is closed under inversion. Suppose $g\in H(A)$, we want to show $g^{-1}\in H(A)$. As before, if $g$ vanishes on $I$, and $S(I)\subseteq I$, then $g$ certainly vanishes on $\im S|_I$. But $g^{-1}$ is given by the composite $k[G]\xrightarrow{S} k[G]\xrightarrow{g} A,$
    which therefore vanishes on $I$, showing that $g^{-1}\in H(A)$. Finally, we will show that $H(A)$ contains the unit element. The unit element in $G(A)$ is given by the morphism $k[G]\xrightarrow{\varepsilon} k \xrightarrow{\eta_A} A$, where $\eta_A$ is the unit morphism for $A$ as a $k$--algebra. By our assumption that $\varepsilon(I)=0$ we see that the unit element in $G(A)$ vanishes in $I$, and thus identifies with an element of $H(A)$. 
    
    What we have shown that subgroup schemes correspond precisely to closed subschemes which are also subgroup functors.
    
    \item Suppose $k$ has characteristic $n\geq 1$, and $A$ be any $k$--algebra. There is a functor sending $A$ to the group $\alpha_n(A)=\{a\in A \mid a^n=0\}$ under addition. It is easy to see that this group scheme is represented by $k[t]/t^n$. Moreover, clearly $\alpha_n$ is a closed subscheme of $\mathbb{G}_a$ defined by a Hopf ideal, and is therefore a subgroup scheme of $\mathbb{G}_a$.
    
    \item Let $n\geq 1$. The $n$\textit{th roots of unity} $\mu_n$ sends any commutative $k$--algebra $A$ to the group of $n$th roots of unity in $A$, that is, the group $\mu_n(A)=\{a\in A \mid a^n=1\}$. Similar to the above examples, it is easy to see that this group scheme is represented by $k[t]/(t^n-1)$. Moreover, clearly $\mu_n$ is a subgroup scheme of $\mathbb{G}_m$.
    
      \item Let $k$ be a field of characteristic $p$, and consider the Frobenius morphism $F_k: k\to k$ defined by $x\mapsto x^p$. Let $H$ be a Hopf algebra over $k$, we can define a new Hopf algebra $H^{(p)}$ by looking at the pushout diagram:
    \begin{center}
    \begin{tikzcd}
         &k \arrow[d, "F_k"']\arrow[r, "\eta"]&H \arrow[d, dashed]\\
         &k \arrow[r, dashed]& H^{(p)}
    \end{tikzcd} 
    \end{center}
    We also have a Frobenius map $F_H:H\to H$ given by $h\mapsto h^p$ which clearly satisfies $\eta \circ F_k= F_H \circ \eta$. Now using the universal property of this diagram we get an induced morphism of Hopf algebras $F_{H/k}: H^{(p)}\to H$ called the {relative Frobenius morphism}, that is:
        \begin{center}
    \begin{tikzcd}
         &k \arrow[d, "F_k"']\arrow[r, "\eta"]&H \arrow[ddr, bend left=40, "F_H"]\arrow[d] &\\
         &k \arrow[drr, bend right=40, "\eta"]\arrow[r]& H^{(p)} \arrow[dr, dashed, "F_{H/k}"]& \\
         & & &H 
    \end{tikzcd} 
    \end{center}
    We let $F_{H/k}^{(r)}:H^{(p^r)}\to H$ denote the $r$th iterative of the relative Frobenius morphism, and we call the affine group scheme corresponding to the Hopf algebra $\coker F_{H/k}^{(r)}$ the $r$\textit{th Frobenius kernel}, and we denote it by $G_{(r)}$. Note that the image of a map of Hopf algebras is clearly a Hopf ideal, therefore $G_{(r)}$ will always be a subgroup scheme of $\Spec H$

    For example, if $H=k[t]$, then $H^{(p)}=k[t]$, and $F^{(r)}_{H/k}$ is defined by $t\mapsto t^{p^r}$. Thus we see that $G_{(r)}$ is the affine group scheme $\Spec k[t]/(t^{p^r})=\alpha_{p^r}$. Similarly, if we take $H=k[t, t^{-1}]$ we get that $G_{(r)}=\Spec k[t]/(t^{p^r}-1)=\mu_{p^r}$.
    \end{enumerate}
\end{Ex}

 \begin{Def}
     Let $G$ be an affine group scheme over $k$. 
     \begin{enumerate}
         \item We will say $G$ is \textit{flat} if $k[G]$ is flat as a $k$--module.
         \item We will say $G$ is \textit{finite type} if $k[G]$ is finitely generated as a $k$--algebra.
         \item We will say $G$ is \textit{finite} if $k[G]$ is finitely generated as a $k$--module.
         \item We will say $G$ is \textit{\'etale} if $k[G]$ is a separable algebra.
     \end{enumerate}
 \end{Def}

 \begin{Rem}
    We are mainly interested in \'etale group schemes when the ground ring $k$ is a field of characteristic 0. In this case, a $k$--algebra $A$ is \textit{separable} if 
    \[
    A \otimes \Bar{k} = \Bar{k} \times \ldots \times \Bar{k}
    \]
    where $\Bar{k}$ denotes the algebraic closure of $k$. In fact, this is equivalent to $A$ being \textit{reduced}, i.e., it has no non-zero nilpotent elements. We refer to \cite[Section 6.2]{Wat79} for further details.
 \end{Rem}

\subsection*{Cartier's Theorem and \'Etale Group Schemes}

We will need a few results in characteristic zero, namely \textit{Cartier's Theorem}, of which we give an outline of the proof following \cite{Wat79}. In particular, Cartier's theorem shows in characteristic zero that all finite group schemes are \'etale. The upshot of this result is that we can show finite group schemes over an algebraically closed field of characteristic zero must be constant!

\begin{Th}[Cartier]\label{Thm:Cartier}
    Let $H$ be a finite dimensional Hopf algebra over a field $k$ of characteristic zero. Then $H$ is reduced.
\end{Th}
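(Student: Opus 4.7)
The plan is to reduce the problem to showing that a local finite-dimensional commutative Hopf algebra in characteristic zero must be trivial, and then to derive a contradiction from the existence of a nonzero primitive element in the associated graded Hopf algebra.

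First I would reduce to the case $k$ algebraically closed via faithfully flat base change, since reducedness descends along $k\to\bar k$. Then $H$, being finite-dimensional commutative over $\bar k$, decomposes as a product $H=\prod_i H_i$ of local Artinian factors. The factor $H_0$ corresponding to the identity $e\in G(k)$ is the coordinate algebra of the identity component $G^0$, a closed subgroup scheme of $G=\Spec H$; translation by $g\in G(k)$ produces $k$-algebra isomorphisms $H_0\simeq H_g$. Hence $H$ is reduced if and only if $H_0$ is, and it suffices to prove $H_0=k$.

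Suppose for contradiction $H_0$ is local with $I=\ker\varepsilon\ne 0$. Since $H_0$ is Artinian, $I$ is nilpotent, so the filtration $H_0\supseteq I\supseteq I^2\supseteq\cdots$ yields a finite-dimensional associated graded algebra $\mathrm{gr}(H_0)=\bigoplus_{n\ge 0}I^n/I^{n+1}$. The comultiplication descends because $\Delta(I^n)\subseteq\sum_{k+l=n}I^k\otimes I^l$, making $\mathrm{gr}(H_0)$ a graded commutative connected Hopf algebra. Every element of $I/I^2=\mathrm{gr}_1(H_0)$ is primitive: for $x\in I$ the counit axiom gives $\Delta(x)=x\otimes 1+1\otimes x+u$ with $u\in I\otimes I$, a term of filtration degree $\ge 2$, so the degree-one component of the induced coproduct equals $\bar x\otimes 1+1\otimes\bar x$.

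The heart of the proof is to show that a finite-dimensional graded commutative connected Hopf algebra $B$ over a characteristic-zero field has no nonzero primitives. If $0\ne x\in P(B)$ is homogeneous of degree $d\ge 1$ (the $d=0$ case is automatic) and $n\ge 2$ is minimal with $x^n=0$, then commutativity of $B$ and the binomial expansion give
\[
0=\Delta(x^n)=(x\otimes 1+1\otimes x)^n=\sum_{k=0}^n\binom{n}{k}\,x^k\otimes x^{n-k}.
\]
The summands lie in pairwise distinct bidegrees $(dk,d(n-k))$, so each vanishes separately; the $k=1$ term reads $n\,x\otimes x^{n-1}=0$, and since $n$ is a unit in characteristic zero and $x\ne 0$, non-degeneracy of the tensor product forces $x^{n-1}=0$, contradicting minimality. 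Applied to $\mathrm{gr}(H_0)$ this gives $I/I^2=0$, whence Nakayama's lemma yields $I=0$ and $H_0=k$, the desired contradiction. The main obstacle is this vanishing-of-primitives step --- the only genuinely characteristic-zero ingredient --- whose failure in characteristic $p$ is witnessed by the primitive nilpotent $t\in k[t]/(t^p)$, the coordinate algebra of the non-reduced group scheme $\alpha_p$.
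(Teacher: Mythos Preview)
Your proof is correct and takes a genuinely different route from the paper's. The paper, following Waterhouse, works directly with $H$: it constructs derivations $d_i\colon H\to H$ from the Hopf structure satisfying $d_i(x_j)\equiv\delta_{ij}\bmod I$, and uses these (this is where characteristic zero enters) to prove that the monomials $x_1^{m_1}\cdots x_n^{m_n}$ with $\sum m_i=s$ form a basis of $I^s/I^{s+1}$. From this it follows that any square-zero element lies in $\bigcap_j I^j$; a translation argument over the algebraic closure then shows such an element lies in $\bigcap_j\mathfrak{m}^j$ for \emph{every} maximal ideal $\mathfrak{m}$, and Krull's intersection theorem gives zero. Your approach instead decomposes $H$ into local factors, reduces via translation to the connected factor $H_0$, and passes to the associated graded Hopf algebra, where the single binomial identity $\Delta(x^n)=\sum\binom{n}{k}x^k\otimes x^{n-k}$ does all the work. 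The paper's derivation argument yields slightly more along the way (an explicit monomial basis for each $I^s/I^{s+1}$), whereas your argument is shorter, avoids Krull's intersection theorem, and isolates the characteristic-zero input in a single invertible integer.
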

Let $I=\ker \varepsilon$ denote the augmentation ideal of $H$. Since $H$ is finite dimensional, so is $I/I^2$. Let $x_1,\dots, x_n \in I$ be elements whose residue classes give a basis for $I/I^2$

\begin{Lemma}\label{lemma-monomials}
    The monomials $x_1^{m_1}\cdots x_n^{m_n}$ with $\sum_i m_i=s$ are a basis for $I^s/I^{s+1}$
\end{Lemma}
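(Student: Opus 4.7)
The plan is to prove spanning and linear independence separately, using that $H$ is commutative (as is conventional for Cartier's theorem, and certainly the case when $H = k[G]$ for a finite group scheme).

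\emph{Spanning.} I would proceed by induction on $s$. The cases $s=0$ and $s=1$ are immediate: $I^0/I \cong k$ is spanned by the empty monomial $1$, and $I/I^2$ is spanned by $\bar x_1,\dots,\bar x_n$ by hypothesis. For the inductive step, any element of $I^s$ is a sum of products $g_1\cdots g_s$ with $g_j \in I$, and by hypothesis each $g_j$ is congruent modulo $I^2$ to a linear combination of the $x_i$. Expanding and using commutativity to sort factors, each such product reduces modulo $I^{s+1}$ to a linear combination of monomials $x^m$ with $|m|=s$.

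\emph{Linear independence via derivations.} The idea is to build first-order differential operators on $H$ that detect individual monomials. Let $\psi_i \in H^*$ be the linear functional with $\psi_i(1)=0$, $\psi_i|_{I^2}=0$ and $\psi_i(x_j)=\delta_{ij}$; this is well-defined on $H = k\cdot 1 \oplus I$ because the classes $\bar x_1,\dots,\bar x_n$ form a basis of $I/I^2$. Writing each element as $h = \varepsilon(h)\cdot 1 + (h - \varepsilon(h)\cdot 1)$ with the second summand in $I$, and using that $\psi_i$ kills both $1$ and $I^2$, one checks the $\varepsilon$-derivation identity
\[
\psi_i(ab) = \varepsilon(a)\,\psi_i(b) + \psi_i(a)\,\varepsilon(b).
\]
Now define $D_i := (\id_H \otimes \psi_i) \circ \Delta \colon H \to H$. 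Combining $\Delta(ab)=\Delta(a)\Delta(b)$, the $\varepsilon$-derivation property of $\psi_i$, and the counit axiom $\sum a_{(1)} \varepsilon(a_{(2)}) = a$ upgrades this to the honest Leibniz rule $D_i(ab) = D_i(a)\,b + a\,D_i(b)$. Applying this inductively to a product of $s$ elements of $I$ shows $D_i(I^s) \subseteq I^{s-1}$ for every $s\geq 1$, so $D_i$ descends to a degree $-1$ derivation $\bar D_i$ on the associated graded algebra $\mathrm{gr}_I(H) = \bigoplus_{s\geq 0} I^s/I^{s+1}$. Since $\Delta(x_j) - x_j\otimes 1 - 1\otimes x_j \in I\otimes I$ and $\psi_i(1)=0$, a direct computation gives $\varepsilon(D_i(x_j)) = \psi_i(x_j) = \delta_{ij}$, hence $\bar D_i(\bar x_j) = \delta_{ij}$ in $\mathrm{gr}^0 = k$.

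\emph{Conclusion.} Suppose $\sum_{|m|=s} c_m\, \bar x^m = 0$ in $I^s/I^{s+1}$. Fix any multi-index $m'$ with $|m'|=s$ and apply the iterated operator $\bar D^{m'} := \bar D_1^{m'_1} \cdots \bar D_n^{m'_n}$, which lands in $\mathrm{gr}^0 = k$. A routine iteration of the graded Leibniz rule together with $\bar D_i(\bar x_j) = \delta_{ij}$ gives
\[
\bar D^{m'}(\bar x^m) = \begin{cases} m_1!\cdots m_n! & \text{if } m = m', \\ 0 & \text{otherwise,}\end{cases}
\]
because $|m'|=|m|$ combined with the componentwise inequality $m'\leq m$ (needed to avoid a vanishing factor) forces $m'=m$. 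Hence $c_{m'}(m'_1!\cdots m'_n!) = 0$, and since $\mathrm{char}(k)=0$ the factorial is invertible, giving $c_{m'}=0$.

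The main obstacle is setting up the derivations $D_i$ and verifying that they descend to $\mathrm{gr}_I(H)$ with the desired values on the generators $\bar x_j$; everything else is bookkeeping with the Leibniz rule. Characteristic zero intervenes only at the very end, through the invertibility of the multinomial factorials, and indeed the statement fails in characteristic $p$ since $\bar D_i(\bar x_i^p) = p\,\bar x_i^{p-1} = 0$ would prevent us from extracting the coefficient of $\bar x_i^p$ in $I^p/I^{p+1}$.
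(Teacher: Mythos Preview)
Your proof is correct and follows essentially the same approach as the paper: both construct derivations $D_i$ on $H$ as $(\id\otimes\psi_i)\circ\Delta$ for suitable functionals $\psi_i$ vanishing on $I^2$, verify the Leibniz rule, and use characteristic zero to extract coefficients. The only cosmetic differences are that you prove spanning explicitly (the paper omits it as obvious) and that you apply the full iterated operator $\bar D^{m'}$ at once, whereas the paper applies a single $d_i$ and inducts on $s$.
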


\begin{proof}
Let $\Omega_s$ denote the subset of $\mathbb{N}^n$ consisting of the tuples $m=(m_1, \dots, m_n)\in \mathbb{N}^n$ with $\sum_i m_i=s$, and for brevity we will denote $x_1^{m_1}\cdots x_n^{m_n}$ by $x^m$. Furthermore, let $1_i$ denote the element of $\mathbb{N}^n$ with $1$ in the $i$th position and $0$ everywhere else. We have a map 
\[
g\colon  H\to I/I^2, \quad h\mapsto h-\varepsilon(h)1_H
\]
which can be seen to be a derivation of $H$--modules. For each $i\in\{1, ..., n\}$  may also define a $k$--linear map $f_i\colon  I/I^2\to k$ by $x_i \mapsto \delta_{ij}$ which can be extended to a $H$--linear map $\hat{f}_i: H\otimes I/I^2 \to H$ by $a\otimes b \mapsto f_i(b)a$. We may now define a family of derivations $d_i:H\to H$ by the composites 
\[
H\xrightarrow[]{\Delta}H\otimes H \xrightarrow[]{\id_H\otimes g} H\otimes I/I^2 \xrightarrow[]{\hat{f}_i} H,
\]
with the property that $d_i(x_j)=\delta_{ij}\mod I$. Let $\sum_i m_i=s$, then since these $d_i$ are derivations we have that $d_i(x_1^{m_1}\cdots x_n^{m_n})=m_ix_1^{m_1}\cdots x_i^{m_i-1}\cdots x_n^{m_n} \mod I^s$. In particular, $d_i(I^s) \subseteq I^{s-1}$. To show that these monomials $x^m$ for $m\in \Omega_s$ form a basis for $I^s/I^{s+1}$ we must show linear independence. Let $\sum_{m\in \Omega_s} \alpha_m x^m =0 \mod I^{s+1}$, then applying $d_i$ to both sides we see that 
\[
\sum_{m\in \Omega_s} \alpha_n m_i x^{m-1_i}=0 \mod I^s
\]
 and by induction we may conclude that $\alpha_mm_i=0$. Since we are in characteristic zero we must therefore have that $\alpha_m=0$ as desired.
\end{proof}

\begin{Lemma}
    Any square zero element $h\in H$ is contained in $\bigcap_j I^j$.
\end{Lemma}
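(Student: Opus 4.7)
The plan is to prove $h \in I^s$ by induction on $s \geq 0$. The case $s = 0$ is trivial, and for $s = 1$, applying the counit $\varepsilon$ to $h^2 = 0$ gives $\varepsilon(h)^2 = 0$ in the field $k$, so $\varepsilon(h) = 0$ and hence $h \in I$.

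For the inductive step, suppose $h \in I^s$ for some $s \geq 1$. By Lemma \ref{lemma-monomials}, write $h \equiv \sum_{m \in \Omega_s} \alpha_m x^m \pmod{I^{s+1}}$ with coefficients $\alpha_m \in k$. Squaring in the commutative Hopf algebra $H$ (the case of interest, coming from coordinate algebras of affine group schemes) yields
\[
0 = h^2 \equiv \sum_{m,m' \in \Omega_s} \alpha_m \alpha_{m'} x^{m+m'} \pmod{I^{2s+1}}.
\]
Since the degree-$2s$ monomials form a basis of $I^{2s}/I^{2s+1}$ by Lemma \ref{lemma-monomials}, one extracts for each $p \in \Omega_{2s}$ the quadratic relation $\sum_{(m, m') \in \Omega_s \times \Omega_s,\, m + m' = p} \alpha_m \alpha_{m'} = 0$.

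The conclusion that all $\alpha_m$ vanish follows from a lexicographic leading-term argument. Suppose for contradiction some $\alpha_m \neq 0$ and let $m^*$ be the lex-largest such index. Take $p = 2m^*$; then for any pair $(m, m')$ with $m + m' = p$ and both coefficients nonzero we must have $m, m' \leq_{\mathrm{lex}} m^*$. If $m <_{\mathrm{lex}} m^*$, then at the first index $j$ where they differ we have $m_j < m^*_j$, forcing $m'_j = 2m^*_j - m_j > m^*_j$ while $m'$ agrees with $m^*$ at all earlier indices, whence $m' >_{\mathrm{lex}} m^*$, contradicting maximality. Thus only the pair $(m^*, m^*)$ contributes, giving $\alpha_{m^*}^2 = 0$ and hence $\alpha_{m^*} = 0$, contradicting the choice of $m^*$. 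Therefore every $\alpha_m = 0$, so $h \in I^{s+1}$, closing the induction.

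The main technical ingredient is Lemma \ref{lemma-monomials}, where characteristic zero is essential for linear independence of the monomial classes; the combinatorial leading-term step is elementary once the filtered-algebra picture is in place. Morally, this argument is the observation that in the commutative case $\mathrm{gr}_I(H)$ is generated in degree one by the classes $\bar{x}_i$ with the monomials $\bar{x}^m$ forming a basis, so the graded ring is a polynomial ring, hence an integral domain; the relation $\bar{h}^2 = 0$ in each graded piece therefore forces $\bar{h} = 0$, which is precisely the content of the inductive step.
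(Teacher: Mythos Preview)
Your proof is correct and follows essentially the same approach as the paper: both argue that if $h \in I^s \setminus I^{s+1}$, then its class in $I^s/I^{s+1}$ squares to a nonzero element of $I^{2s}/I^{2s+1}$, contradicting $h^2=0$. The paper states this step tersely (``we see that $(\sum \alpha_m x^m)^2$ is non-trivial modulo $I^{2j+1}$''), whereas you supply the missing justification via the lexicographic leading-term argument and correctly identify the underlying reason, namely that $\mathrm{gr}_I H$ is a polynomial ring and hence a domain.
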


\begin{proof}
    Suppose that $h^2=0$ and that there exists some $j+1$ for which $h\notin I^{j+1}$, so that $h$ is non-zero in $I^{j}/I^{j+1}$. We can write 
    \[
    h=\sum_{m\in \Omega_{j}}\alpha_m x^m \mod I^{j+1}.
    \]
    Squaring $h$ and using the Lemma \ref{lemma-monomials} we see that $(\sum_{m\in \Omega_{j}}\alpha_m x^m)^2$ is non-trivial modulo $I^{2j+1}$, which is a contradiction since it must be congruent to $h^2=0$.
\end{proof}

\begin{proof}[Proof of Theorem \ref{Thm:Cartier}] We may embed $k$ into its algebraic closure, and assume without loss of generality that $k$ is algebraically closed. Since $H$ is a finite dimensional algebra it is a quotient of the polynomial ring $k[x_1, \cdots, x_\ell]$ by some ideal $J$. Moreover, since we are over an algebraically closed field of characteristic zero, Hilbert's Nullstellensatz tells us that the maximal ideals of $H$ are of the form 
\[
\mf{m}_a=(x-a_0, \dots, x-a_\ell)
\]
for $a_i \in k$ with $f(a_1, \dots, a_\ell)=0$, for all $f\in J$. In particular, the ideal $\mf{m}_a$ is the kernel of the evaluation map at $a=(a_1, \dots, a_\ell)$, that is, it is the kernel of the map $g_a:H\to k$ defined by $f \mapsto f(a_1, \dots, a_\ell)$. We have a map of $k$--algebras: 
\[
\psi_{g_a}\colon H\xrightarrow[]{\Delta} H\otimes H\xrightarrow{g_a\otimes \id_H}k\otimes H \simeq H,
\]
which is an isomorphism with inverse $\psi_{g_a \circ S}$. Now one calculates $\psi_{g_a}(\mf{m}_a)=I$.

Let $h\in H$ be square zero, then we have that $\psi_{g_a}(h)$ is square zero, and thus contained in $\bigcap_j I^j$. Since $\psi_{g_a}$ is a bijection we see that $h\in \bigcap_j \mf{m}_a^j$. Now Krull's intersection theorem tells us that $h=0$.
\end{proof}

\begin{Cor}
    Finite group schemes defined over a field of characteristic zero are \'etale.
\end{Cor}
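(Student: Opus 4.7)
The plan is to deduce the corollary directly from Cartier's Theorem (Theorem \ref{Thm:Cartier}) combined with the characterisation of separable algebras recalled in the remark preceding the étale definition. Let $G$ be a finite group scheme over a field $k$ of characteristic zero, so that $k[G]$ is a finite-dimensional commutative Hopf algebra over $k$. To conclude that $G$ is étale we must show that $k[G]\otimes_k\bar{k}\cong\bar{k}\times\cdots\times\bar{k}$, where $\bar{k}$ denotes the algebraic closure of $k$.

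First I would base change to $\bar{k}$. The tensor product $k[G]\otimes_k\bar{k}$ inherits the structure of a Hopf algebra over $\bar{k}$, since the comultiplication, counit, and antipode for $k[G]$ extend $\bar{k}$-linearly (using that tensor product commutes with tensor product, which is the only compatibility one needs for Hopf structure). Moreover, $k[G]\otimes_k\bar{k}$ remains finite-dimensional over $\bar{k}$, and $\bar{k}$ still has characteristic zero. Thus Cartier's Theorem applies to $k[G]\otimes_k\bar{k}$, showing that it is reduced.

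Next I would invoke the structure theory of finite-dimensional reduced commutative algebras over an algebraically closed field. Any such algebra $A$ is Artinian, hence a finite product $A\cong \prod_{i=1}^{r}A_{\mathfrak{m}_i}$ of its localisations at maximal ideals. Each factor $A_{\mathfrak{m}_i}$ is a local Artinian $\bar{k}$-algebra, and reducedness forces each to be a field. A finite field extension of the algebraically closed field $\bar{k}$ must equal $\bar{k}$ itself, so $A\cong \bar{k}^{r}$. Applying this to $A=k[G]\otimes_k\bar{k}$ yields exactly the desired decomposition, so $k[G]$ is separable and $G$ is étale.

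The real substance is already contained in Cartier's Theorem; what remains for the corollary is almost formal. The only mildly delicate point is making sure that reducedness is preserved (indeed strengthened to \emph{geometrically} reduced) under the base change $k\rightsquigarrow\bar{k}$. The trick here is that we do not need to prove geometric reducedness of $k[G]$ directly; instead we exploit that the base-changed algebra $k[G]\otimes_k\bar{k}$ is itself a finite-dimensional Hopf algebra in characteristic zero, so Cartier's Theorem applies a second time and bypasses the issue entirely.
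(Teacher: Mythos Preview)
Your proof is correct and follows essentially the same approach as the paper, which simply invokes Cartier's Theorem together with the equivalence (recalled in the remark and cited from \cite[Section 6.2]{Wat79}) that for a finite-dimensional algebra over a field of characteristic zero, separable is equivalent to reduced. Your argument is a bit more self-contained: rather than citing this equivalence, you reapply Cartier's Theorem to the base-changed Hopf algebra $k[G]\otimes_k\bar{k}$ and then use the elementary structure theory of reduced Artinian algebras over an algebraically closed field, which neatly sidesteps any separate discussion of geometric reducedness.
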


\begin{proof}
    For a finite dimensional algebra $A$ over a field, separable is equivalent to reduced. See \cite[Section 6.2]{Wat79}. 
\end{proof}

\begin{Prop}\label{Prop:etale in charc 0 are constant}
Any \'etale group scheme $G$ over an algebraically closed field $k$ of characteristic 0 is constant.  
\end{Prop}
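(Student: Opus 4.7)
The plan is to leverage the hypothesis to decompose $k[G]$ completely as a $k$-algebra, and then match its Hopf structure with that of the constant group scheme on $\Gamma := G(k)$ from Example \ref{main examples}(v).

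First, by the Remark following the definition of \'etale, since $k[G]$ is separable and $k$ is algebraically closed of characteristic zero, I obtain a decomposition $k[G] \cong k \times \cdots \times k$ as $k$-algebras. Let $e_1, \ldots, e_n$ denote the primitive idempotents. Any $k$-algebra map $k[G] \to k$ must send the family $\{e_i\}$ to a family of orthogonal idempotents of $k$ summing to $1$, so exactly one $e_i$ is sent to $1$ and the others to $0$. Hence $\Gamma := G(k) = \Hom_{k\text{-alg}}(k[G], k)$ has cardinality $n$, with one projection per idempotent, and I re-index so that $e_g$ denotes the idempotent dual to $g \in \Gamma$, i.e.\ $g(e_h) = \delta_{g,h}$.

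Next, I would verify that the Hopf structure on $k[G]$ coincides with that of $k^{|\Gamma|}$ described in Example \ref{main examples}(v). The unit axiom for $\Gamma$ forces $\varepsilon$ to equal the identity element $e \in \Gamma$ under the above identification, so $\varepsilon(e_g) = \delta_{g,e}$. For the comultiplication, I would use the description of the group law on $G(A)$ in terms of $\Delta$ (see the Remark preceding Example \ref{main examples}): for any $f, g, h \in \Gamma$,
\[
(g \otimes h)(\Delta(e_f)) \;=\; (gh)(e_f) \;=\; \delta_{gh,f}.
\]
Writing $\Delta(e_f) = \sum_{g,h} c^f_{g,h}\, e_g \otimes e_h$ and evaluating with $(g \otimes h)$ reads off $c^f_{g,h} = \delta_{gh,f}$, whence $\Delta(e_f) = \sum_{g'h' = f} e_{g'} \otimes e_{h'}$. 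A parallel argument from the antipode axiom gives $S(e_g) = e_{g^{-1}}$. These are precisely the Hopf-algebra formulas for the coordinate ring of the constant group scheme $G_\Gamma$ in Example \ref{main examples}(v), so $k[G] \cong k[G_\Gamma]$ as Hopf algebras, and consequently $G \cong G_\Gamma$.

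The principal obstacle is really the initial decomposition $k[G] \cong k^n$; once this is in hand, the rest is Hopf-algebra bookkeeping via the duality between $k$-algebra maps $k[G]\to k$ and points of $\Gamma$. The algebraic closure of $k$ is what forces the simple factors to be copies of $k$ itself rather than finite separable extensions, while the characteristic-zero hypothesis is already built into the definition of separability invoked above (and otherwise guaranteed by Cartier's theorem in the finite-dimensional case).
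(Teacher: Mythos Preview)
Your proof is correct and takes a genuinely different, more elementary route than the paper. The paper proceeds by first establishing the general Galois-theoretic equivalence $\mathbf{CAlg_k^\textrm{sep}} \simeq \mathcal{G}\textrm{-}\mathbf{FinSet}^\textrm{op}$ (with $\mathcal{G}$ the absolute Galois group of $k$), observes that this equivalence is compatible with the Hopf/group structures, and then notes that for $k$ algebraically closed of characteristic zero one has $k=k_s$ so $\mathcal{G}$ is trivial, whence the $\mathcal{G}$-set attached to $k[G]$ is just a finite group $\Gamma=\Hom_k(k[G],k)$ and $G$ is the constant group scheme on $\Gamma$. You instead bypass the categorical machinery entirely: you use the decomposition $k[G]\cong k^n$ directly (available because $\bar k=k$), read off $\Gamma=G(k)$ from the primitive idempotents, and verify the Hopf-algebra identities for $\Delta$, $\varepsilon$, $S$ by hand using the formula for the group law in terms of $\Delta$. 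One small cosmetic point: when you write $(g\otimes h)(\Delta(e_f))=(gh)(e_f)$ you are silently identifying $k\otimes_k k$ with $k$ via the multiplication map $\alpha$; this is harmless but worth saying. The paper's approach has the advantage of placing the result in its natural generality (\'etale group schemes over any field correspond to finite groups with continuous Galois action), while yours is self-contained and avoids developing that equivalence, which is arguably preferable if the only goal is this proposition.
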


First, we need to make some preparations. In what follows, let $k_s$ denote the separable closure of $k$, and let $\mathcal{G}$ denote the group of automorphisms of $k_s$ over $k$. Let  $\mathbf{CAlg_k^\textrm{sep}}$ denote the category whose objects are separable $k$--algebras and whose morphisms are given by homomorphisms of $k$--algebras, and let  $\mathcal{G}$-$\mathbf{FinSet}$ denote the category whose objects are finite sets with a continuous action of $\mathcal{G}$, with morphisms given by $\mathcal{G}$-equivariant maps.

\begin{Prop} 
The category $\mathbf{CAlg_k^\textrm{sep}}$ is equivalent to the opposite of the category $\mathcal{G}$-$\mathbf{FinSet}$. In symbols, 
\[
  \mathbf{CAlg_k^\textrm{sep}} \simeq \mathcal{G}\textrm{-}\mathbf{FinSet}^\textrm{op}.
\]
\end{Prop}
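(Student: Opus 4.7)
The plan is to exhibit explicit quasi-inverse functors. Define
\[
F\colon \mathbf{CAlg}_k^{\textrm{sep}} \to (\mathcal{G}\textrm{-}\mathbf{FinSet})^{\textrm{op}}, \qquad A \mapsto \Hom_{\mathbf{CAlg}_k}(A, k_s),
\]
with $\mathcal{G}$ acting on $F(A)$ by post-composition, and in the opposite direction
\[
G\colon (\mathcal{G}\textrm{-}\mathbf{FinSet})^{\textrm{op}} \to \mathbf{CAlg}_k^{\textrm{sep}}, \qquad X \mapsto \Hom_{\mathcal{G}}(X, k_s),
\]
where $\Hom_{\mathcal{G}}(X,k_s)$ denotes the set of $\mathcal{G}$-equivariant maps, endowed with the $k$-algebra structure inherited pointwise from $k_s$.

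The first task is to verify that these functors land in the claimed categories. For $F$, finite dimensionality of $A$ bounds $|F(A)|$ by $\dim_k A$, and the image of any homomorphism $A \to k_s$ lies in some finite Galois subextension $L/k$; hence the $\mathcal{G}$-action on $F(A)$ factors through the finite quotient $\mathrm{Gal}(L/k)$ and is continuous. For $G$, a continuous $\mathcal{G}$-action on a finite set $X$ factors through some $\mathrm{Gal}(L/k)$, and decomposing $X$ into orbits $\mathcal{G}/H_i$ identifies $G(X)$ with $\prod_i k_s^{H_i}$. Each $H_i$ is open, so by the Galois correspondence $k_s^{H_i}/k$ is a finite separable field extension, and $G(X)$ is a finite product of such extensions, hence a separable $k$-algebra.

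It remains to exhibit natural isomorphisms $GF \cong \mathrm{id}$ and $FG \cong \mathrm{id}$. I would exploit the fact that both sides decompose into connected or transitive pieces: a finite separable $k$-algebra is a finite product of finite separable field extensions, and a finite continuous $\mathcal{G}$-set is a disjoint union of transitive orbits. On these building blocks: for a transitive orbit $\mathcal{G}/H$, the computation $G(\mathcal{G}/H) = k_s^H$ combined with the orbit-stabilizer theorem produces a canonical bijection $F(k_s^H) \cong \mathcal{G}/H$; for a finite separable extension $L/k$, evaluation produces a natural map $L \to G(F(L))$, and the main obstacle is showing that this map is an isomorphism.

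This surjectivity reduces to the classical Galois-theoretic identity $|\Hom_{\mathbf{CAlg}_k}(L, k_s)| = [L:k]$ for finite separable $L/k$, combined with a dimension count: fixing a finite Galois closure $L'/k$ of $L$ inside $k_s$ and setting $H = \mathrm{Gal}(L'/L)$, the Galois correspondence identifies $G(F(L))$ with the fixed field $(L')^H = L$, of $k$-dimension $[L:k]$. Injectivity of the evaluation map follows from separability, so it is an isomorphism. Naturality in both variables and the triangle identities are then routine diagram chases.
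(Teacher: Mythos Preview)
Your proposal is correct and follows essentially the same approach as the paper: both use the functor $A \mapsto \Hom_k(A, k_s)$ and, implicitly or explicitly, its quasi-inverse $X \mapsto \Hom_{\mathcal{G}}(X, k_s)$. The only cosmetic difference is that the paper verifies $GF(A) \cong A$ in one stroke for arbitrary separable $A$ via the $\mathcal{G}$-equivariant isomorphism $A \otimes_k k_s \cong \mathrm{Map}(X_A, k_s)$ and then checks essential surjectivity on transitive $\mathcal{G}$-sets, whereas you first decompose $A$ into a product of separable field extensions and appeal to the Galois correspondence on each factor; both routes rest on the same underlying Galois-theoretic input.
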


\begin{proof} For a separable $k$--algebra $A$, let $X_A$ be $\Hom_k(A,k_s)$ equipped with the $\mathcal{G}$--action induced by the action of $\mathcal{G}$ on $k_s$. This clearly defines a functor 
\[
X_{-}\colon\mathbf{CAlg_k^\textrm{sep}} \to \mathcal{G} \textrm{-}\mathbf{FinSet}^\textrm{op}.
\]
We claim that this is an equivalence. First, we will show that $X_{-}$ is fully faithful. Let $A$ be a separable $k$--algebra. Then we can recover $A$ as the $\mathcal{G}$-fixed points of the ring of functions $X_A\to k_s$. Indeed, the latter is isomorphic to $A\otimes_k k_s$ via 
\[
a\otimes \sigma \mapsto (f\mapsto f(x)=x(a)\sigma)
\]
and this isomorphism is $\mathcal{G}$-equivariant if we let $\mathcal{G}$ act on $A\otimes_k k_s$ on the right by automorphisms and trivially on the left. Hence $(A\otimes_k k_s)^\mathcal{G}\cong A$. In particular, for an equivariant map $X_B\to X_A$, we get a map between the rings of functions $X_B\to X_A\to k_s$ and taking $\mathcal{G}$--fixed points give us a morphism $A\to B$. We deduce that $X_{-}$ is fully faithful. 
It remains to show that $X_{-}$ is essentially surjective. Let $X$ be a transitive $\mathcal{G}$--set, so assume that $X$ has the form $\mathcal{G}_{x_0}$. Then one can choose a finite Galois extension $L$ of $k$ such that the action of $\mathcal{G}$ factors through the Galois group $\mathrm{Gal}(L/k)$. Consider the isotropy group $H$ of $x_0$, and let $A$ be the subfield of $L$ fixed by $H$. It follows that $X_A\to A$ is a $\mathcal{G}$-equivariant isomorphism, which sends $x_0$ to the inclusion $A\to L$. The general case follows by noticing that the disjoint union of $X_A$ and $X_B$ is isomorphic to $X_{A\times B}$. 
\end{proof}

\begin{Rem}
    One can use the Hopf algebra structure on $k[G]$ to show that the $\mathcal{G}$--sets from the previous proposition have a group structure. In fact, one obtains an equivalence  $\mathbf{CAlg_k^\textrm{sep}}$ with the opposite of the category of finite groups with a continuous action of $\mathcal{G}$ by group homomorphisms.
\end{Rem}

\begin{proof}[Proof of Proposition \ref{Prop:etale in charc 0 are constant}] 
Since algebraically closed is equivalent to separably closed in characteristic zero, we deduce that $\mathcal{G}$ is trivial. Hence, there is a finite group $\Gamma$ such that $G$ is the constant group scheme mapping a $k$--algebra $A$ to $\Gamma^{\pi_0(\Spec (A) )}$ as defined in Example \ref{main examples}(v). Unpacking the above equivalences, one checks that $\Gamma=\Hom_k(k[G],k)$.
\end{proof}

\pagebreak
\section{Representations of Affine Group Schemes}

In this section, we introduce the notion of a module over an affine group scheme and compare it with the concept of a comodule over its coordinate algebra. Recall that $k$  denotes an arbitrary commutative ring throughout.

\begin{Def}
    Let $G$ be an affine group scheme over $k$, and $V$ be a $k$--module. A \textit{representation} of $G$ on $V$ is a natural transformation $\rho\colon  G\times V_a\to V_a$ such that, for all $k$--algebras $A$, the map 
    \[
    \rho_A\colon G(A)\times V\otimes A\to V\otimes A
    \]
    is a group action and $G(A)$ acts on $V\otimes A$ by $A$--linear automorphisms.

    Let $(W,\tau)$ be another representation of $G$. A $k$--linear map $f\colon V\to W$ is a \textit{morphism of representations} if the following diagram commutes
    \begin{center}
        \begin{tikzcd}
            &G\times V_a \arrow[r, "\rho"] \arrow[d, "\id\times \Tilde{f}"'] &V_a \arrow[d, "\Tilde{f}"] \\
            &G\times W_a \arrow[r, "\tau"']&W_a. 
        \end{tikzcd}
    \end{center}
    We will refer to a representation of $G$ as a \textit{$G$--module}, and a morphism of representations as a \textit{$G$--module homomorphism}. We will denote the category of $G$--modules and $G$--module maps by $\mathbf{Rep}(G)$.
\end{Def}

\begin{Rem}
     If $V$ is free, then one sees that the data of a representation of $G$ on $V$ is equivalent to the data of a morphism of affine group schemes $\rho\colon  G\to GL_V$.
\end{Rem}

\begin{Ex}\label{examples 2} We give some simple examples of representations of affine group schemes.
\begin{enumerate}[(i)]
    \item Picking any $k$--module $V$ and the identity group action yields the \textit{trivial representation}; that is, for $g\in G(A)$ and $v\otimes a \in  V\otimes A$ we define $\rho(g)(v\otimes a)=v\otimes a$.
    \item One dimensional representations of $G$, i.e., representations of $G$ on $k$. Those correspond precisely to morphisms $G\to \mathbb{G}_m$, and are called \textit{characters}. We call 
    \[
    \chi(G)\coloneqq\Hom(G, \mathbb{G}_m)
    \]
    the \textit{character group}. It is easy to see that this forms a group under pointwise multiplication. Note that different characters give different representations of $G$ on $k$.
    \item If we pick $V=k[G]$, then there is the so-called \textit{right  regular representation $\rho_r$}. We can view $k[G]\otimes_k A$ as the collection of scheme homomorphisms $G_A\to \mathbb{A}^1_A$, where $\mathbb{A}^1_A=\Spec( k[t]\otimes_k A) $ denotes the affine line over $A$ and $G_A=\Spec k[G]\otimes_k A$ denotes the base change of $G$. Moreover, from the functor of points perspective, we can think of $G(A)$ as the points over $A$ of the scheme $G$, or equivalently, the points of $G_A$. Now let $g\in G(A)$ and 
    \[
    f\in k[G]\otimes_k A \simeq \Nat(G_A, \mathbb{A}^1_A),
    \]
    we define $\rho_r(g)(f)(x)=f(xg)$ for all $x\in G(A)$. Analogously, we have a \textit{left regular representation $\rho_l$} by $\rho_l(g)(f)(x)=f(g^{-1}x)$ for all $x\in G(A)$. 

    Alternatively from the algebraic perspective, given $x\in k[G]$ with $\Delta(x)=x_i\otimes f_i$, the right regular representation is more explicitly given by 
    \[
    \rho(g)(x\otimes a)=\sum_i x_i\otimes g(f_i)a, \quad \mbox{for all }  g\in G(A), \mbox{ and } a\in A.
    \]
\end{enumerate}
\end{Ex}

\begin{Th}\label{comodules}
    Let $G$ be an affine group scheme over $k$. Then there is an equivalence of categories 
    \[
    \mathbf{Rep}(G) \simeq \mathbf{coMod}_{k[G]}.
    \]
    Furthermore, if $G$ is finite and flat, and $k$ is Noetherian, then there is an equivalence of categories 
    \[
    \mathbf{Rep}(G) \simeq \mathbf{Mod}_{k[G]^*}. 
    \]
\end{Th}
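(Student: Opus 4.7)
The plan is to construct explicit quasi-inverse functors $\Phi\colon \mathbf{Rep}(G) \to \mathbf{coMod}_{k[G]}$ and $\Psi\colon \mathbf{coMod}_{k[G]} \to \mathbf{Rep}(G)$, and then to deduce the second equivalence by appealing to Theorem \ref{equivalence}. The whole argument is an application of the Yoneda philosophy: a representation is determined by its value on the universal element $\mathrm{id}_{k[G]} \in G(k[G])$.

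First I would define $\Phi$. Given a representation $\rho\colon G\times V_a\to V_a$, evaluate it at $A=k[G]$ on the universal point $\mathrm{id}_{k[G]}\in G(k[G])=\mathrm{Hom}_{\mathbf{CAlg}_k}(k[G],k[G])$, obtaining a $k[G]$-linear automorphism $\rho_{k[G]}(\mathrm{id}_{k[G]})$ of $V\otimes k[G]$. Restricting it to the summand $V\otimes 1 \hookrightarrow V\otimes k[G]$ yields a $k$-linear map $\Delta_V\colon V\to V\otimes k[G]$. The coassociativity axiom for $\Delta_V$ is the image under $\rho$ of the associativity diagram for the action, evaluated at the pair $(\mathrm{id}_{k[G]\otimes k[G]\otimes k[G]})$ (equivalently, it comes from naturality of $\rho$ applied to the comultiplication $k[G]\to k[G]\otimes k[G]$), while the counit axiom corresponds to the fact that the identity element of $G(k)$ acts trivially. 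On morphisms, a $G$-equivariant $f\colon V\to W$ is sent to itself, viewed as a comodule map, which is immediate from naturality.

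Next I would define $\Psi$. Given a coaction $\Delta_V(v)=v_{(0)}\otimes v_{(1)}$, for each $k$-algebra $A$ and each $g\in G(A)=\mathrm{Hom}_{\mathbf{CAlg}_k}(k[G],A)$ set
\[
\rho_A(g)(v\otimes a) \;=\; v_{(0)}\otimes g(v_{(1)})\,a,
\]
extended $A$-linearly. Naturality in $A$ is transparent. The identity element of $G(A)$ is $\eta_A\circ\varepsilon$, so $\rho_A$ of it acts as $v\otimes a\mapsto v_{(0)}\varepsilon(v_{(1)})\otimes a = v\otimes a$ by the counit axiom. For $g,h\in G(A)$ with product $gh = m_A\circ(g\otimes h)\circ\Delta$ (the group structure on $G(A)$ coming from the Hopf algebra structure), coassociativity of $\Delta_V$ gives $\rho_A(gh)=\rho_A(g)\circ\rho_A(h)$. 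That each $\rho_A(g)$ is an $A$-linear automorphism (with inverse $\rho_A(g^{-1})$ given by the antipode) is then automatic. A comodule map is automatically $G$-equivariant by the same Sweedler-style computation. Finally, $\Phi$ and $\Psi$ are mutually quasi-inverse: $\Psi\Phi = \mathrm{id}$ because reading off the coaction from $\rho_{k[G]}(\mathrm{id}_{k[G]})$ on $V\otimes 1$ and then reconstructing the action via the formula above recovers $\rho$ (since any $g\in G(A)$ factors through the universal element as $g=g_\ast\circ\mathrm{id}_{k[G]}$ and $\rho$ is natural), and $\Phi\Psi=\mathrm{id}$ by direct unwinding.

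For the second equivalence, observe that if $G$ is finite and flat and $k$ is Noetherian, then $k[G]$ is a finitely generated flat $k$-module, hence finitely generated and projective over $k$ (a standard fact: finitely generated flat modules over Noetherian rings are projective). Theorem \ref{equivalence} then provides an equivalence $\mathbf{coMod}_{k[G]}\simeq {}_{k[G]^\ast}\mathbf{Mod}$, which composed with the first equivalence gives the claim. The only real work is the bookkeeping of naturality and the Sweedler manipulations in the first part; I do not expect a genuine obstacle, only care in pinning down that the group structure on $G(A)$ really is the convolution $g\ast h = m_A\circ(g\otimes h)\circ\Delta$, as already highlighted in the paper's discussion following Remark \ref{equivalence of definitions}.
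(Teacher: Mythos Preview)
Your proposal is correct and follows essentially the same approach as the paper: both directions of the equivalence are constructed exactly as you describe (the paper even writes the action formula $\rho(g)(x\otimes a)=\sum_i x_i\otimes g(f_i)a$ and the coaction $\Delta_V(v)=\rho(\mathrm{id}_{k[G]})(v\otimes 1)$), and the second statement is deduced from Theorem~\ref{equivalence} via the same finitely-presented-flat-implies-projective observation. Your write-up supplies a bit more detail on verifying the group-action axioms, but there is no substantive difference.
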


\begin{proof}
    To prove the first statement; let $(V, \Delta_V)$ be a $k[G]$--comodule. Then we may define a representation as follows. Pick any commutative $k$--algebra $A$, and some $g\in G(A)=\Hom(k[G], A)$. Then we get  $k$--linear map 
    \[
    V\xrightarrow[]{\Delta_V} V\otimes_k k[G] \xrightarrow[]{\id_V\otimes g} V\otimes_k A,
    \]
    which corresponds to an $A$--linear map $V\otimes_k A\to V\otimes_k A$ by tensoring everywhere with $A$ and then composing with the multiplication map on $A$. More precisely, let $x\otimes a \in V\otimes A$ and $v(x)=\sum_i x_i\otimes f_i$. Then for $g\in G(A)$ our representation is defined by 
    \[
    \rho(g)(x\otimes a)=\sum_i x_i \otimes g(f_i)a. 
    \]
     It is easy to see that this is a representation of $G$.

    Conversely, given a representation $\rho\colon G\times V_a\to V_a$, where $V$ is some $k$--module, we may define a $k[G]$--coaction $\Delta_V$ on $V$ as follows. Consider the identity morphism $\id_{k[G]} \in G(k[G])=\Hom(k[G], k[G])$, then we can define a map
    \[
    \Delta_V\colon V \to V\otimes_k k[G], \quad v\mapsto \rho(\id_{k[G]})(v\otimes 1).
    \]
      It is easy to verify that  this is a coaction on $V$.

    One checks that these constructions are mutually inverse, and that this assignment behaves correctly with morphisms. 

    For the second statement we can just prove that $\textbf{Mod}_{k[G]^\ast}\simeq \mathbf{coMod}_{k[G]}$. From assumptions on $G$ and the fact $k$ is Noetherian we have that $k[G]$ is finitely presented and flat as a $k$--module. It is well known that such a $k$--module is finitely presented and flat if and only if it is finitely generated and projective. Thus the result follows from Theorem \ref{equivalence}. 
\end{proof}

\begin{Rem}
   By unpacking the assignments from the previous theorem, one observes that the right regular representation of  $G$ corresponds to $k[G]$ with coaction given by $\Delta_{k[G]}$. On the other hand, the left regular representation corresponds to $k[G]$ with coaction 
   \[
   (12)\circ(S_{k[G]} \circ \mathrm{id}_{k[G]})\circ \Delta_{k[G]}
   \]
    where $S_{k[G]}$ denotes the antipode and $(12)$ is the swapping map.  
\end{Rem}

\begin{Prop}\label{Thm: Group schemes over fields are embedding in GL_n}
     Let $G$ be an affine group scheme over $k$, such that the coordinate algebra $k[G]$ is a free $k$-module of rank $n$. Then there is a closed immersion $G\hookrightarrow \GL_n$.
 \end{Prop}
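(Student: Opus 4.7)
The plan is to construct the closed immersion via the right regular representation and then show that the corresponding map of coordinate algebras is surjective.

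First, choose a $k$-basis $\{e_1,\dots,e_n\}$ of $k[G]$. The right regular representation of $G$ (Example \ref{examples 2}(iii)) makes $k[G]$ into a $G$-module, or equivalently (by Theorem \ref{comodules}) a right $k[G]$-comodule via the comultiplication $\Delta\colon k[G]\to k[G]\otimes k[G]$. Writing
\[
\Delta(e_i)=\sum_{j=1}^{n} e_j\otimes a_{ij}\qquad\text{with }a_{ij}\in k[G],
\]
the freeness of $k[G]$ of rank $n$ identifies $\GL_{k[G]}$ with $\GL_n$, so the representation is equivalent to a morphism of affine group schemes $\rho\colon G\to\GL_n$. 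Dually, this is a homomorphism of Hopf algebras
\[
\varphi\colon k[\GL_n]=k[t_{11},\dots,t_{nn},s]/(\det(t_{ij})s-1)\longrightarrow k[G],
\]
and one checks that $\varphi(t_{ij})=a_{ij}$.

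The next step is to show $\varphi$ is surjective, which by the equivalence between closed subschemes and quotient algebras is exactly the statement that $G\hookrightarrow\GL_n$ is a closed immersion. For this I use the counit axiom $(\varepsilon\otimes\id_{k[G]})\circ\Delta=\id_{k[G]}$, which applied to $e_i$ gives
\[
e_i=(\varepsilon\otimes\id_{k[G]})(\Delta(e_i))=\sum_{j=1}^{n}\varepsilon(e_j)\,a_{ij}.
\]
Thus each basis element $e_i$ is a $k$-linear combination of the matrix coefficients $a_{ij}$, so the $a_{ij}$ already generate $k[G]$ as a $k$-\emph{module}, a fortiori as a $k$-algebra. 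Since $a_{ij}=\varphi(t_{ij})\in\operatorname{im}(\varphi)$, the map $\varphi$ is surjective.

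The subtle bookkeeping (and what I expect to be the only real point of care) is to confirm that $\varphi$ really sends $t_{ij}$ to $a_{ij}$, i.e.\ that the choice of basis of $k[G]$ induces compatible isomorphisms $\GL_{k[G]}\simeq\GL_n$ on the functor-of-points side and on the Hopf algebra side; once conventions are fixed this is formal. The invertibility of the matrix $(a_{ij})$ (needed to make $\rho$ land in $\GL_n$ rather than merely $\operatorname{End}$) is automatic because $\rho$ is built from a group action and hence lands in automorphisms. This completes the argument.
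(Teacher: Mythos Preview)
Your proof is correct and follows essentially the same approach as the paper: both construct the map $G\to\GL_n$ via the right regular representation, write $\Delta(e_i)=\sum_j e_j\otimes a_{ij}$, identify the induced Hopf algebra map as $t_{ij}\mapsto a_{ij}$, and use the counit axiom $e_i=\sum_j\varepsilon(e_j)a_{ij}$ to conclude that the image contains a $k$-basis and hence the map is surjective.
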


 \begin{proof}
Let $\{e_1, \dots , e_n\}$ be a $k$-basis for $k[G]\simeq k^n$. The right regular representation gives a $k[G]$-comodule structure on $k[G]\simeq k^n$, which, as we have seen, is equivalent to the data of a morphism of group schemes $G\to \GL_n$. We want to check that this morphism is a closed immersion. A morphism of group schemes gives a morphism of coordinate algebras in the opposite direction, so we get a map of coordinate algebras $k[\GL_n]\to k[G]$. From Example \ref{main examples} (iv), we see that the coordinate algebra of $\GL_n$ is given by $k[t_{11}, t_{12}, \dots, t_{nn}, s]/(\det(t_{ij})s-1)$. To prove that this morphism is a closed immersion we will equivalently show that this morphism on coordinate algebras is surjective.

Now, write $\Delta(e_i)=\sum_j e_j\otimes\alpha_{ij}$. The counit axiom gives us that $e_i=\sum_j \varepsilon(e_j)\alpha_{ij}$. From the description of the coordinate algebra of $\GL_n$ in Example \ref{main examples} (iv), it is not so hard to see that the morphism $k[\GL_n]\to k[G]$ is defined by $t_{ij}\mapsto \alpha_{ij}$. In particular, from the consequence of the counit axiom above, the $e_i$'s are in the image of this map. Thus this map hits a basis of the target, and is hence surjective.
 \end{proof}

\pagebreak
\section{Invariants and Cohomology}

In this section, we will only consider flat affine group schemes. In this case, the category of representations is abelian by Theorem \ref{abelian} and this category has enough injectives by Theorem \ref{injectives}, so talking about the exactness of functors and taking derived functors makes sense. 

\begin{Def}
    Let $(V, \rho)$ be a representation of $G$. We define the \textit{invariants} $V^G$ of $V$ as follows:
\[
V^G=\{x\in V \mid x\otimes 1\in (V\otimes_k A)^{G(A)}, \text{ for all $k$--algebras $A$}\}.
\]
Where $(V\otimes_k A)^{G(A)}$ denotes the fixed points of $V\otimes_k A$ under the group action $\rho_A$.
\end{Def}

\begin{Lemma}\label{lemma:invariants-coinvariants}
    Let $(V, \rho)$ be a representation of $G$. Then 
    \[
    V^G=\{x\in V \mid \Delta_V(x)=x\otimes 1\}
    \]
    where $\Delta_V$ is the coaction defined on $V$ in Theorem \ref{comodules}. Moreover,  we have a natural isomorphism $V^G\simeq\Hom^{k[G]}(k, V)$, where $k$ is given the structure of the trivial representation.
\end{Lemma}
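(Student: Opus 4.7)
My plan is to prove the two claims in sequence by unpacking the definitions and exploiting the universal element $\mathrm{id}_{k[G]} \in G(k[G])$, which encodes the entire representation via naturality.

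For the first equality, I would use the explicit description of the action from Theorem \ref{comodules}: for $g \in G(A) = \Hom(k[G], A)$ and $\Delta_V(x) = \sum_i x_i \otimes f_i$, one has
\[
\rho_A(g)(x \otimes 1) = \sum_i x_i \otimes g(f_i).
\]
For the inclusion $V^G \subseteq \{x \mid \Delta_V(x) = x \otimes 1\}$, I would take $A = k[G]$ and $g = \mathrm{id}_{k[G]}$. The calculation then shows $\rho_{k[G]}(\mathrm{id}_{k[G]})(x\otimes 1) = \sum_i x_i \otimes f_i = \Delta_V(x)$, and if $x\in V^G$ this element equals $x\otimes 1$. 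For the reverse inclusion, suppose $\Delta_V(x) = x \otimes 1_{k[G]}$; then for any $k$--algebra $A$ and any $g \in G(A)$ the formula collapses to $\rho_A(g)(x\otimes 1) = x \otimes g(1_{k[G]}) = x \otimes 1$, so $x \otimes 1$ is fixed by all of $G(A)$.

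For the second claim, I would view $k$ as a $k[G]$--comodule with coaction $\Delta_k\colon k \to k \otimes k[G]$ given by $1 \mapsto 1 \otimes 1_{k[G]}$ (this is the trivial representation in the dictionary of Theorem \ref{comodules}). A $k$--linear map $f\colon k \to V$ is determined by $x \coloneqq f(1) \in V$. The condition that $f$ be a comodule map is the commutativity of
\[
\begin{tikzcd}
k \arrow[r, "\Delta_k"] \arrow[d, "f"'] & k\otimes k[G] \arrow[d, "f\otimes \id"] \\
V \arrow[r, "\Delta_V"'] & V \otimes k[G],
\end{tikzcd}
\]
which evaluated at $1 \in k$ reads exactly $\Delta_V(x) = x \otimes 1_{k[G]}$. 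By the first part this is equivalent to $x \in V^G$, giving a bijection $\Hom^{k[G]}(k, V) \xrightarrow{\sim} V^G$ via $f \mapsto f(1)$. Naturality in $V$ is immediate from the construction.

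There is no real obstacle here — the only conceptual point is that evaluating the natural action on the universal element $\mathrm{id}_{k[G]} \in G(k[G])$ recovers the coaction $\Delta_V$, which is exactly the content of the reverse construction in the proof of Theorem \ref{comodules}. Everything else reduces to direct unwinding of definitions.
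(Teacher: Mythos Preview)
Your proposal is correct and follows essentially the same approach as the paper: both use the universal element $\mathrm{id}_{k[G]}\in G(k[G])$ for the inclusion $V^G\subseteq\{x\mid\Delta_V(x)=x\otimes 1\}$, and both establish the second claim via the bijection $f\mapsto f(1)$. The only cosmetic difference is that for the reverse inclusion you plug directly into the explicit action formula from Theorem~\ref{comodules}, whereas the paper phrases the same computation as a naturality square chase; these are the same argument.
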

\begin{proof}
    Clearly if $x\in V^G$, then picking $A=k[G]$ shows that 
    \[
    x\otimes 1 \in (V\otimes_k k[G])^{G(k[G])}.
    \]
    In other words,  $x\otimes 1$ is invariant under the action of $G(k[G])$. In particular,  acting via the identity morphism $\id_{k[G]} \in G(k[G])$ must also therefore fix $x\otimes 1$. This action is precisely the definition of the coaction $\Delta_V$. Therefore $\Delta_V(x)=x\otimes 1$.

    Conversely, suppose $\Delta_V(x)=x\otimes 1$, and let 
    \[
    f\in G(A)=\Hom_{\textbf{CAlg}_k}(k[G], A)
    \]
    be any morphism. We want to show that the action of $f$ on $x \otimes 1 \in V\otimes_k A$ via $\rho_A$ does nothing. Since $\rho$ is by definition a natural transformation of functors, the map $f\colon k[G]\to A$ gives us a commutative square: 
    \begin{center}
        \begin{tikzcd}
            &G(k[G]) \times V\otimes_k k[G] \arrow[d]\arrow[r, "\rho_{k[G]}"]&V\otimes_k k[G] \arrow[d] \\
            &G(A) \times V\otimes_k A \arrow[r, "\rho_{A}"']&V\otimes_k A.
        \end{tikzcd}
    \end{center}
    If we fix $\id_{k[G]} \in G(k[G])$ then top horizontal map is just the definition of the coaction $\Delta_V$ on elements of the form $x'\otimes 1$ for $x'\in V$. Therefore for our given $x$ the top path in the square gives us 
    \[
    (\id_{k[G]}, x\otimes 1) \mapsto \Delta_V(x)=x\otimes 1 \mapsto x\otimes 1
    \]
     since we are assuming $\Delta_V(x)=x\otimes 1$. Now if we follow the bottom path we see that the vertical map gives us $(\id_{k[G]}, x\otimes 1) \mapsto (f, x\otimes 1)$, and we must therefore have that the horizontal map sends $ (f, x\otimes 1)$ to $x\otimes 1$. In other words, the action of $f$ on $x\otimes 1$ must be trivial. Therefore $x\in V^G$ since $f\in G(A)$ was arbitrary.
    
    The second claim now follows easily using the first; given a map of representations $\varphi\colon k\to V$, the assignment $\varphi\mapsto \varphi(1)$ gives us the desired bijection. Naturality in $V$ is easily checked.
\end{proof}

\begin{Rem}
    Note that $\{x \in V \mid \Delta_V(x) = x \otimes 1\}$ is precisely the definition of the $k[G]$-coinvariants of the comodule $(V, \Delta_V)$, as given in Definition \ref{def:coinvariants}, and denoted by $V^{\mathrm{co}k[G]}$. When the context is clear, we use these notations interchangeably without further comment.
\end{Rem}

\begin{Cor}
    Let $G$ be a flat affine group scheme. Then the functor $(-)^G\colon \mathbf{Rep}(G)\to \mathbf{Ab}$ is left exact. 
\end{Cor}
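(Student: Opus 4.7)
The plan is to reduce the statement to the observation that Hom functors are left exact in any abelian category. By Lemma \ref{lemma:invariants-coinvariants}, there is a natural isomorphism of functors
\[
(-)^G \;\cong\; \Hom^{k[G]}(k,-) \colon \mathbf{Rep}(G)\longrightarrow \mathbf{Ab},
\]
where $k$ denotes the trivial representation. So it suffices to show that the target category $\mathbf{Rep}(G)\simeq \mathbf{coMod}_{k[G]}$ is abelian and that $\Hom^{k[G]}(k,-)$ is left exact.

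First I would invoke the equivalence $\mathbf{Rep}(G)\simeq\mathbf{coMod}_{k[G]}$ from Theorem \ref{comodules}, together with Theorem \ref{abelian}, which tells us that $\mathbf{coMod}_{k[G]}$ is abelian whenever $k[G]$ is flat over $k$: this is exactly the standing hypothesis since $G$ is assumed flat. Thus short exact sequences in $\mathbf{Rep}(G)$ make sense, and a short exact sequence in $\mathbf{Rep}(G)$ is in particular a short exact sequence of underlying $k$--modules (the forgetful functor is exact, as noted in the proof of Theorem \ref{abelian}).

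Next I would appeal to the general principle that $\Hom_{\mathcal{A}}(X,-)$ is left exact for any object $X$ in an abelian category $\mathcal{A}$. Applied to $X=k$ and $\mathcal{A}=\mathbf{coMod}_{k[G]}$, this gives left exactness of $\Hom^{k[G]}(k,-)$, and transporting along the natural isomorphism above yields left exactness of $(-)^G$.

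I do not anticipate a real obstacle here, since the result is essentially formal once the two preceding ingredients (the identification with $\Hom^{k[G]}(k,-)$ and the fact that $\mathbf{coMod}_{k[G]}$ is abelian) are in place. If one preferred a hands-on argument bypassing the Hom description, one could instead unwind the characterization $V^G=\{x\in V\mid \Delta_V(x)=x\otimes 1\}$ directly: given an exact sequence $0\to V'\xrightarrow{f} V\xrightarrow{g} V''$ in $\mathbf{Rep}(G)$, injectivity of $f|_{(V')^G}$ and the kernel property at $V^G$ follow from the corresponding exactness of the underlying $k$--module sequence combined with the fact that $f$ and $g$ are comodule maps (so they intertwine the coactions). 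Flatness of $k[G]$ is again the key input, since it ensures that kernels in $\mathbf{coMod}_{k[G]}$ agree with kernels of underlying $k$--module maps, as shown in Proposition \ref{kernels}.
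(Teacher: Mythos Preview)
Your proof is correct and follows essentially the same approach as the paper: both identify $(-)^G$ with $\Hom^{k[G]}(k,-)$ via Lemma \ref{lemma:invariants-coinvariants} and then invoke left exactness of Hom functors. You simply spell out more of the surrounding justification (abelianness of $\mathbf{coMod}_{k[G]}$ via Theorem \ref{abelian}) than the paper does.
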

\begin{proof}
   By Lemma \ref{lemma:invariants-coinvariants},  we get that the functor $(-)^G$ is naturally isomorphic to the representable functor $\Hom^{k[G]}(k, -)$. The latter  is left exact.
\end{proof}

We have seen that the category of $k[G]$--comodules has enough injectives whenever $k[G]$ is flat. Therefore we can form injective resolutions following the procedure in Theorem \ref{injectives}, and take the right derived functors of the functor $(-)^G$. 

\begin{Def}
    We denote the $i$th right derived functor of $(-)^G$ by $H^i(G, -)$, and we will call it the $i$\textit{th group cohomology functor}. For a $G$--module $N$, we denote the $i$th right derived functor of $\Hom^{k[G]}(N,-)$ by $\mathrm{Ext}_G^i(N,-)$, we will call it the $i$\textit{th extension group functor}. In particular, with this notation $\mathrm{Ext}^i_G(k,-)$ is simply $H^i(G,-)$.
\end{Def}

\begin{Def}
    Let $G$ be an affine group scheme over $k$. A $G$--\textit{algebra} is a $k$--algebra $A$ which also has the structure of a $k[G]$--comodule, and such that the multiplication map $m\colon A\otimes_k A\to A$ is a morphism of comodules. Note that $A\otimes_k A$ has the structure of a $k[G]$--comodule with coaction $\Delta_{A\otimes A}$ given by the composite 
    \[
     A\otimes A \xrightarrow{\Delta_A \otimes \Delta_A} A\otimes k[G] \otimes A\otimes k[G] \xrightarrow[]{\mathrm{id}_A\otimes(12) \otimes \mathrm{id}_A} A\otimes A \otimes k[G]\otimes k[G]\xrightarrow[]{\mathrm{id}_{A\otimes A}\otimes \lambda} A\otimes A\otimes k[G]
    \]
    where the map $\lambda$ denotes the multiplication of $k[G]$.  Furthermore, if $I \subseteq A$ is a subcomodule of $A$, we will say it is a $G$\textit{-subalgebra of $A$} if it contains the unit and is closed under multiplication, i.e. $m(I\otimes_k I) \subseteq I$. We will say $I$ is a $G$\textit{-ideal of $A$} if we have that $m(I\otimes_k A) \subseteq I$, in which case it is easy to see that the quotient has the structure of a $G$--algebra.
\end{Def}

\begin{Ex}
    The trivial $k[G]$--comodule is indeed a $G$--algebra with the structure of $k$--algebra given by multiplication. Recall that the  coaction  $k\to k\otimes k[G]$ is given by $a\mapsto a\otimes 1_{k[G]}$.  
\end{Ex}

\begin{Lemma}\label{AG is a k-algebra}
    Let $G$ be a flat affine group scheme over $k$, and $A$ be a $G$--algebra.  Then the multiplication on $A$ induces a $k$--algebra structure on invariants $A^G$.
\end{Lemma}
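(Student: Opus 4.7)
The plan is to verify the two ingredients needed for $A^G$ to inherit a $k$-algebra structure from $A$: that it is closed under multiplication and that it contains the multiplicative unit. That $A^G$ is a $k$-submodule of $A$ is free since, by Lemma \ref{lemma:invariants-coinvariants}, it can be written as
\[
A^G = \{x \in A \mid \Delta_A(x) = x \otimes 1\} = \ker\bigl(\Delta_A - (\mathrm{id}_A \otimes \eta_{k[G]})\bigr),
\]
which is a kernel of a $k$-linear map.

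For closure under multiplication, I would start with $x, y \in A^G$ and exploit the hypothesis that the multiplication $m \colon A \otimes_k A \to A$ is a morphism of $k[G]$-comodules, so that $\Delta_A \circ m = (m \otimes \mathrm{id}_{k[G]}) \circ \Delta_{A \otimes A}$. Unwinding the explicit formula for $\Delta_{A \otimes A}$ given in the definition of a $G$-algebra and substituting $\Delta_A(x) = x \otimes 1$ and $\Delta_A(y) = y \otimes 1$, the factor in $k[G] \otimes k[G]$ collapses under the multiplication of $k[G]$ to $1 \otimes 1 \mapsto 1$, yielding $\Delta_{A \otimes A}(x \otimes y) = x \otimes y \otimes 1$. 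Applying $m \otimes \mathrm{id}_{k[G]}$ then gives $\Delta_A(xy) = xy \otimes 1$, so $xy \in A^G$.

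For containment of the unit, note that implicit in the definition of a $G$-algebra is that the unit map $\eta_A \colon k \to A$ is also a morphism of $k[G]$-comodules, where $k$ carries the trivial coaction $r \mapsto r \otimes 1$. Granting this, we compute
\[
\Delta_A(1_A) = \Delta_A(\eta_A(1_k)) = (\eta_A \otimes \mathrm{id}_{k[G]}) \circ \Delta_k(1_k) = (\eta_A \otimes \mathrm{id}_{k[G]})(1_k \otimes 1) = 1_A \otimes 1,
\]
so $1_A \in A^G$. Equivalently, using the functor-of-points description from Lemma \ref{lemma:invariants-coinvariants}, each $\rho_B(g)$ for $g \in G(B)$ is an automorphism of the $B$-algebra $A \otimes_k B$, hence fixes $1_A \otimes 1_B$ automatically.

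The main obstacle is conceptual rather than computational: one has to be a touch careful about which structure maps are required to be comodule morphisms in the definition of a $G$-algebra. The actual verification is a short Sweedler-notation calculation combined with the observation that unit preservation of the coaction on $1_A$ is built into the structure.
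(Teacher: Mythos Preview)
Your proof is correct and follows essentially the same approach as the paper: both arguments use the comodule-morphism property of $m$ to compute $\Delta_{A\otimes A}(x\otimes y)=x\otimes y\otimes 1$ and then apply $m\otimes\mathrm{id}_{k[G]}$ to conclude $\Delta_A(xy)=xy\otimes 1$. You in fact give more detail than the paper does on why $A^G$ is a $k$-submodule and why $1_A\in A^G$, which the paper simply asserts.
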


\begin{proof}
    Let $m \colon A\otimes_k A\to A$ denote the multiplication map on $A$, and $\Delta_A\colon A\to A\otimes_k k[G]$ denote the coaction. Clearly $A^G$ has the structure of a $k$--module and contains the unit element, so what we have to show is that it is closed under multiplication. Let $x, y \in A^G$. We have to show that $xy=m(x\otimes y)$ is also in $A^G$. Since the map $m$ is a map of comodules, we have a commutative diagram:
    \begin{center}
        \begin{tikzcd}
            &A\otimes_k A \arrow[d, "m"]\arrow[r, "\Delta_{A\otimes A}"]&A\otimes_k A\otimes_k k[G] \arrow[d, "m \otimes \id_{k[G]}"] \\
            &A \arrow[r, "\Delta_A"]&A\otimes_k k[G].
        \end{tikzcd}
    \end{center}
    To show that $xy=m(x\otimes y)$ is invariant we must show that $\Delta_A(m(x\otimes y))=xy\otimes 1$, this is just the bottom path in the commutative square, applying the top path to $x\otimes y$ gives us 
    \[
    \Delta_{A\otimes A}(x\otimes y)=x\otimes y\otimes 1 \mbox{  and  } m\otimes \id_{k[G]}(x\otimes y\otimes 1)=xy\otimes 1.
    \]
      Therefore we have shown that $\Delta_A(m(x\otimes y))=xy\otimes 1$ as desired.
\end{proof}

\begin{Def}
    Let $G$ be a flat affine group scheme over $k$. We say that $G(k)$ is \textit{dense in $G$} if there is no proper closed subfunctor $X\subset G$ such that $G(k)\subseteq X(k)$. 
\end{Def}

There are many cases of interest where $G(k)$ is dense in $G$. For instance, this happens when $k$ is a field and $G$ is a constant group scheme. Other examples include $\mathbb{G}_a$, $\GL_n$, and $\mathrm{SL}_n$ when $k$ is an infinite field. See \cite[Section 4.5]{Wat79}.

\begin{Prop}\label{Prop:naive invariants vs functorial invariants}
    Let $G$ be a flat affine group scheme over $k$ such that $G(k)$ is dense in $G$. Let $M$ be a $G$--module that is projective as a $k$--module. Then $M^G$ agrees with $M^{G(k)}$. 
\end{Prop}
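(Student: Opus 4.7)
The plan is to show the non-obvious inclusion $M^{G(k)} \subseteq M^G$ by a density argument: given $x \in M^{G(k)}$, construct a closed subfunctor $X \subseteq G$ whose $A$-points are exactly those $g \in G(A)$ fixing $x \otimes 1$, then invoke density of $G(k)$ in $G$ to conclude $X = G$. The reverse inclusion $M^G \subseteq M^{G(k)}$ is immediate by taking $A = k$ in the definition of $M^G$.

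First I would define the subfunctor $X$ of $G$ by
\[
X(A) = \{g \in G(A) \mid g \cdot (x \otimes 1) = x \otimes 1 \text{ in } M \otimes_k A\}.
\]
Writing $\Delta_M(x) = \sum_i m_i \otimes a_i$ (a finite sum) and using Theorem \ref{comodules}, the action of $g \colon k[G] \to A$ sends $x \otimes 1$ to $\sum_i m_i \otimes g(a_i)$, so $g \in X(A)$ if and only if $(\mathrm{id}_M \otimes g)(\Delta_M(x) - x \otimes 1) = 0$ in $M \otimes A$. To realize $X$ as a closed subfunctor I would exploit projectivity: pick a splitting $M \oplus M' = F$ with $F = \bigoplus_\alpha k e_\alpha$ free, and let $\iota \colon M \hookrightarrow F$ be the inclusion. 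Since $\iota$ is split, $\iota \otimes \mathrm{id}_A$ is injective for every $A$, so the vanishing condition on $M \otimes A$ is equivalent to its image vanishing in $F \otimes A$. Expanding
\[
(\iota \otimes \mathrm{id}_{k[G]})(\Delta_M(x) - x \otimes 1) = \sum_\alpha e_\alpha \otimes b_\alpha \in F \otimes k[G]
\]
with $b_\alpha \in k[G]$ and almost all zero, the condition on $g$ becomes $g(b_\alpha) = 0$ for all $\alpha$. Hence $X$ is represented by $k[G]/I_x$, where $I_x$ is the ideal generated by the $b_\alpha$, so $X$ is a closed subfunctor of $G$.

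Now the hypothesis $x \in M^{G(k)}$ gives $G(k) \subseteq X(k)$, and density of $G(k)$ in $G$ forces $X = G$. Unwinding, this says $g \cdot (x \otimes 1) = x \otimes 1$ for every $k$-algebra $A$ and every $g \in G(A)$, i.e.\ $x \in M^G$, completing the proof.

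The main obstacle, and the place where the projectivity hypothesis is genuinely used, is verifying that $X$ is \emph{closed} rather than merely a subfunctor. Without projectivity, expressing the failure of invariance as coordinate equations in $k[G]$ is not straightforward, because one cannot in general detect the vanishing of an element of $M \otimes A$ by evaluating coordinates against a fixed dual system. Once $X$ is closed, the density hypothesis handles the rest with essentially no further work.
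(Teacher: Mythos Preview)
Your proof is correct and follows essentially the same approach as the paper: define the stabilizer subfunctor of the element (or set of elements) to be fixed, show it is closed, and invoke density. The paper works with the entire subset $S' = M^{G(k)}$ at once rather than element by element, and it outsources the closedness of the stabilizer to \cite[Section I.2.12]{jantzen2003representations}, whereas you spell out the argument directly via the embedding into a free module; your version is thus slightly more self-contained, but the idea is identical.
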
 

\begin{proof}
    For any subset $S$ of $M$, consider the $k$--functor $Z_G(S)$ given by 
    \[
    Z_G(S)(A)\coloneqq \{g\in G(A)\mid g(m\otimes 1)=m\otimes 1\text{ for all } m\in S\}
    \]
    where $A$ is a $k$--algebra. By \cite[Section I.2.12]{jantzen2003representations}, $Z_G(S)$ is a closed subfunctor of $G$, provided that $M$ is projective as a $k$--module. In particular, for 
    \[
    S'=\{m\in M\mid g(m\otimes 1)=m\otimes 1 \text{ for all } g\in G(k)\}
    \]
    we obtain that $G(k)=Z_G(S')(k)$. Therefore, $G=Z_G(S')$ as we wanted.  
\end{proof}

We will now define two important functors, called the induction and restriction functors. 

\begin{construction}
    Let $G$ be an affine group scheme, and let $H\subseteq G$ be a subgroup scheme. Equivalently, from the perspective of coordinate algebras, we have a surjective map of Hopf algebras $\phi\colon k[G]\to k[H]$, so that $k[H]\simeq k[G]/I$ for some Hopf ideal $I$. In this case we get that $H$ is a subgroup functor of $G$ (see Example \ref{main examples} (vi)).
Now if $(V, \rho)$ is any representation of $G$ on $V$, then by restriction of the group action we get a representation of $H$ on $V$, more precisely, for every commutative $k$--algebra $A$ we have a map 
\[
G(A)\times V\otimes_k A\to V\otimes_k A
\]
which restricts to a map 
\[
H(A)\times V\otimes_k A\to V\otimes_k A
\]
via the map 
\[
\phi^\ast\colon  H(A)=\Hom_{\textbf{CAlg}_k}(k[H], A)\to \Hom_{\textbf{CAlg}_k}(k[G], A)=G(A).
\]
Since this is natural in $A$, we get a functor $\mathbf{Rep}(G)\to \mathbf{Rep}(H)$. Viewing $V$ as a $k[G]$--comodule with coaction $\Delta_V$, we can see that this functor corresponds to the assignment $(V, \Delta_V) \mapsto (V, \Delta_{V'})$ where $\Delta_{V'}$ is the new coaction defined by composing with the quotient map 
\[
V\xrightarrow{\Delta_V} V\otimes_k k[G] \xrightarrow{\id_V\otimes \phi} V\otimes_k k[H].
\]
\end{construction}

\begin{Def}
    Let $H\subseteq G$ be a subgroup scheme. We call the functor $\mathbf{Rep}(G)\to \mathbf{Rep}(H)$ from the above construction the \textit{restriction functor}, and denote it by $\res^G_H$.
 \end{Def}

This restriction functor is part of an adjoint pair, we will now construct its right adjoint.

\begin{construction}
    Let $H$ now be a flat subgroup scheme of $G$, that is, suppose $k[H]$ is itself flat as a $k$--module. Consider some $k[H]$--comodule $(W, \Delta_W)$. Then $W\otimes_k k[G]$ has a $k[H]$--coaction given by the tensor product of  $\Delta_W$ and the canonical $k[H]$--coaction on $k[G]$ via the restriction of the regular representation. To be precise, the $k[H]$--coaction on $W\otimes_k k[G]$ is given by the composite map:
\[
W\otimes_k k[G] \xrightarrow{\Delta_W \otimes \Delta} W\otimes_k k[H] \otimes_k k[G] \otimes_k k[G] \xrightarrow{\id \otimes \id \otimes \id \otimes \phi} W\otimes_k k[H] \otimes_k k[G] \otimes_k k[H] \to W\otimes_k k[G] \otimes_k k[H]
\]
where $\phi\colon k[G]\to k[H]$ is the Hopf map defining $H$, and the last map multiplies the elements in $k[H]$ together. Since we have a $k[H]$--coaction we can take $H$--invariants and consider $(W\otimes_k k[G])^H$. Now we also have a $k[G]$--comultiplication on $W\otimes_k k[G]$ given by the identity action on $W$ tensored with the comultiplication on $k[G]$, that is: 
\[
W\otimes_k k[G] \xrightarrow{\id_W\otimes \Delta} W\otimes_k k[G]\otimes_k k[G].
\]
The claim is now that this map descends to a map 
\[
(W\otimes_k k[G])^H \xrightarrow{\id_W\otimes \Delta} (W\otimes_k k[G])^H\otimes_k k[G]
\]
which we leave as an easy but tedious exercise. This makes $(W\otimes_k k[G])^H$ into a $k[G]$--comodule, and one checks that this construction is functorial in $W$, giving us a functor $\mathbf{Rep}(H)\to \mathbf{Rep}(G)$.
\end{construction}

\begin{Def}
     If $H\subseteq G$ is a flat subgroup scheme, then we call the functor $\mathbf{Rep}(H)\to \mathbf{Rep}(G)$ from the above construction the \textit{induction functor}, and denote it by $\ind^G_H$.
\end{Def}

\begin{Prop}\label{adjuntion restriction induction}
    Let $G$ be a flat affine group scheme, and let $H\subseteq G$ be a flat subgroup scheme. Let $V$ be a $k[G]$--comodule and let $W$ be a $k[H]$--comodule. Then: 
    \[
    \Hom^{k[H]}(\res_H^GV, W) \simeq \Hom^{k[G]}(V, \ind^G_HW).
    \]
\end{Prop}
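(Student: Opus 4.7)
The plan is to construct the adjunction isomorphism explicitly via Sweedler calculus and then invoke Theorem \ref{adjunction} to bypass most of the routine work. Given $f \in \Hom^{k[H]}(\res^G_H V, W)$, I would define $\Phi(f) \colon V \to W \otimes k[G]$ by $\Phi(f)(v) = f(v_{(0)}) \otimes v_{(1)}$, where $v_{(0)} \otimes v_{(1)}$ denotes the image of $v$ under the $k[G]$-coaction $\Delta_V$. Conversely, given $g \in \Hom^{k[G]}(V, \ind_H^G W)$, I would define $\Psi(g) \colon V \to W$ as the composite $V \xrightarrow{g} (W \otimes k[G])^H \hookrightarrow W \otimes k[G] \xrightarrow{\id_W \otimes \varepsilon} W$.

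The first step is to observe that Theorem \ref{adjunction}, applied at the level of underlying $k$-modules, already supplies a natural bijection $\Hom_k(V,W) \simeq \Hom^{k[G]}(V, W \otimes k[G])$ realised by exactly these formulas (forgetting all $H$-structure). So two things remain to verify: (a) that $\Phi(f)$ actually takes values in $(W \otimes k[G])^H$ precisely when $f$ is $k[H]$-colinear; and (b) that the resulting map $\Phi(f) \colon V \to (W \otimes k[G])^H$ is $k[G]$-colinear. Claim (b) is automatic, since the $k[G]$-coaction on $\ind_H^G W$ was constructed by restricting $\id_W \otimes \Delta$ from $W \otimes k[G]$ to $H$-invariants, and $\Phi(f)$ is already $k[G]$-colinear in $W \otimes k[G]$ by Theorem \ref{adjunction}. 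Similarly, the fact that $\Psi(g)$ produced from a $k[G]$-colinear $g$ is an $H$-equivariant map into $W$ follows once (a) is established, together with the counit axiom, which gives $\Psi \circ \Phi = \id$ (and $\Phi \circ \Psi = \id$ follows from the fact that an element of $(W \otimes k[G])^H$ is determined by its image under $\id_W \otimes \varepsilon$ combined with the coaction, this being the content of Theorem \ref{adjunction}).

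The heart of the argument is therefore claim (a). Unpacking the construction preceding the Proposition, the $k[H]$-coaction on $W \otimes k[G]$ sends an elementary tensor $w \otimes x$ to $w_{(0)} \otimes x_{(1)} \otimes w_{(1)} \phi(x_{(2)})$ (after the swap that lines up the $k[H]$-factors and multiplies them in $k[H]$), where $\phi \colon k[G] \to k[H]$ is the Hopf quotient. Applying this to $\Phi(f)(v) = f(v_{(0)}) \otimes v_{(1)}$ and using coassociativity of $\Delta_V$, one sees that the $H$-invariance condition $\Phi(f)(v) \otimes 1_{k[H]} = f(v_{(0)})_{(0)} \otimes v_{(1)} \otimes f(v_{(0)})_{(1)} \phi(v_{(2)})$ is equivalent, after multiplying through by $S$ on the $k[H]$-slot, to the identity $f(v)_{(0)} \otimes f(v)_{(1)} = f(v_{(0)}) \otimes \phi(v_{(1)})$, which is precisely the statement that $f$ is $k[H]$-colinear between $\res^G_H V$ and $W$.

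The main obstacle is purely notational: keeping the Sweedler indices straight in verifying (a), since the coaction on $W \otimes k[G]$ is a composite of four maps ($\Delta_W \otimes \Delta$, a swap, $\id \otimes \id \otimes \id \otimes \phi$, and multiplication in $k[H]$). Once (a) is done, the mutual-inverse property of $\Phi$ and $\Psi$ is an immediate consequence of Theorem \ref{adjunction}, and naturality in both variables follows from naturality of the forgetful-to-$\otimes k[G]$ adjunction. Flatness of $k[H]$ is used only to ensure via Proposition \ref{kernels} that $(W \otimes k[G])^H$ is a well-defined subcomodule.
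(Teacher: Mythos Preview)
Your proposal is correct and follows essentially the same approach as the paper: both construct the adjunction by composing with the map $\epsilon_W = (\id_W \otimes \varepsilon)$ in one direction and by $v \mapsto f(v_{(0)}) \otimes v_{(1)}$ in the other. The paper's proof is considerably terser---it simply names the two maps and leaves the verification to the reader---whereas you spell out claim (a) in Sweedler notation and explicitly invoke Theorem~\ref{adjunction} to handle the $k[G]$-colinearity for free; your version is in fact more complete. (Note that the paper writes the inverse as $v \mapsto f(v) \otimes 1$, which appears to be a slip; your formula $f(v_{(0)}) \otimes v_{(1)}$ is the one that actually works and matches the general pattern of Theorem~\ref{adjunction}.)
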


\begin{proof} 
   Consider the map $\epsilon_W$ given by the composition 
   \[
   (W\otimes k[G])^H\hookrightarrow W\otimes k[G] \xrightarrow{\mathrm{id}\otimes \epsilon} W\otimes k\cong W
   \]
   where $\epsilon$ denotes the counit of $k[G]$. It turns out that $\epsilon_W$ is a map of $k[H]$--modules. In fact,  composing with $\epsilon_W$ induces an isomorphism 
   \[
   \epsilon_W^\ast\colon  \Hom^{k[G]}(V, \ind^G_HW) \to \Hom^{k[H]}(\res_H^GV, W). 
   \]
   Indeed, an inverse is given by the map  that sends a $k[H]$--map $f\colon \res_H^G V \to W$  to the map $V\to \ind^G_H W$, $v\mapsto f(v)\otimes 1$. The details are left to the interested reader.   
\end{proof}

\begin{Rem}\label{flat subgroup scheme}
    Under some assumptions, the induction functor from $H$ to $G$ can be interpreted as a quasi-coherent sheaf on $G/H$, and in fact this allows us to derive some useful results we would otherwise not be able to. As before, let $H\subseteq G$ be a subgroup scheme of an affine scheme $G$, where $G$ and $H$ are both flat over a Noetherian ring $k$. Since $H$ is a subgroup functor of $G$, we have a functorial assignment of quotient groups 
    \[
    A\mapsto G(A)/H(A) \quad \textrm{for all }k\textrm{-algebras } A.
    \]
    This however is not the correct definition in general for the quotient $G/H$! Instead it is defined via a universal property: let the action of $H$ on $G$ be given on the left by the map $\alpha\colon  H\times G\to G$, and the projection to the second factor be given by $p\colon H\times G \to G$. Then the quotient scheme $G/H$ is given by the coequaliser of the pair $(\alpha$, $p)$ in the category of $k$--schemes. In other words, the quotient $G/H$ comes with a map $\pi\colon  G\to G/H$ and is the unique scheme with the property that any morphism of schemes $f\colon G\to X$ that is constant on $H$--orbits factors uniquely through $\pi$. The problem is whether $G/H$ even exists in the category of $k$--schemes, for a discussion of this see \cite[Chapter 5]{jantzen2003representations}. 
    
    In the situation where we will be applying this method, the quotient $G/H$ is actually an affine scheme: \textit{If $H$ is finite, then $G/H$ is an affine scheme} \cite[page 72 (6)]{jantzen2003representations}. Following \cite[5.8]{jantzen2003representations} we can associate to each representation $W$ of $H$ a quasi-coherent sheaf $\mathcal{L}(W)$ on $G/H$. Let $(W, \tau)$ be a representation of $H$ on $W$, and consider the quasi-coherent $\mathcal{O}_{G}$--module associated to the $k[G]$--module $k[G]\otimes_k W$. Now for any open set $U\subseteq G/H$ such that $\pi^{-1}U$ is affine, $H$ has a canonical diagonal action on the quasi-coherent $\mathcal{O}_G$--module $k[\pi^{-1}U]\otimes_k W$, where $H$ acts on $k[\pi^{-1}U]$ via the restricted regular representation, and $H$ acts on $W$ via $\tau$. The sheaf $\mathcal{L}(W)$ is then defined on $U$ by 
\[
\mathcal{L}(W)(U)=(k[\pi^{-1}U]\otimes_k W)^H.
\]
    Since sections are invariant under the $H$ action, this makes $\mathcal{L}(W)$ into a $\mathcal{O}_{G/H}$--module which can be seen to be quasi-coherent by \cite[5.9]{jantzen2003representations}. The global sections functor of this sheaf is precisely the induced representation functor. Moreover, since the induced representation functor is a right adjoint, it is left exact, and we can compute the derived induced representations equivalently as the sheaf cohomology of this quasi-coherent sheaf. In particular, sheaf cohomology of a quasi-coherent sheaf on an affine scheme vanishes by \cite[III 3.7]{hartshorne2013algebraic}, and since we are assuming that $G/H$ is affine, we can see that in this case the higher derived induced representations vanish. In other words, $\ind^G_H(-)$ is exact whenever $G/H$ is affine.
\end{Rem}

\begin{Def}
    A flat subgroup scheme $H$ of $G$ is \textit{exact} if the functor $\mathrm{ind}_H^G(-)$ is exact. 
\end{Def}

\begin{Prop}\label{frobenius reciprocity}
Let $H$ be a flat subgroup scheme of a flat affine group scheme $G$, $M$ be $G$--module and $N$ be an $H$--module. If $H$ is exact, then the following properties hold.
\begin{enumerate}
    \item \textit{Frobenius reciprocity:} For each non-negative integer $n$, there is an isomorphism 
    \[
    \Ext_G^n(M,\mathrm{ind}_H^G(N))\cong \mathrm{Ext}_H(\mathrm{res}^G_H(N),M).
    \]
    \item \textit{Shapiro's Lemma:} For each non-negative integer $n$, there is an isomorphism 
    \[
    H^n(G,\mathrm{ind}_H^G(N))\cong H^n(H,N).
    \]
\end{enumerate}
\end{Prop}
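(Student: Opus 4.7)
The plan is to derive both statements from the adjunction in Proposition \ref{adjuntion restriction induction} together with the exactness hypothesis on $H$, using a Grothendieck-style balancing of derived functors.

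First I would record two preliminary observations. The restriction functor $\mathrm{res}^G_H$ is exact, since on underlying $k$-modules it is the identity and the coaction is only postcomposed with the quotient map $k[G]\to k[H]$ (kernels and cokernels are computed on the underlying $k$-module by Theorem \ref{abelian} applied to $k[G]$ and $k[H]$, both flat). Combined with the adjunction $\mathrm{res}^G_H\dashv \mathrm{ind}^G_H$ from Proposition \ref{adjuntion restriction induction}, this already forces $\mathrm{ind}^G_H$ to preserve injective objects: if $J$ is an injective $H$-module, then
\[
\Hom^{k[G]}(-,\mathrm{ind}^G_H J)\;\cong\;\Hom^{k[H]}(\mathrm{res}^G_H(-),J)
\]
is exact as a composite of two exact functors.

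Next I would prove $(1)$ by the usual resolution argument. Choose an injective resolution $N\to I^\bullet$ in $\mathbf{Rep}(H)$, which exists by Theorem \ref{injectives}. The hypothesis that $H$ is exact means $\mathrm{ind}^G_H$ is exact, so $\mathrm{ind}^G_H(N)\to \mathrm{ind}^G_H(I^\bullet)$ is still a resolution in $\mathbf{Rep}(G)$, and by the preliminary observation it is an injective resolution. Therefore
\[
\Ext^n_G(M,\mathrm{ind}^G_H N)\;=\;H^n\bigl(\Hom^{k[G]}(M,\mathrm{ind}^G_H I^\bullet)\bigr)\;\cong\;H^n\bigl(\Hom^{k[H]}(\mathrm{res}^G_H M,I^\bullet)\bigr)\;=\;\Ext^n_H(\mathrm{res}^G_H M,N),
\]
where the middle isomorphism is the natural adjunction isomorphism applied termwise (it is an isomorphism of cochain complexes because the adjunction is natural in both arguments).

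Statement $(2)$ is now immediate: take $M=k$ equipped with the trivial $G$-action. Then $\mathrm{res}^G_H(k)=k$ as an $H$-module, and $\Ext^n_G(k,-)=H^n(G,-)$, $\Ext^n_H(k,-)=H^n(H,-)$ by the definition of group cohomology as $\mathrm{Ext}$ out of the trivial representation, so $(1)$ specializes to Shapiro's Lemma. The only nontrivial point in the whole argument is the fact that $\mathrm{ind}^G_H$ preserves injectives, and this is where the hypothesis that $\mathrm{res}^G_H$ (a left adjoint) is exact is doing the work; the exactness of $\mathrm{ind}^G_H$ itself is precisely what keeps injective resolutions acyclic after induction.
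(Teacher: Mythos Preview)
Your argument is correct. It is essentially the same content as the paper's proof, but packaged differently: the paper invokes the Grothendieck spectral sequence for the composite $\Hom_G(M,-)\circ \mathrm{ind}^G_H\cong \Hom_H(\mathrm{res}^G_H M,-)$ and observes that it degenerates because $\mathrm{R}^{>0}\mathrm{ind}^G_H=0$ by the exactness hypothesis, whereas you unwind this degeneration by hand, pushing an injective resolution of $N$ through $\mathrm{ind}^G_H$ and applying the adjunction termwise. Both approaches rest on the same two ingredients, namely the adjunction of Proposition~\ref{adjuntion restriction induction} and the fact that $\mathrm{ind}^G_H$ preserves injectives (which you make explicit and which is implicitly needed for the spectral sequence to exist). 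Your version has the advantage of being self-contained and not requiring the reader to know the Grothendieck spectral sequence; the paper's version is shorter once that machinery is granted.
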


\begin{proof}
    This follows by Groethendieck spectral sequence which computes the derived functors of a composition of functors applied to $\Hom_G(M,-)\circ \mathrm{ind}_G^H$ since  this composition can by identified with $\Hom_H(N,-)$ by Proposition \ref{adjuntion restriction induction}. Explicitly, the spectral sequence is given by 
    \[
    E^{p,q}_2= \mathrm{Ext}^p(N, \mathrm{R}^q\mathrm{ind}_H^G(M))\Rightarrow \mathrm{Ext}^{p+q}_H(M,N)
    \]
    where $\mathrm{R}^q\mathrm{ind}_H^G(-)$ denotes the $q$th right derived functor of $\mathrm{ind}_H^G(-)$. Hence the result follows by the exactness of $H$. 
\end{proof}

\begin{Cor}\label{vanishing of cohomology for induced modules}
    Let $M$ be a $G$--module. Then $H^i(G,M\otimes k[G])=0$ for all $i>0$, and $H^0(G,M\otimes k[G])=M$.  
\end{Cor}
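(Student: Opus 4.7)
The plan is to deduce the corollary from Shapiro's Lemma (Proposition \ref{frobenius reciprocity}(2)) applied to the trivial subgroup scheme $H = 1 \subseteq G$, which corresponds to the Hopf ideal $\ker\varepsilon \subseteq k[G]$.

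First I would verify the hypotheses needed to invoke Shapiro's Lemma, namely that $1$ is a flat and exact subgroup scheme of $G$. Flatness is clear since $k[1] = k$ is flat over $k$. For exactness, the quotient scheme $G/1$ is simply $G$, which is affine. By the discussion in Remark \ref{flat subgroup scheme}, when $H$ is finite (which the trivial subgroup certainly is) and $G/H$ is affine, the induction functor $\mathrm{ind}_1^G$ can be computed as the global sections of a quasi-coherent sheaf on the affine scheme $G$, and hence its higher derived functors vanish. Therefore $\mathrm{ind}_1^G$ is exact, so $1$ is an exact subgroup scheme of $G$ in the sense of the preceding definition.

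Next, I would identify the $G$-module $\mathrm{ind}_1^G(\mathrm{res}_1^G M)$ with $M \otimes k[G]$. By the construction of induction, for any $k$-module $N$ (viewed as a trivial $1$-representation), we have
\[
\mathrm{ind}_1^G N = (N \otimes k[G])^{1} = N \otimes k[G],
\]
since taking invariants under the trivial group is the identity; the residual $k[G]$-coaction is $\mathrm{id}_N \otimes \Delta$, which is precisely the natural coaction of Theorem \ref{adjunction}. Applying this with $N = \mathrm{res}_1^G M$ yields $\mathrm{ind}_1^G(\mathrm{res}_1^G M) \cong M \otimes k[G]$ as $G$-modules. (If one wishes instead to consider $M \otimes k[G]$ with the diagonal $G$-action, the tensor identity from Lemma \ref{tensor identity}, twisted by the antipode, produces an isomorphism of $G$-modules between the two structures, so the result applies uniformly.)

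Finally, applying Shapiro's Lemma gives
\[
H^i(G, M \otimes k[G]) \cong H^i(G, \mathrm{ind}_1^G M) \cong H^i(1, M)
\]
for every $i \geq 0$. Since $\mathbf{Rep}(1) = \mathbf{Mod}_k$ and the invariants functor $(-)^{1}$ is the identity functor on $k$-modules, it is exact and its higher derived functors vanish. Hence $H^0(1, M) = M$ and $H^i(1, M) = 0$ for $i > 0$, which gives exactly the statement of the corollary. The only step requiring any real work is the verification that $\mathrm{ind}_1^G$ is exact, but this is already supplied by the sheaf-theoretic discussion of Remark \ref{flat subgroup scheme}; everything else is bookkeeping.
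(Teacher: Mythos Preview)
Your proof is correct and follows essentially the same route as the paper: identify $M\otimes k[G]$ with $\mathrm{ind}_1^G M$ (using the tensor identity to pass between the diagonal and the $M_{\mathrm{triv}}\otimes k[G]$ coactions), then apply Shapiro's Lemma for the exact subgroup $1\subseteq G$. The only cosmetic difference is that the paper verifies exactness of $\mathrm{ind}_1^G$ directly from the formula $\mathrm{ind}_1^G(-)=(-)\otimes k[G]$ and the flatness of $k[G]$, rather than via the sheaf-cohomology argument of Remark~\ref{flat subgroup scheme}; your route is valid but slightly less direct.
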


\begin{proof}
    The $G$--module $M\otimes k[G]$ agrees the module $M_{\textrm{triv}}\otimes k[G]$ where the action on the factor $k[G]$ is by the regular represenation, and $M_\textrm{triv}$ denotes $M$ with the trivial $G$--action. Moreover, by the \textit{tensor identity} (see \cite[Proposition I.3.6]{jantzen2003representations}) this also agrees with  $\mathrm{ind}_1^G(M)$. Now the result follows by Frobenius reciprocity since $\mathrm{ind}_1^G(-)=-\otimes k[G]$ is exact. 
\end{proof}

Let $G$ be a finite group scheme and $M$ be a $G$--module. We will describe the so called \textit{Hochschild complex} $C^\bullet(G,M)$  of $M$  which computes the cohomology groups $H^i(G,M)$.

\begin{construction}
    Fix $n\geq0$. Define $C^n(G,M)$ by $M\otimes_k k[G]^{\otimes n} $ and the differentials $\partial^n\colon C^n(G,M)\to C^{n+1}(G,M)$  by $\sum_{i=0}^{n+1} (-1)^i \partial^n_i $, where 
 \[
 \partial^n_i(m\otimes \lambda_1\otimes\ldots \otimes \lambda_n) = \left\{
        \begin{array}{ll}
            \varphi(m)\otimes \lambda_1\otimes\ldots\otimes \lambda_n & \textrm{if $i=0$,} \\
            m\otimes \lambda\ldots\otimes \Delta(\lambda_i)\otimes\ldots\otimes\lambda_n & \textrm{if $1\leq i<n$,}\\
            m\otimes\lambda_1\ldots\otimes\lambda_n\otimes 1 & \textrm{if $i=n+1.$}
        \end{array}
    \right.
 \]
Here, $\varphi$ denotes the comodule map of $M$, and $\Delta$ the comultiplication of the coordinate algebra $k[G]$. It is a routine exercise to verify that $\partial^{n+1}\circ\partial^{n}=0$, so that $(C^\bullet(G,M),\partial)$ is a chain complex. The key step to verify that $C^\bullet(G,M)$ computes $H^\ast(G,M)$ is the existence of an exact complex 
\[
0\to M \to M\otimes k[G]\to M\otimes k[G]^{\otimes 2} \to \ldots
\]
where the action of $G$ on $M\otimes k[G]^{\otimes n}$ is trivial on $M$, and by the left regular representation on the first factor of $k[G]^{\otimes n}$, and trivial on the rest. The point is that each $M\otimes k[G]^{\otimes n}$ is acyclic (see Corollary \ref{vanishing of cohomology for induced modules}), and hence the previous resolution computes the cohomology of $G$ with coefficients in $M$. Moreover, one verifies that $(M\otimes k[G]^{\otimes n+1})^G$ can be identified with $C^n(G,M)$. We refer the interested reader to \cite[Section I.4.15]{jantzen2003representations} for further details.
\end{construction}

\begin{Rec}
    We can use the Hochschild complex to define a $k$--algebra structure on the graded group $H^\ast(G,k)=\bigoplus_{i\geq0}H^i(G,k)$. First, note that we can identify $C^{n}(G,k)\otimes C^m(G,k)$ with $C^{n+m}(G,k)$. Via this identification we define the \textit{cup product} 
    \[
    C^\bullet(G,k)\otimes C^\bullet(G,k)\to C^\bullet(G,k)
    \]
    as the cochain map sending a homogeneous element $\lambda \otimes \lambda'$ in $C^n(G,k)\otimes C^m(G,k)$ to $\lambda \otimes \lambda'$ in $C^{n+m}(G,k)$. Moreover, using the differential in $C^{n}(G,k)\otimes C^m(G,k)$ we deduce that the cup product satisfies 
    \[
    \partial^{m+n}(\lambda \otimes \lambda')=\partial^n\lambda\otimes \lambda' +(-1)^m \lambda \otimes \partial^m\lambda'.
    \]
    From this is clear that an element $\lambda\otimes\lambda'$ is a cocycle if $\lambda$ and $\lambda'$ are cocycles. Now we are ready to define the cup product at the level of cohomology as: 
\begin{align*}
     H^\ast(G,k)\otimes H^\ast(G,k) & \to H^\ast(G,k)\\ 
    [\lambda]\otimes[\lambda'] & \mapsto [\lambda\otimes \lambda']
\end{align*}
One verifies that this is well defined, that is, that the cohomology class $[\lambda\otimes \lambda']$ just depends on the cohomology classes $[\lambda]$ and $[\lambda']$. Note that the cup product is graded commutative, meaning that for homogeneous elements $[\lambda]$ in $H^n(G,k)$ and $[\lambda']$ in $H^m(G,k)$ we have $[\lambda]\otimes[\lambda']=(-1)^{m+n}[\lambda'\otimes\lambda]$. Now, let $M$ be a $G$--module. We can identify $C^n(G,k)\otimes C^m(G,M)$ with $C^{m+n}(G,M)$ and follow the same construction of the cup product above to define a multiplication 
\[
H^\ast(G,k)\otimes H^\ast(G,M)\to H^\ast(G,M), \quad [\lambda][\lambda']\mapsto [\lambda \lambda'] 
\]
This action turns $H^\ast(G,M)$ into a left $H^\ast(G,k)$--module.
\end{Rec}

For convenience, let us record the above discussion in the following lemma.

\begin{Lemma} 
The cup product turns  $H^\ast(G,k)$ into a graded commutative $k$--algebra, and for any $G$--module $M$,  the graded cohomology groups $H^\ast(G,M)$ have a natural structure of a graded  $H^\ast(G,k)$--module.
\end{Lemma}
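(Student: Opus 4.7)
The plan is to verify the required algebraic identities at the level of the Hochschild cochain complex and then transfer them to cohomology. The starting observation, already recorded in the preceding recollection, is the Leibniz-type identity $\partial^{n+m}(\lambda \otimes \lambda') = \partial^n \lambda \otimes \lambda' + (-1)^n \lambda \otimes \partial^m \lambda'$. This says exactly that the cup product
\[
\smile\colon C^\bullet(G,k) \otimes_k C^\bullet(G,k) \to C^\bullet(G,k), \qquad \lambda \otimes \lambda' \longmapsto \lambda \otimes \lambda'
\]
is a morphism of cochain complexes when the source is equipped with the usual tensor-product differential. Consequently a product of cocycles is a cocycle, and the product of a cocycle with a coboundary is a coboundary, so $\smile$ descends to a well-defined bilinear pairing on $H^\ast(G,k)$. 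Associativity is immediate because at the level of cochains $\smile$ is literally the concatenation of tensor factors, and the element $1 \in k = C^0(G,k)$ is a cocycle that acts as a two-sided unit since cupping with it inserts no new tensor factors. This already shows $H^\ast(G,k)$ is a graded associative unital $k$--algebra.

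For graded commutativity, the cup product is not symmetric at the cochain level; the plan is to construct an explicit cochain homotopy between $\smile$ and its transpose $\smile^{\mathrm{op}}$ defined by $\lambda \smile^{\mathrm{op}} \lambda' = (-1)^{nm}\,\sigma_{n,m}(\lambda' \otimes \lambda)$, where $\sigma_{n,m}$ is the block transposition of the coordinate-algebra factors. This is the standard Eilenberg--Zilber-style shuffle argument, and the essential inputs are: (i) the Hochschild complex $C^\bullet(G,M) = (M\otimes k[G]^{\otimes \bullet})^G$ comes from the cobar construction on the coaugmented coalgebra $k[G]$, so a shuffle homotopy is available; and (ii) the coefficients $k$ form a \emph{commutative} $G$--algebra, which is what turns the shuffle into a homotopy between $\smile$ and $\smile^{\mathrm{op}}$. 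Passing to cohomology the homotopy term vanishes and yields $[\lambda][\lambda'] = (-1)^{nm}[\lambda'][\lambda]$.

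For the module structure on $H^\ast(G,M)$, the very same formula $\lambda \otimes \lambda' \mapsto \lambda \otimes \lambda'$ defines a map $C^\bullet(G,k) \otimes_k C^\bullet(G,M) \to C^\bullet(G,M)$; a direct inspection of the boundary $\partial^n$ (using that the $M$--coaction appears only in the face $\partial^n_0$, which sits on the far left and so only interacts with the first factor) gives the analogous Leibniz identity, so this pairing is a cochain map and descends to an action of $H^\ast(G,k)$ on $H^\ast(G,M)$. Associativity of the action and unitality follow at the cochain level from the same concatenation argument used for the algebra structure, and naturality in $M$ is clear from the functoriality of the Hochschild complex.

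The only genuinely nontrivial point is the graded commutativity step, and I expect that to be the main obstacle: the bookkeeping with iterated comultiplications and Sweedler-type indices while writing down a cochain homotopy is the most error-prone part. Everything else reduces to formal manipulations with the Leibniz identity and with concatenation of tensor factors, all of which are classical and carry over verbatim from the analogous story for group cohomology of discrete groups.
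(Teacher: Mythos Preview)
Your proposal is correct and follows the same route as the paper: both work at the level of the Hochschild complex and use the Leibniz identity to pass to cohomology. In fact the paper does not give a separate proof of this lemma at all; it simply records the preceding recollection (which asserts well-definedness and graded commutativity without argument) as a lemma ``for convenience.'' Your write-up is therefore more detailed than the paper's, and in particular you correctly single out graded commutativity as the only nontrivial point and supply the standard shuffle-homotopy argument that the paper omits.
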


\begin{Rem}
Let us mention that there are different approaches to define the cup product on cohomology. For instance,  one can consider the extension groups $\Ext_G^i(k,k)$ as equivalence classes of exact sequences 
\[
0\to k\to M_{i-1} \to \ldots\to M_0 \to k\to 0
\]
in $\mathbf{Rep}(G)$  (e.g. see \cite[Section 2.6]{Ben}), and use  the Yoneda composition to define a product on the graded extension groups $\Ext^\ast_G(k,k)$. As expected, the product obtained by the Yoneda composition agrees with the cup product defined above. See \cite{suarez2004hilton}.     
\end{Rem}

\begin{Rec}
    When $M$ is a $G$--algebra, the graded cohomology $H^\ast(G,M)$ has more structure as we will explain. For this, let us briefly recall ourselves how to define the cup product using injective resolutions. Let $M$ and $N$ be $G$--modules. Let $J_M$, $J_N$ and $J_{M\otimes N}$ denote  injective resolutions of $M$, $N$ and $M\otimes N$ respectively. Recall that $J_M\otimes J_N$ denotes the tensor product over $k$ equipped with diagonal action of $G$. In particular, note that 
\[
0\to M\otimes N\to J_M^\bullet \otimes J_N^\bullet
\]
is an exact complex, where $J_M^\bullet$ denotes the complex $0\to J_M^{1}\to J^{2}_M\to \ldots$, and similarly for $J_N^\bullet$.  Indeed, it is enough to check this at the level of chain complexes of $k$--modules since forgetting the $G$--action detects exactness. For this, consider the K\"unneth spectral sequence applied to the complexes   $J_M^\bullet$ and $J_N^\bullet$:
\[
E_{p,q}^2 = \bigoplus_{k_1 + k_2 = q} \mathrm{Tor}_p^{k}( H_{k_1}(J_M^\bullet), H_{k_2}(J_N^\bullet)) \Rightarrow H_{p+q}(J_M^\bullet \otimes J_N^\bullet) 
\]
and hence the result follows. Alternatively one can pass to the derived category of $k$--modules and verify that 
\[
M\otimes^L N\to J_M^\bullet \otimes^L J_N^\bullet
\]
is a quasi-isomorphisms.

Now, since the complex $0\to M\otimes N\to J_M^\bullet \otimes J_N^\bullet$ is exact and $J_{M\otimes N}$ is an injective resolution of $M\otimes N$, we obtain a unique chain chain map up to homotopy  
\[
J_M\otimes J_N\to J_{M\otimes N}.
\]
This follows by the defining property of an injective object. Via this map, we obtain the cup product at the level of chain complexes by
  \begin{align*}
     (J_M)^G\otimes (J_N)^G & \to (J_M\otimes J_N)^G\\ 
    \lambda\otimes\lambda' & \mapsto \lambda\otimes \lambda'
\end{align*}
 just as before, we can check that this is a graded commutative product. 
\end{Rec}

\begin{Lemma}
   Let $G$ be a flat affine group scheme over a Noetherian ring $k$, and $A$ be a $G$--algebra. Then the graded cohomology module $H^\ast(G,A)\coloneqq\bigoplus_i H^i(G, A)$ forms a graded commutative $k$--algebra.
\end{Lemma}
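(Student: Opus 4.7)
The plan is to build the algebra structure on $H^\ast(G,A)$ by transporting the already-constructed cup product on cohomology through the multiplication map of $A$. More precisely, since $A$ is a $G$--algebra, its multiplication $\mu\colon A\otimes A\to A$ is a morphism of $G$--modules, so it induces a map $\mu_\ast\colon H^\ast(G,A\otimes A)\to H^\ast(G,A)$. Combining this with the cup product pairing
\[
\smile\colon H^\ast(G,A)\otimes H^\ast(G,A)\to H^\ast(G,A\otimes A),
\]
which was constructed in the preceding recollection via a chosen chain map $J_A\otimes J_A\to J_{A\otimes A}$ between injective resolutions, we define the multiplication on $H^\ast(G,A)$ as $\mu_\ast\circ \smile$.

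Next I would verify the algebra axioms. \emph{Associativity} follows from two facts: the cup product at the level of resolutions is strictly associative (since for a third factor both composites land in $J_A\otimes J_A\otimes J_A$ and then in $J_{A\otimes A\otimes A}$ via chain maps that are unique up to homotopy between injective resolutions of $A\otimes A\otimes A$), and the multiplication $\mu$ on $A$ is associative, so the two bracketings of $\mu_\ast\circ(\mu_\ast\otimes\mathrm{id})\circ(\smile\otimes\mathrm{id})\circ\smile$ agree. \emph{Graded commutativity} is similar: the cup product on $H^\ast(G,A\otimes A)$ satisfies $[\lambda]\smile[\lambda']=(-1)^{|\lambda||\lambda'|}\tau_\ast([\lambda']\smile[\lambda])$, where $\tau\colon A\otimes A\to A\otimes A$ swaps factors, and the commutativity assumption on the $k$--algebra $A$ means $\mu\circ\tau=\mu$, so the signs propagate correctly after applying $\mu_\ast$. \emph{Unitality} comes from the unit $\eta\colon k\to A$ of the $G$--algebra, which is a $G$--module map (the trivial comodule structure on $k$ is compatible by the axioms of a $G$--algebra); it induces $\eta_\ast\colon H^\ast(G,k)\to H^\ast(G,A)$, and the image of $1\in H^0(G,k)$ is the unit.

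Finally, $k$--linearity of the product in each variable is automatic from the $k$--linearity of the cup product and of $\mu_\ast$. The main thing to be careful about is that the chain map $J_A\otimes J_A\to J_{A\otimes A}$ used to define the cup product is only unique up to chain homotopy, so one should check that associativity and graded commutativity descend from chain-level identities-up-to-homotopy to honest identities on cohomology classes; this is exactly the content of the previous recollection, and no new input is needed beyond noting that $\mu$ being a morphism of $G$--modules makes $\mu_\ast$ well-defined on cohomology and compatible with these homotopies. The only potential obstacle is bookkeeping the signs in graded commutativity, but since the commutativity of $A$ sits at the level of $G$--module maps, not just $k$--linear maps, it interacts cleanly with the existing graded commutativity of $\smile$.
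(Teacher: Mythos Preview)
Your proposal is correct and follows exactly the same approach as the paper: define the product as the composite of the cup product $H^\ast(G,A)\otimes H^\ast(G,A)\to H^\ast(G,A\otimes A)$ with the map induced by the multiplication $\mu\colon A\otimes A\to A$. The paper simply states this composite and declares the verification of the graded commutative $k$--algebra axioms a routine exercise, whereas you have spelled out those details.
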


\begin{proof}
 The multiplication is given by the composition 
   \[
   H^\ast(G,A)\otimes H^\ast(G,A)\to H^\ast(G,A\otimes A)\to H^\ast(G,A)
   \]
    where the first map is product the cup product as defined above, and the second map is induced by the multiplication $A\otimes A\to A$. It is a routine exercise to show that this multiplication turns $H^\ast(G,A)$ into a graded commutative $k$--algebra.  
\end{proof}

\begin{Rem}
    In fact, one can also give an interpretation of the cup product on $H^\bullet(G,A\otimes A)$ using the Yoneda product on extension groups, at least when $A$ is flat as $k$--module, see \cite[Lemma 2.5]{touze2010cohomology}.  
\end{Rem}

\begin{Rec}
    Since localization is an exact functor, we can see that the cohomology of the Hochschild complex is unaffected when localizing. More precisely, let $k$ be the ground ring and localize at any multiplicative subset $S \subset k$. Then $S^{-1}k$ is a flat $k$--module. Now, let $M$ be any $G$--module. Write $G_{S^{-1}k}$ for the base change of $G$ along the localization $k\to S^{-1}k$.  Then the Hochschild complex associated to the $G_{S^{-1}k}$--module $M[S^{-1}]$ is just the Hochschild complex for $M$ as $G$--module tensored with $S^{-1}k$, that is
    \[
    C^\bullet(G_{S^{-1}k}, S^{-1}M)=C^\bullet(G, M)\otimes_k S^{-1}k.
    \]
    Since $S^{-1}k$ is flat over $k$, we see that the cohomology is unaffected. In other words 
    \[
    H^i(G_{S^{-1}k}, S^{-1}M)=H^i(G, M)
    \]
    as $S^{-1}k$--modules. Actually, any flat base change doesn't affect cohomology, that is, for any morphism $k\to \ell$ of rings, which makes $\ell$ flat as $k$--module, then the above argument can be applied. We can see that for any $G_k$--module $M$ we have $H^i(G_k, M)=H^i(G_\ell, M)$ as $\ell$--modules.
\end{Rec}

From the above discussion we get the following result.

\begin{Cor}\label{Flat base change}
    Flat base change does not affect group cohomology.
\end{Cor}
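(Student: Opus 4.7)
The plan is to deduce this directly from the explicit description of the Hochschild complex given just before the statement. The key observation is that the Hochschild complex $C^\bullet(G,M)$ computing $H^\ast(G,M)$ is built out of tensor products of $M$ and $k[G]$ over $k$, so its formation behaves well under base change along any ring homomorphism $k\to\ell$.

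First, I would form the base-changed group scheme $G_\ell$ and observe that $\ell[G_\ell] = k[G]\otimes_k \ell$, with comultiplication and counit obtained from those of $k[G]$ by tensoring with $\ell$. For a $G_k$--module $M$, the $G_\ell$--module $M\otimes_k \ell$ has comodule structure induced from that of $M$. A direct inspection of the differentials $\partial^n_i$ in the construction above (which only involve the comodule map $\varphi$, the comultiplication $\Delta$, and the unit) shows that the Hochschild complex is compatible with this base change, i.e.
\[
C^\bullet(G_\ell, M\otimes_k \ell) \;\cong\; C^\bullet(G,M)\otimes_k \ell
\]
as complexes of $\ell$--modules.

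Second, I would invoke flatness of $\ell$ over $k$: the functor $-\otimes_k \ell$ is exact, and in particular commutes with taking cohomology of a cochain complex. Thus
\[
H^i\bigl(C^\bullet(G,M)\otimes_k \ell\bigr) \;\cong\; H^i\bigl(C^\bullet(G,M)\bigr)\otimes_k \ell \;=\; H^i(G,M)\otimes_k \ell.
\]
Combining with the previous isomorphism of complexes gives $H^i(G_\ell, M\otimes_k \ell)\cong H^i(G,M)\otimes_k \ell$ as $\ell$--modules, which is the claim.

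I do not expect any genuine obstacle here; the only point that requires a moment of care is the verification that the differentials genuinely commute with base change, which is immediate from the formulas for $\partial^n_i$ since each summand is a tensor product of $k$--linear maps. The argument as written in the preceding Recollection already sketches exactly this, so the proof is essentially a formal unwinding of that remark.
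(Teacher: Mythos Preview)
Your proposal is correct and is exactly the approach the paper takes: the corollary is stated as an immediate consequence of the preceding Recollection, which (just as you do) identifies $C^\bullet(G_\ell, M\otimes_k\ell)$ with $C^\bullet(G,M)\otimes_k\ell$ and then uses flatness of $\ell$ to commute cohomology past the tensor. Your write-up is in fact slightly more explicit than the paper's sketch (you state the conclusion as $H^i(G_\ell, M\otimes_k\ell)\cong H^i(G,M)\otimes_k\ell$ rather than the paper's abbreviated $H^i(G_k,M)=H^i(G_\ell,M)$), but the substance is identical.
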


We finish up this section with an important result regarding base change for cohomology. This requires some preparations.

\begin{construction}
    Let $l\to k$ be a map of rings, $G_l$ be a group scheme over $l$ and let $G_k$ be the base change of $G_l$ to $k$, that is, $k[G_k]=l[G_l]\otimes_l k$.  Any $G_k$--module $M$ with coaction $\Delta_M\colon M\to  M\otimes_k k[G_k]$ can be viewed as a $G_l$--module  by considering $M$ as $l$--module and $\Delta_M$ as an $l$--linear map by restriction along the map $l\to k$. This makes sense since 
    \[
    M\otimes_k k[G_k]=M \otimes_k k \otimes_l \otimes l[G_l]=M \otimes_l l[G_l].
    \]
    We can also consider any morphism of $G_k$--modules as a morphism of $G$--modules in the same fashion. This construction is functorial. 
\end{construction}

\begin{Def}
  The functor $\mathbf{Rep}(G_k)\to \mathbf{Rep}(G_l)$ from the above construction is called \textit{base change functor}, and it is denoted by $\mathrm{res}^k_l$. 
\end{Def}

\begin{Rem}
    The base change functor is not a standard tool in the literature on the representation theory of group schemes but rather a clever observation due to van der Kallen. Although it seems simple, it has important applications as we will see.  
\end{Rem}

\begin{Lemma}\label{base change}
    Let $l\to k$ be a morphism of rings, and $G_l$ be a flat group scheme over $l$. Let $G_k$ be the base change of $G_l$ to $k$. Then for every $G_k$--module $M$ there is an isomorphism of graded $l$--modules 
    \[
    H^\ast(G_k,M)\cong H^\ast(G_l,M).
    \]
\end{Lemma}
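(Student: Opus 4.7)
My plan is to use the explicit Hochschild complex description of cohomology, $H^{\ast}(G,M) = H^{\ast}(C^{\bullet}(G,M))$, introduced earlier in the section, and to exhibit an isomorphism of cochain complexes of $l$-modules between $C^{\bullet}(G_k,M)$ and $C^{\bullet}(G_l,\mathrm{res}^k_l M)$. This would give the claimed isomorphism of graded $l$-modules in cohomology.

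First, I would unwind the base change at the level of coordinate algebras. Since $k[G_k] = l[G_l] \otimes_l k$ by definition of base change, iterating the tensor product over $k$ I would show that for each $n \geq 0$ there is a canonical isomorphism of $k$-modules
\[
k[G_k]^{\otimes_k n} \;\cong\; l[G_l]^{\otimes_l n} \otimes_l k,
\]
by repeatedly moving the $\otimes_l k$ factors past each other and collapsing the resulting $k \otimes_k k \cong k$ identifications. Tensoring with $M$ and applying the standard identification $M \otimes_k (X \otimes_l k) \cong M \otimes_l X$, valid for any $k$-module $M$ and any $l$-module $X$, I would obtain $l$-linear isomorphisms
\[
C^n(G_k,M) \;=\; M \otimes_k k[G_k]^{\otimes_k n} \;\cong\; M \otimes_l l[G_l]^{\otimes_l n} \;=\; C^n(G_l,\mathrm{res}^k_l M).
\]

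Next, I would verify that these isomorphisms assemble into a map of cochain complexes. The differentials $\partial^n = \sum_i (-1)^i \partial^n_i$ are built from three kinds of maps: the coaction $\varphi$ on $M$, the comultiplication $\Delta$ on the coordinate algebra applied in the $i$-th slot, and the map $x \mapsto x \otimes 1$. Since the coaction of $G_k$ on $M$ is obtained from the $G_l$-coaction by extending scalars, and the comultiplication of $k[G_k]$ is obtained from that of $l[G_l]$ by applying $- \otimes_l k$, each $\partial^n_i$ on the $G_k$-side is literally the image of the corresponding $\partial^n_i$ on the $G_l$-side under $-\otimes_l k$ combined with the identifications above. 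The verification is therefore a diagram chase: one checks separately for the three types of summands that both sides of each square agree.

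Having identified the two Hochschild complexes as $l$-linear cochain complexes, the isomorphism $H^{\ast}(G_k,M) \cong H^{\ast}(G_l,\mathrm{res}^k_l M)$ of graded $l$-modules is immediate. The only place where something nontrivial could go wrong is in matching the differentials --- concretely, making sure the unit map $x \mapsto x \otimes 1$ in the $k$-coalgebra lines up with the one in the $l$-coalgebra after restriction. That is the bookkeeping step I would expect to be the main obstacle, though it is purely formal. Note that no flatness hypothesis is needed on the base change map $l \to k$: unlike Corollary \ref{Flat base change}, which compares $H^{\ast}(G_l,M)$ with the cohomology of the base-changed module over $G_k$, here the underlying $l$-module of $M$ is unchanged, so the comparison is strictly at the level of the $l$-linear complexes.
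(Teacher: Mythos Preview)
Your argument is correct and, in spirit, rests on the same key identification the paper uses, namely $M\otimes_k k[G_k]\cong M\otimes_l l[G_l]$ (stated in the construction preceding the definition of $\mathrm{res}^k_l$). The difference is in the packaging: you compare the two Hochschild complexes directly and identify them term-by-term as $l$-linear cochain complexes, whereas the paper instead takes an injective resolution of $M$ as a $G_k$-module by ``typical injectives'' $I\otimes_k k[G_k]$, observes that these become $I\otimes_l l[G_l]$ under restriction and are therefore $G_l$-acyclic, and then uses that invariants agree ($M^{G_k}=M^{G_l}$) to conclude that the same complex computes both cohomologies. Your route is more elementary and self-contained given that the Hochschild complex has already been introduced; the paper's route is closer to the derived-functor formalism and makes the role of acyclicity explicit. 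Neither approach needs flatness of $l\to k$, as you correctly note.
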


\begin{proof}
    Note that we can express the invariants as an equalizer of two maps. Indeed,  let $f\colon M\to M\otimes k[G]$ be the $k$--module map defined by $m\mapsto m\otimes 1$. Then we have that: 
    \[
    M^{G_k}=\mathrm{Ker}(M\xrightarrow[]{\Delta_M-f} M\otimes k[G] )
    \]
     and this agrees with $M^{G_l}$ since $M\otimes_k k[G_l]=M\otimes_l l[G_l]$ and hence our $k$--linear map $\Delta_M-f$ looks the same when considered as an $l$--linear map. 
     
    Let $J_M$ be an injective resolution of $M$ as $G_k$--module. We can assume that the terms of the resolution $J_M$ are typical injectives, that is, modules of the form $I\otimes_k k[G_k]$ for an injective $k$--module $I$.  Now, note that base change functor $\mathrm{res}^k_l$ is always exact since it is obtained from the restriction functor from $k$--modules to $l$--modules which is exact. Then the exactness of $\mathrm{res}^k_l$ together with our first observation give us  $(J_M)^{G_k}=(J_M)^{G_l}$ as complexes of $l$--modules. Moreover, the $G_k$--modules of the form $I\otimes_k k[G]$ have the form $I\otimes_l l[G]$ when viewed as $G_l$--modules. In particular, those modules are $G_l$--acyclic (see Corollary \ref{vanishing of cohomology for induced modules}). Thus $J_M$ is a resolution of $M$ by acyclic $G_l$--modules, hence $(J_M)^{G_l}$ computes  $H^\ast(G_l,M)$. But recall that the complex $(J_M)^{G_k}$ computes $H^\ast(G_k,M)$ as well, hence the result follows.  
\end{proof}

\pagebreak

\section{Costandard Modules and Good Filtrations}

Our aim in this section is to introduce some basic definitions and notation related to costandard modules and good filtrations, which will be relevant later on. Along the way, we also present some interesting results in the case where the base ring is a field.

 Let $k$ be a commutative ring, and let $G$ denote $\GL_n$. Recall that its coordinate algebra $k[G]$ is generated by  $t_{11}, t_{12}, \dots, t_{nn}$ and $det(t_{ij})^{-1}$ where the $t_{i,j}$ denote the dual basis of the space of $n\times n$--matrices with basis given by the matrices $e_{ij}$ having a one at entry $(i,j)$ and the rest zeros (see Example \ref{main examples}(iv)).

Consider the subgroup $T$ of $G$ of diagonal matrices, that is, $T(A)$ consists of diagonal matrices of $G(A)$, for $A$ a $k$--algebra. Similarly, consider the subgroup $B$ of lower triangular matrices. In particular, $T$ is a maximal torus of $G$ and can be identified with $(\mathbb{G}_m)^n$, and $B$ is a Borel subgroup of $G$. Moreover, $T\leq B\leq G$ and there is a retraction $B\to T$. 

\begin{Rec}
Recall that the character group $\chi(T)$ of $T$ is given by $\mathrm{Hom}(T,\mathbb{G}_m)\cong \mathbb{Z}^n$ (see Example \ref{examples 2}). In fact, the matrices $\epsilon_i\coloneq t_{ii}|_T$, for $1\leq i\leq n$, form a basis of $\chi(T)$. Let $\mathcal{Y}(T)$ denote the abelian group $\mathrm{Hom}(\mathbb{G}_m,T)$, and we consider it with a basis $\epsilon'_i$, for $1\leq i\leq n$, where $\epsilon'_i(a)= ae_{ii}+\sum_{j\not=i}e_{jj}$. There is a paring 
\[
\langle-,-\rangle\colon \chi(T)\times \mathcal{Y}(T)\to \mathbb{Z}
\]
such that $\lambda\circ \varphi$ is the map $a\to a^{\langle\alpha,\varphi\rangle}$, for all $a\in \mathbb{G}_a(A)$, $A$ a $k$--algebra. This paring is bilinear and induces an isomorphism $\mathcal{Y}(T)\cong\mathrm{Hom}_\mathbb{Z}(\chi(,\mathbb{Z})$. In elements of the basis the paring $\langle \epsilon_i,\epsilon'_j\rangle$ corresponds to the Kronecker symbol $\delta_{ij}$. The set 
\[
R\coloneq \{\epsilon_i-\epsilon_j\mid 1\leq i,j\leq n, \, i\not= j\}
\]
is called \textit{root system of} $G$. 
For a root $\epsilon_i-\epsilon_j\in R$ we let $(\epsilon_i-\epsilon_j)^\vee$ to denote $\epsilon'_i-\epsilon'_j$, these elements are usually refer as \textit{coroots}.   We let $R^+$ denote the set $\{\epsilon_i-\epsilon_j\mid 1\leq i<j\leq n\}$ and refer to its elements as \textit{positive roots}. 
\end{Rec}

\begin{Def}
 An element $\lambda \in \chi(T)$ is called \textit{dominant weight of $T$} if  $\langle \lambda, \varphi^\vee\rangle\geq 0$, for all positive roots $\varphi$. The set of all dominant weights of $T$ is denoted by $\chi_+(T)$. 
\end{Def}

\begin{Rem}
We warn the reader that the notion of dominant weight is attached to the choice of a set of positive roots, but in this document we will only work with $R^+$ as defined above unless we explicitly state otherwise. 
    
\end{Rem}

\begin{Rem}
   Recall that the character group $\chi(T)$ is isomorphic to $\mathbb{Z}^n$ and we have specified a basis $\epsilon_i$. Hence an element $\lambda$ in $\chi(T)$ is often denoted by $(\lambda_1,\ldots,\lambda_n)\in \mathbb{Z}^n$. In particular, $\lambda$ is a dominant weight if and only if  $\lambda_1\geq\ldots\geq \lambda_n$. 
\end{Rem}

\begin{Rec}
An element $\lambda$ in $\chi(T)$, determines a $T$--module structure on $k$, and we will write $k_\lambda$ to refer to $k$ with this $T$--action. We will also write $k_\lambda$ to denote the $B$--module obtained from the $T$--module $k_\lambda$ by restriction via the projection $ B\to T$. We let $\nabla_\lambda$ denote the induced module $\mathrm{ind}_B^G(k_\lambda)$.     
\end{Rec}

For the rest of this section, we let $k$ denote a field. The following result can be found in \cite[Section II.2.6, I.5.12]{jantzen2003representations}.

\begin{Lemma}\label{costandard have finite dimension over k}
    The induced module $\nabla_\lambda$ is non trivial if and only if $\lambda$ is a dominant weight. Moreover, $\nabla_\lambda$ is finitely generated over $k$. 
\end{Lemma}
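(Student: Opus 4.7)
The plan is to realize $\nabla_\lambda$ geometrically as global sections of a line bundle on the flag variety and then combine classical algebraic-geometric finiteness with a weight analysis. Recall from Remark \ref{flat subgroup scheme} that, since $B$ is a flat subgroup of $G=\GL_n$ and the quotient $G/B$ exists as a scheme (here it is the classical flag variety, which is smooth and projective over $k$), the induction functor $\mathrm{ind}_B^G(-)$ is the composition of the associated-sheaf construction $\mathcal{L}(-)$ with global sections. Since $k_\lambda$ is one-dimensional, the associated quasi-coherent sheaf is an invertible sheaf $\mathcal{L}(\lambda)$, and hence $\nabla_\lambda \cong H^0(G/B, \mathcal{L}(\lambda))$.

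From this geometric description, finite-dimensionality over $k$ is immediate: $G/B$ is projective over $k$ and $\mathcal{L}(\lambda)$ is coherent, so Serre's finiteness theorem for coherent cohomology on projective varieties yields $\dim_k H^0(G/B,\mathcal{L}(\lambda)) < \infty$. This handles the ``moreover'' clause independently of the dominance question.

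For the equivalence ``$\nabla_\lambda \neq 0$ iff $\lambda$ is dominant'', I would proceed in two steps. For the ``if'' direction, one exhibits an explicit nonzero section. Using the Bruhat decomposition $G = \bigsqcup_w U^- w B$, the big cell $U^- B$ is open and dense in $G$, so any $B$-equivariant function on $G$ is determined by its restriction to $U^-$; conversely, for dominant $\lambda$ one can construct a section via products of minors (a Pl\"ucker-type construction, which is the classical realization of the matrix coefficient of the highest weight vector of the Weyl module $V(\lambda)$) that visibly satisfies the $B$-equivariance and is nonzero.

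For the ``only if'' direction, suppose $\lambda$ is not dominant, so $\langle \lambda, \alpha^\vee\rangle < 0$ for some positive root $\alpha = \epsilon_i-\epsilon_{i+1}$. The strategy is to restrict to the copy of $\mathrm{SL}_2$ inside $G$ generated by the root subgroups for $\pm\alpha$: by transitivity of induction, one reduces to the corresponding statement in $\mathrm{SL}_2$-theory, namely that $\mathrm{ind}_{B_{\mathrm{SL}_2}}^{\mathrm{SL}_2}(k_\mu)=0$ whenever $\mu<0$. This last fact is a finite-dimensional $\mathrm{SL}_2$-computation, relying on the finiteness just established, together with the classical statement that every finite-dimensional $\mathrm{SL}_2$-representation is a sum of weight spaces with non-negative highest weights. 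I expect this ``only if'' direction to be the main obstacle, since it genuinely uses the classical structure theory of $\mathrm{SL}_2$-representations and the reduction to root-subgroup-level calculations; the geometric and dominance-order arguments take care of the rest rather cleanly.
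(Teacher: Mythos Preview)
The paper does not give its own proof of this lemma; it simply cites Jantzen \cite[II.2.6, I.5.12]{jantzen2003representations}. Your sketch is essentially the standard argument found there and is correct: the geometric identification $\nabla_\lambda = H^0(G/B,\mathcal{L}(\lambda))$ together with projectivity of the flag variety gives finite-dimensionality, and explicit highest-weight sections (matrix coefficients, or products of minors) show nonvanishing for dominant $\lambda$.

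For the ``only if'' direction, your $\mathrm{SL}_2$-reduction through a minimal parabolic is valid but heavier than needed. The route Jantzen actually takes in II.2.6 is a direct weight argument: if $\nabla_\lambda\neq 0$ then $\lambda$ occurs as a weight and is the highest one (all other weights are $<\lambda$); since the weight multiset of any finite-dimensional $G$-module is $W$-stable, each $s_\alpha\lambda$ is also a weight, hence $s_\alpha\lambda\leq\lambda$, i.e.\ $\langle\lambda,\alpha^\vee\rangle\geq 0$ for every simple root $\alpha$. This bypasses transitivity of induction and any separate analysis of $\mathrm{SL}_2$-representation theory in positive characteristic, and it makes transparent why the finite-dimensionality established first is the real input.
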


\begin{Rec}
    Recall that the \textit{Weyl group $W$ of  the root system $R$} corresponds the group generated by the set of reflections $s_\alpha$ on the character group $\chi(T)$ corresponding to elements $\alpha$ in the root system, that is, $s_\alpha \lambda$ corresponds to $\lambda - \langle \lambda, \alpha\rangle\alpha$ and 
    \[
    W=\langle s_\alpha\mid \alpha \in R\rangle. 
    \]
    In fact, $W$ can be identify with the symmetric group on $n$ letters and it acts by permutation on $\{\epsilon_1\ldots,\epsilon_n\}$. We reserve $w_0$ to denote the element in $W$ corresponding to the permutation $i\mapsto n+1-i$.
\end{Rec}

\begin{Def}
 A \textit{costandard $G$--module} is a module of the form $\nabla_\lambda$ for $\lambda$ a dominant weight. A  module of the form $(\nabla_{-w_0\lambda})^*$, for $\lambda$ a dominant weight, is called \textit{standard $G$--module} and it is denoted by $\Delta_\lambda$. Note that the dual of a costandard module makes sense by Lemma \ref{costandard have finite dimension over k}.
\end{Def}

The following theorem will not be used in this notes, however, it largely reflects the relevance of the (co)standard modules in the representation theory of group schemes. For more details, see \cite[Section II.2]{jantzen2003representations}.

\begin{Th}
    The set $\{ L(\lambda) \mid \lambda \, \textrm{ is a dominant weight}\}$ is a complete list of simple $G$--modules up to isomorphism, where $L(\lambda)$ denotes the socle of $k_\lambda$. 
         For any dominant weight $\lambda\in \chi(T)$, there is an isomorphism $\nabla_\lambda\cong L(\lambda)$. In particular, the dual $(\nabla_\lambda)^\ast$ is isomorphic to $\nabla_{-w_0\lambda}$.
\end{Th}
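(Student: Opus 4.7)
The plan is to apply classical highest weight theory, using Frobenius reciprocity from Proposition~\ref{adjuntion restriction induction}:
\[
\Hom_G(V,\nabla_\lambda)\;\cong\;\Hom_B(\res^G_B V, k_\lambda),
\]
which picks out the $B$-semi-invariants of weight $\lambda$ in $V$, equivalently the $\lambda$-weight subspace of $V^U$ where $U\subset B$ is the unipotent radical (the strict lower-triangular subgroup).

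First I would show that every nonzero finite-dimensional $G$-module $V$ carries a $B$-semi-invariant of some dominant weight. Since $U$ is unipotent, any nonzero finite-dimensional representation has a $U$-fixed vector, so $V^U\neq 0$. The torus $T$ preserves $V^U$ and decomposes it into weight spaces; choosing a weight $\lambda$ maximal in the dominance order and analyzing the $\mathrm{SL}_2$-subgroup associated to each simple root $\epsilon_i-\epsilon_{i+1}$ forces $\langle\lambda,(\epsilon_i-\epsilon_{i+1})^\vee\rangle\geq 0$, i.e.\ $\lambda\in\chi_+(T)$.

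Next, for a simple $G$-module $V$ with highest weight $\lambda$, Frobenius reciprocity produces a nonzero, hence injective, map $V\hookrightarrow\nabla_\lambda$, so $V\subseteq\mathrm{soc}(\nabla_\lambda)=:L(\lambda)$. To finish the classification I would show that $L(\lambda)$ is itself simple, relying on two facts: (i) the space of $B$-semi-invariants of weight $\lambda$ in $\nabla_\lambda$ is one-dimensional, equivalently $\End_G(\nabla_\lambda)\cong k$, via the reciprocity above applied to $V=\nabla_\lambda$ and an explicit description of $\ind^G_B k_\lambda$ as functions on $G$; and (ii) $\lambda$ is the maximal weight of $\nabla_\lambda$, so every nonzero $G$-submodule must contain the distinguished line of highest-weight vectors. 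Together (i) and (ii) force any two simple submodules to intersect, so the socle is simple. Distinct dominant weights yield nonisomorphic $L(\lambda)$ since highest weight is an invariant, giving a complete irredundant list.

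For the duality $(\nabla_\lambda)^*\cong\nabla_{-w_0\lambda}$, I would compare formal characters: using Weyl's character formula and the $W$-symmetry of $\mathrm{ch}\,\nabla_\lambda$, the involution $\mu\mapsto-\mu$ on $\chi(T)$ sends $\mathrm{ch}\,\nabla_\lambda$ to $\mathrm{ch}\,\nabla_{-w_0\lambda}$. Matching the highest-weight structure—both sides have simple socle $L(-w_0\lambda)$—then upgrades the character identity to a $G$-isomorphism; Kempf vanishing for the corresponding line bundle on $G/B$ is the input needed to control this in positive characteristic, while in characteristic zero both sides are already simple and the assertion is immediate. The principal obstacle throughout is fact (i) above: uniqueness of the highest-weight line in $\nabla_\lambda$ depends on realizing $\nabla_\lambda$ as global sections of a line bundle on the flag variety $G/B$ and computing its $B$-semi-invariants via the open Bruhat cell—this geometric input is the substantive content of the theorem.
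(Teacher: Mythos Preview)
The paper does not prove this theorem; it merely refers the reader to \cite[Section II.2]{jantzen2003representations}. Your approach to the classification of simples via highest-weight theory and Frobenius reciprocity is exactly the standard one developed there, and is correct.

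Your duality argument, however, has a genuine gap. Dualizing a module with simple socle produces a module with simple \emph{head}, not simple socle; so your claim that $(\nabla_\lambda)^*$ has simple socle $L(-w_0\lambda)$ is wrong in positive characteristic. More seriously, equality of formal characters does not imply isomorphism of $G$-modules over a field of positive characteristic unless both modules are already known to be simple. Note that the theorem as stated asserts $\nabla_\lambda\cong L(\lambda)$ for all dominant $\lambda$, which holds only in characteristic zero; granted that, both sides of the duality are simple and your character argument goes through. But over a field of positive characteristic the correct general statement is $(\nabla_\lambda)^*\cong \Delta_{-w_0\lambda}$ (which is the paper's \emph{definition} of the Weyl module), and this is typically not isomorphic to $\nabla_{-w_0\lambda}$. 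Kempf vanishing does not bridge this gap; what it contributes is the cohomological vanishing recorded in Theorem~\ref{Kempf's theorem}, not the collapse of $\Delta_\mu$ to $\nabla_\mu$.
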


Another remarkable feature of costandard modules is that they are acyclic, i.e., their cohomology vanishes. This is a consequence of Kempf's Vanishing Theorem and we will record it in the following theorem together with the vanishing of the extension groups of a costandard module. A proof can be found in \cite[Section II.4]{jantzen2003representations}.

\begin{Th}\label{Kempf's theorem}
    Let $\lambda$ be a dominant weight of $\chi(T)$. Then $R^i \mathrm{ind}_B^G k_\lambda$ vanish for $i>0$. Moreover, the group $\mathrm{Ext}_G^i(\nabla_\lambda,\Delta_\mu)$ is trivial, unless $i=0$ and $\lambda=\mu$. Since $\Delta_0=\nabla_0=k$, we obtain that $H^i(G,\nabla_\lambda)=0$ for  any non-trivial dominant weight $\lambda$. 
\end{Th}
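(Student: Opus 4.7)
The plan is to follow the classical strategy found in Jantzen \cite{jantzen2003representations}, whose essential content is the geometric realisation of induction on $G/B$ together with Kempf's cohomological vanishing for line bundles.

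First I would reinterpret the induction geometrically. As recalled in Remark \ref{flat subgroup scheme}, the functor $\mathrm{ind}_B^G$ may be computed as the global sections of the associated quasi-coherent sheaf $\mathcal{L}(-)$ on the flag variety $G/B$. Applied to the one-dimensional $B$-module $k_\lambda$, the associated sheaf $\mathcal{L}(\lambda)$ is an invertible sheaf on $G/B$, and $R^i\mathrm{ind}_B^G(k_\lambda) \cong H^i(G/B, \mathcal{L}(\lambda))$. The first assertion of the theorem is thereby translated into the sheaf cohomology vanishing $H^i(G/B, \mathcal{L}(\lambda)) = 0$ for $i>0$ and $\lambda$ dominant.

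The sheaf cohomology vanishing is the heart of the proof and the main obstacle. In characteristic zero I would appeal to Borel--Weil--Bott (or to Kodaira vanishing after handling the walls $\langle \lambda, \alpha^\vee\rangle = 0$ by direct computation, using that $\mathcal{L}(\lambda)$ is globally generated for dominant $\lambda$). In positive characteristic $p$, I would apply the Frobenius splitting technique of Mehta--Ramanathan: the flag variety $G/B$ admits a splitting $\mathcal{O}_{G/B} \to F_*\mathcal{O}_{G/B}$ compatible with a Schubert divisor whose associated line bundle is ample. Iterated Frobenius pullback makes $\mathcal{L}(\lambda)^{\otimes p^r}$ ample for large $r$, forcing its higher cohomology to vanish by Serre vanishing; the compatible splitting then propagates this vanishing back to $\mathcal{L}(\lambda)$ itself. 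Either route requires serious algebraic geometry going well beyond what has been set up in this text, and this step is by far the hardest.

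With Kempf vanishing in hand, the Ext statement follows by a standard Grothendieck spectral sequence argument. From the adjunction $\Hom_G(M, \mathrm{ind}_B^G N) \cong \Hom_B(\mathrm{res}_B^G M, N)$ and the derived functors of $\mathrm{ind}_B^G$ one obtains a convergent spectral sequence
\[
E_2^{p,q} = \Ext_G^p(M, R^q\mathrm{ind}_B^G(N)) \Longrightarrow \Ext_B^{p+q}(\mathrm{res}_B^G M, N).
\]
Taking $M = \Delta_\lambda$, $N = k_\mu$ with $\mu$ dominant and applying the first part to annihilate the $q>0$ contributions, this collapses to $\Ext_G^i(\Delta_\lambda, \nabla_\mu) \cong \Ext_B^i(\mathrm{res}_B^G \Delta_\lambda, k_\mu)$. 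A weight-filtration analysis of $\mathrm{res}_B^G \Delta_\lambda$ (all $T$-weights are $\leq \lambda$ in the dominance order, with $\lambda$ occurring with multiplicity one) then forces the right-hand side to vanish unless $i=0$ and $\lambda = \mu$, in which case it is one-dimensional. To move between $\Ext_G^i(\Delta_\lambda, \nabla_\mu)$ and the form $\Ext_G^i(\nabla_\lambda, \Delta_\mu)$ stated in the theorem, I would invoke the contravariant involution on rational $G$-modules that interchanges $\nabla_\lambda$ with $\Delta_\lambda$ (the composition of the Chevalley anti-automorphism with vector space duality, which satisfies $\tau(\nabla_\lambda) = \Delta_\lambda$ and preserves $\Ext$). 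The final claim $H^i(G, \nabla_\lambda) = 0$ for non-trivial dominant $\lambda$ is then the special case $\mu = 0$ combined with $\Delta_0 = k$.
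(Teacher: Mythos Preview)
The paper gives no proof of this theorem; it simply refers the reader to \cite[Section II.4]{jantzen2003representations}. Your outline is precisely the strategy found there: identify $R^i\mathrm{ind}_B^G k_\lambda$ with $H^i(G/B,\mathcal{L}(\lambda))$, establish Kempf vanishing (via Frobenius splitting in positive characteristic), and then collapse the Grothendieck spectral sequence to reduce the $\Ext$ computation to a $B$-module calculation. So on the main points your approach matches what the paper defers to.

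There is, however, a genuine problem with your final duality step. A contravariant exact involution $\tau$ gives $\Ext^i_G(M,N)\cong \Ext^i_G(\tau N,\tau M)$, so from $\Ext^i_G(\Delta_\lambda,\nabla_\mu)$ you obtain $\Ext^i_G(\Delta_\mu,\nabla_\lambda)$, \emph{not} $\Ext^i_G(\nabla_\lambda,\Delta_\mu)$. The groups $\Ext^i_G(\nabla_\lambda,\Delta_\mu)$ are in general nonzero in positive degree in positive characteristic, so no such duality can produce the statement as literally written. In fact the theorem as printed almost certainly contains a typo: the standard result (Jantzen II.4.13) and the paper's own subsequent Theorem \ref{modules with a good filtration are acyclic} both concern $\Ext^i_G(\Delta_\lambda,\nabla_\mu)$, which is exactly what your spectral-sequence argument computes directly. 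The cohomological consequence $H^i(G,\nabla_\lambda)=0$ that the paper actually uses later follows immediately from $\Ext^i_G(\Delta_0,\nabla_\lambda)=\Ext^i_G(k,\nabla_\lambda)=0$, so you should simply drop the duality manoeuvre and state the $\Ext$ vanishing in its correct form.
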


We are ready to define a modules with a good filtration; the key feature to keep in mind is that those modules behave as modules in characteristic 0. 

\begin{Def}
    Let $M$ be a $G$--module. A chain of $G$--submodules of $M$
\[
\{0\} = M_0 \subset M_1 \subset M_2 \subset \dots
\]
is a good filtration of $M$ if every $ M_{i} / M_{i-1}$ is isomorphic to a direct sum of costandard modules. 
\end{Def}

\begin{Rem}
    The previous definition is due to van der Kallen \cite{vdK5-JB} and it is more general than the one given  in \cite{jantzen2003representations} where the quotient of consecutive terms in the filtration has to be a single costandard module. 
\end{Rem}

\begin{Th}\label{modules with a good filtration are acyclic}
   Let $M$ be a $G$--module. Then the following properties are equivalent.
   \begin{enumerate}
       \item $M $ has a good filtration. 
       \item $\mathrm{Ext}^i(\Delta_\lambda, M)=0$ for all dominant weights $\lambda$, and $i>0$.
       \item $\mathrm{Ext}^1(\Delta_\lambda, M)=0$ for all dominant weights $\lambda$.
   \end{enumerate}
\end{Th}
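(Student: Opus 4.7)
The easy half, $(2)\Rightarrow(3)$ and $(1)\Rightarrow(2)$, is essentially formal. The former is just the case $i=1$. For the latter, I would induct on a good filtration $\{M_j\}$ of $M$. The base case is that a direct sum of costandard modules has vanishing higher $\Ext$ against any $\Delta_\lambda$, which is immediate from Kempf's vanishing (Theorem~\ref{Kempf's theorem}) once one observes that $\Ext^i(\Delta_\lambda,-)$ commutes with coproducts; this holds because $\Delta_\lambda$ is finite-dimensional over $k$ and hence Noetherian in the locally finite Grothendieck category $\mathbf{Rep}(G)$. The inductive step applies the long exact sequence of $\Ext$ to $0\to M_{j-1}\to M_j\to M_j/M_{j-1}\to 0$. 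For transfinite filtrations, writing $M=\varinjlim_j M_j$ and using that $\Ext^i(\Delta_\lambda,-)$ also commutes with filtered colimits (again by Noetherianity of $\Delta_\lambda$) completes the argument.

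The substance is in $(3)\Rightarrow(1)$. My plan is to build a good filtration of $M$ transfinitely, applying Zorn's lemma to the poset of submodules $N\subseteq M$ admitting a good filtration; this poset is closed under chain unions by concatenating filtrations at limit stages. It therefore suffices to prove the following inductive lemma: if $N\subsetneq M$ is a submodule with a good filtration, then there is a strictly larger submodule $N'\subseteq M$ also admitting a good filtration. Invoking the already-established $(1)\Rightarrow(2)$ on $N$, we have $\Ext^{\geq 1}(\Delta_\mu,N)=0$ for every dominant $\mu$; the long exact sequence applied to $0\to N\to M\to M/N\to 0$ then forces $\Ext^1(\Delta_\mu,M/N)=0$ for every dominant $\mu$, so the quotient $M/N$ still satisfies (3). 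Replacing $M$ by $M/N$, the problem reduces to the following existence claim: every nonzero $G$-module satisfying (3) contains a nonzero submodule isomorphic to a costandard module $\nabla_\lambda$.

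This existence statement is where I expect the main difficulty to lie. To attack it I would choose $\lambda$ to be a maximal dominant weight among those actually appearing in $M$, which exists because $\mathbf{Rep}(G)$ is locally finite over the field $k$. By maximality, the $\lambda$-weight space of $M$ is annihilated by the positive unipotent radical, so every nonzero vector of $M_\lambda$ generates a copy of the simple socle $L(\lambda)$ of $\nabla_\lambda$ inside $M$. The delicate step is then to upgrade such a copy of $L(\lambda)$ to a full copy of $\nabla_\lambda$ inside $M$; equivalently, to show that the restriction map $\Hom_G(\nabla_\lambda,M)\to\Hom_G(L(\lambda),M)$ is surjective. I would attempt this by analysing the short exact sequence $0\to L(\lambda)\to\nabla_\lambda\to Q\to 0$, where every composition factor of $Q$ has highest weight strictly less than $\lambda$, and arguing that the obstruction class in $\Ext^1(Q,M)$ vanishes as a consequence of the hypothesis $\Ext^1(\Delta_\mu,M)=0$ together with Theorem~\ref{Kempf's theorem} applied to relate the composition factors of $Q$ to the $\Delta_\mu$. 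The required costandard submodule is then produced as the sum of the images of these embeddings, which is internally direct by simplicity of $L(\lambda)$.
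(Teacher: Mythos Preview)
The paper does not actually prove this theorem; it is stated without proof as background, with the surrounding material pointing the reader to \cite{Don85}, \cite{Mathieu3}, and implicitly to Jantzen. So there is no in-paper argument to compare against, only the standard literature proofs.

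Your treatment of $(1)\Rightarrow(2)$ and $(2)\Rightarrow(3)$ is correct and standard. For $(3)\Rightarrow(1)$, the Zorn reduction to ``every nonzero $M$ satisfying (3) contains a nonzero costandard submodule'' is also the right shape.

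The gap is in the last paragraph. You want the obstruction in $\Ext^1_G(Q,M)$ to vanish, where $Q=\nabla_\lambda/L(\lambda)$, and you propose to deduce this from the hypothesis $\Ext^1(\Delta_\mu,M)=0$ via the composition factors of $Q$. But those composition factors are simples $L(\mu)$, not Weyl modules $\Delta_\mu$, and there is no passage from $\Ext^1(\Delta_\mu,M)=0$ to $\Ext^1(L(\mu),M)=0$: from $0\to R_\mu\to\Delta_\mu\to L(\mu)\to 0$ one only learns that $\Ext^1(L(\mu),M)$ is a quotient of $\Hom_G(R_\mu,M)$, which has no reason to vanish. Theorem~\ref{Kempf's theorem} controls $\Ext^i(\Delta_\lambda,\nabla_\mu)$, not $\Ext$ groups against simples, so the appeal to it does not close the gap. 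In short, $Q$ has no Weyl filtration in general, and your hypothesis gives no grip on $\Ext^1(Q,M)$.

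The standard arguments (Donkin, Jantzen II.4.16) do not try to lift an embedding of $L(\lambda)$. Instead, with $\lambda$ maximal one uses that $M_\lambda=M^{U^+}_\lambda$ and the universal property of Weyl modules to produce, for each nonzero $v\in M_\lambda$, a map $\Delta_\lambda\to M$; the hypothesis $\Ext^1(\Delta_\mu,M)=0$ is then applied, via an induction along the weight poset, to a short exact sequence whose outer terms are already controlled by Weyl modules (not simples), forcing the image to be a copy of $\nabla_\lambda$. If you want to repair your approach, that is the mechanism to import.
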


The following is a simple observation from the previous theorem in the particular case that $\lambda=0$ since $\Delta_0=k$.

\begin{Cor}
    If $M$ has a good filtration,  then  $H^i(G,M)=0$ for every $i>0$.
\end{Cor}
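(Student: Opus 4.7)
The plan is to observe that this corollary is essentially an immediate specialization of Theorem \ref{modules with a good filtration are acyclic} to the dominant weight $\lambda = 0$. First, I would recall that the trivial $G$--module $k$ is itself a standard module: as noted in Theorem \ref{Kempf's theorem}, we have $\Delta_0 = \nabla_0 = k$, so the trivial representation appears as $\Delta_\lambda$ with $\lambda = 0$ (which is indeed a dominant weight, since $\langle 0, \varphi^\vee\rangle = 0 \geq 0$ for every positive root $\varphi$).

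Next, I would rewrite group cohomology in terms of extension groups. By definition, $H^i(G, M) = \mathrm{Ext}^i_G(k, M)$, where $k$ carries the trivial $G$--module structure. Combining this with the identification $k = \Delta_0$ yields
\[
H^i(G, M) \;=\; \mathrm{Ext}^i_G(\Delta_0, M).
\]

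Finally, since $M$ has a good filtration, the implication $(1) \Rightarrow (2)$ of Theorem \ref{modules with a good filtration are acyclic} tells us that $\mathrm{Ext}^i_G(\Delta_\lambda, M) = 0$ for every dominant weight $\lambda$ and every $i > 0$. Specializing to $\lambda = 0$ gives $\mathrm{Ext}^i_G(\Delta_0, M) = 0$ for $i > 0$, which, combined with the identification above, is precisely the desired vanishing $H^i(G, M) = 0$ for $i > 0$. There is no real obstacle here; the corollary is a direct unpacking of the theorem, with the only substantive input being the identification $\Delta_0 = k$ recorded in Theorem \ref{Kempf's theorem}.
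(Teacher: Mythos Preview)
Your proposal is correct and matches the paper's approach exactly: the paper states this corollary as a direct specialization of Theorem \ref{modules with a good filtration are acyclic} to $\lambda = 0$, using the identification $\Delta_0 = k$.
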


We finish this subsection with some properties of the class of modules with good filtrations. We refer to \cite[Chapter 3]{Don85} and \cite{Mathieu3}.

\begin{Lemma}\label{properties of good filtrations}
     Let $M$ and $N$ be a $G$--modules with good filtrations. Then the following properties hold. 
     \begin{enumerate}
         \item Any extension of $M$ by $N$ has a good filtration.
         \item $M\otimes N$ has a good filtration.
     \end{enumerate} 
\end{Lemma}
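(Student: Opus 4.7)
The plan for part (1) is a direct application of the cohomological criterion of Theorem \ref{modules with a good filtration are acyclic}. Given a short exact sequence $0 \to N \to E \to M \to 0$ and any dominant weight $\lambda$, applying $\mathrm{Ext}_G^*(\Delta_\lambda, -)$ produces a long exact sequence containing
\[
\mathrm{Ext}_G^1(\Delta_\lambda, N) \to \mathrm{Ext}_G^1(\Delta_\lambda, E) \to \mathrm{Ext}_G^1(\Delta_\lambda, M).
\]
The outer terms vanish by hypothesis, hence so does the middle term. As this holds for every dominant $\lambda$, the criterion yields a good filtration on $E$.

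For part (2), my strategy is a two-step reduction culminating in the invocation of a deep theorem. Starting from good filtrations $0 = M_0 \subset M_1 \subset \cdots$ of $M$ and $0 = N_0 \subset N_1 \subset \cdots$ of $N$, I would tensor the first filtration with $N$ to obtain an exhaustive filtration of $M \otimes N$ whose successive quotients have the form $(M_i/M_{i-1}) \otimes N \cong \bigoplus_j (\nabla_{\lambda_j} \otimes N)$. Using part (1) together with the observation that the cohomological criterion is stable under arbitrary direct sums (because $\Delta_\lambda$ is finite-dimensional over $k$ by Lemma \ref{costandard have finite dimension over k}, so the functor $\mathrm{Ext}_G^1(\Delta_\lambda, -)$ commutes with filtered colimits and hence with arbitrary direct sums), it suffices to prove that $\nabla_\lambda \otimes N$ has a good filtration for each dominant weight $\lambda$. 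Applying the analogous reduction to the filtration of $N$ then further reduces the problem to the single statement that $\nabla_\lambda \otimes \nabla_\mu$ has a good filtration for arbitrary dominant weights $\lambda$ and $\mu$.

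The main obstacle, and indeed the substantive content of the theorem, is exactly this last statement about tensor products of costandard modules. It is a deep result, proved in full generality by Mathieu \cite{Mathieu3} using the machinery of Frobenius splittings, after many special cases had been handled by Donkin \cite{Don85} and others. I would cite this theorem directly rather than attempt to reproduce its proof, since its justification is substantially more involved than anything that precedes the lemma in this section.
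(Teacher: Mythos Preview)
Your argument is correct. Note that the paper itself does not supply a proof of this lemma at all: it simply records the statement and refers the reader to \cite{Don85} and \cite{Mathieu3}. Your write-up is therefore more detailed than what the paper offers, but it is entirely consistent with it: part~(1) follows immediately from the cohomological criterion of Theorem~\ref{modules with a good filtration are acyclic} exactly as you describe, and your reduction for part~(2) to the statement that $\nabla_\lambda \otimes \nabla_\mu$ has a good filtration is the standard one, with the final step being precisely the deep theorem of Mathieu (preceded by Donkin's work in type $A$ and other cases) that the paper cites. One small remark: your justification that $\mathrm{Ext}^1_G(\Delta_\lambda,-)$ commutes with direct sums uses finite-dimensionality of $\Delta_\lambda$, which follows from Lemma~\ref{costandard have finite dimension over k} applied to $\nabla_{-w_0\lambda}$ and then dualizing; and to pass from the successive quotients to the full (possibly infinite) filtration you implicitly use that $\mathrm{Ext}^1_G(\Delta_\lambda,-)$ commutes with filtered colimits, which holds for the same reason.
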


\begin{Rem}
We stress that the definitions in this subsection still make sense if we replace the field for a more general commutative Noetherian ring. Even some of the results hold more generally, for instance Kempf's Vanishing Theorem  is proved over the integers in \cite[Section B.4]{jantzen2003representations}. We will make this precise in Section \ref{section:GrosshansFiltrations}.
\end{Rem}
\pagebreak

\part*{Part II - Cohomological Finite Generation}\label{part II}
\addcontentsline{toc}{part}{Part II - Cohomological Finite Generation}

\section{Outline of the Second Part}

For the rest of this document, we will assume that all group schemes are flat and affine. Therefore, we will simply refer to them as group schemes without further qualification. The goal of this second part of the document is to give a panoramic overview of van der Kallen's theorem \cite{vdK23} on finite generation of cohomology for finite group schemes over an arbitrary Noetherian base: 

\begin{Th}[van der Kallen]
    Finite group schemes over a Noetherian ring $k$ have the \textit{cohomological finite generation property}.
\end{Th}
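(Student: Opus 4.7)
The plan is to combine the two main structural ingredients identified in the introduction -- the provisional CFG criterion (Theorem \ref{Thm B}) and bounded torsion for finite group schemes over a field (Theorem \ref{Thm C}) -- with the embedding and reduction lemmas to deduce the theorem.

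Fix a finitely generated $G$--algebra $A$; the task is to show that $H^\ast(G,A)$ is a finitely generated $k$--algebra. By the Embedding Lemma (Lemma \ref{embedding of G}), $G$ sits as a closed flat subgroup scheme of some $\GL_n$ over $k$. Form $B := \mathrm{ind}_G^{\GL_n} A$, which is a finitely generated $\GL_n$--algebra over $k$. The Reduction Lemma (Lemma \ref{vdK's Reduction Lemma}) supplies a canonical graded $k$--algebra isomorphism
\[
H^\ast(\GL_n, B) \;\cong\; H^\ast(G, A),
\]
so finite generation of $H^\ast(G,A)$ over $k$ is equivalent to finite generation of $H^\ast(\GL_n, B)$.

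Since $\GL_n$ is a Chevalley group scheme over $k$, the provisional CFG criterion (Theorem \ref{Thm B}) reduces the latter to showing that the underlying abelian group $H^\ast(\GL_n, B)$ has bounded $\mathbb{Z}$--torsion. Via the isomorphism above, this is the same as asking that $H^\ast(G, A)$ have bounded $\mathbb{Z}$--torsion when viewed purely as an abelian group.

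It remains to verify bounded torsion, and this is where I expect the main obstacle to lie. The idea is to exploit Theorem \ref{Thm C}: after base change along $k \to \kappa$ for $\kappa$ a residue field of a point of $\Spec(k)$, the group scheme $G$ becomes finite over the field $\kappa$ and $A\otimes_k \kappa$ is a finitely generated $G_\kappa$--algebra, so Theorem C yields bounded torsion fibrewise. The hard part will be to upgrade these fibrewise bounds into a single uniform bound on the $\mathbb{Z}$--torsion of $H^\ast(G,A)$ over the Noetherian base $k$, using flat base change (Corollary \ref{Flat base change}) and universal-coefficient-style comparisons between integral cohomology and cohomology with coefficients in $\mathbb{F}_p$, together with Noetherianity of $k$ and finite generation of $A$ to ensure that only finitely many residue characteristics contribute nontrivially and that a global torsion exponent exists. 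Once bounded torsion is in hand, Theorem \ref{Thm B} delivers the finite generation of $H^\ast(\GL_n, B) \cong H^\ast(G, A)$, completing the proof.
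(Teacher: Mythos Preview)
Your overall architecture---embed $G$ in $\GL_n$, apply the Reduction Lemma to identify $H^\ast(G,A)$ with $H^\ast(\GL_n,\ind_G^{\GL_n}A)$, then invoke provisional CFG (Theorem~\ref{torsion}) to reduce to bounded $\mathbb{Z}$-torsion---is exactly the paper's.

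The gap is in the bounded-torsion step. Your plan to deduce it from Theorem~C as stated in the introduction (over a field) by working fibrewise cannot succeed: for any residue field $\kappa$ of $k$, the cohomology $H^\ast(G_\kappa, A\otimes_k\kappa)$ is a $\kappa$-vector space, so its $\mathbb{Z}$-torsion is trivially bounded (by $1$ if $\mathrm{char}\,\kappa=0$, by $p$ if $\mathrm{char}\,\kappa=p$). Thus ``bounded torsion over a field'' carries no information whatsoever. What is actually needed is a bound on the torsion exponent of $H^{>0}(G,A)$ as an abelian group over the integral base, and the fibrewise statements do not control this: the maps $k\to\kappa$ are not flat, so Corollary~\ref{Flat base change} does not apply, and universal-coefficient comparisons run the wrong way (they relate $H^\ast(G,A)\otimes\mathbb{F}_p$ to $H^\ast(G,A/pA)$, but give no exponent bound on the $p$-power torsion in $H^\ast(G,A)$ itself).

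The paper establishes bounded torsion directly over the Noetherian base (Theorem~\ref{bounded torsion}, which despite the wording of Theorem~C in the introduction is proved over an arbitrary Noetherian $k$). After a Zariski-local reduction making the rank-one summand $(k[G]^\ast)^{\mathrm{co}\,k[G]}$ free, one passes to geometric points of $\Spec(\mathbb{Q}\otimes k)$---where $G$ becomes constant---to show that a generator $\psi$ of the integrals satisfies $\psi(1)\in(\mathbb{Q}\otimes k)^\times$; clearing denominators produces $\phi\in k[G]^\ast$ with $\phi(1)=n\in\mathbb{Z}_{>0}$ and $\phi\cdot M\subseteq M^G$ for every $G$-module $M$, forcing $nH^1(G,M)=0$ for all $M$, and a dimension shift finishes. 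The alternative route in Section~\ref{sec: alternative bounded} reaches the same conclusion via the $G$-equivariance of the trace $\mathrm{tr}_{k[G]/k}$, with $n=\mathrm{rk}_k\,k[G]$. Either of these is the missing ingredient in your outline.
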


In order to make the above statement precise, let us introduce the relevant terminology. 

\begin{Def}
        Let $G$ be a group scheme over a ring $k$. We say that $G$ satisfies 
        \begin{enumerate}
            \item the \textit{finite generation (FG) property} if for every finitely generated $G$--algebra $A$, the algebra of invariants $A^G$ is also a finitely generated $k$--algebra, 

            \item the \textit{cohomological finite generation (CFG) property}  if for every finitely generated $G$--algebra $A$, the graded algebra of derived invariants $H^*(G, A)$ is also a finitely generated $k$--algebra.
        \end{enumerate}
\end{Def}

In particular, one can draw some interesting conclusions for group schemes with the (FG) or (CFG) property, for instance:

\begin{Prop}[c.f. \cite{TvdK10}]\label{Prop: CFG implies finite genarion of modules}
    Let $G$ be a group scheme over $k$. Then the following properties hold. 
    \begin{enumerate}
        \item Assume that $G$ satisfies (FG). Let $A$ be a finitely generated $G$--algebra, and $M$ be an $AG$--module finitely generated over $A$. Then $H^0(G,M)$ is finitely generated over $H^0(G,A)$. 
        \item Assume that $G$ satisfies (CFG). Let $A$ be a finitely generated $G$--algebra, and $M$ be an $AG$--module finitely generated over $A$. Then $H^\ast(G,M)$ is finitely generated over $H^\ast(G,A)$. 
    \end{enumerate}
\end{Prop}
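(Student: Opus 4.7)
The plan is to treat (1) and (2) uniformly by the classical square-zero extension trick. Set $B = A \oplus M$ as a $k$-module and equip it with the multiplication $(a,m)(a',m') = (aa', am' + a'm)$ and unit $(1_A, 0)$; this makes $B$ into a commutative $k$-algebra in which $M$ sits as a square-zero ideal ($M \cdot M = 0$). Equip $B$ with the diagonal coaction $\Delta_B = \Delta_A \oplus \Delta_M$. The fact that $A$ is a $G$-algebra and that the $A$-action on $M$ is $G$-equivariant (by the definition of an $AG$-module) is exactly what makes the multiplication $B \otimes B \to B$ a $k[G]$-comodule map, so $B$ is a commutative $G$-algebra. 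Since $A$ is finitely generated over $k$ and $M$ is finitely generated over $A$, a direct check shows $B$ is finitely generated as a $k$-algebra: any $k$-algebra generators of $A$ together with a lift of any set of $A$-module generators of $M$ suffice.

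Next I would observe that the short exact sequence of $G$-modules
\[
0 \longrightarrow M \longrightarrow B \xrightarrow{\pi} A \longrightarrow 0
\]
is split by the $G$-algebra section $i\colon A \to B$, $a \mapsto (a,0)$. Applying $H^*(G,-)$ yields a split short exact sequence of graded $H^*(G,A)$-modules, and in particular
\[
H^*(G,B) \;\cong\; H^*(G,A) \oplus H^*(G,M),
\]
with $H^*(G,M) = \ker \pi_*$. The crucial input is that $H^*(G,M)$ squares to zero inside the ring $H^*(G,B)$: the cup-product factors as $H^*(G,M) \otimes H^*(G,M) \to H^*(G, M \otimes M) \to H^*(G,B)$, and the second map is induced by the zero map $M \otimes M \to B$ coming from $M$ being a square-zero ideal.

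Now invoke the hypothesis. Under (FG) the $k$-algebra $H^0(G,B) = B^G$ is finitely generated; under (CFG) the graded $k$-algebra $H^*(G,B)$ is finitely generated. In either case it is finitely generated as an $H^*(G,A)$-algebra (via $i_*$), so I may pick generators $x_1, \ldots, x_r$. Using the splitting I may replace each $x_j$ by its $H^*(G,M)$-component and assume $x_j \in H^*(G,M)$. Since any product of two such generators lies in $H^*(G,M)^2 = 0$, every element of $H^*(G,B)$ has the shape $a_0 + \sum_j a_j x_j$ with $a_0, a_j \in H^*(G,A)$; projecting along $\pi_*$ shows that $x_1, \ldots, x_r$ generate $H^*(G,M)$ as an $H^*(G,A)$-module. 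Restricting to degree zero proves (1), and the full version proves (2).

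The argument is essentially formal once the square-zero extension is introduced, so I do not expect a real obstacle. The points requiring genuine verification are the compatibility of the diagonal coaction with the product on $B$ (which follows from the definition of an $AG$-module) and the compatibility of the cup product on $H^*(G,B)$ with the multiplication $B \otimes B \to B$ used to detect the vanishing $H^*(G,M) \cdot H^*(G,M) = 0$; both follow directly from the constructions recalled earlier in the excerpt.
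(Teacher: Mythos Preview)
Your proof is correct and follows exactly the paper's approach: both introduce the square-zero extension $A \rtimes M = A \oplus M$ (the paper uses the notation $\rtimes$), observe that it is a finitely generated $G$--algebra, and then use the identity $H^\ast(G, A \rtimes M) = H^\ast(G, A) \rtimes H^\ast(G, M)$ together with the characterisation of finite generation for such an algebra. You have simply spelled out the details that the paper leaves implicit.
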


\begin{proof}
    Let $B$ be  a $k$--algebra and $N$ be a $B$--module. We define a $k$--algebra $B\rtimes N$ as follows. It is  $B\oplus N$ as $k$--module, and the multiplication is given by  $(b.n)\cdot (b',n')=(bb',bn'+b'n)$. Note that $B\rtimes N$ is finite generated as $k$--algebra if and only if $B$ is finitely generated as $k$--algebra and $N$ is finitely generated as $B$--module. Now use 
    \[
    H^\ast(G,A\rtimes M)=H^\ast(G,A)\rtimes H^\ast(G,M).
    \]
    Hence the result follows.  \qedhere \end{proof}

Here we give an outline of the proof of van der Kallen's theorem, the details of which are the core of what remains in this document. The essence of van der Kallen's proof consists of a few steps, each of which comprises a section in this chapter:

Firstly, we show that $\GL_n$ satisfies (CFG) over a Noetherian base $k$ when $k$ contains a field (see Theorem \ref{CFG for GLn}).

From this result, we can obtain a form of \textit{provisional (CFG)} over an arbitrary Noetherian base $k$ (see Theorem \ref{torsion}). Reduction from $(CFG)$ in the case where the base ring contains a field to the weaker provisional (CFG) for arbitrary $k$ is via a reduction to the Noetherian ring $k \otimes \mathbb{Z}_p/p(k \otimes \mathbb{Z}_p)$, which contains the field $\mathbb{F}_p$. For a finitely generated $\GL_n$--algebra $A$, this provisional (CFG) allows you to deduce the finite generation $H^*(\GL_n, A)$ over any Noetherian base $k$ via the existence of a uniform bound on its torsion. 

Now take our finite group scheme $G$ over an arbitrary Noetherian base $k$, we may embed $G$ into $\GL_n$ for some $n>0$ (see Proposition \ref{embedding of G}). Now reduction from a finite group scheme $G$ to $\GL_n$ is known as the \textit{Reduction Lemma} (see Lemma \ref{vdK's Reduction Lemma}) from which we establish an isomorphism $H^*(G, A)\simeq H^*(\GL_n,\mathrm{ind}_G^{\GL_n} A )$.

For the last step, we establish the existence of a bound on the torsion of $H^*(G, A)$ (see Theorem \ref{bounded torsion}), which coupled with provisional (CFG) and the above isomorphism immediately gives us van der Kallen's theorem!

\pagebreak

\section{The Reduction Lemma}\label{section: reduction lemma}

In this section, we show how to reduce the (CFG) problem for finite group schemes to that of $\GL_n$. The following result will play a fundamental role in the rest of this document (c.f. \cite[Exp. VIB, Proposition 3.15]{SGA3I}). Recall that all the group schemes in the rest of this document are assumed to be flat and affine.

\begin{Prop}\label{embedding of G}
 Let $G$ be a finite flat affine group scheme over a Noetherian ring $k$. Then there is an closed embedding $G\hookrightarrow \GL_n$ for some $n>0$.
\end{Prop}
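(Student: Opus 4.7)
The plan is to generalize the argument of Proposition~\ref{Thm: Group schemes over fields are embedding in GL_n}, which handles the special case where $k[G]$ is a \emph{free} $k$-module. Since $G$ is finite and flat over the Noetherian ring $k$, the coordinate algebra $k[G]$ is finitely generated and projective over $k$ (as noted in the proof of Theorem~\ref{comodules}), but in general it need not be free. The trick is to embed $k[G]$ as a direct summand of a free $k$-module: choose a finitely generated projective $k$-module $P$ and an isomorphism $\phi\colon k[G]\oplus P \xrightarrow{\sim} k^n$ for some $n>0$.

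Next I would construct a morphism $\psi\colon G \to \GL_n$ by equipping $k[G]\oplus P$ with the $G$-module structure in which $G$ acts on $k[G]$ via the right regular representation (coaction $\Delta$) and trivially on $P$, and then transporting this along $\phi$ to a coaction $\rho\colon k^n \to k^n\otimes k[G]$. Since $k^n$ is free of rank $n$, this representation corresponds to a morphism of group schemes $\psi\colon G\to \GL_n$, and its associated Hopf algebra map $\Phi\colon k[\GL_n] \to k[G]$ sends the generator $t_{ij}$ to the matrix coefficient $\beta_{ij}$ determined by $\rho(f_i) = \sum_j f_j\otimes \beta_{ji}$, where $\{f_1,\dots,f_n\}$ is the standard basis of $k^n$.

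Finally I would show that $\Phi$ is surjective, which makes $\psi$ a closed immersion. Write $\phi(f_i) = (b_i,p_i)$, so that $\{b_i\}_{i=1}^n$ spans $k[G]$ as a $k$-module (because $\phi$ is a $k$-module isomorphism). Projecting the identity $(\phi\otimes \id)(\rho(f_i)) = (\Delta(b_i),\,p_i\otimes 1)$ onto its $k[G]\otimes k[G]$ component yields $\Delta(b_i) = \sum_j b_j \otimes \beta_{ji}$ in $k[G]\otimes k[G]$. Applying $\varepsilon\otimes \id$ and invoking the counit axiom, exactly as in Proposition~\ref{Thm: Group schemes over fields are embedding in GL_n}, gives $b_i = \sum_j \varepsilon(b_j)\,\beta_{ji}$, so each $b_i$ lies in the image of $\Phi$. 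Since the $b_i$ generate $k[G]$ as a $k$-module, $\Phi$ is surjective.

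The only real obstacle is the bookkeeping forced by the non-freeness of $k[G]$: producing the auxiliary summand $P$, transporting the coaction along $\phi$, and checking that the counit argument still runs when the $b_i$ form merely a spanning set rather than a basis. Once this setup is accepted, surjectivity of $\Phi$ follows from a single application of the counit axiom, so the reduction to the free case of Proposition~\ref{Thm: Group schemes over fields are embedding in GL_n} is essentially cosmetic.
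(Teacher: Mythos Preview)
Your argument is correct (modulo a harmless slip where you write $\phi(f_i)$ for $\phi^{-1}(f_i)$) and the overall strategy---right regular representation on $k[G]$, trivial action on a projective complement, transport to $k^n$---is exactly the paper's. The one difference is in how surjectivity of $\Phi\colon k[\GL_n]\to k[G]$ is checked: the paper observes that surjectivity is local on $\Spec(k)$, localizes at each prime so that $k_{\mathfrak p}[G]$ becomes free, and then invokes the free case of Proposition~\ref{Thm: Group schemes over fields are embedding in GL_n} verbatim; you instead run the counit computation globally, noting that the identity $b_i=\sum_j\varepsilon(b_j)\beta_{ji}$ only needs the $b_i$ to \emph{span} $k[G]$, not to form a basis. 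Your route is marginally more direct and avoids appealing to the local freeness of projectives, while the paper's route makes the reduction to the already-proved free case completely transparent; neither buys anything substantial over the other.
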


\begin{proof}
   Consider the right regular representation $\rho_r$ of $k[G]$. Note that $\rho_r$ is faithful. Indeed, we have that $\rho_r(g)(f)(1)=f(g)$ for $f\in k[G]$, $g\in G(k)$ and $1$ the unit of $G(k)$. Now, since  $k[G]$ is projective, there is a $k$--module $Q$ such that $k[G]\oplus Q\cong k^n$ for some $n>0$. Letting $G$ act trivially on $Q$, we obtain that $k[G]\oplus Q$ is a faithful $G$--module. Hence we obtain a morphism \[\phi\colon G\to \GL_n.\] We claim that this morphism is a closed embedding. Indeed, recall that a morphism of affine schemes $\mathrm{Spec}(A)
   \to \mathrm{Spec}(B) $ is a closed embedding if and only if the corresponding morphism of rings $B\to A$ is a surjection. Moreover, surjections are detected locally on $\mathrm{Spec}(k)$. Then we need to show that the corresponding map $\phi'\colon k[\GL_n]\to k[G]$ is a surjection after localization at each prime $\mathfrak{p}$ of $k$. But $k_\mathfrak{p}[G]$ is a free $k_\mathfrak{p}$--module since $k_\mathfrak{p}$ is local. Hence the same proof as in Theorem \ref{Thm: Group schemes over fields are embedding in GL_n}   
   shows that $\phi'\otimes k_\mathfrak{p}$ is surjective. This completes the proof. 
\end{proof}

\begin{Lemma}[Reduction Lemma]\label{vdK's Reduction Lemma} Let $k$ be a ring. Let $\mathbb{G}$ be a group scheme over $k$ of finite type, and $G$ be a flat subgroup scheme of $\mathbb{G}$. Let $A$ be a finitely generated $G$--algebra. Suppose that:
\begin{itemize}
    \item[$(i)$] $G$ satisfies (FG).
    \item[$(ii)$] $\mathbb{G}/G$ is an affine scheme. 
\end{itemize}
   \noindent Then the following properties hold. 
   \begin{itemize}
       \item[(a)] $\mathrm{ind}_G^\mathbb{G} A:= (k[\mathbb{G}]\otimes A)^G$ is a finitely generated $k$--algebra.  
       \item[(b)] $H^\ast(G,A)\cong H^\ast(\mathbb{G},\mathrm{ind}_G^\mathbb{G} A)$.
   \end{itemize}
\end{Lemma}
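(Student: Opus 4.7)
The plan is to prove the two parts separately, with each following directly from machinery developed in Part I of the paper.

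For (a), I would first observe that $k[\mathbb{G}]\otimes A$ carries a natural $G$--algebra structure with diagonal action: $G$ acts on $k[\mathbb{G}]$ by the restriction of the right regular representation of $\mathbb{G}$, and on $A$ via its given $G$--structure. Since $\mathbb{G}$ is of finite type over $k$, the coordinate algebra $k[\mathbb{G}]$ is finitely generated as a $k$--algebra; combining this with the finite generation of $A$ shows that $k[\mathbb{G}]\otimes A$ is a finitely generated $k$--algebra. By Lemma \ref{AG is a k-algebra}, its $G$--invariants $(k[\mathbb{G}]\otimes A)^G = \mathrm{ind}_G^{\mathbb{G}}A$ form a $k$--subalgebra. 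The (FG) hypothesis on $G$, applied to the finitely generated $G$--algebra $k[\mathbb{G}]\otimes A$, then immediately yields that $\mathrm{ind}_G^{\mathbb{G}}A$ is finitely generated over $k$.

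For (b), the hypothesis that $\mathbb{G}/G$ is an affine scheme lets us invoke Remark \ref{flat subgroup scheme}: the induction functor $\mathrm{ind}_G^{\mathbb{G}}\colon \mathbf{Rep}(G)\to\mathbf{Rep}(\mathbb{G})$ is exact, so $G$ is an exact flat subgroup scheme of $\mathbb{G}$. We are therefore in a position to apply Shapiro's Lemma, Proposition \ref{frobenius reciprocity}(2), with $N=A$, which yields the natural isomorphism
\[
H^i(\mathbb{G},\mathrm{ind}_G^{\mathbb{G}}A)\cong H^i(G,A)
\]
for every $i\geq 0$, as claimed.

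Neither step involves any serious combinatorics; the substantive content was already packaged into the adjunction between $\mathrm{res}^{\mathbb{G}}_G$ and $\mathrm{ind}^{\mathbb{G}}_G$ and the associated Grothendieck spectral sequence used to derive Shapiro's Lemma. The only point demanding attention is verifying that the diagonal $G$--action on $k[\mathbb{G}]\otimes A$ is compatible with the algebra structure (so that Lemma \ref{AG is a k-algebra} applies) and that, upon passing to $G$--invariants, the resulting object genuinely coincides with $\mathrm{ind}_G^{\mathbb{G}}A$ as defined in the construction preceding Proposition \ref{adjuntion restriction induction}. Once these identifications are in place, (FG) and Shapiro's Lemma do all the remaining work.
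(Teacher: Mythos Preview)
Your proposal is correct and follows essentially the same approach as the paper: for (a) you use finite generation of $k[\mathbb{G}]\otimes A$ together with the (FG) hypothesis, and for (b) you invoke Remark \ref{flat subgroup scheme} to get exactness of induction and then apply Proposition \ref{frobenius reciprocity}. The only cosmetic difference is that the paper cites part (1) of Proposition \ref{frobenius reciprocity} (Frobenius reciprocity) whereas you cite part (2) (Shapiro's Lemma), but these amount to the same thing here.
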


\begin{proof}
    Since $k[\mathbb{G}]$ and $A$ are both finitely generated over $k$, so is $k[\mathbb{G}]\otimes_k A$. The assumption that $G$ satisfies (FG) gives us that $(k[\mathbb{G}]\otimes_k A)^G$ is finitely generated over $k$, thus proving part (a).
    
    In the discussion of the induction functor in Remark \ref{flat subgroup scheme}, we saw that if $\mathbb{G}/G$ is an affine scheme, then the derived induction functors $R^i\mathrm{ind}^\mathbb{G}_G(M)$ are given by the cohomology of a quasi-coherent sheaf on $\Spec k[\mathbb{G}/G]^*$. A classical result of Serre \cite[III 3.7]{hartshorne2013algebraic} is that quasi-coherent sheaves on affine schemes have vanishing higher cohomology groups. Therefore we see that the higher derived induction functors vanish, that is,  $R^{>0}\mathrm{ind}^\mathbb{G}_G(M)=0$. In other words, the induction functor is exact. Part (b) is called generalized Frobenius reciprocity, and now just follows from Proposition \ref{frobenius reciprocity} (1).
\end{proof}

Interestingly, over a field, Touz\'e and van der Kallen proved in \cite{TvdK10} that (FG) and (CFG) are equivalent properties. The hope is that, over an arbitrary Noetherian base, the properties (FG) and (CFG) are not too different. By looking into the future, and using results which appear in the coming two sections, we can see the utility of the Reduction Lemma in this direction via the following particular instance of van der Kallen's theorem:

We consider the case where $k$ is a Noetherian ring containing a field $\mathbb{F}$. In this situation we have shown in Theorem \ref{CFG for GLn} that $\GL_n$ has (CFG). Now, take $G$ to be your favourite finite group scheme over $k$, and embed it into $\mathbb{G} = \GL_n$, as above. We see that $G$ satisfies (FG) by Theorem \ref{FG for Chevalley}. Point $(ii)$ holds by the discussion in Remark \ref{flat subgroup scheme}, which states that when $G$ is finite, $\GL_n/G$ is affine. Thus, the conclusions $(a)$ and $(b)$ of the Reduction Lemma together show that since $\GL_n$ satisfies (CFG), so does $G$. In particular, (CFG) for $\GL_n$ implies (CFG) for the finite group scheme $G$ that you started with.

\section{Finite Generation of Invariants}\label{section: FG}
A crucial point in establishing the cohomological finite generation theorem over a Noetherian base is first establishing the \textit{finite generation} (FG) property for the general linear group over a Noetherian ring $k$. Indeed, in \cite{FvdK10} it is proved that \textit{Chevalley group schemes} over a Noetherian ring satisfy (FG) (of which $\text{GL}_n$ is an example). Moreover, in \cite{van2021reductivity} it is proved that finite group schemes over a Noetherian ring also satisfy (FG). In this section, we are going to outline the key points in the proofs of these two important theorems.

We begin with the definition of the (FG) property:

\begin{Def}
    Let $G$ be an affine group scheme over a ring $k$. We say that $G$ satisfies the \textit{finite generation property} (FG) if for every finitely generated $G$--algebra $A$, the subring of invariants $A^G$ is also a finitely generated $k$--algebra.
\end{Def}

This is not a property we can expect to hold in general; indeed, the next example shows that finite generation does not even hold for the additive group scheme $\mathbb{G}_a$. 

\begin{Ex}\label{Ex: Nagata} We highlight two examples where the algebra of invariants is not finitely generated. A further discussion can be found in \cite[Section 2.1]{DK15}.

\begin{enumerate}
    \item Let $a_{i,j}$ be algebraically independent elements over  $\mathbb{Q}$ for $i=1,2,3$ and $j=1,\ldots, 16$. Let $G$ be the subgroup of $\GL_{32}$ of block diagonal matrices $[A_r]_{1\leq r\leq 16}$ where 
    \[A_r= \begin{pmatrix}
 c_j & b_j c_j \\
 0 & c_j
\end{pmatrix}\]  
with $c_j $ and $b_j$ in $\mathbb{C}$ satisfying that $c_1\cdot\ldots\cdot c_{16}=1$ and $\sum_{j=1}^{16}a_{i,j}b_j=0$ for $i=1,2,3$. Consider $x_1,\ldots,x_{16},y_1,\ldots,y_{16}$ complex numbers algebraically independent over $\mathbb{C}$. Then 
\[k[x_1,\ldots,x_{16},y_1,\ldots,y_{16}]^G\]
is not finitely generated over $\mathbb{C}$. This is Nagata's famous counterexample to \textit{Hilbert's fourteenth problem}. In fact, one can replace $\mathbb{C}$ with any algebraically closed field $k$ of characteristic 0 and $\mathbb{Q}$ with the prime field of $k$.  See \cite{Nag59}. 

\item Let $k$ be an algebraically closed field of characteristic 0. Consider the action of the additive group scheme $\mathbb{G}_a$ on $k[x_1,x_2,y_1,y_2,y_3]$ given by 
\begin{align*}
    \sigma\cdot(x_1,x_2,y_1,y_2,y_3) =  \\
 \left (x_1, x_2,y_1 + \sigma x_1^2 , y_2 +\sigma(x_1 y_1 + x_2) + \frac{1}{2}\sigma^2 x_1^3, y_3 + \sigma y_2 + \frac{1}{2}\sigma^2 ( x_1 x + x_2)+ \frac{1}{6}\sigma^3 x_1^3\right).
\end{align*}
Then $k[x_1,x_2,y_1,y_2,y_3]^{\mathbb{G}_a}$ is not finitely generated over $k$. This example is due to Daigle and Freudenburg. See \cite{DF99}.
\end{enumerate}
\end{Ex}

\begin{Rem}
    We warn the reader that the previous examples are far from trivial. In fact, there is an active research area focused on finding other counterexamples to Hilbert's fourteenth problem and variants of it. 
\end{Rem}

A key step in establishing (FG) for certain classes of group schemes is showing that they satisfy the following reductivity property:

\begin{Def}
    Let $G$ be an affine group scheme over $k$. Let $L$ be any cyclic $k$--module, and $L$ be the trivial representation of $G$ on $L$. Let $M$ be any representation of $G$, and let $\varphi$ be a surjective $G$--module homomorphism from $M$ to $L$. If there exists a positive integer $d$ such that the $d$th symmetric power induces a surjection 
    \[
    (S^dM)^G\to S^dL,
    \]
    we will say that $G$ is \textit{power reductive.}
\end{Def}

We  give a few examples of power reductivity:
\begin{Ex}
    \begin{enumerate}[(i)]
        \item Let $k$ be a field of characteristic $2$, let $M=k^2$ with basis $x, y$, and let $L=k$ with basis $z$. Let $\mathfrak{S}_2$ denote the symmetric group on two elements. This group can act on $M$ in the obvious way via $x\mapsto y$ and $y\mapsto x$. The invariants of $M$ is the one dimensional vector space spanned by the invariant $x+y$, i.e. $M^{\mathfrak{S}_2}=k\langle x+y\rangle$
        
        We define a $k$--linear map $M\to L$ via $x\mapsto z$ and $y\mapsto z$, this is clearly a map of representations. Under this map we have that $x+y\mapsto 2z=0$, since we are in characteristic $2$, so the induced map $M^{\mathfrak{S}_2} \to L$ is just $0$. Looking in degree $2$ we can see that $x^2+xy+y^2 \mapsto 3z^2=z^2$. Since $x^2+xy+y^2 \in (S^2M)^{S_2}$, and $S^2L=k[z^2]$, we can see that the map \[(S^2M)^{\mathfrak{S}_2} \to S^2L\] is surjective. This exhibits the power reductivity of $\mathfrak{S}_2$.
    \item Let $k=\mathbb{Z}$, and consider $\mathrm{SL}_2$ acting by conjugation on the group $\mathcal{M}$ of $2\times 2$ matrices with entries in $\mathbb{Z}$, this has a basis consisting of matrices with a $1$ in one entry and $0$ everywhere else. Consider also the group $\mathcal{L} \subset \mathcal{M}$ of matrices consisting of multiples of the identity matrix. Let us consider the linear functional on $\mathcal{M}$ defined by 
    \[
    \alpha\colon \mathcal{M}\to km, \quad \begin{pmatrix}
         1 &0 \\
         0 &0 
    \end{pmatrix}\mapsto 1
    \]
    and similarly we will denote by $\beta, \gamma, \delta$ the linear functionals dual to the other basis elements. Clearly $\{\alpha, \beta, \gamma, \delta\}$ is a dual basis for $M=\mathcal{M}^*$. The action of $\mathrm{SL}_2$ on $\mathcal{M}$ also gives us an action of $\mathrm{SL}_2$ on $M$. Consider  the linear functional on $\mathcal{L}$ given by 
    \[
    \lambda\colon \mathcal{L}\to k, \quad \begin{pmatrix}
         1 &0 \\
         0 &1 
    \end{pmatrix} \mapsto 1.
    \]
     Clearly $\{\lambda\}$ is a dual basis for $L=\mathcal{L}^*$. There is the obvious inclusion map $\mathcal{L}\to \mathcal{M}$, and taking duals we get a surjective map $M\to L$, we can see that this dual map is defined on the dual basis by $\alpha, \delta \mapsto \lambda$ and $\beta, \gamma \mapsto 0$. This extends to a surjective map on their symmetric algebras:
\[S^*M=\mathbb{Z}[\alpha, \beta, \gamma, \delta] \to \mathbb{Z}[\lambda]=S^*L\]
The action of $\mathrm{SL}_2$ on $M$ is inherited by $\mathbb{Z}[\alpha, \beta, \gamma, \delta]$, and we can see that the determinant polynomial $D=\alpha \delta - \beta \gamma$ and the trace polynomial $t=\alpha+\delta$ generate all the invariants of this $\mathrm{SL}_2$ action. So taking invariants we get a map: 
\[\mathbb{Z}[\alpha, \beta, \gamma, \delta]^{\mathrm{SL}_2}=\mathbb{Z}[t, D] \to \mathbb{Z}[\lambda].\]
We can see that this map is not surjective when restricting degree $1$, since $t\mapsto 2\lambda$, but indeed it is surjective in degree two since $D\mapsto \lambda^2$. This exhibits the power reductivity of $\mathrm{SL}_2$.
    \end{enumerate}
\end{Ex}

\begin{Rem}
    Let $G$ denote $\mathfrak{S}_2$ or $\mathrm{SL}_2$, and let $M$ denote the $G$--module defined in the previous example, respectively. Note that we considered above the \textit{naive fixed points} in the sense that we only looked at the $G(k)$-invariants of the corresponding representation $G(k) \times M \to M$. However, in both cases, $G(k)$ is dense in $G$, and hence there is no difference between $M^G$ and $M^{G(k)}$; see Proposition \ref{Prop:naive invariants vs functorial invariants}.
\end{Rem}

\begin{Def}
    Let $\varphi:B\to A$ be a morphism of $k$--algebras. We will say that $\varphi$ is \textit{power surjective} if every $a\in A$ has $a^k \in \im \varphi$ for some positive integer $k$.
\end{Def}

\begin{Prop}\label{integral2}
    let $k$ be a ring and let $G$ be a  power reductive flat affine group scheme over $k$. Let $f\colon  A\to B$ be a power surjective map of $G$--algebras. Then the induced map 
    \[
    f^G\colon  A^G\to B^G
    \]
    is power surjective.
\end{Prop}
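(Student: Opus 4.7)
The plan is to extract a $G$-invariant preimage of some power of $b$ by combining the power surjectivity of $f$ with the power reductivity hypothesis, applied to a carefully chosen $G$-module surjection onto a cyclic trivial subrepresentation of $B$.

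Given $b\in B^G$, I would first use the power surjectivity of $f$ to pick $n\geq 1$ and $a\in A$ with $f(a)=b^n$. Since $b^n$ is $G$-invariant, the cyclic submodule $L\coloneqq kb^n\subseteq B$ is a trivial $G$-subrepresentation, and because $f$ is $G$-equivariant and the category of $G$-modules is abelian (Theorem \ref{abelian}), its preimage $f^{-1}(L)$ is a $G$-submodule of $A$. This preimage contains $a$, and hence it contains the $G$-submodule $V\coloneqq\langle a\rangle$ generated by $a$; it follows that the restriction $\varphi\coloneqq f|_V\colon V\to L$ is a surjective morphism of $G$-modules sending $a$ to $b^n$.

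Next I would invoke the power reductivity of $G$ for the surjection $\varphi$: there exists $d\geq 1$ such that the induced map $(S^d V)^G\to S^d L$ is surjective. Since $S^d L=k(b^n)^d$ is generated by $(b^n)^d$, I can then choose $\sigma\in (S^d V)^G$ mapping to $(b^n)^d$ under $S^d\varphi$. Finally, I would use that $A$ and $B$ are $G$-algebras, which yields $G$-equivariant $d$-fold multiplication maps $\mu_A\colon S^d A\to A$ and $\mu_B\colon S^d B\to B$ satisfying $f\circ\mu_A=\mu_B\circ S^d f$. Setting $\tilde\sigma\in (S^d A)^G$ to be the image of $\sigma$ under $S^d V\hookrightarrow S^d A$ and $a'\coloneqq\mu_A(\tilde\sigma)\in A^G$, one computes
\[
f(a')=\mu_B\bigl(S^d f(\tilde\sigma)\bigr)=\mu_B\bigl((b^n)^d\bigr)=b^{nd},
\]
so $b^{nd}\in f(A^G)=\im f^G$, as desired.

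The only step that really requires thought is checking that $f^{-1}(L)$ is a $G$-submodule of $A$, which is formal given the abelian structure of $\mathbf{Rep}(G)$ (one can, for instance, realise it as the kernel of the composite $G$-equivariant map $A\to B\to B/L$). The rest is a direct application of the defining property of power reductivity, the cleverness being only in choosing $V=\langle a\rangle$ and $L=kb^n$ at the outset so that the hypothesis produces an invariant $\sigma$ which, after multiplying out in the $G$-algebra $A$, gives the sought-for $G$-invariant preimage of $b^{nd}$.
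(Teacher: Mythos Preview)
Your proof is correct and follows essentially the same route as the paper's: reduce to a cyclic trivial target $L=kb^n$, find a $G$-submodule of $A$ surjecting onto it, apply power reductivity to get an invariant in $S^d$ hitting the generator, and then multiply out via the $G$-equivariant multiplication maps. The only cosmetic difference is that the paper takes the full preimage $M=\ker(A\to B/L)=f^{-1}(L)$ as the source of the surjection, whereas you take the smaller submodule $V=\langle a\rangle\subseteq f^{-1}(L)$; either choice works and the remainder of the argument is identical.
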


\begin{proof}
 Pick some $b\in B^G$, since our map is power surjective $b^k$ has a preimage in $A$ for some $k>0$. Now take $L$ to be the cyclic module generated by $b^k$, i.e. $L=\langle b^k\rangle$. Since $b^k$ is an invariant, the induced subcomodule structure on $L$ is the trivial one. Let $M$ be the kernel of the $G$--equivariant map $A\to B\to B/L$ which is a $G$--submodule of $A$ by the flatness assumpion on $G$. Note that $f(M)$ is a submodule of $B$ contained in $L$, so it must be equal to $L$. 
The assumption of power reductivity gives us a $d>0$ such that there exists a commutative diagram:
 \begin{center}
\begin{tikzcd}
    &(S^dM)^G \arrow[r] \arrow[d, "(S^df)^G", twoheadrightarrow]&(S^dA)^G \arrow[r] \arrow[d, "(S^df)^G"]&A^G \arrow[d,"f^G"] \\
    &S^dL \arrow[r]& (S^dB)^G \arrow[r] &B^G 
\end{tikzcd}
\end{center}
where the rightmost horizontal maps are multiplication in $A$ and $B$. Since the leftmost vertical map is surjective, the element $b^k\otimes \dots \otimes b^k$ ($d$ times) in $S^dL$ has a preimage in $(S^dM)^G$, say $m=m_1\otimes \dots \otimes m_d$. Now mapping this element $m$ around the outside commutative square, the top path gives us $f(\prod_{i=1}^d m_i)$, and the bottom path gives us $b^{kd}$. In other words, $b^{kd}$ has a preimage in $A^G$, so that $A^G\to B^G$ is power surjective.
\end{proof}

\begin{Rem}
  In the context of  Proposition \ref{integral2}, a relevant case is as follows. Let $I$ be a $G$--stable ideal of $A$. Then the projection map $A\to A/I$ induces a power-surjective morphism on invariants 
  \[A^G\to (A/I)^G.\]
\end{Rem}

\begin{Th}[Hilbert's fourteenth problem]\label{Hilberts 14th}
    Let $G$ be a flat affine group scheme over a Noetherian ring $k$. Let $A$ be a finitely generated $G$--algebra. If $G$ is power reductive, then the algebra of invariants $A^G$ is a finitely generated $k$--algebra. In other words, flat power reductive group schemes satisfy (FG).
\end{Th}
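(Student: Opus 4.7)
My plan is to reduce to the case of a graded symmetric algebra on a finitely generated $G$--module via local finiteness, and then adapt the classical Hilbert--Nagata argument, using power reductivity in place of the Reynolds operator.

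First, by local finiteness (Corollary~\ref{locallyfinite}), I would find a finitely generated $G$--stable $k$--submodule $V\subseteq A$ containing a set of $k$--algebra generators; the universal property of the symmetric algebra then yields a surjection $\pi\colon S:=\Sym(V)\twoheadrightarrow A$ of $G$--algebras. By Proposition~\ref{integral2}, the induced map $\pi^G\colon S^G\to A^G$ on invariants is power surjective. Hence, once $S^G$ is shown to be finitely generated over $k$, its image $\pi^G(S^G)\subseteq A^G$ is a finitely generated $k$--subalgebra over which $A^G$ is integral, and a standard transfer argument exploiting the Noetherianity of $A$ promotes this to finite generation of $A^G$. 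The whole problem thus reduces to proving the statement for the graded polynomial $G$--algebra $S=\Sym(V)$.

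In the graded setting I would exploit the Hilbert ideal. Since $k$ is Noetherian, so is $S$, and the ideal $I:=S\cdot S^G_+$ generated by the positive-degree invariants is finitely generated; pick homogeneous invariants $f_1,\ldots,f_r\in S^G_+$ generating it, and set $B:=k[f_1,\ldots,f_r]\subseteq S^G$. Following Nagata, the argument proceeds by Noetherian induction on the poset of $G$--stable graded ideals $J\subseteq S$: one aims to show that $(S/J)^G$ is finitely generated as a $k$--algebra for every such $J$, and in particular for $J=0$, which yields finite generation of $S^G$.

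The main obstacle is the inductive step, which is precisely where power reductivity is essential. Given a homogeneous invariant $h\in S^G_+$, the definition of $I$ supplies a decomposition $h=\sum_j g_j f_j$ whose cofactors $g_j\in S$ are generally not invariant. Power reductivity, applied to a suitable trivial cyclic quotient of the $G$--submodule generated by the $g_j$, produces for some positive integer $N$ a replacement $h^N=\sum_j g_j' f_j$ with $g_j'\in S^G$, yielding a power-integral dependence of $h$ over $B$ together with finitely many bookkeeping invariants from the induction. The Noetherianity of $S$ prevents this bootstrap from continuing indefinitely and produces a finite generating set for $S^G$ over $B$, whence $S^G$ is finitely generated as a $k$--algebra. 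In this way, power reductivity plays the role of the Reynolds operator of classical invariant theory, substituting averaging over a reductive group by passage to a sufficiently high symmetric power.
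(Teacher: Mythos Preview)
Your reduction from $A$ to $S=\Sym(V)$ has a genuine gap. Power surjectivity of $\pi^G\colon S^G\to A^G$ (via Proposition~\ref{integral2}) does give that $A^G$ is integral over the finitely generated $k$--subalgebra $B:=\pi^G(S^G)$, but integrality over a finitely generated subalgebra is not enough to conclude that $A^G$ is finitely generated. The Artin--Tate lemma (Lemma~\ref{Lemma Artin-Tate}) goes the other way: it requires the \emph{larger} algebra to be known finitely generated, which is exactly what you are trying to prove. Noetherianity of $A$ does not rescue this: $A^G$ is merely a $k$--subalgebra of $A$, not a $B$--submodule of a finite $B$--module, and there is no reason for $A$ itself to be integral over $B$. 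The paper circumvents this issue with the $A[x]$ trick: it passes to the graded subalgebra $B\subseteq A[x]$ generated in degree one by $Mx$ (with $1\in M$), so that evaluation at $1$ gives a \emph{surjection} $B^G\twoheadrightarrow A^G$ on invariants, not merely a power surjection, and one is reduced cleanly to the homogeneous case of Proposition~\ref{Prop: I is homogeneous}.

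Your inductive step in the graded case is also underspecified. Power reductivity concerns surjections onto trivial cyclic modules; it does not produce a decomposition $h^N=\sum_j g_j' f_j$ with invariant cofactors $g_j'\in S^G$, and it is unclear what ``suitable trivial cyclic quotient of the $G$--submodule generated by the $g_j$'' you have in mind. The mechanism that actually works, and that the paper uses in the proof of Proposition~\ref{Prop: I is homogeneous}, is different: one argues by contradiction, taking a maximal bad homogeneous ideal, and repeatedly exploits power surjectivity of $A^G\to (A/J)^G$ for $G$--stable ideals $J$ (Proposition~\ref{integral2}) together with annihilator ideals and the graded structure to force $(A^G)^+$ to be finitely generated. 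No analogue of the Reynolds operator survives; only the weaker consequence that invariants surject up to powers onto invariants of quotients.
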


\begin{Rem}
   The original proof given in \cite{FvdK10} of the theorem above claims that the same reasoning from pages 23-26 of \cite{springer2006invariant} is good enough to obtain the result and one only needs to replace the conclusion of \cite[Lemma 2.4.7]{springer2006invariant} for geometrically reductive algebraic groups over a field with the conclusion of Proposition \ref{integral2}. This is indeed the case, however, we decided to include some details for completeness. We need some preparations. 
\end{Rem}

\begin{Def}
  Let $B$ be a commutative ring, and let $A\subseteq B$ be a subring. An element $b\in B$ is \textit{integral over A} if there exists $n\geq 1$, and  $a_0,a_1\ldots, a_{n+1}\in A$ such that 
  \[
   0 = b^n +a_{n-1}b^{n-1} + \ldots +a_0. 
  \]
  We say that $B$ is \textit{integral over $A$} if all elements of $B$ are integral over $A$. We also say that a map of rings $\varphi\colon A\to B$ is integral if $B$ is integral over the image of $\varphi$. 
\end{Def}

\begin{Lemma}\label{integral1}
    Power surjective maps of $k$--algebras are integral extensions.
\end{Lemma}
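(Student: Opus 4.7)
The proof is essentially immediate from unwinding the definitions, so my plan is very short. Given the map $\varphi\colon A\to B$ and an arbitrary element $b\in B$, I would invoke the hypothesis of power surjectivity to produce an integer $n\geq 1$ and an element $a\in A$ with $\varphi(a)=b^n$.

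The key (and only) observation is then that the polynomial
\[
p(X)=X^n-\varphi(a)\in \varphi(A)[X]
\]
is monic with coefficients in $\varphi(A)$, and by construction $p(b)=b^n-\varphi(a)=0$. This exhibits $b$ as integral over $\varphi(A)$, and since $b\in B$ was arbitrary, $B$ is integral over $\varphi(A)$, i.e.\ $\varphi$ is an integral extension.

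There is no real obstacle here: the lemma is a direct reformulation of the definition, and the whole content is that a monic relation $b^n=\varphi(a)$ is itself the required integral dependence relation. No further machinery (not even commutativity subtleties, since we already assume our algebras are commutative) is needed.
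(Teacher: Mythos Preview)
Your proof is correct and is essentially identical to the paper's own proof: both take an arbitrary $b\in B$, use power surjectivity to write $b^n=\varphi(a)$, and observe that $b$ is a root of the monic polynomial $X^n-\varphi(a)\in\varphi(A)[X]$.
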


\begin{proof}
    let $\varphi:A
    \to B$ be a power surjective map of $k$--algebras. We want to show that every $b\in B$ is the root of a monic polynomial with coefficients in the image of $A$. Let $b\in B$ be arbitrary, then since $\varphi$ is power surjective we have $b^k=\varphi(a)$ for some $a\in A$. In particular $b$ is a root of the monic polynomial $t^k-\varphi(a)$.
\end{proof}

Let us recall the following well-known result. 

\begin{Lemma}[Artin-Tate]
    Let $k$ be a commutative Noetherian ring and $B\subseteq C$ be algebras over $k$. If $C$ is finitely generated as $k$--algebra and $C$ is finitely generated as $B$--module, then $B$ is finitely generated as $k$--algebra. 
\end{Lemma}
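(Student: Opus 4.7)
The plan is to carry out the classical argument: extract from the two finiteness hypotheses a finitely generated $k$-subalgebra $B_0 \subseteq B$ over which $C$ becomes a finite module, and then exploit Noetherianity of $B_0$ to conclude that $B$ itself is a finite $B_0$-module, hence a finitely generated $k$-algebra.

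More concretely, I would begin by choosing finite generating data. Let $c_1, \ldots, c_n$ be generators of $C$ as a $k$-algebra, and let $d_1, \ldots, d_m$ be generators of $C$ as a $B$-module. Since the $d_j$ generate $C$ over $B$, I can write
\[
c_i = \sum_{j=1}^m b_{ij}\, d_j, \qquad d_i d_j = \sum_{\ell=1}^m b_{ij\ell}\, d_\ell,
\]
for suitable elements $b_{ij}, b_{ij\ell} \in B$. Let $B_0$ be the $k$-subalgebra of $B$ generated by the finite collection $\{b_{ij}\} \cup \{b_{ij\ell}\}$. By construction $B_0$ is a finitely generated $k$-algebra.

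Next I would verify that $C$ is a finitely generated $B_0$-module. Indeed, the second relation shows that the $B_0$-submodule $\sum_j B_0\, d_j$ of $C$ is closed under multiplication, and contains each $c_i$ by the first relation; since the $c_i$ generate $C$ as a $k$-algebra and $B_0 \supseteq k$, this submodule must equal all of $C$. Thus $C = \sum_{j=1}^m B_0\, d_j$ is finitely generated over $B_0$.

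Finally, since $k$ is Noetherian and $B_0$ is a finitely generated $k$-algebra, the Hilbert basis theorem gives that $B_0$ is itself Noetherian. Then $C$, being a finitely generated module over the Noetherian ring $B_0$, is a Noetherian $B_0$-module, and consequently the $B_0$-submodule $B \subseteq C$ is finitely generated over $B_0$. Since $B_0$ is finitely generated as a $k$-algebra and $B$ is finitely generated as a $B_0$-module, $B$ is finitely generated as a $k$-algebra, as desired. There is no real obstacle here; the only subtlety worth highlighting is the use of Noetherianity of $k$ (via Hilbert's basis theorem) to pass finite generation from $B_0$-modules to $k$-algebras, which is exactly the hypothesis that the statement provides.
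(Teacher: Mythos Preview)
Your proof is correct and is precisely the standard argument for the Artin--Tate lemma. The paper itself leaves this result as an exercise, so there is nothing to compare against; your write-up would serve perfectly well as the omitted proof.
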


\begin{proof}
    This is left as an exercise. 
\end{proof}

\begin{Lemma}\label{Lemma Artin-Tate}
   Let $k$ be a commutative Noetherian ring and $B\subseteq C$ be algebras over $k$. If $C$ is finitely generated as $k$--algebra and $C$ is integral over $B$, then  $B$ is also a finitely generated $k$--algebra. Moreover, $C$ is finitely generated as $B$--module 
\end{Lemma}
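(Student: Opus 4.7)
The plan is to reduce this statement to the Artin--Tate lemma that was just proved; the only missing ingredient is the passage from the integrality of $C$ over $B$ to the finite generation of $C$ as a $B$--module, after which Artin--Tate completes the argument.

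First I would pick generators $c_1,\ldots,c_n$ of $C$ as a $k$--algebra. Since $C$ is integral over $B$, each $c_i$ satisfies a monic polynomial relation with coefficients in $B$, say of degree $d_i$. The key step is then the standard claim that a finitely generated $B$--algebra whose algebra generators are integral over $B$ is actually finitely generated as a $B$--module. Concretely, I would argue by induction on $n$: the subring $B[c_1]$ is spanned as a $B$--module by $1,c_1,\ldots,c_1^{d_1-1}$ by using the monic relation to rewrite higher powers; and having shown $B[c_1,\ldots,c_{i-1}]$ is a finitely generated $B$--module, one observes that $c_i$ is also integral over $B[c_1,\ldots,c_{i-1}]$, so $B[c_1,\ldots,c_i]$ is a finitely generated $B[c_1,\ldots,c_{i-1}]$--module; composing these finite generations gives that $B[c_1,\ldots,c_n]=C$ is a finitely generated $B$--module, which proves the ``moreover'' part of the lemma.

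Finally, with the chain $k\subseteq B\subseteq C$ now satisfying all the hypotheses of the Artin--Tate lemma (namely $k$ Noetherian, $C$ finitely generated as a $k$--algebra, and $C$ finitely generated as a $B$--module), I would invoke Artin--Tate directly to conclude that $B$ is finitely generated as a $k$--algebra.

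The argument is essentially routine once the inductive ``integral plus finitely many generators equals module-finite'' step is in hand; there is no real obstacle, since the monic relations provide exactly the linear dependences needed to bound the spanning set at each stage of the induction. The only care point is ensuring at each inductive step that $c_i$ remains integral over the enlarged ring $B[c_1,\ldots,c_{i-1}]$, which is immediate because its original monic relation over $B$ is a fortiori a monic relation over the larger ring.
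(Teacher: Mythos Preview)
Your proof is correct and follows essentially the same strategy as the paper. The only cosmetic difference is in the order and packaging: you first establish directly that $C$ is module-finite over $B$ via the inductive tower $B[c_1,\ldots,c_i]$, and then invoke the Artin--Tate lemma as a black box; the paper instead introduces the intermediate finitely generated $k$--algebra $A=k[X]$ (where $X$ is the finite set of coefficients of the monic relations), shows $C$ is module-finite over $A$ by the same inductive reasoning, and then uses Noetherianity of $A$ to conclude that the $A$--submodule $B\subseteq C$ is finitely generated --- which is exactly the inlined proof of Artin--Tate. Both routes rely on the identical core observation that integral algebra generators give module-finiteness.
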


\begin{proof}
    Write $C=k[c_1,\ldots,b_m]$ for some elements $c_i\in C$. Since each  $c_i$ is integral over $B$, there are $n_i\geq 1$ and elements $b_{i,1},\ldots , b_{i,n_i}$ such that 
    \[
    0=c_i^{n_i} + b_{i,1}c_i^{n_i-1}+\ldots b_{i,n_i}.
    \]
    Let $X$ be the set containing all the  $b_{i,j}$ appearing in these equations. Consider the $k$--algebra $A=k[X]$. Note that $A$ is a Noetherian by Hilbert's basis theorem. Moreover, the ring $C$ is integral over $A$, and hence it  is finitely generated as $A$--module. Since $B$ is an $A$--submodule of $C$, we deduce that $B$ is a finitely generated $A$--module. It follows that $B$ is a finitely generated $k$--algebra.  For the second claim, a set of generators of $B$ as $A$--module together with $X$ is a set of generators for $C$ as $B$--module. 
\end{proof}

\begin{Rec}\label{Noetherianity for graded ring}
    Recall that for a $\mathbb Z$--graded commutative ring $A_\ast$ with underlying ring $A$, the following conditions are equivalent:
    \begin{enumerate}
        \item  $A$ is Noetherian.
        \item $A_0$ is Noetherian and  $A$ is a finitely generated  $A_0$--algebra.
        \item Any homogeneous ideal of $A_\ast$ is finitely generated. 
    \end{enumerate}
\end{Rec}

\begin{Rec}\label{Rec: A+ is finitely generated}
   Let $A$ be a non-negatively graded $k$--algebra. If the homogeneous ideal $A^+=\bigoplus_{n\geq 1}A_{n}$ is finitely generated, then $A$ is a finitely generated $A_0$--algebra. 
   \end{Rec}

\begin{Rec}\label{Rec: SV}
    Let $V$ be a finitely generated free $k$--module. Write $S(V)$ to denote \textit{the $k$--algebra of polynomial functions on $V$,} that is, the subalgebra of the algebra of functions on $V$ (with the pointwise structure) generated by a dual basis for $V$.  One can verify that $S(V)$ is naturally isomorphic to both $ k[t_1,\ldots, t_{\mathrm{rk}(V)}]$ and to the symmetric algebra $\mathrm{Sym}(V^\ast)$. We will consider $S(V)$ with the obvious grading coming from $k[t_1,\ldots, t_{\mathrm{rk}(V)}]$. 

  On the other hand, consider a flat group scheme $G$ and let  $V$ be a $G$--module that it finitely generated and free as $k$--module. Then  $S(V)$ has structure of $G$--algebra. Indeed, for a polynomial $f\in V(S)$, we let
  \[
  g\cdot f(v)=f(g^{-1}v), \textrm{ for } g\in G \textrm{ and } v\in V. 
  \]
   Note that this $G$--action respects the grading, and hence $S(V)^G$ is a graded $k$--algebra. 
\end{Rec}

\begin{Rec}\label{structure of G-algebras}
    Let $G$ be a flat group scheme over $k$. Then any $G$--algebra $A$ that is finitely generated as $k$--algebra is of the form 
    \[A= S(V)/ I\]
    for some $G$--invariant ideal $I$ of $S(V)$. Here we are considering the $G$--action on $S(V)$ as in Recollection \ref{Rec: SV}. Indeed, consider the $G$--submodule $V$ of $A$ spanned by a set of generators of $A$ as $k$--algebra. By the universal property of the symmetric algebra, we must have a $k$--algebra map 
    \[f\colon \mathrm{Sym}(V^\ast) \to A\]
    that restricts to the inclusion $V\to A$, and hence $f$ must be surjective.  Moreover, we obtain a surjective map $f'\colon S(V)\to A$ by identifying  $\mathrm{Sym}(V^\ast)$  with $S(V)$. The map $f'$  is actually $G$--equivariant and hence  the kernel of this map is a stable $G$-ideal of $S(V)$. It follows that $S(V)/\mathrm{ker}(f')\cong A$.   
\end{Rec}

\begin{Prop}\label{Prop: I is homogeneous}
    Let $G$ be a power-reductive group scheme over $k$. Let $V$ be  a  $G$--module that is finitely generated free as $k$--module, and $I$ be a homogeneous $G$-stable ideal of $S(V)$. Then $(S(V)/I)^G$ is a finitely generated $k$--algebra. 
\end{Prop}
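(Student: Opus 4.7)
The plan is to combine the Noetherianity of $S(V)$ with power-reductivity via Proposition \ref{integral2}, using Noetherian induction on homogeneous $G$-stable ideals. By Hilbert's basis theorem $S(V)\cong k[t_1,\ldots,t_n]$ is Noetherian, so $A:=S(V)/I$ is a graded Noetherian $k$-algebra, each of whose graded pieces is finitely generated over $k$. Set $B:=A^G$, a graded $k$-subalgebra. By Noetherianity of $A$ the homogeneous ideal $B_+\cdot A$ is finitely generated; choose homogeneous $f_1,\ldots,f_r\in B_+$ generating it as an $A$-ideal, and put $C:=k[f_1,\ldots,f_r]\subseteq B$, a finitely generated (hence Noetherian) $k$-subalgebra.

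The strategy is to show that $A$ is integral over $C$, for then $A$ is finitely generated as a $C$-module, and $B\subseteq A$ is finitely generated over $C$ by Lemma \ref{Lemma Artin-Tate}, so $B$ is a finitely generated $k$-algebra. Power-reductivity enters through Proposition \ref{integral2}: the quotient $A\twoheadrightarrow \overline{A}:=A/(f_1,\ldots,f_r)A$ is power-surjective, so the induced map $B\to \overline{A}^G$ is power-surjective. Since $B_+$ maps to zero in $\overline{A}$, the image of $B$ is contained in the degree-zero part, which forces every positive-degree homogeneous invariant of $\overline{A}$ to be nilpotent.

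I would then run a Noetherian induction on the homogeneous $G$-stable ideals of $S(V)$: assuming for contradiction that the statement fails, pick a maximal counterexample $I$ and replace $I$ by it. Every proper non-zero homogeneous $G$-stable quotient $A/J$ now has $(A/J)^G$ finitely generated. If $B_+=0$ then $B=B_0$ is a quotient of $k$, already finitely generated, a contradiction. Otherwise, pick a non-zero homogeneous $b\in B_+$ and apply the inductive hypothesis to $A/Ab$; combined with the power-surjectivity of $B\to (A/Ab)^G$ furnished by Proposition \ref{integral2} and with Lemma \ref{Lemma Artin-Tate}, this shows $B/(B\cap Ab)$ is a finitely generated $k$-algebra. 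Lifting finitely many generators and inductively peeling off a factor of $b$ (whose positive degree strictly decreases the degree of the remainder) should then show $B$ itself is finitely generated, yielding the contradiction.

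The \textbf{main obstacle} is this last descent step: it is clean when $b$ is a non-zero-divisor (so that $B\cap Ab = Bb$), but in general one must either choose $b$ to avoid the finitely many $G$-stable associated primes of $A$, or iterate the argument on the $G$-stable quotient $A/\mathrm{Ann}_A(b)$ and assemble the pieces. The delicate interplay between the ring-theoretic structure of $A$ and its $G$-invariant substructure — and in particular controlling zero-divisors in a $G$-equivariant manner — is the principal technical content of the proof, as developed in \cite{springer2006invariant} and adapted to the Noetherian setting in \cite{FvdK10}.
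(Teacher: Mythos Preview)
Your Noetherian-induction outline is the paper's approach, and you have correctly isolated the crux. Two points, however, need fixing.

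First, the opening paragraph about $C=k[f_1,\ldots,f_r]$ and integrality of $A$ over $C$ is a detour you never return to; the nilpotence of positive-degree invariants in $\overline{A}$ does not by itself give integrality of $A$ over $C$, and the rest of your argument does not use $C$. Drop it and go straight to the Noetherian induction.

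Second, and more substantively, neither of your proposed resolutions of the zero-divisor obstacle is quite right. The option of ``choosing $b$ to avoid the $G$-stable associated primes'' is unavailable: in the minimal counterexample \emph{every} homogeneous $b\in B_+$ is a zero-divisor in $A$ (this is exactly what your non-zero-divisor argument proves by contraposition). The option of passing to $A/\mathrm{Ann}_A(b)$ is correct, but the ``peeling off a factor of $b$'' descent you sketch only works when $B\cap bA=bB$, i.e.\ in the non-zero-divisor case. The missing step is the $A^G$-module isomorphism
\[
(bA)^G \;\xrightarrow{\ \sim\ }\; (A/\mathrm{Ann}_A(b))^G,\qquad bx \longmapsto [x],
\]
well-defined precisely because $bx=bx'$ forces $x-x'\in\mathrm{Ann}_A(b)$. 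Since $\mathrm{Ann}_A(b)\neq 0$, the inductive hypothesis applies to $A/\mathrm{Ann}_A(b)$; combined with Proposition~\ref{integral2} and Lemma~\ref{Lemma Artin-Tate}, this makes $(A/\mathrm{Ann}_A(b))^G$ a finitely generated module over $A^G/(\mathrm{Ann}_A(b)\cap A^G)$, hence $(bA)^G=B\cap bA$ is a finitely generated $A^G$-module. Together with the finite generation of $B/(B\cap bA)$ as a $k$-algebra that you already obtained, this shows $(A^G)^+$ is a finitely generated ideal, and Recollection~\ref{Rec: A+ is finitely generated} finishes the contradiction.
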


\begin{proof}
    We will proceed by contradiction. Assume that $(S(V)/I)^G$ is not finitely generated as $k$--algebra. Since $k$ is Noetherian, so is $S(V)$. Now, consider the collection of $G$--invariant homogeneous ideals $I'$ of $S(V)$ such that $(S(V)/I')^G$ is not finitely generated. By our assumption, this collection is not empty. Hence there exists a maximal $G$--stable homogeneous ideal $I$ of $S(V)$ with this property. By maximality of $I$, we obtain that any homogeneous ideal of $A=S(V)/I$ must satisfy that $(A/J)^G$ is a finitely generated $k$--algebra. Moreover, we claim that for any $G$--stable homogeneous ideal $J$ of $A$, the algebra $A^G/(A^G\cap J)$ is finitely generated.  Indeed, consider the  projection map $\pi\colon A\to A/J$ which induces  map 
    \[ \pi\colon A^G\to (A/J)^G. \]
    By Proposition \ref{integral2}, we get that $(A/J)^G$ is integral over $\pi (A^G)$. By Lemma \ref{Lemma Artin-Tate} we have that $\pi(A^G)$ must be finitely generated as $k$--algebra. But note that $\pi(A^G)\cong A^G/(A^G\cap J)$, this completes the claim.

    Now, let $a\in A^G$ be homogeneous of positive degree. We claim that $a$ must be a zero divisor. Assume otherwise. Let $ax\in A^G$. Then $a(gx-x)=0$ for all $g\in G$. This implies that $x\in A^G$. It follows that $a A\cap A^G =a A^G$. By the previous paragraph,  we conclude that $A^G/aA^G$ is finitely generated as $k$--algebra. It follows that $(A^G)^+/aA^G$ is finitely generated, and so is $(A^G)^+$. By Recollection \ref{Rec: A+ is finitely generated}, we deduce that $A^G$ must be a finitely generate $k$--algebra which is a contradiction.  Then $a$ must be a zero divisor. 

    Fix $a\in A^G$ homogeneous of positive degree. Note that 
    \[
    \mathrm{Ann_A(a)\coloneqq \{x\in A\mid xa=0\}}
    \]
    is a non-zero, $G$--invariant homogeneous ideal of $A$. It follows that $(A/\mathrm{Ann}_A(a))^G$ is finitely generated $k$--algebra. Using again Proposition \ref{integral2} applied to the projection map $A\to A/\mathrm{Ann}_A(a)$, we deduce that $(A/\mathrm{Ann}_A(a))^G$ is integral over $A^G/\mathrm{Ann}_A(a)\cap A^G$. Another layer of Lemma \ref{Lemma Artin-Tate} gives us that  $(A/\mathrm{Ann}_A(a))^G$ is a finitely generated $(A^G/\mathrm{Ann}_A(a)\cap A^G)$--module.  In fact, the map 
    \[
    (aA)^G \to  (A/\mathrm{Ann}_A(a))^G, \, \, \,  ax\mapsto [ax] 
    \]
    is an isomorphism of  $(A^G/\mathrm{Ann}_A(a)\cap A^G)$--modules, which implies that $(aA)^G$ is a finitely generated $A^G$--module. Since $A^G/(aA)^G\cong A^G/aA\cap A^G$ is finitely generated as $k$--algebra,  we conclude that the ideal $(A^G)^+$ must be finitely generated. By Recollection \ref{Rec: A+ is finitely generated} we obtain that $A^G$ must be Noetherian. We obtain the desired contradiction. 
\end{proof}

\begin{proof}[Proof of Theorem \ref{Hilberts 14th}]
  We proceed as in \cite[Lemma 11]{van2021reductivity}. Let $M$ be a finitely generated $G$--submodule of $G$ containing a finite set of generators of $A$ as $k$--algebra and also the unit $1$. Let $B$ the graded $k$--subalgebra of $A[x]$ generated by $M[x]$. Note that $B$ is also a $G$--algebra and evaluation at $1$ gives us a $G$--algebra map  $\mathrm{ev}_1\colon B\to A$. Let $B_d$ denote the $d$th homogeneous part of $B$.  By the definition  of $B$, we get that  $\mathrm{ev}_1|_{B_d}$ is an inclusion, for any $d\geq0$. In particular, this tell us that any $G$--invariant of $A$ lies in the image of some $G$-invariant of $b_d$ under $\mathrm{ev}_1$, for some $d\geq 0$. In other words, 
  \[
  \mathrm{ev}_1^G\colon B^G\to A^G
  \]
  is a surjective $k$--algebra map. Now, by Proposition \ref{Prop: I is homogeneous}, we deduce that $B^G$ is finitely generated over $k$. Since $\mathrm{ev}_1^G$ is surjective, we deduce that $A^G$ is also finitely generated over $k$. 
\end{proof}

\begin{Rem}
  One might attempt to adapt Nagata’s method for proving finite generation of invariants over fields to the case of $G$--algebras of the form $S(V)/I$, where $I$ is not necessarily homogeneous, and then conclude using Recollection \ref{structure of G-algebras}. For the  readers interested in this approach, we refer to the argument  presented in \cite[Section 3.4]{Dolgachev}, restricted to the case of fields. See also \cite[Remark 9]{van2021reductivity} for a further discussion. 
\end{Rem}

In principal, to prove that a certain class of group schemes satisfy (FG), one could alternatively prove that the desired class is power reductive. As it turns out, power reductivity is a local property on $\Spec(k)$, as the next proposition shows. First, let us recall  that for a group scheme $G$ over a commutative ring $k$, and a map of rings $k\to l$, we write $G_l$ to denote the group scheme obtained by base change along $k\to l$.

\begin{Prop}\label{Power Reductivity is Local}
  Let $G$ be a group scheme over a commutative Noetherian ring $k$.  If $G_{k_{\mathfrak{m}}}$ is power reductive for every maximal ideal $\mathfrak{m}$ of $k$, then $G$ is power reductive. 
\end{Prop}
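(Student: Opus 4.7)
Suppose we are given a surjective map of $G$--modules $\varphi\colon M\to L$ with $L$ cyclic and trivial $G$--action, and I want to produce a single $d\ge 1$ such that the induced map $\psi_d\colon (S^dM)^G\to S^dL$ is surjective. Fix a $k$--generator $\ell$ of $L$, so $S^dL$ is cyclic generated by $\ell^d$, and define
\[
J_d \;=\; \{\,a\in k \;:\; a\,\ell^d \in \operatorname{image}(\psi_d)\,\},
\]
an ideal of $k$. The point is that $\psi_d$ is surjective if and only if $J_d=k$. The plan hinges on two structural observations about the sequence of ideals $(J_d)_{d\ge 1}$. First, $(J_d)$ is multiplicatively well--behaved: if $\xi\in (S^dM)^G$ satisfies $\psi_d(\xi)=a\ell^d$, then $\xi^e\in (S^{de}M)^G$ satisfies $\psi_{de}(\xi^e)=a^e\ell^{de}$, whence $J_d^{\,e}\subseteq J_{de}$ for all $e\ge 1$. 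Second, since $k\to k_\mathfrak{m}$ is flat and $G$ is flat, invariants commute with this base change (the functor $(-)^G$ is a $k$--linear kernel), and symmetric powers commute with any base change; so applying $(-)_\mathfrak{m}$ identifies $\psi_d$ localized at $\mathfrak{m}$ with the analogous map for $\varphi_\mathfrak{m}\colon M_\mathfrak{m}\to L_\mathfrak{m}$ over $G_{k_\mathfrak{m}}$, and hence $(J_d)_\mathfrak{m}$ with its local analogue.

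Next I invoke the hypothesis. For each maximal ideal $\mathfrak{m}$ of $k$, the $k_\mathfrak{m}$--module $L_\mathfrak{m}$ is cyclic with trivial $G_{k_\mathfrak{m}}$--action, and $\varphi_\mathfrak{m}$ is surjective, so power reductivity of $G_{k_\mathfrak{m}}$ furnishes an integer $d_\mathfrak{m}\ge 1$ such that the localized $\psi_{d_\mathfrak{m}}$ is surjective. Translating, $(J_{d_\mathfrak{m}})_\mathfrak{m}=k_\mathfrak{m}$, which means there exists $s_\mathfrak{m}\in J_{d_\mathfrak{m}}$ with $s_\mathfrak{m}\notin\mathfrak{m}$. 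Now the distinguished opens $\{D(s_\mathfrak{m})\}_\mathfrak{m}$ cover every maximal ideal of $\operatorname{Spec}(k)$, hence cover all of $\operatorname{Spec}(k)$. Using quasi--compactness of $\operatorname{Spec}(k)$, finitely many of them suffice, say $D(s_1),\dots,D(s_r)$ with $s_i:=s_{\mathfrak{m}_i}\in J_{d_i}$ (where $d_i:=d_{\mathfrak{m}_i}$). Equivalently, $(s_1,\dots,s_r)=k$.

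The main obstacle is that these finitely many local exponents $d_i$ are a priori different; the rest of the argument is the multiplicative trick that merges them. Set $D:=d_1d_2\cdots d_r$ (or any common multiple of the $d_i$). Since $J_{d_i}^{\,D/d_i}\subseteq J_{D}$, we get $s_i^{D/d_i}\in J_D$ for every $i$. Because $D(s_i^{D/d_i})=D(s_i)$, these elements still generate the unit ideal, so $1\in (s_1^{D/d_1},\dots,s_r^{D/d_r})\subseteq J_D$, giving $J_D=k$ and hence surjectivity of $\psi_D$. As $\varphi$ was arbitrary, $G$ is power reductive, completing the proof.
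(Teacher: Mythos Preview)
Your proof is correct and follows essentially the same strategy as the paper: both introduce an ideal of $k$ measuring the failure of $(S^dM)^G\to S^dL$ to be surjective, show this ideal behaves well under localization, and then use the local hypothesis to conclude it is the unit ideal. The paper's ideal $\mathcal{J}_d(k)=\{x\in k\mid \exists m>0,\ x^mS^dL\subseteq \operatorname{im}\psi_d\}$ is the radical of your $J_d$, but this is a cosmetic difference. Your argument is in fact more explicit than the paper's at the key step: the paper asserts that $k_\mathfrak{m}=\mathcal{J}_d(k)_\mathfrak{m}$ ``for some $d$ and all $\mathfrak{m}$'' without spelling out how a single $d$ is chosen, whereas you carry out the quasi-compactness argument (finitely many $s_i\in J_{d_i}$ generating the unit ideal, then $D=\prod d_i$ and $s_i^{D/d_i}\in J_D$) that actually produces this uniform exponent.
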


\begin{proof}
    We define a collection of objects:
    \[\mathcal{J}_d(k)=\{x\in k \mid \exists m>0 \text{ such that } x^mS^dL\subseteq S^d\varphi((S^dM)^G)\}\]
     where $L,\,  M,$ and $\varphi$ are as in the definition of power reductivity. It is easy to see that the condition $\mathcal{J}_d(k)=k$ is equivalent to the power reductivity of $G$. Moreover, this construction commutes with localization, since taking invariants commutes with localization. Therefore, we can see that $\mathcal{J}_d(k_\mathfrak{m})$ is equal to $\mathcal{J}_d(k)_{\mathfrak{m}}$ as ideals of $k_\mathfrak{m}.$ Now if we assume that $G_{k_\mathfrak{m}}$ is power reductive for all maximal ideals $\mathfrak{m}$, we get that 
     \[
     k_\mathfrak{m}=\mathcal{J}_d(k_\mathfrak{m})=\mathcal{J}_d(k)_{\mathfrak{m}}
     \]
     for some $d$ and all $\mathfrak{m}$. Since the equality of ideals is a local property, we can conclude that this property holds globally, in other words $\mathcal{J}_d(k)=k$.
\end{proof}

There is an alternative characterisation of power reductivity which is useful, called Integrality or (Int):

\begin{Def}
    We will say that an affine group scheme $G$ satisfies property (Int) if for every surjective map $A\to B$ of $G$--algebras we have that $B^G$ is integral over the image of $A^G$.
\end{Def}

\begin{Prop}\label{Int iff PR}
    Let $G$ be a flat affine group scheme over $k$. Then $G$ satisfies property (Int) if and only if $G$ is power reductive.
\end{Prop}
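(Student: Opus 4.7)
The plan is to prove the equivalence in two directions, with essentially all the work in the direction (Int) $\Rightarrow$ power reductive.

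The direction power reductive $\Rightarrow$ (Int) is immediate from the earlier results: given a surjective map $f\colon A\to B$ of $G$-algebras, $f$ is in particular power surjective, so Proposition~\ref{integral2} gives that $f^G\colon A^G\to B^G$ is power surjective, and then Lemma~\ref{integral1} shows that $B^G$ is integral over the image of $A^G$, which is precisely (Int).

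For the reverse direction, let $\varphi\colon M\twoheadrightarrow L$ be a surjection of $G$-modules with $L$ cyclic trivial, generated by $\ell$. I would apply (Int) to the induced surjective $G$-algebra map $S^\ast\varphi\colon S^\ast M\twoheadrightarrow S^\ast L$; since the $G$-action on $S^\ast L$ is trivial, $(S^\ast L)^G = S^\ast L$, and we deduce that $\ell$ is integral over the graded $k$-subalgebra $R_\ast := (S^\ast\varphi)((S^\ast M)^G)\subseteq S^\ast L$, where $R_0 = k$ and $R_d$ is the image of $(S^d M)^G \to S^d L$. By homogeneity, extract from the integral equation its degree-$n$ homogeneous component, of the form $\ell^n = -\sum_{i=0}^{n-1} r_{i,n-i}\,\ell^i$ with $r_{i,n-i}\in R_{n-i}$. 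Introduce the ideals $\mathfrak{a}_d := \{\lambda\in k : \lambda\ell^d \in R_d\}$; writing $r_{i,n-i} = a_{i,n-i}\,\ell^{n-i}$ with $a_{i,n-i}\in \mathfrak{a}_{n-i}$ and using that $S^n L$ is cyclic generated by $\ell^n$, the relation yields $1 \in \mathfrak{a}_1 + \mathfrak{a}_2 + \cdots + \mathfrak{a}_n$.

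Now I exploit that $R_\ast$ is a subalgebra, which gives the multiplicative structure $\mathfrak{a}_a\cdot\mathfrak{a}_b\subseteq \mathfrak{a}_{a+b}$. Combined with the standard fact that $\mathfrak{b}_1+\cdots+\mathfrak{b}_n=k$ implies $\mathfrak{b}_1^{m_1}+\cdots+\mathfrak{b}_n^{m_n}=k$ for any positive exponents (a pigeonhole argument applied to a sufficiently high power of the identity $1=x_1+\cdots+x_n$), I choose $N=\mathrm{lcm}(1,2,\ldots,n)$ and $m_j=N/j$ to obtain $\mathfrak{a}_1^{m_1}+\cdots+\mathfrak{a}_n^{m_n}=k$. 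Iterated multiplicativity gives $\mathfrak{a}_j^{m_j}\subseteq \mathfrak{a}_{j m_j}=\mathfrak{a}_N$ for every $j$, hence $\mathfrak{a}_N=k$. Thus $\ell^N \in R_N$; since $\ell^N$ generates the cyclic module $S^N L$, the map $(S^N M)^G \to S^N L$ is surjective, and power reductivity holds at $d=N$.

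The main obstacle is the combinatorial passage from the weaker condition $1 \in \sum_j \mathfrak{a}_j$ (many small ideals) to the strictly stronger conclusion $\mathfrak{a}_N = k$ (one single degree). Without the multiplicative structure on the $\mathfrak{a}_d$'s coming from the graded algebra $R_\ast$, this step would fail; the pigeonhole lemma on binomial expansions paired with iterated multiplicativity is precisely what bridges the gap and delivers a uniform degree.
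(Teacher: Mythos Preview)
Your proof is correct and follows essentially the same approach as the paper: both directions match, and in the nontrivial (Int) $\Rightarrow$ power reductive direction both arguments pass to the symmetric algebra, extract a homogeneous integral equation for the generator, and then use the multiplicativity of the image together with a pigeonhole on a power of the relation $1 = -\sum a_j + (\text{annihilator term})$ to force surjectivity in a single degree. Your packaging via the ideals $\mathfrak{a}_d$ and the use of $N=\operatorname{lcm}(1,\dots,n)$ instead of $n!$ is a clean variant of the paper's argument (which works directly with the scalars $r_i$ and the element $r=1+\sum r_i$), but the underlying mechanism is identical.
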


\begin{proof}
  It follows immediately from \ref{integral1} and \ref{integral2} that power reductivity implies (Int). We will prove the converse.
  
  Let $\varphi\colon M\to L$ be a surjective map of comodules, where $M$ is arbitrary, and $L$ is a cyclic $k$--module endowed with the trivial action. After taking symmetric powers we will still have a surjective map of comodules 
  \[
  S^*\varphi\colon  S^*M\to S^*L
  \]
   where the actions on the symmetric algebras are the diagonal actions, this is even a surjective map of $G$--algebras. Now pick some generator $b\in L$, viewed as a homogeneous element of degree $1$ in $S^*L$. Using our assumption of the property (Int), that is, that $S^*L$ is integral over $(S^*M)^G$ via the map $(S^*\varphi)^G$, there exists a monic polynomial
   \[
   t^n + a_1t^{n-1} + \dots +a_n \textrm{ with } b \textrm{ as a root and } a_i\in \im((S^*\varphi)^G).
   \]
   Since $b$ is homogeneous of degree $1$, we may assume that the $a_i$ are homogeneous of degree $i$, and we write $a_i=r_ib^i$ for some $r_i\in k$. Now since the $a_i$ are in the image of $(S^i\varphi)^G$, we have that $a_i^{n!/i}$ are in the image of $(S^{n!}\varphi)^G$ and thus that the cokernel of $(S^{n!}\varphi)^G$ is annihilated by $r_i^{n!/i}$ since $r_i^{n!/i}b^{n!}=a_i^{n!/i}$. Putting $r=1+r_1+\dots+r_n$, the equation $rb^n=0$ is equivalent to fact that $b$ is a root of the monic polynomial above, and from this we see that $r^{j}b^{n!}=0$ for any $1\leq j$. Now the unit ideal in $k$ is generated together by $r^{j!}$ and $r_i^{n!/i}$ for all $1 \leq i, j \leq n$, but all these elements of $k$ annihilate $b^{n!}$ in the cokernel of $(S^{n!}\varphi)^G$, and thus the cokernel vanishes.
\end{proof}

This alternative characterisation of power reductivity has a few useful consequences, the first being an easy proof that constant group schemes associated to finite groups satisfy power reductivity:

\begin{Cor}
    Finite groups are power reductive.
\end{Cor}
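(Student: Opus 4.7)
The plan is to invoke Proposition \ref{Int iff PR} and verify property (Int) directly via the classical trick of building a monic polynomial from the group orbit. This bypasses symmetric powers entirely and exploits the finiteness of $\Gamma$ in the most elementary way.

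First I would unpack what a $G_\Gamma$-algebra structure amounts to concretely. Since $k[G_\Gamma]=k^{|\Gamma|}$ with basis of idempotents $\{e_\gamma\}_{\gamma\in\Gamma}$, any $G_\Gamma$-comodule $V$ decomposes its coaction as $v\mapsto \sum_{\gamma}\Delta_\gamma(v)\otimes e_\gamma$, and the comodule axioms translate into the statement that $\{\Delta_\gamma\}$ is a $\Gamma$-action on $V$ by $k$-linear automorphisms; the multiplicativity of the coaction forces these automorphisms to be $k$-algebra maps when $V$ is a $G_\Gamma$-algebra. Moreover, unwinding $\Delta(v)=v\otimes 1 = \sum_\gamma v\otimes e_\gamma$ shows $V^{G_\Gamma}$ coincides with the ordinary invariants $V^\Gamma$. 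So $G_\Gamma$-algebras and their invariants are exactly $\Gamma$-algebras and their invariants in the naive sense.

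With this in hand, let $\varphi\colon A\to B$ be a surjection of $G_\Gamma$-algebras and pick $b\in B^{G_\Gamma}=B^\Gamma$. Choose any lift $a\in A$ with $\varphi(a)=b$, and form
\[
p(t)\;=\;\prod_{\gamma\in\Gamma}\bigl(t-\gamma\cdot a\bigr)\;\in\;A[t].
\]
Its coefficients are the elementary symmetric polynomials in the orbit $\{\gamma\cdot a\}_{\gamma\in\Gamma}$, which are manifestly $\Gamma$-invariant, so they lie in $A^{G_\Gamma}$. Applying $\varphi$ coefficient-wise and using that $\varphi$ is $\Gamma$-equivariant together with $\gamma\cdot b=b$, we obtain a monic polynomial
\[
\varphi(p)(t)\;=\;\prod_{\gamma\in\Gamma}\bigl(t-\gamma\cdot b\bigr)\;=\;(t-b)^{|\Gamma|}
\]
with coefficients in $\varphi(A^{G_\Gamma})$, and $b$ is evidently a root. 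Hence $b$ is integral over $\varphi(A^{G_\Gamma})$, which verifies (Int) and therefore power reductivity by Proposition \ref{Int iff PR}.

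I do not anticipate any serious obstacle here: the only mildly delicate point is confirming that the functorial notion $V^{G_\Gamma}$ agrees with the set-theoretic $\Gamma$-invariants, which is done in the first step above. The finiteness of $\Gamma$ is essential to keep the orbit polynomial $p(t)$ of finite degree, so the argument does not generalise beyond the finite case — but that is exactly what the corollary claims.
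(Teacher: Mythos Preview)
Your proof is correct and follows essentially the same approach as the paper: both verify (Int) via the orbit polynomial $\prod_{\gamma\in\Gamma}(t-\gamma\cdot a)$ and then invoke Proposition~\ref{Int iff PR}. The only cosmetic difference is that the paper first observes that $A$ is integral over $A^G$ and then notes that the composite of an integral extension with a surjection remains integral, whereas you work directly with a chosen $b\in B^G$ and its lift; your added paragraph identifying $V^{G_\Gamma}$ with the naive $\Gamma$-invariants makes explicit something the paper leaves implicit.
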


\begin{proof}
    For a finite group $G$ acting on a $k$--algebra $A$, we see that $A$ is integral over $A^G$ since $a$ is a root of the polynomial $\prod_{g\in G}(x- g(a))$. Now property (Int) follows since a composite of an integral extension $A^G\to A$ followed by a surjective map $A\to B$ is still an integral extension. Then apply the equivalence of power reductivity and (Int).
\end{proof}

\begin{Cor}
   Let $G_l$ be a flat group scheme over $l$ and  $l\to k$ be a morphism of rings. If $G_l$ is power reductive, then so is  
   its  base change $G_k$ along $l\to k$. 
\end{Cor}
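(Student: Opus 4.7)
The plan is to exploit the equivalence between power reductivity and property (Int) established in Proposition \ref{Int iff PR}. The (Int) formulation is much better suited to base change, because it refers only to surjections of $G$-algebras and an integrality condition, avoiding the cyclicity hypothesis on the target module that appears in the definition of power reductivity.

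First, I would record the key compatibility: for any $G_k$-module $M$, viewed as a $G_l$-module via the base change functor $\mathrm{res}^k_l$ (defined just before Lemma \ref{base change}), the invariants coincide, $M^{G_k} = M^{G_l}$, as subsets of $M$. This is essentially the degree-zero case of Lemma \ref{base change}, and it follows directly from the identification of coactions $M \to M \otimes_k k[G_k] = M \otimes_l l[G_l]$: the condition $\Delta_M(m) = m \otimes 1$ is literally the same equation on both sides. In particular, if $A$ is a $G_k$-algebra, then $\mathrm{res}^k_l(A)$ is naturally a $G_l$-algebra (the multiplication is $k$-bilinear hence $l$-bilinear, and the $G_l$-equivariance reduces to the $G_k$-equivariance via the same identification), and the subring $A^{G_k} \subseteq A$ agrees with $A^{G_l}$.

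Next, I would take an arbitrary surjective map $f\colon A \to B$ of $G_k$-algebras and view it, via $\mathrm{res}^k_l$, as a surjective map of $G_l$-algebras (surjectivity is a property of the underlying set-theoretic map, which is unchanged by restriction of scalars). Since $G_l$ is power reductive, Proposition \ref{Int iff PR} tells us it satisfies (Int), so $B^{G_l}$ is integral over the image of $A^{G_l}$ in $B^{G_l}$. Translating via the identification from the first step, $B^{G_k}$ is integral over the image of $A^{G_k}$ in $B^{G_k}$. Hence $G_k$ satisfies (Int), and one more application of Proposition \ref{Int iff PR} in the reverse direction concludes that $G_k$ is power reductive.

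The argument is essentially bookkeeping: the nontrivial content lies entirely in the equivalence of power reductivity with (Int) and in the trivial naturality of invariants under base change. There is no real obstacle here; this is the reason the (Int) reformulation was introduced in the first place. Trying to argue directly from the original definition of power reductivity would be more awkward, since a cyclic quotient of $M$ as a $k$-module need not be cyclic as an $l$-module, and one would have to produce a suitable auxiliary cyclic $l$-module and then untangle the symmetric powers.
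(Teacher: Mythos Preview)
Your proof is correct and follows exactly the same approach as the paper: use the equivalence of power reductivity with (Int) from Proposition \ref{Int iff PR}, observe that invariants are unchanged under the base change functor $\mathrm{res}^k_l$ (the degree-zero case of Lemma \ref{base change}), and conclude that (Int) transfers from $G_l$ to $G_k$. The paper states this in two sentences, and you have simply filled in the details it leaves implicit.
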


\begin{proof}
    Base change doesn't affect invariants, see Lemma \ref{base change}, so it is easy to see that (Int) is preserved under base change. Now apply the equivalence of power reductivity and (Int).
\end{proof}

An interesting class of group schemes are called \textit{Chevalley group schemes}.

\begin{Def}
    A \textit{Chevalley group scheme over $\mathbb{Z}$} is a connected flat affine group scheme $G$ over $\mathbb Z$ such that the structural map $G\to \mathrm{Spec}(\mathbb Z)$ is smooth with connected reductive fibers and that admits a fiberwise maximal $\mathbb Z$--torus. In other words, $G$  is a connected split reductive flat group scheme over $\mathbb{Z}$. A \textit{Chevalley group scheme over $k$}, where $k$ is a Noetherian ring, is just the base change of a Chevalley group scheme over $\mathbb{Z}$.
\end{Def}

\begin{Rem}
    While  not all the terms in the above definition are treated with detail in this document, the upshot is that the algebraic groups that we know and love, $\text{GL}_n, \text{SL}_n, \text{PGL}_n, \text{Sp}_{2n}, $ and $\text{SO}_n$, are all Chevalley groups over $\mathbb{Z}$. We refer to \cite[Section 1]{ConradNonsplit} for a further discussion on Chevalley group schemes. 
\end{Rem}

 We have now arrived at the main technical theorems of this section: 

\begin{Th}[{Mumford's Conjecture}]\label{Mumfords Conjecture}
A Chevalley group scheme is power reductive for every Noetherian base.
\end{Th}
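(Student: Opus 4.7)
The plan is to prove Mumford's Conjecture over a Noetherian base by a chain of reductions terminating in the classical case over an algebraically closed field, where the statement is Haboush's celebrated theorem (the original Mumford Conjecture).

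First I would use Proposition \ref{Power Reductivity is Local} to reduce to the case where the base $k$ is a local Noetherian ring with maximal ideal $\mathfrak{m}$ and residue field $\kappa$. Next, since invariants commute with flat base change (so $(S^d M \otimes_k k')^{G_{k'}} \cong (S^d M)^G \otimes_k k'$ for $k \to k'$ flat), the property of power reductivity descends along faithfully flat extensions of the base. In particular, the completion map $k \to \hat{k}$ is faithfully flat, so it suffices to establish power reductivity over $\hat{k}$, and we may therefore assume $k$ is a complete Noetherian local ring.

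The second step is the heart of the argument: lifting the power-reductive structure from the residue field $\kappa$ to $k$. Given a surjection $\varphi \colon M \to L$ of $G$-modules with $L$ cyclic and trivial, reduction modulo $\mathfrak{m}$ produces a surjection $\bar\varphi \colon M \otimes \kappa \to L \otimes \kappa$ of $G_\kappa$-modules. Assuming power reductivity over $\kappa$, we obtain some $d$ and an invariant $\bar x \in (S^d(M \otimes \kappa))^{G_\kappa}$ mapping to a generator of $S^d(L \otimes \kappa)$. One then lifts $\bar x$ step by step through the $\mathfrak{m}$-adic filtration using the theory of good filtrations over $\mathbb{Z}$ for Chevalley group schemes: the relevant symmetric powers of $G$-modules admit good filtrations (Lemma \ref{properties of good filtrations}) that behave universally, and the acyclicity provided by Kempf's theorem (Theorem \ref{Kempf's theorem}) controls the obstructions to successive lifts. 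Passing to the inverse limit over powers of $\mathfrak{m}$ (and invoking completeness of $k$) yields the desired invariant in $(S^d M)^G$, possibly after replacing $d$ by a multiple.

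Finally, over the field $\kappa$ we extend scalars to the algebraic closure $\bar{\kappa}$, and faithfully flat descent once more reduces the problem to the case of an algebraically closed field. Here $G_{\bar\kappa}$ is a classical reductive algebraic group, and Haboush's theorem asserts that it is geometrically reductive; translated via Proposition \ref{Int iff PR}, this is precisely power reductivity, completing the proof. The main obstacle will be the second step: carrying out the lifting from the residue field to the complete local ring is not formal, because a naive Nakayama-type argument produces a candidate invariant only as an element of a projective limit, and verifying that the obstruction vanishes requires genuine input from the good-filtration theory of Chevalley groups over $\mathbb{Z}$ rather than purely categorical manipulations.
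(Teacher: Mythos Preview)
Your first reduction to a local Noetherian base via Proposition \ref{Power Reductivity is Local} matches the paper, and your faithfully-flat descent to the completion is correct (though the paper does not need it). From there the routes diverge. The paper does not descend to the residue field and then lift back. Instead, after a further reduction to the simply connected semi-simple case (passing to $G/Z$ and then to the simply connected cover), it invokes a Steinberg-type module $\mathrm{St}_r$ constructed \emph{directly over the local ring} $k$ (Lemma \ref{Key lemma for power reductivity}, taken as a black box). The required invariant in $(S^d M)^G$ is then produced concretely: with $d$ equal to the $k$-rank of $\mathrm{St}_r$, the determinant is a $G$-invariant element of $S^d(\Hom_k(\mathrm{St}_r,\mathrm{St}_r)^\ast)$, and the vanishing of $\Ext^1_G(\Hom_k(\mathrm{St}_r,\mathrm{St}_r)^\ast, \ker\varphi)$ lets one factor the evaluation map through $M$, pushing the determinant to an invariant that hits a generator of $S^d L$.

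Your proposed lifting from the residue field is where there is a genuine gap. You want to lift $\bar x \in (S^d(M\otimes\kappa))^{G_\kappa}$ to $(S^d M)^G$ through the $\mf{m}$-adic filtration, controlling obstructions via good filtrations and Kempf vanishing. But $M$ is an \emph{arbitrary} $G$-module; there is no reason for $S^d M$ or its successive layers $\mf{m}^i S^d M/\mf{m}^{i+1} S^d M$ to carry good filtrations, so the acyclicity you appeal to is simply not available for these modules, and the $H^1$ obstructions to lifting invariants have no reason to vanish. The paper's insight is precisely to sidestep this: rather than trying to lift an unspecified invariant out of $\kappa$, one writes down a \emph{specific} invariant (the determinant) on a module engineered to have the required $\Ext$-vanishing already over $k$, so no residue-field-to-$k$ obstruction theory is ever needed. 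Your final reduction to an algebraically closed field and invocation of Haboush is fine in isolation, but the argument does not reach that point without the lifting step, and as stated that step does not go through.
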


A full proof of this statement lies beyond the scope of this document, as it would lead us too far afield. Instead, we outline the key ideas and treat the following lemma as a black box. For a complete treatment, we refer the interested reader to \cite[Section 3]{FvdK10}.

 \begin{Lemma}\label{Key lemma for power reductivity}
     Let $G$ be a simply connected semi-simple group scheme defined over a local commutative Noetherian ring $k$. Let $M$ be a finitely generated $G$--module. Then there is a $G$--module $\mathrm{St}_r$ satisfying the following properties. 
     \begin{enumerate}
         \item $\mathrm{St}_r$ is $k$--free and of finite rank. 
         \item $\mathrm{Ext}^n_G(\Hom_k(\mathrm{St}_r,\mathrm{St}_r)^\ast,M)=0$ for all $n>0$. 
     \end{enumerate}
     Here $\Hom_k$ denotes the internal space of homomorphisms, which in particular is again a $G$--module, and $(-)^\ast$ is short for the $k$--dual, i.e., $\Hom_k(-,k)$. 
 \end{Lemma}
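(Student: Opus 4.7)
The plan is to take $\mathrm{St}_r$ to be the $r$-th Steinberg module of $G$, adapted to the local Noetherian base. Over an algebraically closed field of characteristic $p>0$, $\mathrm{St}_r$ is the irreducible $G$-representation of highest weight $(p^r-1)\rho$ where $\rho$ is the half-sum of positive roots; since $G$ is simply connected semi-simple, this representation is self-dual and coincides with both the Weyl and dual Weyl module at this weight, so it admits an integral form free of finite rank over $\mathbb{Z}$. Base-changing this integral form to $k$ yields property (1). If the residue characteristic of $k$ is $0$ then $G$ becomes linearly reductive after passing to the generic fibre, all $\Ext^{n>0}_G$-groups vanish and there is nothing to prove, so we may assume from now on that $k$ has residue characteristic $p>0$.

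For property (2), the key input is the exceptional behaviour of $\mathrm{St}_r$ with respect to the $r$-th Frobenius kernel $G_r$: as a $G_r$-module it is simultaneously projective and injective. Using self-duality to identify $\Hom_k(\mathrm{St}_r,\mathrm{St}_r)^{*}\cong \mathrm{St}_r\otimes \mathrm{St}_r$, together with the tensor-hom adjunction, one obtains
\[
\Ext^{q}_{G_r}\!\bigl(\mathrm{St}_r\otimes \mathrm{St}_r,\, M\bigr)\;\cong\;\Ext^{q}_{G_r}\!\bigl(\mathrm{St}_r,\, \Hom_k(\mathrm{St}_r,M)\bigr)\;=\;0\quad\text{for all }q>0.
\]
To bootstrap from $G_r$ to $G$, I would run the Lyndon-Hochschild-Serre spectral sequence
\[
E_2^{p,q}=H^p\!\bigl(G/G_r,\,\Ext^{q}_{G_r}(\mathrm{St}_r\otimes \mathrm{St}_r, M)\bigr)\Longrightarrow \Ext^{p+q}_G\!\bigl(\mathrm{St}_r\otimes \mathrm{St}_r, M\bigr).
\]
By the previous display the spectral sequence collapses onto its row $q=0$, reducing property (2) to the vanishing of $H^p(G/G_r,\,\Hom_{G_r}(\mathrm{St}_r\otimes \mathrm{St}_r, M))$ for all $p>0$.

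The main obstacle is this last $G/G_r$-cohomology step, together with the choice of $r$. Since $M$ is finitely generated, only finitely many weights occur in it; the plan is to identify $\Hom_{G_r}(\mathrm{St}_r\otimes \mathrm{St}_r, M)$ with a Frobenius twist of a module whose weights are controlled in terms of $p^r$, and then appeal to a Kempf-type vanishing theorem (in the spirit of Theorem \ref{Kempf's theorem}) on the quotient $G/G_r$, which is itself a form of $G$ with Frobenius twists of the standard representations. Choosing $r$ large enough so that $p^r$ dominates the weights appearing in $M$ forces the required higher cohomology to vanish, and this is where the dependence of $r$ on $M$ enters. To handle a general local Noetherian base $k$ rather than a field, I would exploit that $\mathrm{St}_r$ is $k$-free of finite rank: one reduces modulo the maximal ideal, applies the residue-field case, and then lifts the Ext-vanishing back to $k$ by an $\mathfrak{m}$-adic Nakayama-type argument using that $\Ext$ behaves well with respect to the relevant filtrations on a finitely generated $M$. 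I expect this descent step to be comparatively routine, while the weight-control identification on $G/G_r$ is the genuinely hard part.
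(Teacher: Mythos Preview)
The paper does not prove this lemma; it explicitly treats it as a black box and refers to \cite[Section 3]{FvdK10} for the actual argument. So there is no ``paper's own proof'' to compare against, and your sketch must stand on its own.

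Your overall shape is right: the module in question is indeed the Steinberg module, and the vanishing ultimately comes from a weight bound depending on $M$ together with Kempf-type input. However, there is a genuine gap in your reduction strategy. You invoke the $r$th Frobenius kernel $G_r$ and the extension $1\to G_r\to G\to G/G_r\to 1$ over the local ring $k$, but the Frobenius morphism --- and hence the Frobenius kernel --- is only defined when $p\cdot k=0$. A local Noetherian ring with residue characteristic $p$ need not satisfy this: think of $k=\mathbb{Z}_{(p)}$ or $k=\mathbb{Z}_p$. Over such a mixed-characteristic base the object $G_r$ simply does not exist, so the Lyndon--Hochschild--Serre spectral sequence you write down is unavailable, and the core of your argument for property~(2) collapses.

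Your fallback --- reduce modulo $\mathfrak{m}$, prove vanishing there, and lift via a Nakayama-type argument --- does not repair this. Vanishing of $\Ext^n$ over the residue field does not by itself force vanishing over $k$: the module $M$ is not obtained from $M/\mathfrak{m}M$ by base change, and the relevant universal-coefficient spectral sequences produce obstructions rather than eliminate them. The argument in \cite{FvdK10} avoids Frobenius kernels entirely at this step; it works directly with the integral Steinberg module and the fact that $\mathrm{St}_r\otimes\mathrm{St}_r$ admits a filtration by costandard modules $\nabla_\mu$ over $\mathbb{Z}$, then uses Kempf vanishing over $\mathbb{Z}$ together with a weight bound on $M$ (this is where the choice of $r$ enters) to kill the relevant $\Ext$ groups. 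If you want to salvage your approach, that is the route to take: replace the $G_r$-projectivity input by the good-filtration structure of $\mathrm{St}_r\otimes\mathrm{St}_r$, which does make sense over the base ring.
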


\begin{Rem}
The construction of the $G$--module $\mathrm{St}_r$ is analogous to the so-called \textit{Steinberg module} in the case of a field of positive characteristic. Unfortunately, some terminology is required to make this construction explicit. 
\end{Rem}

\begin{proof}[Sketch proof of Theorem \ref{Mumfords Conjecture}]
    By Proposition \ref{Power Reductivity is Local}, we know that power reductivity is a local property on $\mathrm{Spec}(k)$. Then we can replace $G$ by $G_{k_\mathrm{m}}$ for a maximal ideal $\mathrm{m}$ of $k$. Indeed, Chevalley group schemes are defined via base change from $\mathbb Z$, so our $G_{k_\mathfrak{m}}$ is also a Chevalley group scheme. In order words, we can assume that $k$ is local. 

    Let $f\colon M\to L$ be a surjective map of $G$--modules with $L$ cyclic and with trivial $G$--action. We want to show that there is some $d>0$ such that 
    \[
    S^d(f)^G\colon S^d(M)^G \to S^d L
    \]
    is surjective. We want to use Lemma \ref{Key lemma for power reductivity} which applies to simply connected semi-simply groups, so we need to reduce our problem to such setting.  To this end, consider the Chevalley group scheme $G_\mathbb{Z}$ from which $G$ is obtained via base change. Note that we can replace $G$ by $G/Z$ where $Z$ is the base change of the center $Z_\mathbb{Z}$ of $G_\mathbb{Z}$. Indeed, one verifies that $M^Z \to L$ remains surjective. Moreover, there is a natural isomorphism $(N^Z)^{G/N}\cong N^G$ for any $G$--module. The upshot is that $G/Z$ is a semi-simple group scheme, but it's not necessary simply connected. Fortunately, there is a \textit{simply connected cover} $\widetilde{G}_\mathbb{Z}$ of $G$ with center $\widetilde{Z}_\mathbb{Z}$ such that $\widetilde{G}_\mathbb{Z}/\widetilde{Z}_\mathbb{Z}= G_\mathbb{Z}/Z_\mathbb{Z}$ (see for instance \cite[Ex, 6.5.2]{ConradReductive}). It follows that we can further replace $G/Z$ by the base change of $\widetilde{G}_\mathbb{Z}$ to $k$ which is a simply connected semi-simple group scheme as we wanted. 

    Hence assume that $G$ is a simply connected semi-simple group scheme. We can also replace $M$ with a finitely generated $G$--submodule in such a way that the restriction of $f$ to such submodule remains surjective. Thus let us assume that $M$ is finitely generated $G$--module.  
    
    Let $\mathrm{St}_r$ be the $G$--module associated to $\mathrm{Ker}(f)$ from Lemma \ref{Key lemma for power reductivity}. Let $d$ denote the rank of $\mathrm{St}_r$. 
    Then $S^d(\Hom_k(\mathrm{St}_r,\mathrm{St}_r)^\ast)$ contains the determinant $\mathrm{det}$ since it is polynomial of degree $d$. Moreover, the determinant is $G$--invariant and satisfies that $\det(1_{\mathrm{St}_r})=1$. In particular, we obtain a commutative diagram 
    \begin{center}
        \begin{tikzcd}
           &  & \Hom_k(\mathrm{St}_r,\mathrm{St}_r)^\ast \arrow[ddll,"\varphi"'] \arrow[dd,bend left=50,"\psi"] \arrow[d,"\mathrm{ev}"] \\ 
            & & k \arrow[d,"\simeq"] \\
            M \arrow[rr,"f"]& & L=k\langle b\rangle
        \end{tikzcd}
    \end{center}
    where the $\varphi$ exists due to the vanishing of $\mathrm{Ext}^1_G(\Hom_k(\mathrm{St}_r,\mathrm{St}_r)^\ast,\mathrm{Ker}(f))$. By the previous observation, $S^d(\psi)$ sends the determinant to $b^{d}$.  Since the latter is $G$--invariant, we deduce that  $ S^d(\varphi)(\det)$ is a $G$--invariant in $M$ which is sent to $b^d$ under $f$ as we wanted. 
\end{proof}

\begin{Th}\label{finite over local rings}
    Finite group schemes over local rings satisfy (Int). In particular, finite group schemes are power reductive over an arbitrary Noetherian base. 
\end{Th}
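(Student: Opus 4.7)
The plan is to prove (Int) for a finite group scheme over a local Noetherian ring by constructing an explicit norm polynomial; the ``in particular'' clause then follows immediately from Proposition \ref{Int iff PR} and Proposition \ref{Power Reductivity is Local}, which together reduce power reductivity over an arbitrary Noetherian base to (Int) over the local stalks. So fix $G$ finite over a local Noetherian $k$; then $k[G]$ is finitely generated projective, hence free over $k$, of some rank $d$.

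Given a $G$-algebra $A$ and $a \in A$, I plan to produce a monic polynomial $N_a(T) \in A^G[T]$ of degree $d$ annihilating $a$. Since $A \otimes k[G]$ is free of rank $d$ over $A$, multiplication by $\Delta_A(a) \in A \otimes k[G]$ defines an $A$-linear endomorphism, and I take $N_a(T) \in A[T]$ to be its characteristic polynomial. Cayley--Hamilton then yields $N_a(\Delta_A(a)) = 0$ in $A \otimes k[G]$; applying $\id_A \otimes \varepsilon$ and the counit axiom $(\id_A \otimes \varepsilon) \circ \Delta_A = \id_A$ produces $N_a(a) = 0$ in $A$.

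The main obstacle is showing that the coefficients $c_i$ of $N_a$ lie in $A^G$, i.e. $\Delta_A(c_i) = c_i \otimes 1$. The strategy is to use naturality of the characteristic-polynomial construction under $G$-algebra maps. By coassociativity, $\Delta_A \colon A \to A \otimes k[G]$ is itself a $G$-algebra morphism when $A \otimes k[G]$ is equipped with the coaction $\id_A \otimes \Delta$; naturality therefore gives $\Delta_A(N_a(T)) = N_{\Delta_A(a)}(T)$ in $(A \otimes k[G])[T]$, where the right-hand side is the characteristic polynomial of multiplication by $(\id_A \otimes \Delta)(\Delta_A(a))$ on $(A \otimes k[G]) \otimes k[G]$. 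An explicit comparison, via the $k$-linear automorphism $\Phi(a \otimes b \otimes c) = a \otimes b^{(1)} \otimes b^{(2)} c$ which intertwines multiplication by $\Delta_A(a) \otimes 1$ with multiplication by $(\id_A \otimes \Delta)(\Delta_A(a))$ (a fact witnessed concretely already for $\alpha_p$, where the identity reduces to the Frobenius relation $(x+y)^p = x^p + y^p$), will identify $N_{\Delta_A(a)}(T)$ with $N_a(T) \otimes 1$. This yields $\Delta_A(c_i) = c_i \otimes 1$, that is, $c_i \in A^G$. This coassociativity-driven bookkeeping is where the real work lies.

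Property (Int) then follows at once: given a surjection $\pi \colon A \twoheadrightarrow B$ of $G$-algebras and $b \in B^G$, lift $b$ to some $a \in A$ and observe that $\pi(N_a) \in \pi(A^G)[T]$ is monic with $\pi(N_a)(b) = \pi(N_a(a)) = 0$. Hence $b$ is integral over $\pi(A^G)$, so (Int) holds over local bases, and by the reductions above, power reductivity holds for finite group schemes over any Noetherian base.
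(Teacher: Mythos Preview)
Your approach is correct and is precisely the classical norm argument (as in Mumford's GIT, SGA3 Exp.~V, or Demazure--Gabriel); the paper itself does not prove this theorem at all but simply cites \cite{van2021reductivity}, so your direct argument supplies substantially more than the paper does. The reductions via Proposition~\ref{Int iff PR} and Proposition~\ref{Power Reductivity is Local} are exactly right, the freeness of $k[G]$ over the local base is the key enabling observation, and the Cayley--Hamilton step together with the counit axiom is clean.

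One bookkeeping point deserves care, as you anticipated. With the naturality setup you describe (applying the construction $N_{(-)}$ to the $G$-algebra $B=A\otimes k[G]$ with coaction $\id_A\otimes\Delta$, and using the $G$-algebra map $\Delta_A\colon A\to B$), the base ring for the characteristic polynomial on $B\otimes k[G]=A\otimes k[G]\otimes k[G]$ is $B$ acting through the \emph{first two} tensor factors, and the element you must compare $(\id_A\otimes\Delta)(\Delta_A(a))=a_{(0)}\otimes a_{(1)}\otimes a_{(2)}$ with is $(\iota\otimes\id)(\Delta_A(a))=a_{(0)}\otimes 1\otimes a_{(1)}$, not $\Delta_A(a)\otimes 1=a_{(0)}\otimes a_{(1)}\otimes 1$. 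Your $\Phi(a\otimes b\otimes c)=a\otimes b_{(1)}\otimes b_{(2)}c$ does intertwine the multiplications you name, but it is not $B$-linear for the first-two-factors structure (check: it does not commute with left multiplication by $1\otimes\beta\otimes 1$). The fix is painless: use instead the $B$-algebra automorphism
\[
\Phi'((\alpha\otimes\beta)\otimes\gamma)=(\alpha\otimes\beta\gamma_{(1)})\otimes\gamma_{(2)},
\]
with inverse $(\alpha\otimes\beta)\otimes\gamma\mapsto(\alpha\otimes\beta S(\gamma_{(1)}))\otimes\gamma_{(2)}$. One checks directly that $\Phi'$ is a $B$-algebra automorphism (using commutativity of $k[G]$) and that $\Phi'(a_{(0)}\otimes 1\otimes a_{(1)})=a_{(0)}\otimes a_{(1)}\otimes a_{(2)}$. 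This yields $\iota(N_a(T))=\Delta_A(N_a(T))$, hence $c_i\in A^G$, and your (Int) conclusion goes through verbatim.
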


For a discussion of the proof of this see \cite{van2021reductivity}. It is a special case of a more general theorem regarding groupoid schemes over local rings, see also Remark \ref{rem: discussion on fg for finite groups}.

As a corollary of these two theorems we get the following:

\begin{Cor} \label{FG for Chevalley}
    If $G$ is a Chevalley group scheme or a finite flat group scheme then it satisfies (FG) for every Noetherian base.
\end{Cor}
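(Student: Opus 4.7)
The plan is to assemble this corollary directly from the machinery already developed. The key observation is that Theorem \ref{Hilberts 14th} reduces the (FG) property to a verification of power reductivity for flat group schemes over a Noetherian base, and the two cases in the statement correspond precisely to the two main power reductivity results just established.

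First I would handle the Chevalley case. By definition, a Chevalley group scheme over $k$ is obtained by base change from a flat affine group scheme over $\mathbb{Z}$, hence it is flat over $k$. Theorem \ref{Mumfords Conjecture} (Mumford's Conjecture) states that such a scheme is power reductive over any Noetherian base. Thus Theorem \ref{Hilberts 14th} applies and yields (FG).

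For the finite flat case, flatness is part of the hypothesis, and Theorem \ref{finite over local rings} already packages the local assertion into the statement that finite group schemes are power reductive over an arbitrary Noetherian base (this is where Proposition \ref{Power Reductivity is Local} enters, letting us promote the local (Int) property to global power reductivity). Again Theorem \ref{Hilberts 14th} finishes the job.

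There is essentially no obstacle in the corollary itself, since all the genuine difficulty has been front-loaded into Theorems \ref{Mumfords Conjecture} and \ref{finite over local rings}. The only minor points worth spelling out for clarity are: (i) that the hypotheses of Theorem \ref{Hilberts 14th} (flatness, Noetherian base, power reductivity) are all met in each case, and (ii) that the finite flat case invokes Proposition \ref{Power Reductivity is Local} implicitly via Theorem \ref{finite over local rings} to pass from local rings to a general Noetherian base.
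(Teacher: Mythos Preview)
Your proposal is correct and follows essentially the same approach as the paper: combine power reductivity (via Theorem~\ref{Mumfords Conjecture} for Chevalley groups, via Theorem~\ref{finite over local rings} for finite flat groups) with Theorem~\ref{Hilberts 14th}. The only cosmetic difference is that the paper unpacks the finite case slightly more, citing Proposition~\ref{Int iff PR} and Proposition~\ref{Power Reductivity is Local} explicitly rather than relying on the ``In particular'' clause of Theorem~\ref{finite over local rings}.
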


\begin{proof}
    The claim for Chevalley group schemes follows from the combination of Theorem \ref{Mumfords Conjecture} and Theorem \ref{Hilberts 14th}. The claim for finite flat group schemes follows from Theorem \ref{finite over local rings}, the equivalence of (Int) and power reductivity \ref{Int iff PR}, and the locality of power reductivity \ref{Power Reductivity is Local}, combined with Theorem \ref{Hilberts 14th}.
\end{proof}

\begin{Rem}\label{rem: discussion on fg for finite groups}
    Finite generation of invariants for finite group schemes dates back to 1970. In \cite[Sec. 12, Theorem 1]{Mum70}, Mumford proved (FG) when the base ring is a field. For a Noetherian base, the earliest references (as far as we can tell) are \cite[Ch. 3, Sec. 2, no 6, Corollary]{DG70} and \cite[Exp. V, Theorem 4.1]{SGA3I}. Note that \cite[Exp. V, Theorem 4.1]{SGA3I} shows that certain groupoid schemes satisfy (Int), but as we have seen, this implies (FG). In particular, finite group schemes fit into this context. We refer to \cite[Theorem 20]{van2021reductivity} and the references therein for further discussion. We thank Peter Symonds for these references.
\end{Rem}

\pagebreak

\section{Good Grosshans Filtrations}\label{section:GrosshansFiltrations}

The notion of a \textit{good filtration} plays an important role on the (CFG) property for linear algebraic group schemes over fields. For instance, let $G$ denote  $\GL_n$ or $\mathrm{SL}_n$ over an algebraically closed field $k$ of positive characteristic. Let $A$ be finitely generated $G$--algebra and $M$ be a Noetherian $AG$--module. It has been shown in \cite[Theorem 1.1]{svdK09} that if $A$ has a good filtration, then $M$ has finite \textit{good filtration dimension} and $H^\ast(G,M)$ is Noetherian as $A^G$--module. This is one of the relevant ingredients in the proof of the (CFG)-conjecture over fields given in \cite{TvdK10}. However, good filtrations are not longer  adequate if one wants to extend the (CFG) property for finite  group schemes over arbitrary Noetherian commutative rings. Here is where the notion of good Grosshans filtration becomes relevant and, in fact, it agrees with good filtrations when restricted to fields. 

In particular, good Grosshans filtrations are used in \cite{vdK15} to generalize the preceding result, thereby enabling van der Kallen to extend the (CFG) property to Noetherian commutative rings containing a field. The aim of this section is to present these results.

For the rest of this section, we let $k$ denote a Noetherian ring and $G$ denote $\GL_n$, unless stated otherwise. 

\begin{Rec}
 We let $T$ denote the standard torus of  of $G$, that is, the subfunctor of $G$ such that $T(A)$ are diagonal matrices of $G(A)$ for all $k$--algebra $A$. Let  $B$ (resp. $B^+$)  denote the subgroup of $G$ such that $B(A)$ (resp. $B^+(A)$) consists of lower (resp. upper) triangular matrices of $G(A)$ for all $A$. In particular, we have $B\cap B^+=T$. We will also consider the unipotent radical subgroup $U$ and $U^+$ of $B$ and $B^+$, respectively. Explicitly, $U(A)$ (resp. $U^+(A)$)  consists of all matrices in $B(A)$ (resp $B^+(A)$) with diagonal entries equal to 1, for all $A$. Note that the roots of $U^+$ are positive. 

Let $M$ be a $G$--module. We say that  $m\in M$ is a \textit{weight vector}  of weight $\lambda=(\lambda_1,\dots,\lambda_n)$ if
    \begin{equation*} 
\begin{pmatrix}
x_1 &  & 0 \\
 & \ddots &  \\
0 &  & x_n
\end{pmatrix} \cdot m=x_1^{\lambda_1}\dots x_n^{\lambda_n}m.
\end{equation*}    
\end{Rec}

\begin{Def}
 \textit{The Grosshans height} $\mathrm{ht} \lambda$ of a weight  $\lambda$ is defined by  
 \[\mathrm{ht}(\lambda)=\sum_{i=1}^{n}(n-2_i+1)\lambda_i\in \mathbb{Z}.\]
\end{Def}

\begin{Def}
    A \textit{Grosshans filtration} of $M$ is a filtration 
    \[0=M_{-1}\subset M_{\leq0}\subset \dots \subset \bigcup_{i\geq0}M_{\leq i}=M\]
    where $M_{\leq j}$ is the largest $G$--submodule of $M$ which weight vectors of $M_{\leq j}$ have Grosshans height at most $i$. We will denote its associated graded by $\mathrm{gr}M$.  The \textit{Grosshans hull} of $M$ is $\mathrm{ind}_B^{\GL_n}(M)^{U^+}$ and it is denoted by $\mathrm{hull}_\nabla \mathrm{gr}M$. Given a $G$--algebra $A$, we let $\mathrm{gr}_i A$ denote $A_{\leq i}/A_{<i}$. The \textit{Grosshans graded algebra} $\mathrm{gr}A$ is given by $\bigoplus_{i\leq 0}\mathrm{gr}_i A$. 
\end{Def}

\begin{Lemma}\label{embedding into grosshans}
    Let $M$ be a $G$--module. Then there is a natural injective map from the associated graded of $M$  to the Grosshans hull of $M$, in symbols,   $\mathrm{gr} M\hookrightarrow \mathrm{hull}_\nabla \mathrm{gr}M$. 
\end{Lemma}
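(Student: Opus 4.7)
The approach is to use Frobenius reciprocity for the restriction--induction adjunction recorded in Proposition \ref{adjuntion restriction induction}. The unit of this adjunction yields a natural $G$-equivariant map $\eta : \mathrm{gr} M \to \mathrm{ind}_B^G (\mathrm{res}_B^G\, \mathrm{gr} M)$, which in the standard functional realization sends a section $v \in \mathrm{gr} M$ to the function $g \mapsto g \cdot v$. This map is injective: the counit of the adjunction provides a $B$-equivariant splitting (evaluation at the identity), so pre-composition with $\eta$ recovers the identity on the restriction, forcing $\eta$ itself to be injective.

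The nontrivial step is to show that the image of $\eta$ lands inside the $U^+$-invariants. The key combinatorial fact is that the Grosshans height $\mathrm{ht}(\lambda) = \langle \lambda, 2\rho \rangle$ is strictly increased by addition of any nonzero sum of positive roots of $U^+$. Hence, for a weight vector $v \in M_{\leq i}$ of weight $\lambda$, acting by $u \in U^+$ produces $uv = v + w$, where $w$ is a sum of weight components of weights $\lambda + \alpha$ with $\mathrm{ht}(\lambda + \alpha) > \mathrm{ht}(\lambda)$. Since $uv \in M_{\leq i}$, all these raised components are bounded by height $i$. A careful analysis of the filtration shows that in the associated graded piece $\mathrm{gr}_i M = M_{\leq i}/M_{\leq i-1}$, the higher-height components become identified with classes already coming from $M_{\leq i-1}$, so they vanish modulo the lower filtration. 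Consequently, $U^+$ acts trivially on $\mathrm{gr} M$ as a $G$-module, and $\eta(\mathrm{gr} M) \subseteq (\mathrm{ind}_B^G \mathrm{gr} M)^{U^+} = \mathrm{hull}_\nabla\, \mathrm{gr} M$.

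Naturality in $M$ is immediate from the naturality of Frobenius reciprocity and the functorial nature of the Grosshans filtration. The main obstacle is the careful filtration bookkeeping needed to verify the triviality of the $U^+$-action on the associated graded. The claim is immediate for ``top height'' weight vectors, i.e.\ those with $\mathrm{ht}(\lambda) = i$ inside $M_{\leq i}$, because any nontrivial $U^+$-raising would produce weight components of height exceeding $i$, which cannot lie in $M_{\leq i}$. The intermediate-height case, where $\mathrm{ht}(\lambda) < i$, is more subtle and relies on identifying the raised components with weight vectors whose $G$-orbit is already contained within $M_{\leq i-1}$; this requires invoking structural properties of the Grosshans filtration and the behavior of the Weyl group on weights, as developed in van der Kallen's foundational papers on the subject.
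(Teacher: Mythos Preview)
Your central claim that $U^+$ acts trivially on $\mathrm{gr} M$ is false, and with it the whole approach collapses. Take $M = \nabla_\lambda$ for any nontrivial dominant $\lambda$: every nonzero $G$-submodule of $\nabla_\lambda$ contains the socle $L(\lambda)$ and hence the highest weight vector, so $M_{\leq \mathrm{ht}(\lambda)-1} = 0$ and $\mathrm{gr} M = \nabla_\lambda$ sits in a single Grosshans degree. But $U^+$ certainly does not act trivially on $\nabla_\lambda$ --- already for the standard representation of $\GL_2$ an upper unipotent moves $e_2$ to $e_2 + te_1$. Since your $\eta$ is $G$-equivariant and injective, having $\eta(\mathrm{gr} M)$ land in the $U^+$-invariants is \emph{equivalent} to $U^+$ acting trivially on $\mathrm{gr} M$; the ``intermediate-height case'' you flag as subtle is therefore not subtle but simply wrong. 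You have also misread the definition: the Grosshans hull is $\mathrm{ind}_B^G\bigl(M^{U^+}\bigr)$ --- take $U^+$-invariants first, view the result as a $B$-module with trivial $U$-action, then induce --- not $\bigl(\mathrm{ind}_B^G(\mathrm{gr} M)\bigr)^{U^+}$.

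The paper's route is genuinely different. For each $i$ one produces a $G$-map $\mathrm{gr}_i M \to \mathrm{ind}_B^G\bigl((\mathrm{gr}_i M)^{U^+}\bigr)$ by adjunction from a $B$-map to the $U^+$-invariants. Injectivity is not obtained from a splitting but by analyzing the kernel: using that the restriction $\nabla_\mu \to k_\mu$ is an isomorphism on the $\mu$-weight space, the kernel is a $G$-submodule of $\mathrm{gr}_i M$ whose weights all have Grosshans height strictly less than $i$, so it comes from $M_{\leq i-1}$ and vanishes in the quotient. A separate step, invoking the universal property of Weyl modules, identifies $\bigoplus_i (\mathrm{gr}_i M)^{U^+}$ with $M^{U^+}$ as $T$-modules.
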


\begin{proof}
We first describe an embedding of  $\bigoplus_i (\mathrm{gr}_i M)^{U^+}$ into $\mathrm{hull}_\nabla \mathrm{gr}M$, and then we will obtain the desired injective map by identifying $\bigoplus_i (\mathrm{gr}_i M)^{U^+}$ with $M^{U^+}$ as $T$--modules. For $\mathrm{gr}_i M$, consider the quotient map of $T$--modules $\mathrm{gr}_iM \to (\mathrm{gr_iM})^{U^+}$ and inflate it to obtain a map of $B$--modules. Consequently, this induces a map of $G$--modules $\mathrm{gr}_i M\to \mathrm{ind}_B^G((\mathrm{gr}_iM)^{U^+})$. But $\mathrm{ind}_B^G((\mathrm{gr}_iM)^{U^+})$ can be identified with $(\mathrm{gr}_i M)^{U^+}$ since the the restriction map $\mathrm{ind}_B^G k_\lambda \to k_\lambda$ induces an isomorphism of $T$--modules on the weight space of weight $\lambda$, for any dominant weight (see \cite[Proposition 20]{FvdK10}). As a consequence, the kernel of $\mathrm{gr}_i M\to \mathrm{ind}_B^G((\mathrm{gr}_iM)^{U^+})$ must have weights with Grosshans height less that $i$, and therefore it must vanish. 

  It remains to show that $\bigoplus_i (\mathrm{gr}_iM)^{U^+}$ can be identified with $M^{U^+}$ as $T$--modules.  First, note that any weight of $M^{U^+}$ is dominant. Indeed, a non-dominant weight $\lambda$ is such that $\mathrm{ind}_B^G k_\lambda$ vanishes, hence  
  \[
  \mathrm{Hom}_G(k_{-\lambda},M)=\mathrm{Hom}_B(k, \mathrm{ind}_B^G k_\lambda \otimes M)
  \]
  vanishes as well. As a consequence  $-\lambda$ is not a weight of $M^{U}$, therefore $\lambda$ is not a weight of $M^{U^+}$. 
     Returning to the claim, for a dominant weight $\lambda$ of $M^{U^+}$ with $\mathrm{ht} \lambda=i$, the so called Universal property of Weyl modules (see \cite[Proposition 21]{FvdK10}) implies that the weight space $(M^{U^+})_\lambda$ must be contained in $M_{\leq i}$, and since $\lambda$ has Grosshans height $i$, it is not contained in $M_{<i}$. This completes the claim and the proof. 
\end{proof}

\begin{Def}
    Let $M$ be a $G$--module. We say that $M$ has a \textit{good Grosshans filtration} if the above embedding from $\mathrm{gr}M$ to  $\mathrm{hull}_\nabla \mathrm{gr} M$ is an isomorphism. 
\end{Def}

An interesting property of $G$--modules with a good Grosshans filtration is that they become acyclic after tensoring them with costandar modules, just as in the case of modules with a good filtration when $k$ is a field (see Theorem \ref{modules with a good filtration are acyclic}). In fact, $G$--modules with a good Grosshans filtration are detected via the vanishing of certain cohomology groups as we will see. In particular, we obtain that  good Grosshans filtrations agree with good filtrations over fields.  

\begin{Th}\label{cohomological characterization of good Grosshans}
    A $G$--module $M$ has a good Grosshans filtration if and only if $H^i(G,M\otimes k[G/U])$ vanishes for all $i\geq1$. In fact, both conditions are equivalent to the vanishing of $H^1(G,M\otimes k[G/U])$. 
\end{Th}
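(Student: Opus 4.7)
The approach is to translate the statement into vanishing of $H^\ast(G, M \otimes \nabla_\lambda)$ for every dominant weight $\lambda$, and then to identify the obstruction to the embedding $\mathrm{gr} M \hookrightarrow \mathrm{hull}_\nabla \mathrm{gr} M$ being surjective with a suitable cohomology class.

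The first step is to pin down the structure of $k[G/U]$ as a $G$--module. Since $T$ normalizes $U$ inside $B$, right multiplication by $T$ descends to an action on $G/U$ that commutes with left translation by $G$. Decomposing $k[G/U]$ into $T$--weight spaces (which is well-defined since $T$ is a split torus over the Noetherian ring $k$) therefore yields a $G$--module decomposition
\[
k[G/U] \;\cong\; \bigoplus_{\lambda\in\chi_+(T)} \nabla_\lambda,
\]
where the $\lambda$--weight space is $\mathrm{ind}_B^G(k_\lambda) = \nabla_\lambda$, nonzero precisely for dominant $\lambda$ (cf.\ Lemma \ref{costandard have finite dimension over k}). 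Combined with Corollary \ref{vanishing of cohomology for induced modules} and the fact that cohomology commutes with direct sums, this gives
\[
H^i(G,\, M\otimes k[G/U]) \;\cong\; \bigoplus_{\lambda\in\chi_+(T)} H^i(G,\, M\otimes \nabla_\lambda).
\]
Thus it suffices to prove that $M$ has a good Grosshans filtration if and only if $H^1(G, M\otimes \nabla_\lambda) = 0$ for every dominant $\lambda$, and moreover that this vanishing propagates to all $i\geq 1$.

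For the forward direction, suppose $M$ has a good Grosshans filtration. The plan is to run the spectral sequence
\[
E_1^{p,q} \;=\; H^{p+q}(G,\, \mathrm{gr}_{-p} M \otimes \nabla_\lambda) \;\Longrightarrow\; H^{p+q}(G,\, M\otimes \nabla_\lambda)
\]
associated to the Grosshans filtration. Under the hypothesis, each $\mathrm{gr}_i M$ coincides with the $i$--th weight slice of $\mathrm{hull}_\nabla \mathrm{gr} M = \mathrm{ind}_B^G(\mathrm{gr} M)^{U^+}$, which is a direct sum of costandard modules. Then $\mathrm{gr}_i M \otimes \nabla_\lambda$ again has a good Grosshans filtration, by an analogue of Lemma \ref{properties of good filtrations}(2). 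An integral version of Kempf's vanishing (Theorem \ref{Kempf's theorem}) then kills $E_1^{p,q}$ for $q \geq 1$, collapsing the spectral sequence and yielding acyclicity of $M\otimes k[G/U]$ in all positive degrees.

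For the reverse direction, assume $H^1(G, M\otimes k[G/U]) = 0$. Proceed by induction on Grosshans height. For each $i\geq 0$, tensor the short exact sequence
\[
0 \;\longrightarrow\; M_{<i} \;\longrightarrow\; M_{\leq i} \;\longrightarrow\; \mathrm{gr}_i M \;\longrightarrow\; 0
\]
with $\nabla_\lambda$ (for $\lambda$ of Grosshans height $i$) and extract the long exact sequence in $G$--cohomology. Combined with the embedding of Lemma \ref{embedding into grosshans}, the cokernel of $\mathrm{gr}_i M \hookrightarrow (\mathrm{hull}_\nabla \mathrm{gr} M)_i$ is identified with an obstruction class in $H^1(G, M_{<i}\otimes \nabla_\mu)$ for an appropriate $\mu$; this class vanishes by the hypothesis and the weight decomposition of Step 1. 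Consequently $\mathrm{gr} M \to \mathrm{hull}_\nabla \mathrm{gr} M$ is an isomorphism.

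The main obstacle will be the reverse direction, and specifically the identification of the cokernel of $\mathrm{gr}_i M \hookrightarrow (\mathrm{hull}_\nabla \mathrm{gr} M)_i$ with a genuine $H^1$--obstruction. This requires verifying that the Grosshans filtration is compatible with the $U^+$--invariants and with $\mathrm{ind}_B^G$, so that the connecting homomorphism from the long exact sequence lands in the correct weight component of $H^1(G, M\otimes k[G/U])$. A secondary subtlety is why vanishing of $H^1$ alone implies vanishing in all higher degrees: once the forward direction is established, any $M$ whose $H^1$ vanishes acquires a good Grosshans filtration, and then Kempf's vanishing upgrades the conclusion to all $i\geq 1$ automatically.
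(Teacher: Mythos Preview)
Your forward direction is fine and coincides with the paper's: once each $\mathrm{gr}_i M$ is a sum of costandard modules (tensored with trivial modules), Kempf together with the K\"unneth argument kills the positive cohomology of every layer, and the filtration spectral sequence degenerates.

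The reverse direction, however, has a genuine gap. You tensor $0\to M_{<i}\to M_{\leq i}\to \mathrm{gr}_i M\to 0$ with $\nabla_\lambda$ and assert that the cokernel of $\mathrm{gr}_i M\hookrightarrow (\mathrm{hull}_\nabla\mathrm{gr} M)_i$ is detected by a class in $H^1(G,M_{<i}\otimes\nabla_\mu)$. Two problems. First, the long exact sequence you wrote controls the cokernel of $(M_{\leq i}\otimes\nabla_\lambda)^G\to(\mathrm{gr}_i M\otimes\nabla_\lambda)^G$, which is a different object from the cokernel of $\mathrm{gr}_i M\to(\mathrm{hull}_\nabla\mathrm{gr} M)_i$; there is no visible identification between the two. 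Second, and more seriously, the group $H^1(G,M_{<i}\otimes\nabla_\mu)$ already vanishes by your induction hypothesis together with the forward direction, \emph{without using the assumption on $M$ at all}. If your mechanism were correct it would prove that every $G$--module has a good Grosshans filtration, which is false.

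The paper runs the contrapositive instead, and this is where the hypothesis on $M$ actually enters. One takes the minimal $i$ at which $\mathrm{gr}_i M\hookrightarrow(\mathrm{hull}_\nabla\mathrm{gr} M)_i$ fails to be onto, chooses a dominant $\lambda$ with $\mathrm{Hom}_G(\Delta_\lambda,\,\mathrm{hull}_\nabla(\mathrm{gr}_i M)/\mathrm{gr}_i M)\neq 0$, and observes that such $\lambda$ has Grosshans height $\leq i-1$ (not $i$, as you wrote), so that $\mathrm{Hom}_G(\Delta_\lambda,\mathrm{hull}_\nabla(\mathrm{gr}_i M))=0$. The exact sequence $0\to\mathrm{gr}_i M\to\mathrm{hull}_\nabla(\mathrm{gr}_i M)\to(\text{quotient})\to 0$ then forces $\mathrm{Ext}^1_G(\Delta_\lambda,\mathrm{gr}_i M)\neq 0$. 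Pushing this through $0\to M_{<i}\to M_{\leq i}\to\mathrm{gr}_i M\to 0$ (using acyclicity of $M_{<i}$, available by minimality of $i$) and then through $0\to M_{\leq i}\to M\to M/M_{\leq i}\to 0$ (using $\mathrm{Hom}_G(\Delta_\lambda,M/M_{\leq i})=0$ by the definition of the Grosshans filtration) yields $\mathrm{Ext}^1_G(\Delta_\lambda,M)\neq 0$, hence $H^1(G,M\otimes k[G/U])\neq 0$. The point is that the obstruction has to be transported all the way to cohomology of $M$ itself; placing it in $H^1$ of $M_{<i}$ cannot work.
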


\begin{Rem}
In order to connect the statement of this theorem with our previous discussion, we remind ourselves that 
\[
k[G/U]=\bigoplus_\lambda \nabla_\lambda
\]
where $\lambda$ runs over all dominant weights.     
\end{Rem}

\begin{proof}[Sketch of a proof of Theorem \ref{cohomological characterization of good Grosshans}]
Assume that $M$ has a good Grosshans filtration. Consider the associated graded $\mathrm{gr} M$, and note that $\mathrm{gr}_i M \otimes k[G/H]$ is a direct sum of modules of the form  $\mathrm{ind}_B^G k_\lambda \otimes \mathrm{ind}_B^G k_\mu\otimes N$ for each $i$, where $N$ has trivial action of $G$, but each of those modules is acyclic; for this, use the  Künneth formula and Kempf's theorem, i.e., the fact that costandard modules have cohomology just in degree 0. Hence $\mathrm{gr}_i M \otimes k[G/H]$ is acyclic as well. This implies that each term in the Grosshans filtration after tensoring with $k[G/H]$ is acyclic, so does $M\otimes k[G/H]$.

For the converse, we will show the contrapositive. Let $i$ such that $M_{<i}$ has a good Grosshans filtration but $M_{\leq i}$ does not. Moreover, we can identify
\[
M_{\leq i}^{U^+}\simeq  \bigoplus_j (\mathrm{gr}_j M_{\leq i})^{U^+}
\]
  as $T$--modules just as in the proof of Theorem \ref{embedding into grosshans}, and hence we deduce that $\mathrm{hull}_\nabla(\mathrm{gr}_i M)/\mathrm{gr}_i M$ is not-trivial. Thus we can choose $\lambda$ such that $\mathrm{Hom}_G(\nabla_\lambda, \mathrm{hull}_\nabla (\mathrm{gr}_i M)/\mathrm{gr}_i M)$ is non-trivial. In particular,   $\lambda$ must have Grosshans height at most  $i-1$, and then $\mathrm{Hom}_G(\Delta_\lambda, \mathrm{hull}_\nabla(\mathrm{gr}_i M))$ is trivial. Considering the short exact sequence 
\[0\to \mathrm{gr}_i M\to \mathrm{hull}_\nabla (\mathrm{gr}_i M)\to \mathrm{hull}_\nabla (\mathrm{gr}_i M)/ \mathrm{gr}_i M \to 0\]
we deduce that 
\[
\Ext_G^1(\Delta_\lambda, \mathrm{hull}_\nabla (\mathrm{gr}_i M))=H^1(G,\mathrm{hull}_\nabla (\mathrm{gr}_i M)\otimes \nabla_\lambda)
\]
does not vanish. Using the same reasoning, we obtain that $H^1(G, M_{\leq i}\otimes \nabla_\lambda)$ is not trivial since the $G$--module $M_{<i}$ has a good Grosshans filtration, therefore $M_{<i}\otimes k[G/U]$ is a sum of acyclic modules, and hence it is acyclic as well. Finally, another iteration of this argument using the exact sequence 
\[
0\to M_{\leq i} \to M \to M/M_{\leq i}\to 0
\]
will give us that $H^1(G,M\otimes k[G/U])$ is not  trivial, here we use that $\mathrm{Hom}_G(\Delta_\lambda, M/M{\leq i})$ vanishes.
\end{proof}

\begin{Rem}
In particular, from the previous proof we obtain that any  $G$--module $M$ with a good Grosshans filtration satisfies that its associated graded $\mathrm{gr}M$ is a direct sum of modules of the form $\nabla_\lambda\otimes J(\lambda) $, where $J(\lambda)$ has a trivial action of $G$. Of course, this generalizes the behaviour of modules with a good filtration over a field  and allows us to not place further restrictions on the modules when working over arbitrary  commutative Noetherian rings. We stress that the key fact is that the modules $\nabla_\lambda\otimes J(\lambda) $ are still acyclic.     
\end{Rem}

In order to introduce further  relevant consequences of modules with good Grosshans filtrations (at least from the perspective of this document) we first need some preparation. 

\begin{Rec}
 For a moment, we restrict our attention to  $k=\mathbb{Z}$. For a given dominant weight $\lambda\in \chi(T)$, we denote by $S'(\lambda)$ the graded algebra given in degree $n$ by $\nabla_{n\lambda}=\Gamma(G/B,\mathcal{L}(n\lambda))$. Also, we let $S(\lambda)$ denote the graded subalgebra of $S'(\lambda)$ generated by $\Delta_\lambda$, here we are considering $\Delta_\lambda$ as a submodule of $\nabla_\lambda$ with common $\lambda$ weight space. It has been shown in \cite[Lemma 39]{FvdK10} that the inclusion $S'(\lambda)\to S(\lambda)$ is \textit{universally power surjective}. In other words, the map $S'(\lambda)\to S(\lambda)$ remains power surjective after tensoring with any algebra.   
\end{Rec}

\begin{Th}\label{embedding is power surjective}
Let $A$ be a $G$--algebra. Then the embedding from $\mathrm{gr}A$ to  $\mathrm{hull}_\nabla \mathrm{gr}A$ is power surjective. 
\end{Th}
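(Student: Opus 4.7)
The plan is to reduce the statement to the universal power surjectivity of the inclusion $S(\lambda)\hookrightarrow S'(\lambda)$ recorded just above the theorem. First, I would invoke the local finiteness of $\GL_n$--comodules (Corollary \ref{locallyfinite}) to replace a given element $x\in\mathrm{hull}_\nabla\mathrm{gr}A$ by a finitely generated $G$--subcomodule containing $x$. Decomposing under the torus $T$, I may assume $x$ is a weight vector, and since $\mathrm{hull}_\nabla\mathrm{gr}A=\mathrm{ind}_B^G(\mathrm{gr}A)^{U^+}$ is an induced representation, only dominant weights occur, so $x$ has dominant weight $\lambda$.

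Next, I would cut out an appropriate copy of $\nabla_\lambda$ inside $\mathrm{hull}_\nabla\mathrm{gr}A$ containing $x$. More precisely, the hull, being induced from $B$, admits a decomposition (over a suitable flat extension, or locally on $\mathrm{Spec}(k)$) of the form $\bigoplus_\mu\nabla_\mu\otimes J_\mu$, and a weight vector of weight $\lambda$ lives in the sum of $\nabla_\mu\otimes J_\mu$ for $\mu\geq\lambda$; projecting onto the $\nabla_\lambda$--component and choosing a multiplicity vector, I obtain a $G$--equivariant embedding $\nabla_\lambda\hookrightarrow\mathrm{hull}_\nabla\mathrm{gr}A$ whose image contains $x$. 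By the universal property of $S'(\lambda)$ (a polynomial-like free $G$--algebra on $\nabla_\lambda$ in degree one), this extends uniquely to a graded $G$--algebra homomorphism $\Phi\colon S'(\lambda)\to\mathrm{hull}_\nabla\mathrm{gr}A$ with $x\in\Phi(S'(\lambda)_1)$.

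The essential observation is that $\Phi$ carries the subalgebra $S(\lambda)\subseteq S'(\lambda)$ into $\mathrm{gr}A$. Indeed, $S(\lambda)$ is generated by $\Delta_\lambda\subseteq\nabla_\lambda$, which in turn is the $G$--submodule generated by the highest weight vector. Under $\Phi$ this highest weight vector is sent to a highest weight vector of $\mathrm{hull}_\nabla\mathrm{gr}A$, and by Lemma \ref{embedding into grosshans} every such highest weight vector actually lies in $\mathrm{gr}A$. Since $\mathrm{gr}A$ is a $G$--subalgebra of the hull, the entire $G$--subalgebra generated by $\Phi(\Delta_\lambda)$, i.e.\ $\Phi(S(\lambda))$, is contained in $\mathrm{gr}A$.

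Now I would apply the universal power surjectivity of $S(\lambda)\hookrightarrow S'(\lambda)$: it remains power surjective after tensoring with $k$ and composing with $\Phi$. Therefore $x^N\in\Phi(S(\lambda))\subseteq\mathrm{gr}A$ for some $N\geq 1$, which is precisely the desired power surjectivity. The main obstacle, I expect, is the second step, namely making the decomposition of the hull into a sum $\bigoplus_\mu\nabla_\mu\otimes J_\mu$ precise enough over an arbitrary Noetherian $k$ to extract a copy of $\nabla_\lambda$ through $x$; in practice one argues locally on $\mathrm{Spec}(k)$ and uses the flatness of $\nabla_\lambda$ together with the fact that weight multiplicity spaces are well-behaved under base change, so that $\Phi$ can be constructed universally and the universal power surjectivity over $\mathbb{Z}$ propagates to the setting at hand.
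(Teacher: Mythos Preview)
Your overall strategy---build a $G$--algebra map from $S'(\lambda)$ into the hull, observe that the subalgebra $S(\lambda)$ lands in $\mathrm{gr}A$, and invoke universal power surjectivity---is exactly the right shape, and the last observation (that $\Phi(S(\lambda))\subseteq\mathrm{gr}A$ because $\Delta_\lambda$ is generated by its highest weight line and $U^+$--invariants of the hull lie in $\mathrm{gr}A$) is correct. But two of your preparatory steps do not go through.

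First, the reduction ``I may assume $x$ is a weight vector'' is not valid for power surjectivity: if $x=x_1+x_2$ with $x_i^{N_i}\in\mathrm{gr}A$, there is no reason that some power of $x$ lies in $\mathrm{gr}A$. (Also, it is not true that only dominant weights occur in the hull; each $\nabla_\mu$ has many non-dominant weights.) Second, even granting a weight vector $x$ of weight $\lambda$, you cannot in general find a copy of $\nabla_\lambda$ containing it: in the decomposition $\bigoplus_\mu\nabla_\mu\otimes J_\mu$, a weight-$\lambda$ vector typically has components in several $\nabla_\mu\otimes J_\mu$ with $\mu\geq\lambda$, and your ``projection onto the $\nabla_\lambda$--component'' discards the others rather than producing a submodule through $x$. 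Finally, $S'(\lambda)=\bigoplus_n\nabla_{n\lambda}$ is \emph{not} a free $G$--algebra on $\nabla_\lambda$, so extending a $G$--module map $\nabla_\lambda\to C$ to an algebra map $S'(\lambda)\to C$ is not automatic.

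The paper circumvents all three issues at once by constructing the maps on the $B$--side. For each weight vector $b\in A^{U^+}$ of weight $\lambda$, send the (one-dimensional) highest weight line of $S'(\lambda)$ to $b$; this gives a $B$--algebra map $S'(\lambda)\otimes k\to A^{U^+}$, and by adjunction a $G$--algebra map $\psi_b\colon S'(\lambda)\otimes k\to\mathrm{hull}_\nabla\mathrm{gr}A$. An arbitrary element $b$ of the hull then lies in the image of a \emph{tensor product} $\psi_{b_1}\otimes\cdots\otimes\psi_{b_n}$ for finitely many such weight vectors $b_i$, and one applies universal power surjectivity to $\bigotimes_iS(\lambda_i)\hookrightarrow\bigotimes_iS'(\lambda_i)$. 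This avoids any reduction to a single weight vector and any need to split off a copy of $\nabla_\lambda$ inside the hull.
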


\begin{proof}
Consider a weight vector $b$ in $A^{U^+}$ of weight $\lambda$. We recall ourselves that all the weights of $A^{U^+}$ are dominant. Let $\sigma$ be a generator of the $\lambda$ weight space of $\nabla_\lambda$. We obtain a $B$--algebra map 
\[
\sigma_b\colon S'(\lambda)\otimes k\to A^{U^+} 
\]
by sending $\sigma$ to $b$. Then, using the adjuntion restriction-induction, we obtain an algebra  map 
\[
\psi_b(\lambda) \colon S'(\lambda)\otimes k\to \mathrm{hull}_\nabla(\mathrm{gr}A). 
\]
Let $b$ in $\mathrm{hull}_\nabla(\mathrm{gr}A)$. Then there are weight vectors $b_1,\ldots,b_n$ in $A^{U^+}$ of weight $\lambda_1,\ldots,\lambda_n$ respectively, such that  $b$ is in the image of $\psi_{b_1}(\lambda_1)\otimes\ldots\otimes \psi_{b_n}(\lambda_n)$. Moreover, there is a commutative diagram 

\begin{center}
   \begin{tikzcd}
   \bigotimes_{i=1}^n S(\lambda_i)\otimes k \arrow[r,"\varphi"] & \bigotimes_{i=1}^n S'(\lambda_i)\otimes k \arrow[rr, "\otimes \psi_{b_i}"] \arrow[rd] & & \mathrm{hull}_\nabla(\mathrm{gr}A) \\
   & & \mathrm{gr} A  \arrow[ru] & 
\end{tikzcd} 
\end{center}
Since each $S(\lambda_i) \to S'(\lambda_i)$ is universally power surjective, the map $\varphi$ remains power surjective. In particular, there is an integer $m$ and an element $x$ in  $\bigotimes_{i=1}^n S(\lambda_i)\otimes k$ such that  $\varphi(x)=(b_1\otimes \ldots \otimes b_n)^m$. The commutativity of the diagram implies that $b^m$ is in $\mathrm{hull}_\nabla(\mathrm{gr}A)$ as we wanted. 
\end{proof}

 As an application of the power surjectivity of the embedding $\mathrm{gr}A\to \mathrm{hull}_\nabla(\mathrm{gr}A)$, we obtain a criterion for finite generation of a $G$--algebra $A$ in terms its associated graded or its Grosshans hull.

\begin{Th}\label{A fg grA fg and hullA fg}
    Let $A$ be a $G$--algebra. Then the following conditions are equivalent. 
    \begin{enumerate}
        \item $A$ is finitely generated as $k$--algebra.
        \item $\mathrm{gr}A$  is finitely generated as $k$--algebra.
        \item $\mathrm{hull}_\nabla(\mathrm{gr}A)$ is finitely generated as $k$--algebra.
    \end{enumerate}
\end{Th}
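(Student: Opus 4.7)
The plan is to establish the equivalences by proving $(1) \Leftrightarrow (2)$ via standard filtered algebra arguments and $(2) \Leftrightarrow (3)$ using the power surjectivity of the embedding $\mathrm{gr}A \hookrightarrow \mathrm{hull}_\nabla(\mathrm{gr}A)$ recorded in Theorem \ref{embedding is power surjective}. The Grosshans filtration on $A$ is exhaustive, indexed by $\mathbb{Z}_{\geq 0}$, and compatible with multiplication in the sense that it is by $G$-submodules closed under the $G$-algebra structure; this is all we need to run the standard arguments for filtered algebras.

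For $(1) \Leftrightarrow (2)$, I would proceed by a lifting-and-descent argument. If $A$ is generated as a $k$-algebra by $a_1, \ldots, a_m$ with $a_i \in A_{\leq d_i}$, the leading symbols $\bar a_i \in \mathrm{gr}_{d_i} A$ generate $\mathrm{gr}A$: given $\bar a \in \mathrm{gr}_d A$, pick a lift $a \in A_{\leq d}$, write it as a polynomial in the $a_i$'s, and read off those monomials whose total filtration degree is exactly $d$. Conversely, if $\mathrm{gr}A$ is generated by symbols $\bar b_1, \ldots, \bar b_m$ with $\bar b_i \in \mathrm{gr}_{d_i} A$, lift each to $b_i \in A_{\leq d_i}$ and prove by induction on $d$ that $k[b_1, \ldots, b_m]$ contains $A_{\leq d}$. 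The base $d=0$ uses $A_{<0}=0$, and the inductive step observes that, for $a \in A_{\leq d}$, there is a polynomial $p$ in the $b_i$'s such that $a - p(b_1,\ldots,b_m) \in A_{<d}$.

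For $(3) \Rightarrow (2)$, Theorem \ref{embedding is power surjective} provides a power surjective inclusion $\mathrm{gr}A \hookrightarrow \mathrm{hull}_\nabla(\mathrm{gr}A)$, which is integral by Lemma \ref{integral1}. If $\mathrm{hull}_\nabla(\mathrm{gr}A)$ is a finitely generated $k$-algebra, Lemma \ref{Lemma Artin-Tate} forces its integral subalgebra $\mathrm{gr}A$ to be finitely generated as well.

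The hard direction, and the main obstacle, is $(2) \Rightarrow (3)$: an integral extension of a finitely generated Noetherian $k$-algebra is not automatically finitely generated, so the power surjectivity from Theorem \ref{embedding is power surjective} is not by itself sufficient. The strategy is to exploit the (FG) property of $\GL_n$ recorded in Corollary \ref{FG for Chevalley} together with the explicit description $\mathrm{hull}_\nabla(\mathrm{gr}A) = \mathrm{ind}_B^{\GL_n}(\mathrm{gr}A)^{U^+}$. Assuming $\mathrm{gr}A$ is finitely generated, one first identifies $\mathrm{hull}_\nabla(\mathrm{gr}A)^{U^+}$ with $(\mathrm{gr}A)^{U^+}$ via the embedding of Lemma \ref{embedding into grosshans}, and invokes a Grosshans-type result (a consequence of (FG) for $\GL_n$ and the finite generation of $k[\GL_n/U^+]$) to deduce finite generation of $\mathrm{hull}_\nabla(\mathrm{gr}A)^{U^+}$. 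Finally, since $\mathrm{hull}_\nabla(\mathrm{gr}A)$ has good Grosshans filtration built into its definition, its $U^+$-invariants generate it as a $G$-algebra; applying local finiteness to the $G$-orbits of a finite generating set of $U^+$-invariants then yields finite generation of $\mathrm{hull}_\nabla(\mathrm{gr}A)$ as a $k$-algebra.
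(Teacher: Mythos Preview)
Your overall strategy and the paper's are close in spirit: both hinge on (FG) for $\GL_n$ to control $A^{U^+}$, on the power surjectivity of $\mathrm{gr}A \hookrightarrow \mathrm{hull}_\nabla(\mathrm{gr}A)$ combined with Artin--Tate for $(3)\Rightarrow(2)$, and on the standard lifting-of-generators argument for $(2)\Rightarrow(1)$. The paper simply organizes these as the cycle $(1)\Rightarrow(3)\Rightarrow(2)\Rightarrow(1)$, proving $(1)\Rightarrow(3)$ directly: from $A$ finitely generated one gets $A^U \cong (A\otimes k[G/U])^G$ finitely generated by (FG), hence $A^{U^+}$ finitely generated, and then $\mathrm{hull}_\nabla(\mathrm{gr}A)=\mathrm{ind}_B^G A^{U^+}$ is declared finitely generated. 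Your $(2)\Rightarrow(3)$ is essentially the same step applied to $\mathrm{gr}A$ in place of $A$, which is fine since $(\mathrm{gr}A)^{U^+}\cong A^{U^+}$ as $T$-algebras.

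There is, however, a genuine gap in your final step. The assertion that the $U^+$-invariants of $\mathrm{hull}_\nabla(\mathrm{gr}A)$ generate it \emph{as a $G$-algebra} is false in positive characteristic. For $G=\mathrm{SL}_2$ over a field of characteristic~$2$ and $\lambda=2$, the algebra $S'(2)=\bigoplus_n\nabla_{2n}$ (the even-degree part of $k[x,y]$) has good Grosshans filtration and $U^+$-invariants $\bigoplus_n kx^{2n}$; but the $G$-submodule generated by $x^2$ is only $\langle x^2,y^2\rangle$ (since $(x+ty)^2=x^2+t^2y^2$), so the $G$-subalgebra generated by all $U^+$-invariants is $k[x^2,y^2]$, which misses $xy\in\nabla_2$. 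The problem is that the highest weight vector of $\nabla_\lambda$ generates only the image of $\Delta_\lambda\to\nabla_\lambda$, not all of $\nabla_\lambda$, and passing to algebras does not repair this.

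The correct way to finish --- which the paper glosses over but implicitly uses --- is to unwind the induction: for a $T$-algebra $R$ inflated to $B$ one has $\mathrm{ind}_B^G R = (R\otimes k[G])^B = (R\otimes k[G]^U)^T = (R\otimes k[G/U])^T$. Now $k[G/U]$ is a finitely generated $k$-algebra, so $R\otimes k[G/U]$ is finitely generated, and the torus $T$ satisfies (FG) (being linearly reductive over any base), so its invariants are finitely generated. Applying this with $R=A^{U^+}$ completes the argument.
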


\begin{proof}
First, assume that $A$ is finitely generated. In this case, we known that $A^U$ is finitely generated since  $G$ satisfies (FG), see Theorem  \ref{Hilberts 14th}.  By identifying $A^U$ with 
\[
\mathrm{Hom}_U(k,A)=\mathrm{Hom}_G(k,\mathrm{ind}_U^G A )= (A\otimes k[G/U])^G
\]
plus the fact that $k[G/U]$ is a finitely generated $k$--algebra, we obtain that $A^U$ is a finitely generated $k$--algebra. Now, we can use that $A^U$ and $A^{U^+}$ are isomorphic as $k$--algebras, and therefore $\mathrm{hull}_\nabla (\mathrm{gr}A)=\mathrm{ind}_B^G A^{U^+}$ is a finitely generated $k$--algebra. 

Now, assume that $\mathrm{hull}_\nabla(\mathrm{gr}A)$ is finitely generated. Recall that the embedding  $\mathrm{gr} A\hookrightarrow \mathrm{hull}_\nabla \mathrm{gr}A$ is power surjective, hence $\mathrm{hull}_\nabla \mathrm{gr}A$ is integral over $\mathrm{gr} A$ by Lemma \ref{integral1}. Hence, using  Lemma \ref{Lemma Artin-Tate} we obtain that $\mathrm{gr}A$ is finitely generated $k$--algebra as well.  

Finally, suppose that $\mathrm{gr}A$ is finitely generated. For a given finite set of generators for $\mathrm{gr} A$, it is not too hard to show that  a finite set of representatives of these generates $A$. 
\end{proof}

\begin{Th}\label{generic good Grosshans}
    For any $G$--algebra $A$ which is finitely generated as $k$--algebra, there is an integer $n>0$ such that $n\cdot  \mathrm{hull}_\nabla(\mathrm{gr} A)$ is contained in $\mathrm{gr}A$. In particular, $H^i(G,\mathrm{gr}A)$ is annihilated by $n$ for any $i>0$.
\end{Th}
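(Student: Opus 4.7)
Let $Q$ denote the cokernel of the embedding $\mathrm{gr}A\hookrightarrow \mathrm{hull}_\nabla(\mathrm{gr}A)$ of Lemma \ref{embedding into grosshans}, giving a short exact sequence of $G$-modules
\[
0\to \mathrm{gr}A\to \mathrm{hull}_\nabla(\mathrm{gr}A)\to Q\to 0.
\]
The plan is to produce an integer $n>0$ annihilating $Q$, and then to deduce both assertions from the long exact sequence in cohomology once the middle term is shown to be $G$-acyclic.

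To produce $n$, I first observe that $Q$ is a finitely generated $\mathrm{gr}A$-module: Theorem \ref{embedding is power surjective} combined with Lemma \ref{integral1} makes $\mathrm{hull}_\nabla(\mathrm{gr}A)$ integral over $\mathrm{gr}A$; Theorem \ref{A fg grA fg and hullA fg} provides that both $\mathrm{gr}A$ and $\mathrm{hull}_\nabla(\mathrm{gr}A)$ are finitely generated $k$-algebras, so Lemma \ref{Lemma Artin-Tate} yields the finite generation of $\mathrm{hull}_\nabla(\mathrm{gr}A)$, hence of $Q$, as a $\mathrm{gr}A$-module. Next, I would argue that $Q\otimes_\mathbb{Z}\mathbb{Q}=0$. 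The formation of $\mathrm{gr}(-)$ and $\mathrm{hull}_\nabla(-)$ involves only $T$-weight space decompositions, $U^+$-invariants, and $\mathrm{ind}_B^G$, each of which commutes with the flat base change $k\rightsquigarrow k\otimes_\mathbb{Z}\mathbb{Q}$; over this $\mathbb{Q}$-algebra $\GL_n$ is linearly reductive, so every $\GL_n$-module decomposes isotypically as $\bigoplus_\lambda\nabla_\lambda\otimes N_\lambda$, which is precisely the form of its own Grosshans hull. Hence $Q\otimes\mathbb{Q}=0$, and applying this to the finitely many $\mathrm{gr}A$-generators of $Q$ from the previous step produces, via their individual $\mathbb{Z}$-annihilators, a common positive integer $n$ with $n\cdot Q=0$.

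For the cohomological statement, the $T$-weight decomposition $(\mathrm{gr}A)^{U^+}\cong\bigoplus_\lambda k_\lambda\otimes N_\lambda$ (with $N_\lambda$ a trivial $T$-module) used in the proof of Lemma \ref{embedding into grosshans} yields the $G$-module identification
\[
\mathrm{hull}_\nabla(\mathrm{gr}A)\;\cong\;\bigoplus_\lambda \nabla_\lambda\otimes N_\lambda.
\]
Since cohomology of $\GL_n$ commutes with direct sums and with tensoring against $G$-trivial modules, Kempf's vanishing---valid over $\mathbb{Z}$ by \cite{jantzen2003representations}---forces $H^i(G,\mathrm{hull}_\nabla(\mathrm{gr}A))=0$ for every $i>0$. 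The long exact sequence in cohomology then produces surjections $H^{i-1}(G,Q)\twoheadrightarrow H^i(G,\mathrm{gr}A)$ for each $i\geq 1$, and since $n$ annihilates $Q$ it annihilates each $H^i(G,\mathrm{gr}A)$, as claimed.

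The main obstacle is establishing $Q\otimes\mathbb{Q}=0$: one must justify carefully that both the Grosshans filtration and the Grosshans hull commute with the base change $\mathbb{Z}\to\mathbb{Q}$, and invoke the clean isotypic decomposition of $\GL_n$-modules in characteristic zero to ensure that the embedding trivializes. Once this torsion bound is in hand, the remainder is a direct application of Kempf vanishing and the cohomology long exact sequence.
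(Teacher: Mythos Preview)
Your strategy is sound and yields a correct proof, but it differs from the paper's argument in an essential way.

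The paper does not pass through a characteristic-zero base change. Instead it works with the explicit integral algebras $S(\lambda)\subset S'(\lambda)$ over $\mathbb{Z}$ recalled just before Theorem~\ref{embedding is power surjective}: since $\mathrm{hull}_\nabla(\mathrm{gr}A)$ is finitely generated (Theorem~\ref{A fg grA fg and hullA fg}), one chooses finitely many weight vectors $b_1,\dots,b_s$ so that $\psi_{b_1}(\lambda_1)\otimes\cdots\otimes\psi_{b_s}(\lambda_s)$ surjects onto the hull, uses the equality $\bigotimes_i S'(\lambda_i)\otimes\mathbb{Q}=\bigotimes_i S(\lambda_i)\otimes\mathbb{Q}$ together with integrality to extract a single integer $n$ with $n\cdot\bigotimes_i S'(\lambda_i)\subset\bigotimes_i S(\lambda_i)$, and finally observes that the composite through $\bigotimes_i S(\lambda_i)\otimes k$ already factors through $\mathrm{gr}A$. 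This is a direct ``hands-on'' extraction of $n$ from the integral Schur-algebra picture.

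Your route---show that $Q=\mathrm{coker}\bigl(\mathrm{gr}A\hookrightarrow\mathrm{hull}_\nabla(\mathrm{gr}A)\bigr)$ is a finitely generated $\mathrm{gr}A$--module and that $Q\otimes_\mathbb{Z}\mathbb{Q}=0$---is more conceptual and perfectly legitimate. One caution: the sentence ``$\GL_n$ is linearly reductive, so every $\GL_n$--module decomposes isotypically as $\bigoplus_\lambda\nabla_\lambda\otimes N_\lambda$'' is doing real work over a $\mathbb{Q}$--algebra that is not a field, and ``linearly reductive'' (exactness of invariants) does not literally say this. The cleanest way to close the gap within the paper's framework is to avoid the isotypic claim altogether: by Lemma~\ref{no cohomology in char 0} one has $H^1\bigl(G_{k\otimes\mathbb{Q}},(\mathrm{gr}A\otimes\mathbb{Q})\otimes k[G/U]\bigr)=0$, so Theorem~\ref{cohomological characterization of good Grosshans} gives that $\mathrm{gr}A\otimes\mathbb{Q}$ has a good Grosshans filtration, i.e.\ the embedding into its hull is an isomorphism. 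It then remains only to check (as you note) that $(\,\cdot\,)^{U^+}$ and $\mathrm{ind}_B^G$ commute with the flat base change $k\to k\otimes\mathbb{Q}$, which identifies that embedding with your map $\mathrm{gr}A\otimes\mathbb{Q}\to\mathrm{hull}_\nabla(\mathrm{gr}A)\otimes\mathbb{Q}$.

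The cohomological consequence is handled identically in both approaches: $\mathrm{hull}_\nabla(\mathrm{gr}A)$ is $G$--acyclic (it has a good Grosshans filtration by construction, so apply Theorem~\ref{cohomological characterization of good Grosshans}), and the long exact sequence then transfers the bound $nQ=0$ to $H^{>0}(G,\mathrm{gr}A)$.
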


\begin{proof}
   By the previous theorem, we obtain that  $\mathrm{hull}_\nabla(\mathrm{gr} A)$ is a finitely generated $k$--algebra, since $A$ is a finitely generated $k$--algebra. Thus, we can choose weight vectors $b_1,\ldots,b_s$ in $A^U$ of weight $\lambda_1,\ldots,\lambda_s$ respectively, such that  the image of $\psi_{b_1}(\lambda_1)\otimes\ldots\otimes \psi_{b_s}(\lambda_s)$ is all $\mathrm{hull}_\nabla(\mathrm{gr} A)$  (see the proof of Theorem \ref{embedding is power surjective} for the construction of the maps $\psi_{b_i}$). Now, since 
   \[
   \bigotimes_{i=1}^s S'(\lambda_i)\otimes \mathbb{Q}=\bigotimes_{i=1}^s S(\lambda_i)\otimes \mathbb{Q}
   \]
   and each graded algebra $S'(\lambda)$ is integral over its subalgebra $S(\lambda)$ (see  \cite[Lemma 37]{FvdK10}), we obtain that there is a positive integer $n$ such that 
   \[
   n\cdot\bigotimes_{i=1}^n S'(\lambda_i)\subseteq \bigotimes_{i=1}^n S(\lambda_i)
   \]
   Just as before, the map 
   \[
   \bigotimes_{i=1}^n S(\lambda_i)\otimes k \to \bigotimes_{i=1}^s S'(\lambda_i)\otimes k\to \mathrm{hull}_\nabla(\mathrm{gr}A)
   \]
   factors through $\mathrm{gr} A $. We deduce that $n\cdot \mathrm{hull}_\nabla(\mathrm{gr}A)\subseteq \mathrm{gr}A$. 

   It remains to show the second claim.  By the long exact sequence on cohomology associated to the embedding $\mathrm{gr}A\to \mathrm{hull}_\nabla\mathrm{gr}A$ and the fact that $\mathrm{hull}_\nabla\mathrm{gr}A$ is acyclic since $H^1(G,\mathrm{hull}_\nabla\mathrm{gr}A\otimes k[G/U])=0$ (see Theorem \ref{cohomological characterization of good Grosshans}), we deduce that the connecting morphism 
   \[
   H^{i-1}(G,\mathrm{hull}_\nabla\mathrm{gr}A/\mathrm{gr}A)\to H^i(G,\mathrm{gr}A)
   \]
   is surjective for $i>0$. The previous claim gives us that $H^i(G,\mathrm{gr}A)$ must be annihilated by $n$. 
\end{proof}

\begin{Cor}\label{Cor: A1/m is acyclic for some m}
    Let $A$ be a $G$--algebra finitely generated as $k$--algebra. Then there is a natural number $n>0$ such that $A[1/n]$ has a good Grosshans filtration and $H^i(G,A)\otimes \mathbb{Z}[1/n]$ vanishes for all $i>0$.
\end{Cor}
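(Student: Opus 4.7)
The plan is to invoke Theorem \ref{generic good Grosshans} to produce an integer $n > 0$ such that the cokernel $Q$ of the natural embedding $\mathrm{gr}A \hookrightarrow \mathrm{hull}_\nabla \mathrm{gr}A$ is annihilated by $n$ (and consequently $H^i(G,\mathrm{gr}A)$ is $n$-torsion for all $i>0$). I claim this same $n$ witnesses both conclusions; once it is in hand, the rest is formal flat base-change and an acyclicity argument.

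For the first assertion, the key point is that every ingredient in the definition of a good Grosshans filtration is compatible with flat base change. The submodules $A_{\leq i}$ are cut out by weight-vector conditions and so commute with the flat functor $(-)[1/n]$, giving $\mathrm{gr}(A[1/n]) \cong (\mathrm{gr}A)[1/n]$. Taking $U^+$-fixed points is a kernel, hence commutes with localization; and the induction functor $\mathrm{ind}_B^G$ is built from $(- \otimes k[G])^B$, which does too. Therefore $\mathrm{hull}_\nabla \mathrm{gr}(A[1/n]) \cong (\mathrm{hull}_\nabla \mathrm{gr}A)[1/n]$, and the natural embedding for $A[1/n]$ is just the $n$-localization of the embedding for $A$. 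Its cokernel is $Q[1/n] = 0$, so $A[1/n]$ has a good Grosshans filtration.

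For the second assertion, I would show that $A[1/n]$ is $G$-acyclic and then conclude via Corollary \ref{Flat base change}, which identifies $H^i(G,A)\otimes\mathbb{Z}[1/n]$ with $H^i(G,A[1/n])$. Acyclicity proceeds in two stages. First, by the remark following Theorem \ref{cohomological characterization of good Grosshans}, the associated graded $\mathrm{gr}(A[1/n])$ is a direct sum of modules of the form $\nabla_\lambda \otimes J(\lambda)$ with trivial $G$-action on $J(\lambda)$. Kempf's vanishing theorem (valid over $\mathbb{Z}$) makes each $\nabla_\lambda$ acyclic, and since cohomology is computed by the Hochschild complex it commutes with arbitrary direct sums, so $\mathrm{gr}(A[1/n])$ is $G$-acyclic. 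Second, using the short exact sequences
\[
0 \to (A[1/n])_{\leq i-1} \to (A[1/n])_{\leq i} \to \mathrm{gr}_i(A[1/n]) \to 0
\]
and induction on $i$ via the long exact sequence in cohomology, each finite stage $(A[1/n])_{\leq i}$ is $G$-acyclic. Finally, since $A[1/n] = \mathrm{colim}_i (A[1/n])_{\leq i}$ and the Hochschild complex, together with cohomology of chain complexes, commutes with filtered colimits in the Grothendieck abelian category $\mathbf{coMod}_{k[G]}$, we conclude $H^i(G, A[1/n]) = 0$ for all $i>0$.

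I do not anticipate any real obstacle: once $n$ is produced by Theorem \ref{generic good Grosshans}, the whole argument is essentially bookkeeping. The only mild subtleties are checking that the Grosshans hull, induction, and invariants all commute with localization at $n$ (routine, since $\mathbb{Z}\to\mathbb{Z}[1/n]$ is flat) and that group cohomology commutes with filtered colimits (immediate from the Hochschild complex).
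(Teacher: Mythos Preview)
Your proposal is correct and follows essentially the same route as the paper: both invoke Theorem~\ref{generic good Grosshans} to produce $n$, use exactness of localization to get the good Grosshans filtration on $A[1/n]$, and then deduce acyclicity together with flat base change. The only cosmetic difference is that the paper cites Theorem~\ref{cohomological characterization of good Grosshans} directly for the vanishing of $H^i(G,A[1/n])$, whereas you unpack this by climbing the Grosshans filtration using the acyclicity of each $\nabla_\lambda \otimes J(\lambda)$ asserted in the Remark following that theorem---which is precisely how the forward direction of that theorem is proved anyway.
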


\begin{proof}
    By Theorem \ref{generic good Grosshans}, there is $n>0$ such that $n\cdot \mathrm{hull}_\nabla(\mathrm{gr}A)\subset \mathrm{gr}A$. Using exactness of localizations, we  deduce that $A[1/n]$ has a good Grosshans filtration. In particular, Theorem \ref{cohomological characterization of good Grosshans} gives us the vanishing of $H^i(G,A[1/n])$, which agrees with $H^i(G,A)\otimes \mathbb{Z}[1/n]$, for all $i>0$.  
\end{proof}

\begin{Th}\label{finite resolution by acyclic modules}
 Let $A$ be a finitely generated $G$--algebra, and $M$ a Noetherian $AG$--module. If $A$ has a good Grosshans filtration, then there is a finite resolution 
 \[
 0\to M\to N_0\to \dots \to N_d \to 0
 \]
 where the $N_i$ are Noetherian $AG$--modules with good grosshans filtrations.
\end{Th}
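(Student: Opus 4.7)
The plan is to prove the theorem by constructing the resolution inductively, repeatedly embedding into $AG$-modules with good Grosshans filtration, and bounding the length via a finiteness result on a suitable ``good Grosshans filtration dimension.'' This adapts the proof of \cite{svdK09} for the field case with good filtrations to the Noetherian Grosshans setting.

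\textbf{First, I would construct a canonical embedding.} Given a Noetherian $AG$-module $M$, I would produce an embedding $M \hookrightarrow N$ into a Noetherian $AG$-module $N$ with a good Grosshans filtration. The natural candidate is built from the Grosshans hull construction applied to $M$ via its Grosshans filtration: lift the embedding $\mathrm{gr}M \hookrightarrow \mathrm{hull}_\nabla(\mathrm{gr}M)$ from Lemma \ref{embedding into grosshans} to the filtered level, producing an $AG$-module $N \supseteq M$ whose associated graded realizes $\mathrm{hull}_\nabla(\mathrm{gr}M)$. Noetherianity of $N$ over $A$ uses that $A^{U^+}$ is a finitely generated $k$-algebra when $A$ has a good Grosshans filtration (via Theorem \ref{A fg grA fg and hullA fg}), which bounds the number of dominant weights that can occur. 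That $N$ has a good Grosshans filtration is verified via Theorem \ref{cohomological characterization of good Grosshans} by checking $H^1(\GL_n, N \otimes k[\GL_n/U]) = 0$, which in turn reduces by Kempf vanishing (Theorem \ref{Kempf's theorem}) to the acyclicity of induced costandards.

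\textbf{Next, I would iterate and argue termination.} Setting $M_0 := M$, $N_0 := N$, $M_1 := N_0/M_0$ (still Noetherian since $A$ is Noetherian), and continuing, one obtains a long exact sequence $0 \to M \to N_0 \to N_1 \to \cdots$ with each $N_i$ a Noetherian $AG$-module admitting a good Grosshans filtration. Termination in finitely many steps is the heart of the theorem: I would define the good Grosshans filtration dimension $\mathrm{gfd}_A(M)$ as the infimum of lengths of such resolutions, and bound it uniformly in terms of $A$ alone. By Corollary \ref{Cor: A1/m is acyclic for some m}, there exists $n > 0$ such that $M[1/n]$ already has a good Grosshans filtration, so $\mathrm{gfd}(M[1/n]) = 0$. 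The remaining $n$-torsion is supported on primes dividing $n$; for each prime $p \mid n$, one reduces modulo $p$ to invoke the field-case result of \cite{svdK09} on bounded good filtration dimension over $A/pA$. Assembling these local bounds via flat base change (Corollary \ref{Flat base change}) and Noetherian induction on the support of the torsion in $\mathrm{Spec}(A)$ yields the desired uniform finite bound $d$, completing the resolution $0 \to M \to N_0 \to \cdots \to N_d \to 0$.

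\textbf{The main obstacle} will be this termination step. Over a field, one has Mathieu's tensor product theorem for good filtrations and dimension-counting arguments to bound the good filtration dimension directly. Over an arbitrary Noetherian base, one must control $\mathrm{gfd}_A(M)$ uniformly across $\mathrm{Spec}(k)$, combining the generic acyclicity after inverting $n$ with the various mod-$p$ reductions into a single global bound; this gluing is delicate, and uses the full strength of the hypothesis that $A$ has a good Grosshans filtration together with the universal power surjectivity underlying Theorem \ref{embedding is power surjective}. A secondary subtlety lies in Step 1, in verifying that the lift of $\mathrm{gr}M \hookrightarrow \mathrm{hull}_\nabla(\mathrm{gr}M)$ to an honest filtered $AG$-module $N$ can be done compatibly with the $A$-action, so that $N$ remains $A$-finite.
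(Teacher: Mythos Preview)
The paper does not actually prove this theorem: immediately after the statement it says ``We refer the interested reader to \cite{vdK15} for a proof.'' So there is no in-paper argument to compare your proposal against.

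That said, your outline is in the right spirit for what \cite{vdK15} does: one iteratively embeds into Noetherian $AG$-modules with good Grosshans filtration (built from the Grosshans hull) and shows the process terminates. Where your sketch diverges from the actual argument is the termination step. You propose to invoke Corollary \ref{Cor: A1/m is acyclic for some m} to handle the generic part after inverting some $n$, and then reduce modulo each prime $p\mid n$ to fall back on the field-case result of \cite{svdK09}, gluing via Noetherian induction. This detour is both unnecessary and problematic: the good Grosshans filtration dimension is not obviously controlled by its values on the fibres $A/pA$, and the ``gluing'' you describe across $\mathrm{Spec}(k)$ does not have an evident mechanism (good Grosshans filtration is not a local-on-$\mathrm{Spec}(k)$ notion in any simple sense). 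In \cite{vdK15} the bound on the length of the resolution is obtained more directly from the combinatorics of dominant weights and Grosshans heights occurring in $M$, using that the cokernel of $\mathrm{gr}M\hookrightarrow\mathrm{hull}_\nabla(\mathrm{gr}M)$ has strictly smaller top Grosshans height, so the process terminates after finitely many steps governed by $A$ and $M$ alone---no reduction to the field case is needed.

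Your secondary worry about lifting the embedding to an $A$-linear map is legitimate and is indeed one of the technical points handled carefully in \cite{vdK15}; your instinct that this is where the hypothesis on $A$ having a good Grosshans filtration is used is correct.
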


We refer the interested reader to \cite{vdK15} for a proof. 
As we mentioned at the beginning of this section, the previous theorem has a predecessor when the ring $k$ is a field of positive characteristic (see \cite[Theorem 1.1]{svdK09}), and some similar consequences can be obtained from it as we will see. 

\begin{Cor}\label{Cor: resolution by modules with a good filtration}
   Let $A$ be a finitely generated $G$--algebra, and $M$ a Noetherian $AG$--module. If $A$ has a good Grosshans filtration, then $H^i(G, M)$ are Noetherian $A^G$--modules and they vanish for $i>>0$.
\end{Cor}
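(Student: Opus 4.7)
The strategy is to use the finite resolution provided by Theorem \ref{finite resolution by acyclic modules} to reduce the statement to the case of modules with a good Grosshans filtration, where both assertions become tractable. Concretely, pick a resolution
\[
0 \to M \to N_0 \to N_1 \to \cdots \to N_d \to 0
\]
in which each $N_j$ is a Noetherian $AG$--module with good Grosshans filtration. By Theorem \ref{cohomological characterization of good Grosshans}, each $N_j$ satisfies $H^i(G, N_j \otimes k[G/U]) = 0$ for $i \geq 1$, and since $k = \nabla_0$ appears as a direct summand of $k[G/U]$, we deduce $H^i(G, N_j) = 0$ for all $i \geq 1$ and all $j$. In other words, the $N_j$ form a resolution of $M$ by $G$--acyclic modules.

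First I would address the vanishing statement. Splitting the long resolution into short exact sequences in the usual way and running the associated long exact cohomology sequences, the acyclicity of the $N_j$ forces $H^i(G, M)$ to be computed as the $i$--th cohomology of the complex of invariants
\[
N_0^G \to N_1^G \to \cdots \to N_d^G.
\]
Since this complex is concentrated in degrees $0 \leq i \leq d$, the vanishing $H^i(G, M) = 0$ for $i > d$ is immediate.

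Next I would address the Noetherian statement. Since $G = \GL_n$ satisfies (FG) over any Noetherian base by Corollary \ref{FG for Chevalley}, the algebra $A^G$ is a finitely generated $k$--algebra and hence itself Noetherian. Moreover, Proposition \ref{Prop: CFG implies finite genarion of modules}(1) applies to each Noetherian $AG$--module $N_j$, yielding that $N_j^G = H^0(G, N_j)$ is a finitely generated $A^G$--module. Each $H^i(G, M)$ is then a subquotient of the Noetherian $A^G$--module $N_i^G$, so it is itself Noetherian as an $A^G$--module.

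The only substantive inputs are Theorem \ref{finite resolution by acyclic modules} (the existence of the finite Grosshans-good resolution, which is the hard ingredient cited from \cite{vdK15}), the cohomological characterization of good Grosshans filtrations in Theorem \ref{cohomological characterization of good Grosshans}, and the (FG) property for $\GL_n$. The main obstacle in a fully self-contained treatment is really encapsulated in Theorem \ref{finite resolution by acyclic modules}; once that is granted, the rest of the argument is a formal chase through long exact sequences combined with the standard fact that finitely generated modules over a Noetherian ring are Noetherian.
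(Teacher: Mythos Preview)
Your proof is correct and follows essentially the same route as the paper: take the finite resolution by Noetherian $AG$--modules with good Grosshans filtration from Theorem \ref{finite resolution by acyclic modules}, observe that such modules are $G$--acyclic (the paper cites Theorem \ref{cohomological characterization of good Grosshans} directly; your observation that $k=\nabla_0$ is a summand of $k[G/U]$ makes this explicit), and that their invariants are Noetherian over $A^G$ via (FG). The paper invokes Theorem \ref{Hilberts 14th} for this last point while you unpack it through Corollary \ref{FG for Chevalley} and Proposition \ref{Prop: CFG implies finite genarion of modules}(1), but these amount to the same thing.
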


\begin{proof}
 Recall that if $N$ has a good Grosshans filtration, then $N$ is acyclic by Theorem \ref{cohomological characterization of good Grosshans} and $N^G$  is Noetherian over $A^G$ by Theorem \ref{Hilberts 14th}. Hence the exact sequence  
 \[
 0\to M\to N_0\to \dots \to N_d \to 0
 \]
 from Theorem \ref{finite resolution by acyclic modules} give us a finite resolution of $M$ by acyclic $G$--modules, hence it computes $H^\ast(G,M)$. The conclusion is clear now.
\end{proof}

Finally, we can state the main consequence of the previous results (see \cite[Theorem 10.1]{vdK15}).

\begin{Th}\label{Th:CFG when k contains a field}
   Let $k$ be a Noetherian ring containing a field $\mathbb{F}$. Assume that $G$ is the group scheme over $k$ obtained from a geometrically reductive affine algebraic group scheme $G_\mathbb{F}$ over $\mathbb{F}$ by base change along the inclusion $\mathbb{F} \to k$. Then $G$ has the (CFG) property.
\end{Th}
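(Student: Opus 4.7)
The plan is to follow the Grosshans-filtration strategy outlined for Theorem~\ref{Thm D}. Let $A$ be any finitely generated $G$--algebra, equipped with its Grosshans filtration; by Theorem~\ref{A fg grA fg and hullA fg}, the associated graded $\mathrm{gr}\,A$ is again a finitely generated $k$--algebra. The filtration produces a convergent spectral sequence
\[
E_1^{s,t}(A) = H^{s+t}(G, \mathrm{gr}_{-s}\,A) \Longrightarrow H^{s+t}(G, A),
\]
and the abstract spectral-sequence criterion of Evens (Lemma~\ref{Lemma analysis of ss}) reduces the proof to two independent tasks: (i)~showing that $E_1^{\ast,\ast}(A)$ is a finitely generated $k$--algebra, and (ii)~showing that the spectral sequence collapses at a finite page.

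For (i), it suffices to prove that $H^\ast(G, \mathrm{gr}\,A)$ is a finitely generated $k$--algebra, and here I would introduce a second spectral sequence. Choose $r$ large enough that the $r$--th Frobenius kernel $G_r \le G$ is an exact subgroup scheme and that a finite set of generators of $\mathrm{gr}\,A$ lies in $(\mathrm{gr}\,A)^{G_r}$; this is precisely where the hypothesis that $k$ contains a field is used, since it guarantees a Frobenius on $G$. The Lyndon--Hochschild--Serre spectral sequence
\[
{}^{LHS}E_2^{s,t}(\mathrm{gr}\,A) = H^s\!\bigl(G/G_r,\, H^t(G_r, \mathrm{gr}\,A)\bigr) \Longrightarrow H^{s+t}(G, \mathrm{gr}\,A),
\]
together with the full strength of Friedlander--Suslin (Theorem~\ref{Thm E}), provides control over the inner cohomology $H^\ast(G_r, \mathrm{gr}\,A)$. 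Because $\mathrm{gr}\,A$ admits natural maps from algebras with a good Grosshans filtration (Corollary~\ref{Cor: A1/m is acyclic for some m}), Theorem~\ref{Thm F} applies to the outer cohomology and yields both finite vertical vanishing and Noetherianity of $H^s(G/G_r, -)$ on the relevant coefficients. Re-applying Evens' lemma to this LHS spectral sequence then gives finite generation of $H^\ast(G, \mathrm{gr}\,A)$, and hence of $E_1^{\ast,\ast}(A)$.

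For the collapse (ii)---which is the genuine obstacle---I would invoke Touzé's universal cohomology classes (Theorem~\ref{Thm Touze}) in combination with the full strength of Theorem~\ref{Thm E}, adapting the field-theoretic argument of Touzé--van der Kallen \cite{TvdK10}. The universal classes furnish a finite supply of permanent cocycles over which $E_1^{\ast,\ast}(A)$ becomes a Noetherian module; by the Noetherian part of Evens' criterion, only finitely many differentials can then act non-trivially, forcing degeneration at some finite page. Plugging (i) and (ii) into Lemma~\ref{Lemma analysis of ss} gives finite generation of $H^\ast(G,A)$, which is (CFG). The hard part is truly (ii): finite generation of $E_1$ is a book-keeping exercise once Grosshans filtrations and Friedlander--Suslin are available, but collapse is far from automatic in positive characteristic and genuinely needs Touzé's universal classes. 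The assumption that $k$ contains a field is used precisely to enable Frobenius-kernel arguments stalkwise and to invoke the field-theoretic inputs of Friedlander--Suslin and Touzé; removing it is exactly what forces one to pass through the torsion-bound route of Sections~\ref{section:provisionalCFG} and~\ref{section:boundedtorsion}.
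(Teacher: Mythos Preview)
Your proposal is essentially the proof of Theorem~\ref{CFG for GLn} (CFG for $\GL_n$), not of the present theorem. The statement here is about an arbitrary geometrically reductive $G_\mathbb{F}$ base-changed to $k$, and the entire Grosshans machinery you invoke---the Grosshans filtration, the hull $\mathrm{hull}_\nabla(\mathrm{gr}\,A)$, Theorems~\ref{A fg grA fg and hullA fg} and~\ref{Thm F}, Touz\'e's universal classes in Theorem~\ref{Thm Touze}---is set up in the paper only for $G=\GL_n$. Geometrically reductive is much weaker than split reductive: it includes, for instance, all finite group schemes over a field, where there is no maximal torus, no dominant weights, no costandard modules, and hence no Grosshans filtration in the required sense. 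So your spectral-sequence argument cannot even be formulated for such $G$.

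The paper's proof is a one-line reduction that you are missing: embed $G_\mathbb{F}\hookrightarrow\GL_{n,\mathbb{F}}$ via Proposition~\ref{Thm: Group schemes over fields are embedding in GL_n}, note that $\GL_{n,\mathbb{F}}/G_\mathbb{F}$ is affine and that these properties survive base change to $k$, and then apply the Reduction Lemma~\ref{vdK's Reduction Lemma} to get $H^\ast(G,A)\cong H^\ast(\GL_n,\mathrm{ind}_G^{\GL_n}A)$ with $\mathrm{ind}_G^{\GL_n}A$ finitely generated. This transports the problem to $\GL_n$, where your outline (which is indeed the content of Section~\ref{section:CFGforGL}) applies. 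In short: the proof of this theorem is the reduction step, and the hard work you describe belongs to the theorem it reduces to.
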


\begin{proof}
   By Proposition  \ref{Thm: Group schemes over fields are embedding in GL_n}, we obtain a closed embedding $G_\mathbb{F} \to \GL_{n,\mathbb{F}}$ for some $n \geq 1$, and $\GL_{n,\mathbb{F}}/G_\mathbb{F}$ remains affine over $\mathbb{F}$. Base change preserves these properties. Now, for a $G$--algebra finitely generated as a $k$--module, we have that $\mathrm{ind}_G^{\GL_n}(A)$ remains finitely generated. Moreover, the Reduction Lemma \ref{vdK's Reduction Lemma} gives us that 
   \[
   H^\ast(\GL_n,\mathrm{ind}_G^{\GL_n}(A)) \cong H^\ast(G,A).
   \]
    Hence, the problem reduces to showing that $\GL_n$ has the (CFG) property. This will be proved in the following section.
\end{proof}

\pagebreak

\section{CFG for $\GL_n$}\label{section:CFGforGL}

This part follows closely Antoine Touz\'e's lectures given during the Master class on \textit{New Developments in Finite Generation of Cohomology} that took place in Bielefeld University in September 2023. We also refer to Touz\'e's \cite{Tou2022} for a more detailed treatment.

The goal of this section is to prove the following theorem. 

\begin{Th}\label{CFG for GLn}
    Let $k$ be a Noetherian commutative ring, and $A$ be $\GL_n$--algebra finitely generated as $k$--algebra.  If $k$ contains a field, then $H^\ast(\GL_n,A)$ is a finitely generated $k$--algebra. In other words, $\GL_n$ has the (CFG) property. Moreover, if $M$ is a Noetherian $A$--module with compatible $\GL_n$--action, then $H^\ast(G,M)$ is a Noetherian $H^\ast(G,A)$--module.
\end{Th}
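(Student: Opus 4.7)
The plan is to follow the strategy outlined in the introduction, proceeding through two nested spectral sequences and appealing to the full strength of Friedlander--Suslin (Theorem \ref{FS full strength}) together with Touz\'e's universal cohomology classes \cite{Tou10}. Writing $\mathbb{F}$ for the field contained in $k$, the case $\mathrm{char}(\mathbb{F}) = 0$ collapses to finite generation of $A^{\GL_n}$ by linear reductivity together with Corollary \ref{Flat base change}, so assume $\mathrm{char}(\mathbb{F}) = p > 0$. Equip $A$ with its Grosshans filtration, giving the spectral sequence
\[
E_1^{s,t}(A) = H^{s+t}(\GL_n, \mathrm{gr}_{-s} A) \Longrightarrow H^{s+t}(\GL_n, A).
\]
By Evens' Lemma \ref{Lemma analysis of ss} applied to this multiplicative spectral sequence, to conclude that $H^*(\GL_n, A)$ is a finitely generated $k$-algebra it suffices to verify (a) that $E_1^{*,*}(A)$ is a finitely generated $k$-algebra, and (b) that the spectral sequence degenerates at some finite page $E_N$.

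For (a), it is enough to prove that $H^*(\GL_n, \mathrm{gr} A)$ is a finitely generated $k$-algebra, which is Theorem \ref{cohomology of grA is finitely generated}. I would attack this with a second, inner spectral sequence: for $r \gg 0$, the Frobenius kernel $G_r \trianglelefteq \GL_n$ yields
\[
{}^{\mathrm{LHS}}E_2^{s,t} = H^s(\GL_n/G_r, H^t(G_r, \mathrm{gr} A)) \Longrightarrow H^{s+t}(\GL_n, \mathrm{gr} A).
\]
The full strength of Friedlander--Suslin forces $H^t(G_r, \mathrm{gr} A)$ to be a Noetherian $(\mathrm{gr} A)^{G_r}$-module that vanishes for $t \gg 0$. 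By Theorem \ref{A fg grA fg and hullA fg}, $\mathrm{gr} A$ is a finitely generated $k$-algebra, and after inverting the integer supplied by Theorem \ref{generic good Grosshans} (a reduction that can be handled locally on $\mathrm{Spec}(k)$) it acquires a good Grosshans filtration. Identifying $\GL_n/G_r$ with a Frobenius-twisted copy of $\GL_n$ places us in the hypotheses of Theorem \ref{Thm F} applied to the morphism $(\mathrm{gr} A)^{G_r} \to H^t(G_r, \mathrm{gr} A)$, so that each $H^s(\GL_n/G_r, H^t(G_r, \mathrm{gr} A))$ is Noetherian over $(\mathrm{gr} A)^{\GL_n}$ and vanishes for $s \gg 0$. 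A further application of Evens' lemma, its collapse input being Theorem \ref{Thm F}, then furnishes (a).

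For (b), the collapse of the outer Grosshans spectral sequence at a finite page is produced by constructing enough permanent cocycles via Touz\'e's universal cohomology classes to dominate all higher differentials beyond some $E_N$; the necessary bounded vanishing ranges are again supplied by the full strength of Friedlander--Suslin. The statement that $H^*(\GL_n, M)$ is a Noetherian $H^*(\GL_n, A)$-module for a Noetherian $A\GL_n$-module $M$ then follows by running the same two-step spectral sequence machinery with coefficients in $M$, together with Corollary \ref{Cor: resolution by modules with a good filtration}, which provides a finite resolution of $M$ by acyclic $\GL_n$-modules once one passes to a suitable localization. The principal obstacle I foresee is choreographing the two collapses in tandem: each rests on producing Touz\'e-style universal classes that remain compatible with both the Grosshans filtration and the Frobenius-kernel filtration, and on tracking them through the Frobenius twist on $\GL_n/G_r$. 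This is the technical heart of van der Kallen's argument, and the good-Grosshans-filtration framework of Section \ref{section:GrosshansFiltrations} is precisely what keeps these universal classes visible under all of these reductions.
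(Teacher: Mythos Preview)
Your overall architecture matches the paper's: split by characteristic, handle characteristic zero by linear reductivity, and in characteristic $p$ run the Grosshans spectral sequence with Evens' lemma, feeding it Theorem~\ref{cohomology of grA is finitely generated}, which in turn is proved via the LHS spectral sequence for a high Frobenius kernel. However, your execution of step~(a) contains two genuine gaps.

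First, invoking Theorem~\ref{generic good Grosshans} and ``inverting the integer'' is a dead end when $\mathbb{F}_p\subset k$: any integer $n$ is either a unit in $k$ (if $p\nmid n$) or zero (if $p\mid n$), so there is nothing to localize. The paper never uses that theorem here. What it uses is power surjectivity of the embedding $\mathrm{gr}A\hookrightarrow\mathrm{hull}_\nabla(\mathrm{gr}A)$ (Theorem~\ref{embedding is power surjective}): since $pk=0$, for $r\gg0$ the $p^r$-th power of every element of the hull lands in $\mathrm{gr}A$, yielding a Noetherian map of $\GL_n$-algebras
\[
\Xi\colon \mathrm{hull}_\nabla(\mathrm{gr}A)^{(r)}\otimes_{\mathbb{F}_p}k\longrightarrow \mathrm{gr}A.
\]

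Second, you cannot feed $(\mathrm{gr}A)^{G_r}$ into Theorem~\ref{Thm F} as the good-filtration source: there is no reason that algebra has a good filtration, and Friedlander--Suslin does not give vanishing of $H^t(G_r,\mathrm{gr}A)$ for $t\gg0$, only Noetherianity over a specific algebra. The paper's move is to feed $\Xi$ into the full-strength Friedlander--Suslin map to obtain a Noetherian map
\[
\upphi_{FS}(\mathrm{gr}A)\colon \mathbb{S}_r\otimes_{\mathbb{F}_p}\mathrm{hull}_\nabla(\mathrm{gr}A)^{(r)}\otimes_{\mathbb{F}_p}k\longrightarrow H^\ast(G_r,\mathrm{gr}A),
\]
and then \emph{untwist}: viewed as a $\GL_n/G_r\cong\GL_n$-module via $F^r$, the source becomes $\bigotimes_i\mathbb{F}_p[\mathfrak{gl}_n\langle 2p^{i-1}\rangle]\otimes\mathrm{hull}_\nabla(\mathrm{gr}A)\otimes k$, and \emph{this} has a good filtration by \cite{ABW82} together with the defining property of the hull. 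Only now does Corollary~\ref{Cor: resolution by modules with a good filtration} apply to give (C1) and (C2) for the inner spectral sequence. For (b), the paper makes the Noetherianity input to Evens' lemma precise by introducing the Rees algebra $\mathcal{A}=\bigoplus_{i\ge0}t^iA_{\le i}$ and constructing a map $\upphi_{\mathrm{vdK}}$ lifting $\upphi_{FS}$ via Touz\'e's classes; your description of ``enough permanent cocycles'' is on the right track but hides this construction.

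Finally, the module statement does not require rerunning the machinery: it is an immediate consequence of (CFG) for algebras via the $A\rtimes M$ trick (Proposition~\ref{Prop: CFG implies finite genarion of modules}).
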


We will consider two cases in order to prove (CFG) for $\GL_n$: first when the ring $k$ contains $\mathbb{Q}$; second, when $k$ contains $\mathbb{F}_p$ for some prime $p$. The second claim will follow by Proposition \ref{Prop: CFG implies finite genarion of modules}, we also refer to \cite[Theorem 1.5]{TvdK10} for further details.

\subsubsection*{$k$ contains $\mathbb{Q}$} In this case, \textit{there are no 
 higher cohomology groups} as we will see.

\begin{Lemma}\label{no cohomology in char 0}
    Let $M$ be a $\GL_n$--algebra. Then $H^i(\GL_n,M)=0$ for all $i>0$.  
\end{Lemma}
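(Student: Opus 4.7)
The plan is to reduce to the case $k=\mathbb{Q}$ via flat base change, and then invoke the classical fact that $\GL_n$ is linearly reductive in characteristic zero.

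Since $k$ contains $\mathbb{Q}$, the ring map $\mathbb{Q}\hookrightarrow k$ is flat (as $\mathbb{Q}$ is a field), and $\GL_{n}$ over $k$ is the base change of $\GL_{n,\mathbb{Q}}$ along this map. Applying Lemma \ref{base change} to the morphism $\mathbb{Q}\to k$, we obtain an isomorphism of graded $\mathbb{Q}$-modules
\[
H^i(\GL_n,M)\;\cong\; H^i(\GL_{n,\mathbb{Q}},M)
\]
for every $i\geq 0$, where on the right $M$ is regarded as a $\GL_{n,\mathbb{Q}}$-module via restriction of scalars. Thus it suffices to prove that $H^i(\GL_{n,\mathbb{Q}},N)=0$ for all $i>0$ and every $\GL_{n,\mathbb{Q}}$-module $N$ (the algebra structure plays no role).

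For this I would show directly that the functor $(-)^{\GL_{n,\mathbb{Q}}}$ is exact. Given a short exact sequence $0\to N'\to N\to N''\to 0$ of $\GL_{n,\mathbb{Q}}$-modules and an invariant $n''\in (N'')^{\GL_{n,\mathbb{Q}}}$, pick any lift $n\in N$. By local finiteness (Corollary \ref{locallyfinite}), $n$ lies in a finitely generated subcomodule $V\subseteq N$, which is finite-dimensional over the field $\mathbb{Q}$. Its image $V''\subseteq N''$ is then a finite-dimensional subcomodule containing $n''$, and we get a short exact sequence
\[
0\to V\cap N'\to V\to V''\to 0
\]
of finite-dimensional $\GL_{n,\mathbb{Q}}$-modules, in which $n''\in (V'')^{\GL_{n,\mathbb{Q}}}$.

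Finally, I would invoke the classical linear reductivity of $\GL_n$ in characteristic zero: every finite-dimensional rational representation of $\GL_{n,\mathbb{Q}}$ is completely reducible (Weyl's theorem, extended from $\mathbb{C}$ to arbitrary characteristic-zero fields since $\GL_n$ is split reductive). Hence the finite-dimensional sequence above splits as $\GL_{n,\mathbb{Q}}$-modules, and $n''$ lifts to an element of $V^{\GL_{n,\mathbb{Q}}}\subseteq N^{\GL_{n,\mathbb{Q}}}$. This proves surjectivity of $N^{\GL_{n,\mathbb{Q}}}\to (N'')^{\GL_{n,\mathbb{Q}}}$, hence exactness of the invariants functor, and therefore the vanishing of all higher cohomology. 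The main step resting on an external input is the classical linear reductivity of $\GL_n$ in characteristic zero, which is not developed within these notes; everything else reuses machinery already established in the paper.
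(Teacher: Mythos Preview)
Your proof is correct and follows essentially the same approach as the paper: reduce to $\GL_{n,\mathbb{Q}}$ via the base change Lemma~\ref{base change}, then invoke linear reductivity of $\GL_{n,\mathbb{Q}}$. The paper simply asserts linear reductivity as a black box, whereas you spell out the passage from Weyl's theorem on finite-dimensional representations to exactness of the invariants functor on all comodules via local finiteness; this is a welcome elaboration but not a different strategy.
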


\begin{proof}
    Recall that $\GL_n=({\GL_n}_\mathbb{Q})_k$, and hence by base change (see Lemma \ref{base change}) we obtain 
    \[
    H^\ast({\GL_n},M)=H^\ast({\GL_n}_\mathbb{Q},M)
    \]
    Moreover, since ${\GL_n}_\mathbb{Q}$ is linearly reductive we deduce that $H^\ast({\GL_n}_\mathbb{Q},M)=H^0({\GL_n}_\mathbb{Q},M)$. 
\end{proof}

\begin{Prop}\label{CFG for GLn k containing Q}
    Let $k$ be a Noetherian ring containing $\mathbb{Q}$. Then $\GL_n$ satisfies the (CFG) property.
\end{Prop}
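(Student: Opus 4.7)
The plan is short because Lemma \ref{no cohomology in char 0} collapses the question to a question about invariants, which has already been settled. First I would observe that for any finitely generated $\GL_n$--algebra $A$ over $k$, Lemma \ref{no cohomology in char 0} gives $H^i(\GL_n,A)=0$ for all $i>0$, so that
\[
H^\ast(\GL_n,A)\;=\;H^0(\GL_n,A)\;=\;A^{\GL_n}
\]
as graded $k$--algebras (concentrated in degree zero).

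Next I would invoke that $\GL_n$ is a Chevalley group scheme, hence by Corollary \ref{FG for Chevalley} it satisfies (FG) over any Noetherian base, in particular over our $k$ containing $\mathbb Q$. Applied to the finitely generated $\GL_n$--algebra $A$, this yields that $A^{\GL_n}$ is a finitely generated $k$--algebra. Combining with the displayed identification above, $H^\ast(\GL_n,A)$ is a finitely generated $k$--algebra, which is exactly the (CFG) property for $\GL_n$.

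There is no real obstacle here: all the work has been done, in Lemma \ref{no cohomology in char 0} (linear reductivity in characteristic zero via base change) and in Corollary \ref{FG for Chevalley} (Mumford's conjecture for Chevalley group schemes). The characteristic $p$ case is where the substantive argument will live, using the Grosshans filtration machinery of Section \ref{section:GrosshansFiltrations}; this characteristic zero case is essentially a warm-up that records the consequence of the vanishing of higher cohomology.
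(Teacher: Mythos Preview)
Your proof is correct and matches the paper's approach essentially verbatim: apply Lemma \ref{no cohomology in char 0} to reduce $H^\ast(\GL_n,A)$ to $H^0(\GL_n,A)=A^{\GL_n}$, then invoke (FG) for $\GL_n$ (the paper cites Theorem \ref{Hilberts 14th} directly, you cite Corollary \ref{FG for Chevalley}, which is the same content).
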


\begin{proof}
    Let $A$ be a $\GL_n$--algebra finitely generated over $k$. By Lemma \ref{no cohomology in char 0} we have that $H^\ast(\GL_n,A)=H^0(\GL_n,A)$, and the latter is finitely generated since $\GL_n$ has (FG) by Theorem \ref{Hilberts 14th}. 
\end{proof}

\subsubsection*{$k$ contains $\mathbb{F}_p$}

We first need to recall some useful definitions and establish notation that will be needed in this section. In order to avoid confusion, when we want to emphasize that $k$ is a field, we use $\mathbb{F}$ instead. Note that $p\cdot R=0$ and $\mathbb{F}$ is of characteristic $p$ by our initial assumption. 

\begin{Def}
Let $r$ be a non-negative integer. The \textit{$r$th-Frobenius morphism} of a $k$--algebra $A$ is the morphism  $F^r_A\colon A\to A$, $a\mapsto a^{p^r}$. The \textit{Frobenius twist of a $k$--module $M$} is $M\otimes_k  {}_rk$ given by the the base change along $F^r_k$ and it is denoted by $M^{(n)}$, note that we use the notation ${}_r k$ to emphasize that $k$ if viewed as a $k$--module via $F^t_k$. If furthermore $M$ is a $k$--algebra, then $M^{(r)}$ is a $k$--algebra as well. For a $k[\GL_n]$--comodule $(M,\Delta_M)$, we will consider the Frobenius twist $M^{(r)}$ as a $k[\GL_n]$--comodule via the comodule map
\[
M\xrightarrow[]{\Delta_M^{(r)}} (M\otimes k[\GL_n])^{(r)}\xrightarrow[]{\simeq} M^{(r)}\otimes (k[\GL_n])^{(r)}\xrightarrow[]{\mathrm{id}\otimes P^r} (M^{(r)})\otimes k[\GL_n] 
\]
 where $P^r\colon (k[\GL_n])^{(r)}\to k[\GL_n]$ is the algebra map given by sending $a$ to $a^{p^r}$. From this, we obtain a $\GL_n$--module $M^{(r)}$ from a $\GL_n$--module $M$.
\end{Def}

\begin{Rem}
    The Frobenius twist of a finite dimensional $\mathbb{F}$-vector space $V$ can be constructed as the subspace 
$\langle v^{p^r}\mid v\in V\rangle \subset S^{p^r}(V)$, here $S^{p^r}(V)$ denotes the $p^r$th symmetric power of $V$. Let us denote this construction by $V^{(r)'}$. The map 
\[V^{(r)}\to V^{(r)'}, \quad v\otimes1\mapsto v^{p^r}\]
induces an isomorphism. 
\end{Rem}

\begin{Rem}
  Assume that $\mathbb{F}$ is perfect.  Let $v^{(r)}$ denote $v^{p^r}$ when  viewed in $V^{(r)}$. If the $\mathbb{F}$--vector space $V$ has a basis $b_1,\ldots,b_d$, then $b_1^{(r)},\ldots,b_d^{(r)}$ is a basis for $V^{(r)}$.  
\end{Rem}

Recall that the embedding $\mathrm{gr}A\to \mathrm{hull}_\nabla(\mathrm{gr}A)$ is power surjective by Theorem \ref{embedding is power surjective} and both $\mathrm{gr}A$ and $\mathrm{hull}_\nabla(\mathrm{gr}A)$ are finitely generated $k$--algebras by Theorem \ref{A fg grA fg and hullA fg}. In particular, there is  $r$ big enough such that $ x^{p^r}$ lies in $\mathrm{gr}A$, for all $x\in \mathrm{hull}_\nabla(\mathrm{gr}A)$. We obtain a  morphism of $\GL_n$--modules
\[
 (\mathrm{hull}_\nabla(\mathrm{gr}A))^{(r)} \to \mathrm{hull}_\nabla(\mathrm{gr}A), \quad
    a^{(r)}  \mapsto a^{p^r}. 
\]
\noindent and of course, it is a morphism of $k$--algebras as well. By our previous observation, we know that $a^{p^r}$ lies in $\mathrm{gr}A$, so we can consider it as a morphism from $(\mathrm{hull}_\nabla(\mathrm{gr}A))^{(r)}$ to $\mathrm{gr}A$.   This morphism is Noetherian\footnote{Recall that a morphism $f\colon A\to B$ of $G$--algebras is \textit{Noetherian} if  $B$ is a Noetherian $A$--module along $f$.} when $\mathbb{F}$ is perfect, e.g. $\mathbb{F}=\mathbb{F}_p$. Indeed, $(\mathrm{hull}_\nabla(\mathrm{gr}A))^{(r)}$ is finitely generated over $\mathbb{F}$, hence Noetherian by Hilbert's Basis Theorem, and $\mathrm{gr}A$ is finitely generated over $(\mathrm{hull}_\nabla(\mathrm{gr}A))^{(r)}$, for instance, a basis as $\mathbb{F}$-vector space is a generating set.  

Let us summarize our findings so far.  For a general $k$ containing $\mathbb{F}_p$ we obtain a surjective map
\begin{equation*}
\mathrm{hull}_\nabla(\mathrm{gr}A)\otimes_{\mathbb{F}_p}k\xrightarrow[]{\pi} \mathrm{hull}_\nabla(\mathrm{gr}A)    
\end{equation*}
 that is a morphism of ${\GL}_{n,k}$--algebras. Moreover, $\mathrm{hull}_\nabla(\mathrm{gr}A)$ is a $\GL_{n,{\mathbb{F}_p}}$--algebra with a good Grosshans filtration, hence a good filtration since we are over a field.  Thus we get a morphisms of $\GL_{n,k}$--algebras 
\begin{equation}\label{definition of the map Xi}
    (\mathrm{hull}_\nabla(\mathrm{gr}A))^{(r)}\otimes_{\mathbb{F}_p}k \xrightarrow[]{\Xi} \mathrm{gr}A, \quad
    b^{(r)}\otimes \lambda \mapsto \pi(b)^{p^r}\lambda
\end{equation}
\noindent which is Noetherian. This map, together with the main results from Section \ref{section:GrosshansFiltrations}, Friedlander-Sulslin results in \cite{FS97} and some homological algebra will allow us to show that the cohomology of $\mathrm{gr}A$ is a finitely generated algebra over $k$. This will be used to conclude the same property but for $A$. 

We need some preparations. 

\begin{Def}
  Let $G$ be a group scheme defined over $k$. We let $G^{(r)}$ denote the group scheme given by $G^{(r)}(A)=A^{(r)}$, for all $k$--algebras $A$. The \textit{$r$-th Frobenius morphism on $G$}, is the morphism of groups schemes $F_G^r\colon G\to G^{(r)}$ defined by $G(P^r_A)\colon G(A)\to G^{(r)}(A)$, where $P^r_A$ is the algebra map $A\to A^{(r)}$, $a\mapsto a^{(p^r)}$.   The \textit{$r$th Frobenius kernel of $G$} is the normal subgroup scheme of $G$ given as the kernel of the map $F_G^r\colon G\to G^{(r)}$. 
\end{Def}

\begin{Rem}
    The group scheme $G^{(r)}$ corresponds to the fiber product of $G\to \mathrm{Spec}(k)$ and 
    \[
    \mathrm{Spec}(k)\xrightarrow[]{\mathrm{Spec}(F^r_k)}\mathrm{Spec}(k).
    \]
    In other words, it is obtained by base change of schemes. 
\end{Rem}

\begin{Ex}
    The $r$th Frobenius kernel of $\GL_{n,{\mathbb{F}_p}}$ is given by the group scheme represented by the Hopf algebra $\mathbb{F}[\GL_n]/\langle x_{i,j}^{p^r}=\delta_{i,j}\rangle.$
\end{Ex}

Each $r$th Frobenius kernel of $\GL_n$ will give us an extension of group schemes 
\begin{equation}\label{Extension of group schemes by Frobenius kernels}
  1\to   (\GL_n)_r \to \GL_n\to \GL_n/ (\GL_n)_r \to 1.
\end{equation} 
Our next step is to study the cohomology of $\mathrm{gr}A$ making use of the so called Lyndon–Hochschild–Serre spectral sequence associated to the above extension of group schemes. We will give a quick overview of the tools we need. 

\begin{Def}
    A \textit{spectral sequence of algebras} is a sequence of bigraded differential algebras $(E^{\ast,\ast}_r,d_r)_{r\geq r_0}$ with differentials 
    \[d_r\colon E^{s,t}_r\to E^{s+r,t+1-r}_r\]
    and such that $H(E^{\ast,\ast}_r,d)=E^{\ast,\ast}_{r+1}.$ We refer to $E^{\ast,\ast}_{r_0}$ as the \textit{initial page} of the spectral sequence. A spectral sequence  $(E^{\ast,\ast}_r,d_r)_{r\geq r_0}$ \textit{converges} to the graded $k$--algebra $H^\ast$ if the following properties hold. 
    \begin{enumerate}
        \item $E^{\ast,\ast}_\infty$ exists. That is, for any $r,s$ there exists $r_{s,t}$ such that $E^{s,t}_r=E^{s,t}_{r_{s,t}}$ for every $r\geq r_{s,t}$. In this case, $E^{s,t}_\infty:=E^{s,t}_{r_{s,t}}$.
        \item There is an algebra filtration on $H^\ast$ such that 
        \[\bigoplus_{s+t=n}E^{s,t}_\infty\simeq\mathrm{gr}H^n.\] 
    \end{enumerate}
     We will write $E^{\ast,\ast}_r\Rightarrow H^\ast$ if $(E^{\ast,\ast}_r,d_r)_{r\geq r_0}$ converges to $H^\ast$.
\end{Def}

We will record a version of the Lyndon–Hochschild–Serre spectral sequence that is needed in this section. For more details, we refer to \cite[Section I.6.6]{jantzen2003representations}.

\begin{Lemma}
    Let $1\to H\to G\to G/H\to 1$ be an extension of group schemes. Then for any $G$--module $M$ there is a spectral sequence  
    \[
    {}^{LHS}E^{s,t}_2(M)=H^s(G/H,H^t(H,M))\Rightarrow H^{s+t}(G,M)
    \]
    here we are considering the $H^t(H,M)$ as $G/H$--modules by regarding them as the derived functors of 
    \[(-)^H\colon \mathbf{Rep}(G)\to \mathbf{Rep}(G/H).\]
    Moreover, if $A$ is a $G$--algebra, then ${}^{LHS}E^{s,t}_2(A)$ is a spectral sequence of algebras.  
\end{Lemma}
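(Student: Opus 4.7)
The plan is to obtain the spectral sequence from the Grothendieck spectral sequence associated to the composition of derived functors $(-)^G = (-)^{G/H} \circ (-)^H$. First I would verify the functorial identity: given a $G$--module $M$, the $H$--fixed submodule $M^H$ inherits a natural $(G/H)$--action since $H$ is normal in $G$, and an element of $M$ is $G$--invariant if and only if it lies in $M^H$ and is $(G/H)$--invariant there. Both intermediate categories are abelian with enough injectives by Theorem \ref{abelian} and Theorem \ref{injectives}, using the assumed flatness of the group schemes involved, so the derived functors of $(-)^H$ and $(-)^{G/H}$ make sense.

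The main hypothesis to check is that $(-)^H$ sends injective $G$--modules to $(-)^{G/H}$--acyclic objects. By Theorem \ref{injectives}, every injective $G$--module is a direct summand of one of the form $I \otimes k[G]$ with $I$ an injective $k$--module. Using that $H$ acts on $k[G]$ by restriction of the regular representation and that $k[G]^H \cong k[G/H]$, one computes $(I \otimes k[G])^H \cong I \otimes k[G/H]$, where $G/H$ now acts on $k[G/H]$ by the right regular representation. This module is $(G/H)$--acyclic by Corollary \ref{vanishing of cohomology for induced modules} applied to $G/H$. Feeding this into Grothendieck's machinery yields the convergent spectral sequence
\[
{}^{LHS}E^{s,t}_2(M) = H^s(G/H, H^t(H,M)) \Rightarrow H^{s+t}(G,M).
\]

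For the multiplicative structure, when $A$ is a $G$--algebra I would promote the above setup to a bicomplex. Concretely, choose an injective resolution $A \to I^\bullet$ in $\mathbf{Rep}(G)$ and, for each $p$, an injective resolution $I^p \to J^{p,\bullet}$ of the $(G/H)$--module $(I^p)^H$ in $\mathbf{Rep}(G/H)$. The total complex of $(J^{p,q})^{G/H}$ computes $H^\ast(G,A)$, and the row filtration produces the LHS spectral sequence. The multiplication $A \otimes A \to A$ lifts, via the usual diagonal approximation and the defining universal property of injective resolutions, to a cup product on this bicomplex that respects the row filtration; passing to cohomology gives a graded-commutative product on each page compatible with the differentials $d_r$, so that $({}^{LHS}E_r^{\ast,\ast}, d_r)$ becomes a spectral sequence of algebras converging to the graded algebra $H^\ast(G,A)$.

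The hard part will be the multiplicative compatibility, as one cannot in general arrange a Cartan--Eilenberg resolution to be strictly multiplicative; one must instead carry the products coherently up to homotopy at the level of cochains and verify that they descend to a strict multiplication from $E_2$ onward. An alternative route that avoids some of this bookkeeping is to model everything on the Hochschild complex recalled earlier, filtering $C^\bullet(G, A)$ by a normal-subgroup filtration whose associated graded recovers $C^\bullet(G/H, C^\bullet(H, A))$; the cup product on Hochschild cochains is already strict and filtered, which makes the multiplicative convergence essentially formal, at the cost of a more delicate filtration argument on the underlying comodule structure.
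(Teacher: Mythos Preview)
The paper does not actually prove this lemma: it simply records the statement and refers the reader to \cite[Section I.6.6]{jantzen2003representations} for details. Your proposal via the Grothendieck spectral sequence for the composite $(-)^G \cong (-)^{G/H}\circ (-)^H$ is exactly the standard argument (and is what Jantzen does), so you are on the right track.

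One small improvement: your verification that $(-)^H$ sends injectives to $(G/H)$--acyclics goes through the explicit computation $(I\otimes k[G])^H \cong I\otimes k[G/H]$, which implicitly uses that $G/H$ is affine with coordinate ring $k[G/H]$. A cleaner and more robust route is to observe that $(-)^H\colon \mathbf{Rep}(G)\to \mathbf{Rep}(G/H)$ is right adjoint to the exact inflation functor $\mathbf{Rep}(G/H)\to\mathbf{Rep}(G)$, hence preserves injectives outright; this bypasses the identification of $k[G]^H$. Your discussion of the multiplicative structure is honest about the difficulty; the Hochschild-complex route you sketch at the end is indeed the one that makes the filtered cup product most transparent, and is the model Jantzen uses.
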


\begin{Th}\label{cohomology of grA is finitely generated}
    Let $A$ be a $\GL_n$--algebra finitely generated as $k$--algebra. Then $H^\ast(\GL_n,\mathrm{gr}A)$ is finitely generated as $k$--algebra.  
\end{Th}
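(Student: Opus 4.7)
The plan is to treat the two cases for the characteristic of the residue field contained in $k$ separately. If $k\supseteq\mathbb{Q}$, then Lemma \ref{no cohomology in char 0} gives $H^i(\GL_n,\mathrm{gr}A)=0$ for all $i>0$ and Theorem \ref{A fg grA fg and hullA fg} ensures $\mathrm{gr}A$ is a finitely generated $\GL_n$-algebra, so finite generation of $H^\ast(\GL_n,\mathrm{gr}A)=(\mathrm{gr}A)^{\GL_n}$ follows immediately from (FG) for $\GL_n$, proved in Corollary \ref{FG for Chevalley}. From now on I assume that $k$ contains $\mathbb{F}_p$. I would then choose $r$ large enough so that the Noetherian morphism of $\GL_n$-algebras
\[\Xi\colon B:=(\mathrm{hull}_\nabla(\mathrm{gr}A))^{(r)}\otimes_{\mathbb{F}_p}k\longrightarrow \mathrm{gr}A\]
constructed above exists, and analyse the Lyndon–Hochschild–Serre spectral sequence attached to the extension $1\to (\GL_n)_r\to \GL_n\to \GL_n/(\GL_n)_r\to 1$:
\[{}^{LHS}E^{s,t}_2=H^s\bigl(\GL_n/(\GL_n)_r,\,H^t((\GL_n)_r,\mathrm{gr}A)\bigr)\Longrightarrow H^{s+t}(\GL_n,\mathrm{gr}A).\]

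Because $\Xi$ is built from a Frobenius twist followed by a base change, the $(\GL_n)_r$-action on $B$ is trivial and $B$ descends naturally to a $\GL_n/(\GL_n)_r\cong \GL_n^{(r)}$-algebra. Moreover $\mathrm{hull}_\nabla(\mathrm{gr}A)$ has a good Grosshans filtration by definition, hence a good filtration, and since good filtrations are stable under Frobenius twist and flat base change $\mathbb{F}_p\to k$, the algebra $B$ is a $\GL_n^{(r)}$-algebra with good filtration, finitely generated over $k$ by Theorem \ref{A fg grA fg and hullA fg}. Next I would invoke the full strength of Friedlander–Suslin applied to the finite group scheme $(\GL_n)_r$ and the Noetherian $B$-module $\mathrm{gr}A$: this yields that $C:=H^\ast((\GL_n)_r,\mathrm{gr}A)$ is a finitely generated $k$-algebra and Noetherian as a module over $B$, compatibly with the $\GL_n^{(r)}$-action.

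Applying Theorem \ref{Thm F} to the resulting map $B\to C$ of $\GL_n^{(r)}$-algebras then produces a uniform integer $N$ with $H^s(\GL_n^{(r)},C)=0$ for $s>N$, and tells us that each $H^s(\GL_n^{(r)},C)$ is a Noetherian $C^{\GL_n^{(r)}}$-module. Respecting the internal $t$-grading on $C$, this reads ${}^{LHS}E^{s,t}_2=0$ for all $s>N$ uniformly in $t$, while ${}^{LHS}E^{\ast,\ast}_2$ is a finitely generated bigraded $k$-algebra. To conclude I would apply Evens' spectral sequence lemma (Lemma \ref{Lemma analysis of ss}): because the differential $d_r$ has bidegree $(r,1-r)$, the uniform horizontal bound forces $d_r=0$ for every $r>N$, so the spectral sequence collapses at $E_{N+1}$; combined with finite generation of the $E_2$ page this yields that $H^\ast(\GL_n,\mathrm{gr}A)$ is a finitely generated $k$-algebra.

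The main obstacle is bringing the Friedlander–Suslin theorem to bear in exactly the right form: one needs not merely that $H^\ast((\GL_n)_r,\mathrm{gr}A)$ is degreewise finitely generated, but that it is a genuine finitely generated $\GL_n^{(r)}$-algebra which is Noetherian over $B$ compatibly with the outer $\GL_n^{(r)}$-structure, because only then do the hypotheses of Theorem \ref{Thm F} produce the uniform vanishing bound needed for collapse. A secondary delicate point is verifying that Frobenius twisting preserves the good-filtration property after extending scalars from $\mathbb{F}_p$ to the general Noetherian base $k$, so that $B$ genuinely falls inside the scope of Theorem \ref{Thm F}.
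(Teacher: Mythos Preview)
Your overall architecture is right, but there is a genuine gap at the Friedlander--Suslin step. You claim that $C=H^\ast((\GL_n)_r,\mathrm{gr}A)$ is Noetherian as a module over $B=(\mathrm{hull}_\nabla(\mathrm{gr}A))^{(r)}\otimes_{\mathbb{F}_p}k$ via $\Xi$. This is false: the map $B\to C$ induced by $\Xi$ lands entirely in cohomological degree~$0$ (since $(\GL_n)_r$ acts trivially on $B$, so $B=H^0((\GL_n)_r,B)\to H^0((\GL_n)_r,\mathrm{gr}A)\subset C$), whereas $C$ has nontrivial elements in arbitrarily high cohomological degree coming from $H^\ast((\GL_n)_r,k)$. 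So $C$ cannot be a Noetherian $B$--module, and the hypotheses of Theorem~\ref{Thm F} are not met. Friedlander--Suslin does not assert Noetherianity over $B$; what Theorem~\ref{FS full strength} actually produces is a Noetherian map
\[
\upphi_{FS}\colon \mathbb{S}_r\otimes_{\mathbb{F}_p} B \longrightarrow H^\ast((\GL_n)_r,\mathrm{gr}A),
\]
where $\mathbb{S}_r=\bigotimes_{i=1}^r\mathbb{F}_p[\mathfrak{gl}_n^{(r)}\langle 2p^{i-1}\rangle]$ supplies the positive-degree generators. The factor $\mathbb{S}_r$ is not optional.

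The paper's proof fixes exactly this: one must carry $\mathbb{S}_r$ along, then \emph{untwist} via the isomorphism $\GL_n/(\GL_n)_r\xrightarrow{\sim}\mathbf{G}=\GL_n$ induced by $F^r$. Under this untwisting, the source $\mathbb{S}_r\otimes_{\mathbb{F}_p}B$ becomes, as a $\mathbf{G}$--algebra, $\bigl(\bigotimes_{i=1}^r\mathbb{F}_p[\mathfrak{gl}_n\langle 2p^{i-1}\rangle]\bigr)\otimes_{\mathbb{F}_p}\mathrm{hull}_\nabla(\mathrm{gr}A)\otimes_{\mathbb{F}_p}k$. The crucial additional input is that $\bigotimes_i\mathbb{F}_p[\mathfrak{gl}_n]$ has a good filtration (this is \cite{ABW82}); together with the good filtration on $\mathrm{hull}_\nabla(\mathrm{gr}A)$, the tensor product has one, and \emph{now} Corollary~\ref{Cor: resolution by modules with a good filtration} applies to give the vanishing and Noetherianity that feed into Lemma~\ref{Lemma analysis of ss}. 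Note also that the untwisting sidesteps your secondary worry about Frobenius twist preserving good filtrations: after untwisting one is working with $\mathrm{hull}_\nabla(\mathrm{gr}A)$ and $\mathbb{F}_p[\mathfrak{gl}_n]$ directly.
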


The proof will consist in verifying that the spectral sequence ${}^{LHS}E^{s,t}_2(\mathrm{gr}A)$ corresponding to the extension of group schemes given in Equation \ref{Extension of group schemes by Frobenius kernels}, for a big enough $r$, satisfies the conditions of the following lemma. 
\begin{Lemma}\label{Lemma analysis of ss}
    Let $k$ be a Noetherian ring containing $\mathbb{F}_p$ and $(E^{\ast,\ast}_r,d_r)_{r\geq r_0}$ be a spectral sequence of $k$--algebras which converges to $H^\ast$. Assume that: 
    \begin{itemize}
        \item[(C1)] $E^{\ast,\ast}_{r_0}$ is finitely generated as a $k$--algebra.
        \item[(C2)] There is $r_\mathrm{max}$ such that $E^{\ast,\ast}_{r_\mathrm{max}}=E^{\ast,\ast}_\infty$.
        \item[(C3)] The filtration on $H^\ast$ is finite and restrict to a finite filtration on $H^n$ for every degree $n$.
    \end{itemize}
    Then $H^\ast$ is finitely generated as a $k$--algebra. 
\end{Lemma}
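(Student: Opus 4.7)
The plan is to establish finite generation in two stages: first show each page $E_r^{*,*}$ for $r_0 \le r \le r_{\max}$ is a finitely generated $k$-algebra, then transfer the finite generation from $E_\infty^{*,*}$ to $H^*$ using the finite filtration.

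First I would induct on $r$. The base case $r = r_0$ is condition (C1). For the inductive step, assume $E_r$ is finitely generated as a $k$-algebra, hence Noetherian since $k$ is Noetherian. Working in the graded-commutative setting supplied by the cohomological spectral sequence, $d_r$ is a graded derivation of total degree $1$. The characteristic $p$ hypothesis on $k$ now becomes crucial: for any bihomogeneous $a \in E_r$ of even total degree the Leibniz rule yields $d_r(a^p) = p\, a^{p-1} d_r(a) = 0$, while for odd total degree in characteristic $p > 2$ graded commutativity forces $a^2 = 0$ (and in characteristic $2$ the algebra is strictly commutative, so the same $p$-th power argument applies uniformly). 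Fixing bihomogeneous algebra generators $a_1, \ldots, a_m$ of $E_r$, let $B \subseteq \ker d_r$ be the $k$-subalgebra generated by the $p$-th powers $a_i^p$. Then $E_r$ is module-finite over $B$, spanned by the finitely many reduced monomials $a_1^{e_1} \cdots a_m^{e_m}$ whose exponents are bounded by $p-1$ for even $a_i$ and by $1$ for odd $a_i$. Since $B$ is a finitely generated $k$-algebra, it is Noetherian, and so $E_r$ is a Noetherian $B$-module; the $B$-submodule $\ker d_r \subseteq E_r$ is therefore finitely generated over $B$, and hence $\ker d_r$ is a finitely generated $k$-algebra. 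Passing to the quotient, $E_{r+1} = \ker d_r / \im d_r$ is a finitely generated $k$-algebra, closing the induction. By (C2), $E_\infty^{*,*} = E_{r_{\max}}^{*,*}$ is a finitely generated $k$-algebra.

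Next I would transfer finite generation to $H^*$. Choose bihomogeneous algebra generators $\bar{x}_1, \ldots, \bar{x}_N$ of $E_\infty^{*,*}$ with $\bar{x}_i \in E_\infty^{s_i, t_i}$, and pick representatives $x_i \in F^{s_i} H^{s_i + t_i}$. The claim is that the $x_i$ generate $H^*$ as a $k$-algebra. Given $y \in H^n$, select the largest $s$ with $y \in F^s H^n$; the class of $y$ in $F^s H^n / F^{s+1} H^n = E_\infty^{s, n-s}$ is a polynomial $P$ in the $\bar{x}_j$ of total bidegree $(s, n-s)$. Multiplicativity of the filtration ensures $P(x_j) \in F^s H^n$ with the same image in $E_\infty^{s, n-s}$, so $y - P(x_j) \in F^{s+1} H^n$. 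Iterating this descent along the finite filtration of $H^n$ terminates after finitely many steps by (C3), producing an expression of $y$ as a polynomial in the $x_j$.

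The delicate point, and the only place where the hypothesis that $k$ contains $\mathbb{F}_p$ is used, is the inductive passage $E_r \to E_{r+1}$. A priori $\ker d_r$ is just a graded subalgebra of the finitely generated $k$-algebra $E_r$, and subalgebras of finitely generated $k$-algebras need not themselves be finitely generated. The $p$-th power trick is exactly what averts this: it produces a finitely generated subalgebra $B \subseteq \ker d_r$ over which $E_r$ is module-finite, after which Noetherianity of $B$ forces $\ker d_r$ to be finitely generated. This obstruction is what prevents a direct characteristic-free proof and explains the separate simpler treatment of characteristic zero given in Proposition \ref{CFG for GLn k containing Q}.
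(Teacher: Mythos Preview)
Your proof is correct and follows essentially the same approach as the paper: both use the $p$-th power trick to exhibit a finitely generated subalgebra of cycles over which each page is module-finite (the paper phrases this as ``integral over'' and invokes the Artin--Tate Lemma, while you argue module-finiteness directly), deduce that $\ker d_r$ and hence $E_{r+1}$ is finitely generated, and then lift generators of $E_\infty$ through the finite filtration on $H^*$. Your treatment of the odd-degree generators and the graded Leibniz rule is in fact more careful than the paper's somewhat abbreviated $d(a^p)=p\,d(a)=0$.
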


\begin{proof}
    For a differential graded algebra $(A^\ast,d)$, consider the subalgebra $B^\ast$ generated by $p$-powers of elements in $A^\ast$. By definition, $A^\ast$ is integral over $B^\ast$. Since $pR=0$ we get that $d(a^p)=pd(a)=0$. In other words, $B^\ast\subseteq \mathrm{ker}(d)$, and hence $A^\ast$ is integral over $\mathrm{ker}(d)$ as well. By the Artin-Tate Lemma we obtain that $\mathrm{ker}(d)$ is a finitely generated algebra over $k$, then $H^\ast(A^\ast,d)$ is finitely generated over $k$. Thus the pages $E_r^{\ast,\ast}$ are finitely generated algebras over $k$, for all $r\geq r_0$. Since the spectral sequence collapses, the algebra $\bigoplus_{s+t=n}E^{s,t}_\infty$ must be finitely generated over $k$ and it agrees with $\mathrm{gr}(H^\ast)$ for some algebra filtration on $H^\ast$. If furthermore the filtration on $H^\ast$ is finite and restricts to a finite filtration on each degree, then we can use a finite set of generators of $\mathrm{gr}(H^\ast)$ to generate $H^\ast$. 
\end{proof}

In order to verify conditions (C1) and (C2) we need to invoke some results from \cite{FS97} and, for convenience, we will include the statements that we need here. Let $\mathfrak{gl}_n$ denote the \textit{adjoint representation} of $\GL_{n,{\mathbb{F}_p}}$, that is, the space of $n\times n$--matrices with the conjugation action. Recall that $\mathfrak{gl}_n^{(r)}$ denotes the $r$th-Frobinius twist of $\mathfrak{gl}_n$. Note that $\mathfrak{gl}_n^{(r)}$ is a vector space of dimension $n^2$ with trivial action of $(\GL_{n,{\mathbb{F}_p}})_r$ as we will see in the following remark. 

\begin{Rem}
Let  $l$ be a ring, and let $V$ be a $l$--module with an action $\rho$ of $\GL_{n,l}$. Consider \[
V^{[r]}=(V,\rho\circ F^r)
\] 
where $F^r\colon \GL_{n,l}\to \GL_{n,l}$ is given by $[a_{ij}]\mapsto [a_{ij}^{p^r}]$. If $l=\mathbb{F}_p$, then $V^{[r]}=V^{(r)}$. Since $(\GL_n)_r=\mathrm{Ker}F^r$, we obtain that the action of $(\GL_n)_r$ on $V^{[r]}$  is trivial.    
\end{Rem}

Let  
\[
\mathbb{S}_r=\bigotimes_{i=1}^r \mathbb{F}_p[\mathfrak{gl}_n^{(r)}\langle 2p^{i-1}\rangle]
\]
be the tensor product of polynomial algebras over copies of $\mathfrak{gl}_n^{(r)}$. This is a  graded $\mathbb{F}_p$--algebra and the symbol $\langle 2p^{i-1}\rangle$ indicates that the $i$th copy of $\mathfrak{gl}_n^{(r)}$ is in degree $2p^{i-1}$. In particular, $\mathbb{S}_r$ is a $\GL_{n,{\mathbb{F}_p}}$--algebra that is finitely generated as $\mathbb{F}_p$--algebra. Moreover, the action of $\GL_{n,{\mathbb{F}_p}}$ on $\mathbb{S}_r$ restricts to a trivial action of $(\GL_{n,{\mathbb{F}_p}})_r$.

For the rest of this section, let $G$ denote $\GL_n$ over $k$. 

\begin{Th}\label{FS full strength}
    There exists a Noetherian morphism of $G_{\mathbb{F}_p}$--algebras 
    \[
    \upphi_{FS}\colon \mathbb{S}_r\to H^\ast(G_r,\mathbb{F}_p).
    \]
    Moreover, if $C$ and $D$ are two $G_{\mathbb{F}_p}$--algebras and $f\colon C\to D$ is a Noetherian map, then the morphism  
\begin{align*}
    \upphi_{FS}\colon \mathbb{S}_r\otimes H^0(G_r,C) & \to H^\ast(G_r,D)\\
    x\otimes g & \mapsto \upphi_{FS}(x)\cup f(g)
\end{align*} is Noetherian as well. 
\end{Th}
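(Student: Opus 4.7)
The proof I would give is essentially that of Friedlander-Suslin \cite{FS97}, with the construction of the universal classes cleaned up via Touz\'e's work. The plan splits naturally into three stages: constructing $\upphi_{FS}$, proving Noetherianity over $\mathbb{F}_p$, and refining the statement to accommodate a general Noetherian morphism $C\to D$.

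To construct $\upphi_{FS}$, the plan is to invoke the existence of universal cohomology classes (Theorem \ref{Thm Touze}), which for each $i=1,\ldots,r$ produces a class $e_i\in H^{2p^{i-1}}(\GL_{n,\mathbb{F}_p},\mathfrak{gl}_n^{(r)})$. The key observation is that the $\GL_n$-action on $\mathfrak{gl}_n^{(r)}$ factors through the $r$th Frobenius, which vanishes on the kernel $G_r$; hence, upon restriction to $G_r$, the coefficients become a trivial module. Each $\mathrm{res}(e_i)$ then corresponds to a linear map $(\mathfrak{gl}_n^{(r)})^\ast\to H^{2p^{i-1}}(G_r,\mathbb{F}_p)$, which extends by the universal property to an algebra map from the symmetric algebra $\mathbb{F}_p[\mathfrak{gl}_n^{(r)}\langle 2p^{i-1}\rangle]$. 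Cup product assembles the collection into a $G_{\mathbb{F}_p}$-equivariant algebra morphism $\upphi_{FS}\colon \mathbb{S}_r\to H^\ast(G_r,\mathbb{F}_p)$.

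The hard part, and the technical core of the Friedlander-Suslin theorem, is showing that $\upphi_{FS}$ is Noetherian. My plan is induction on $r$. The base case $r=1$ is essentially the Friedlander-Parshall computation of $H^\ast(G_1,\mathbb{F}_p)$, where the image of $\mathbb{F}_p[\mathfrak{gl}_n^{(1)}]$ can be matched explicitly with known polynomial generators. For the inductive step, I would analyse the Lyndon-Hochschild-Serre spectral sequence
\[
E_2^{s,t}=H^s\!\left(G_{r-1}^{(1)}, H^t(G_1,\mathbb{F}_p)\right)\Rightarrow H^{s+t}(G_r,\mathbb{F}_p)
\]
associated to the extension $1\to G_1\to G_r\to G_{r-1}^{(1)}\to 1$. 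Compatibility of $\upphi_{FS}$ with this filtration gives the $E_2$-page the structure of a bigraded module over $\mathbb{S}_r$, and the inductive hypothesis together with the base case yield that $E_2$ is a finitely generated $\mathbb{S}_r$-module. The main obstacle is proving that the spectral sequence collapses at a finite page: one must show that the differentials annihilate sufficiently high powers of the polynomial generators, which rests on the explicit construction of $e_i$ and on detection through restriction to suitable one-parameter subgroup schemes. Once this is established, Lemma \ref{Lemma analysis of ss} concludes that the abutment is Noetherian over $\mathbb{S}_r$.

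For the refined statement, given a Noetherian map $f\colon C\to D$ of $G_{\mathbb{F}_p}$-algebras, the plan is a two-step amplification. First, the cup product promotes the $\mathbb{S}_r$-action on $H^\ast(G_r,\mathbb{F}_p)$ to an $\mathbb{S}_r\otimes H^0(G_r,C)$-action on $H^\ast(G_r,C)$, and a standard flat-base-change argument—together with the first part of the theorem—shows Noetherianity of $H^\ast(G_r,C)$ over $\mathbb{S}_r\otimes H^0(G_r,C)$. Second, exploit the hypothesis that $D$ is a Noetherian $C$-module to filter $D$ by finitely generated $C$-submodules and run the long exact sequence in cohomology, yielding $H^\ast(G_r,D)$ as a Noetherian $H^\ast(G_r,C)$-module. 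Composing the two module structures produces the desired Noetherianity of $\mathbb{S}_r\otimes H^0(G_r,C)\to H^\ast(G_r,D)$.
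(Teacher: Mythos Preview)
The paper does not supply its own proof of this theorem: it is explicitly imported as a black box from \cite{FS97} (see the sentence immediately preceding the statement). Your outline is a faithful sketch of the Friedlander--Suslin argument, so in spirit you are doing exactly what the paper defers to.

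That said, there is a genuine conflation in your construction step. Theorem~\ref{Thm Touze} produces Touz\'e's classes $c[i]\in H^{2i}(\GL_n,\Gamma^i(\mathfrak{gl}_n^{(1)}))$, with divided-power coefficients and degrees $2i$; it does \emph{not} directly produce classes $e_i\in H^{2p^{i-1}}(\GL_n,\mathfrak{gl}_n^{(r)})$ as you claim. The Friedlander--Suslin classes $e_r$ are constructed by an entirely different route---as generators of $\mathrm{Ext}^{2p^{r-1}}_{\mathcal P}(I^{(r)},I^{(r)})$ in the category of strict polynomial functors, then evaluated on $k^n$---and this construction predates Touz\'e's work. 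In the paper's architecture the two sets of classes play distinct roles: the Friedlander--Suslin classes build $\upphi_{FS}$ (this theorem), while Touz\'e's classes are invoked only later to build the map $\upphi_{\mathrm{vdK}}$ needed for Claim~\ref{E_1 is Noetherian over R}. So your plan to ``clean up'' the construction of $\upphi_{FS}$ via Theorem~\ref{Thm Touze} does not work as stated; you would need the original strict-polynomial-functor construction of the $e_i$, or else a substantial additional argument explaining how to extract them from the $c[i]$.

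The inductive Noetherianity argument and the amplification to general $f\colon C\to D$ are correctly sketched and match the structure of \cite{FS97}.
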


Recall from Equation \ref{definition of the map Xi} that we have a Noetherian morphism 
\[
\Xi\colon (\mathrm{hull}_\nabla(\mathrm{gr}A))^{(r)}\otimes_{\mathbb{F}_p}k\xrightarrow[]{\pi} \mathrm{hull}_\nabla(\mathrm{gr}A)
\]
for some $r$ large enough. Thus we can apply Theorem \ref{FS full strength} to $D=\mathrm{gr}A$, $C=(\mathrm{hull}_\nabla(\mathrm{gr}A))^{(r)}\otimes_{\mathbb{F}_p} k$ and $f=\Xi$. Hence we obtain a Noetherian  map 
\[
\upphi_{FS}(\mathrm{gr}A)\colon \mathbb{S}_r\otimes_{\mathbb{F}_p}H^0(G_{r,\mathbb{F}_p}, (\mathrm{hull}_\nabla(\mathrm{gr}A))^{(r)}\otimes_{\mathbb{F}_p}k)\to H^\ast({G_r}_{\mathbb{F}_p},\mathrm{gr}A).
\]
But, as we already observed, the action of $G_{r,\mathbb{F}_p}$ on $(\mathrm{hull}_\nabla(\mathrm{gr}A))^{(r)}=(\mathrm{hull}_\nabla(\mathrm{gr}A))^{[r]}$ is trivial, then it is also trivial on $(\mathrm{hull}_\nabla(\mathrm{gr}A))^{(r)}\otimes_{\mathbb{F}_p} k$. Then we obtain a map 
\[
\upphi_{FS}(\mathrm{gr}A)\colon \mathbb{S}_r\otimes_{\mathbb{F}_p} (\mathrm{hull}_\nabla(\mathrm{gr}A))^{(r)}\otimes_{\mathbb{F}_p}k\to H^\ast({G_r},\mathrm{gr}A).
\]
Here we have used the Base Change Lemma \ref{base change} to remove the $\mathbb{F}_p$ from the right hand side. We are almost ready to prove Theorem \ref{cohomology of grA is finitely generated}, but first we need to \textit{untwist}  $\mathbb{S}_r\otimes_{\mathbb{F}_p} (\mathrm{hull}_\nabla(\mathrm{gr}A))^{(r)}\otimes_{\mathbb{F}_p}k$. For this, consider the following diagram

\begin{center}
        \begin{tikzcd}
           G_r \arrow[r, hook] & G \arrow[rd,"F^r"']\arrow[r]&G/G_r \arrow[d,"\simeq"] \\
            & &\mathbf{G}
        \end{tikzcd}
    \end{center}

\noindent where $\mathbf{G}$ is simply $G$, however, we want to emphasize that we are referring to the one on the lower-right corner of the above diagram. Then, regarding $\mathbb{S}_r\otimes H^0({G_r}_{\mathbb{F}_p},B^{(r)}\otimes k)$ as a $\mathbf{G}$--module is isomorphic to 
\[
\bigotimes_{i=n}^r\mathbb{F}_p[\mathfrak{gl}_n\langle 2p^{i-1}\rangle]\otimes_{\mathbb{F}_p} \mathrm{hull}_\nabla(\mathrm{gr}A) \otimes_{\mathbb{F}_p}k. 
\]
Indeed, we know that  $\mathfrak{gl}_n^{(n)}=\mathfrak{gl}_n^{[r]}$, and this corresponds to the adjoint representation of $\mathbf{G}$ under the pullback along $F^r$.  

\begin{proof}[Proof of Theorem \ref{cohomology of grA is finitely generated}]
    By \cite{ABW82} we know that $\bigotimes_{i=n}^r\mathbb{F}_p[\mathfrak{gl}_n\langle 2p^{i-1}\rangle]$ has a good filtration. Moreover, we already observed that $\mathrm{hull}_\nabla(\mathrm{gr}A)$ has a good filtration as well. It follows that 
    \[
    \bigotimes_{i=n}^r\mathbb{F}_p[\mathfrak{gl}_n\langle 2p^{i-1}\rangle]\otimes_{\mathbb{F}_p} \mathrm{hull}_\nabla(\mathrm{gr}A) \otimes_{\mathbb{F}_p}k
    \]
    has a good filtration. Thus we can use our Noetherian map $\upphi_{FS}(\mathrm{gr}A)$ combined with the fact that $\mathbf{G}=GL_n$ and Corollary \ref{Cor: resolution by modules with a good filtration} to obtain:
    
    \begin{itemize}
    \item $H^s(G/G_r,H^t(G_r,\mathrm{gr}A))=0$ if $s>>0$, this is condition (C2).
    \item $H^\ast(G/G_r,H^\ast(G_r,\mathrm{gr}A))$ is finitely generated as a $k$--algebra, this is condition (C1).
\end{itemize}
 Condition (C3) always holds for the Lyndon–Hochschild–Serre spectral sequence. By Lemma \ref{Lemma analysis of ss} we conclude that $H^\ast(G,\mathrm{gr}A)$ is finitely generated as $k$--algebra.
\end{proof}

\begin{Rem}
    We stress that the previous proof makes use of the results obtained in Section \ref{section:GrosshansFiltrations} regarding good Grosshans filtrations. Hence, as promised, this highlights their relevance in the (CFG) property for $\GL_n$ and for finite group schemes.
\end{Rem}


We will use analysis of  the spectral sequence of algebras associated to a filtered algebra $A$:
\begin{equation}\label{Eq: 2nd spectral sequence needed}
E_1^{s,t}(A)=H^{s+t}(G,\mathrm{gr}_{-s}A)\Rightarrow H^{s+t}(G,A)    
\end{equation}

to prove Theorem \ref{CFG for GLn}. In particular, we will consider $A$ with its Grosshans filtration. Hence, condition (C1) is satisfied by Theorem \ref{cohomology of grA is finitely generated}, so it remains to prove condition (C2). Let us recall ourselves the following result due to Evens, see \cite{Eve61}. 

\begin{Lemma}
    If there exists a ring $R$ such that  $(E^{\ast,\ast}_r,d_r)_{r\geq1}$ is a spectral sequence of $R$--modules and $E^{\ast,\ast}_1$ is a Noetherian module over $R$, then there is $r_\mathrm{max}$ such that $E^{\ast,\ast}_{r_\mathrm{max}}=E^{\ast,\ast}_\infty$. 
\end{Lemma}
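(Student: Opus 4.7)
The plan is to translate the collapse question into chain conditions for two families of submodules of $E^{\ast,\ast}_1$, and then invoke Noetherianity. Recall the standard bookkeeping: inside the $R$-module $E^{\ast,\ast}_1$ one identifies ``boundaries'' $B_r$ and ``cycles'' $Z_r$ fitting into a sandwich
\[
0 = B_1 \subseteq B_2 \subseteq \cdots \subseteq Z_2 \subseteq Z_1 = E^{\ast,\ast}_1,
\]
with $E^{\ast,\ast}_r \cong Z_r/B_r$ for every finite $r$, and $E^{\ast,\ast}_\infty \cong Z_\infty / B_\infty$ where $Z_\infty = \bigcap_r Z_r$ and $B_\infty = \bigcup_r B_r$.

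The first step is to apply Noetherianity of $E^{\ast,\ast}_1$: the ascending chain $B_1 \subseteq B_2 \subseteq \cdots$ of $R$-submodules must stabilize, so there is an $r_0$ with $B_r = B_{r_0}$ for all $r \geq r_0$. The only genuine content of the argument is then the identification
\[
Z_r/Z_{r+1} \;\cong\; B_{r+1}/B_r,
\]
which one obtains by applying the first isomorphism theorem bidegreewise to $d_r\colon E^{\ast,\ast}_r \to E^{\ast,\ast}_r$: its kernel is $Z_{r+1}/B_r$ and its image is $B_{r+1}/B_r$, so that $(Z_r/B_r)/(Z_{r+1}/B_r) \cong B_{r+1}/B_r$. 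This is the mechanism that propagates stabilization from the $B$'s to the $Z$'s: as soon as $B_{r+1}/B_r = 0$ for all $r \geq r_0$, one also has $Z_r = Z_{r_0}$ for $r \geq r_0$.

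Putting both stabilizations together, $E^{\ast,\ast}_r = Z_{r_0}/B_{r_0} = E^{\ast,\ast}_{r_0}$ for all $r \geq r_0$, and the same equality then holds at $r = \infty$. Setting $r_{\max} = r_0$ finishes the proof. There is no real obstacle here: Noetherianity provides only one-sided (ascending) chain stability, and the entire trick is to use the identification $Z_r/Z_{r+1} \cong B_{r+1}/B_r$ to upgrade ACC for the $B_r$ into simultaneous stabilization of both chains — everything else is unwinding the definition of a spectral sequence.
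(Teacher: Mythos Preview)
Your argument is correct and is the standard one: Noetherianity of $E^{\ast,\ast}_1$ over $R$ forces the ascending chain of boundary submodules $B_r$ to stabilize, and the isomorphism $Z_r/Z_{r+1}\cong B_{r+1}/B_r$ (induced bidegreewise by $d_r$) then forces the $Z_r$ to stabilize as well, so all pages freeze from some $r_0$ onward. The paper does not supply a proof of this lemma; it merely records the statement and attributes it to Evens \cite{Eve61}, so there is nothing to compare against beyond noting that your write-up is exactly the classical argument one finds behind that citation.
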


Hence we have to find a suitable ring $R$ such that the spectral sequence from \ref{Eq: 2nd spectral sequence needed} becomes a spectral sequence of $R$--modules satisfying the conditions from the previous lemma. A natural candidate is 
\[
\bigoplus_{i\geq0}t^iA_{\leq i}\subset A[t]
\]
where $A_{\leq i} $ denotes the Grosshans filtration of $A$. Let $\mathcal{A}$ denote $\bigoplus_{i\geq0}t^iA_{\leq i}$.

Evaluation on $t=1$ gives us 
\[
\mathrm{ev}\colon\mathcal{A}\to A
\]
which is a map of $k$--algebras that is compatible with the filtration on $\mathcal{A}$ given by $F_j\mathcal{A}=\bigoplus_{0\leq i\leq j} t^i A_{\leq i}$. Hence, the evaluation map induces a morphism of spectral sequences (all maps are of $k$--algebras) 
\[
E^{\ast,\ast}_r(\mathrm{ev})\colon E^{\ast,\ast}_r(\mathcal{A}) \to E^{\ast,\ast}_r(A)
\]
and note that $E^{\ast,\ast}_r(\mathcal{A})$ is a trivial spectral sequence; all the differentials are zero and all its pages are isomorphic to $H^\ast(G,\mathcal{A})$. Let $R$ denote $H^\ast(G,\mathcal{A})=H^\ast(G,\mathrm{gr}A)$. Then $E^{\ast,\ast}(A)$ is a spectral sequence of $R$--modules. It remains to prove the following claim.

\begin{Claim}\label{E_1 is Noetherian over R}
    $E^{\ast,\ast}_1(A)=H^\ast(G,\mathrm{gr}A)$ is Noetherian over $R$.    
\end{Claim}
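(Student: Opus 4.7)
My plan is to exhibit the Rees algebra $\mathcal{A}$ as itself a $G$--algebra to which Theorem \ref{cohomology of grA is finitely generated} applies, giving $R = H^\ast(G,\mathcal{A})$ the structure of a Noetherian ring. Then I would use the short exact sequence $0\to t\mathcal{A}\to \mathcal{A}\to \mathrm{gr}A\to 0$ to realise $E^{\ast,\ast}_1(A)=H^\ast(G,\mathrm{gr}A)$ as an extension of two finitely generated $R$--modules, which is enough to conclude Noetherianity.

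First I would check that $\mathcal{A}$ is a finitely generated $G$--algebra: if $a_1,\ldots,a_m$ generate $A$ as a $k$--algebra with each $a_j\in A_{\leq d_j}$, then $\{t,t^{d_1}a_1,\ldots,t^{d_m}a_m\}$ generate $\mathcal{A}$, using the subadditivity $A_{\leq i}\cdot A_{\leq j}\subseteq A_{\leq i+j}$ of the Grosshans filtration (Grosshans height is linear on weights). Crucially, the filtration $F_j\mathcal{A}=\bigoplus_{i\leq j}t^iA_{\leq i}$ has $F_j\mathcal{A}/F_{j-1}\mathcal{A}\cong t^jA_{\leq j}$, so its associated graded is canonically isomorphic to $\mathcal{A}$ itself as a $G$--algebra. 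This both accounts for the triviality of $E_r^{\ast,\ast}(\mathcal{A})$ and lets me apply Theorem \ref{cohomology of grA is finitely generated} to $\mathcal{A}$ in place of $A$: the conclusion is that $R=H^\ast(G,\mathcal{A})$ is a finitely generated $k$--algebra, and in particular a Noetherian ring.

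Next, the projection $\mathcal{A}\twoheadrightarrow\mathcal{A}/t\mathcal{A}\cong\mathrm{gr}A$ is a surjective morphism of $G$--algebras, which I will need to identify with the comparison map $\mathrm{gr}(\mathrm{ev})\colon \mathrm{gr}^F\mathcal{A}\to\mathrm{gr}A$ between associated gradeds. On cohomology this produces the ring map $R\to H^\ast(G,\mathrm{gr}A)=E^{\ast,\ast}_1(A)$, which is precisely the action of $R=E^{\ast,\ast}_\infty(\mathcal{A})$ on the $E_1$--page through the morphism of spectral sequences. Since $t$ is $G$--invariant, multiplication by $t$ gives a $G$--linear isomorphism $\mathcal{A}\xrightarrow{\sim} t\mathcal{A}$, under which the inclusion $t\mathcal{A}\hookrightarrow\mathcal{A}$ becomes the endomorphism $\mathcal{A}\xrightarrow{\cdot t}\mathcal{A}$. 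The long exact cohomology sequence attached to $0\to t\mathcal{A}\to\mathcal{A}\to\mathrm{gr}A\to 0$ therefore reads
\[
\ldots \to R \xrightarrow{\cdot t} R \to H^n(G,\mathrm{gr}A) \to R \xrightarrow{\cdot t} R \to \ldots,
\]
and cutting this into short exact sequences in each cohomological degree exhibits $H^n(G,\mathrm{gr}A)$ as an extension of pieces of $R/tR$ and $\Ann_R(t)$. Both of these are finitely generated $R$--modules since $R$ is Noetherian, so $E^{\ast,\ast}_1(A)$ is itself a finitely generated, hence Noetherian, $R$--module.

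The hardest part will be the bookkeeping required to identify the module structures: first, verifying that the $R$--action on the $E_1$--page from the spectral-sequence comparison via $\mathrm{ev}\colon\mathcal{A}\to A$ coincides with the one induced by the Rees specialisation $\mathcal{A}\to\mathrm{gr}A$, and second, keeping track of the internal $t$--grading shifts consistently across the long exact sequence so that the extension pieces are genuinely finitely generated rather than merely graded-finitely-generated.
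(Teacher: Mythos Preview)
Your argument has a genuine gap at the step where you invoke Theorem~\ref{cohomology of grA is finitely generated} for $\mathcal{A}$ to conclude that $R=H^\ast(G,\mathcal{A})$ is finitely generated. That theorem asserts that $H^\ast(G,\mathrm{gr}\,B)$ is finitely generated for a finitely generated $G$--algebra $B$, where $\mathrm{gr}$ denotes the \emph{Grosshans} associated graded. You have correctly computed that the associated graded of $\mathcal{A}$ with respect to the $t$--degree filtration $F_j\mathcal{A}=\bigoplus_{i\le j}t^iA_{\le i}$ is $\mathcal{A}$ itself, but this filtration is \emph{not} the Grosshans filtration of $\mathcal{A}$. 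Since $t$ has weight zero and the Grosshans filtration is additive over $G$--module direct sums, one finds $\mathcal{A}_{\le j}=\bigoplus_{i\ge 0}t^iA_{\le\min(i,j)}$, whence $\mathrm{gr}\,\mathcal{A}=\bigoplus_{j}\bigoplus_{i\ge j}t^i\,\mathrm{gr}_jA$, which is not $\mathcal{A}$. Theorem~\ref{cohomology of grA is finitely generated} therefore says nothing about $R$ itself.

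This gap cannot be patched by elementary means: finite generation of $H^\ast(G,\mathcal{A})$ for the Rees algebra of an arbitrary finitely generated $A$ is already an instance of (CFG) for $\GL_n$, which is exactly what the surrounding spectral-sequence argument is assembled to prove. The paper's route is substantially deeper. It first reduces Noetherianity of $E_1^{\ast,\ast}(A)$ over $R$ to the Noetherianity of a composite map $H^\ast(G,\mathcal{A})\to H^\ast(G_r,\mathrm{gr}A)^{G/G_r}$ through a suitable Frobenius kernel, and then constructs a lift $\upphi_{vdK}$ of the Friedlander--Suslin map $\upphi_{FS}$ back into $H^\ast(G,\mathcal{A})$; the existence of $\upphi_{vdK}$ is precisely where Touz\'e's universal cohomology classes $c[i]\in H^{2i}(\GL_n,\Gamma^i(\mathfrak{gl}_n^{(1)}))$ of Theorem~\ref{Thm Touze} are essential. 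Your long-exact-sequence argument from $0\to t\mathcal{A}\to\mathcal{A}\to\mathrm{gr}A\to 0$ is correct and would indeed finish the proof \emph{if} $R$ were already known to be Noetherian, but establishing that is the heart of the matter.
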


First, we will need the following lemma. 

\begin{Lemma}
    $E^{\ast,\ast}_1(A)=H^\ast(G,\mathrm{gr}A)$ is Noetherian over $R$ if and only there is a Noetherian map $E$ making the following diagram commutative. 
\begin{center}
        \begin{tikzcd}
            & H^\ast(G,\mathcal{A}) \arrow[rd, dashrightarrow,"E"']\arrow[r]&H^\ast(G_r,\mathcal{A})^{G/G_r} \arrow[d] \\
            & &H^\ast(G_r,\mathrm{gr}A)^{G/G_r}
        \end{tikzcd}
    \end{center}
    \noindent where the horizontal and vertical maps are induced by the corresponding restrictions. 
\end{Lemma}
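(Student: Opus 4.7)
The map $E$ is uniquely determined by commutativity of the triangle: it is the composition of the LHS edge morphism $H^\ast(G,\mathcal{A}) \to H^\ast(G_r,\mathcal{A})^{G/G_r}$ with the change-of-coefficients map induced by the projection $\mathcal{A}\twoheadrightarrow\mathrm{gr}A$. The content of the lemma is therefore that this particular composition is Noetherian precisely when $E^{\ast,\ast}_1(A)=H^\ast(G,\mathrm{gr}A)$ is Noetherian as an $R$--module. The natural plan is to factor $E$ as $\varepsilon\circ\varphi$, where $\varphi\colon R=H^\ast(G,\mathcal{A})\to H^\ast(G,\mathrm{gr}A)$ is induced by $\mathcal{A}\twoheadrightarrow \mathrm{gr}A$ (this is precisely the map that gives $H^\ast(G,\mathrm{gr}A)$ its $R$--module structure, coming from the morphism of spectral sequences $E_r(\mathrm{ev})$) and $\varepsilon\colon H^\ast(G,\mathrm{gr}A)\to H^\ast(G_r,\mathrm{gr}A)^{G/G_r}$ is the LHS edge morphism of $\mathrm{gr}A$; one then transfers Noetherianity across $\varepsilon$.

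The linchpin is that $\varepsilon$ is itself a Noetherian map. This is where the heavy machinery of the previous sections is brought to bear. Applying Theorem \ref{FS full strength} (the full strength of Friedlander--Suslin) to the Noetherian $G_{\mathbb{F}_p}$--algebra map $\Xi$ from \ref{definition of the map Xi} produces a Noetherian morphism
\[
\mathbb{S}_r \otimes_{\mathbb{F}_p}(\mathrm{hull}_\nabla\mathrm{gr}A)^{(r)}\otimes_{\mathbb{F}_p} k \;\longrightarrow\; H^\ast(G_r,\mathrm{gr}A).
\]
After untwisting the Frobenius as in the proof of Theorem \ref{cohomology of grA is finitely generated}, the source becomes a $\GL_n$--algebra with a good filtration, so taking $G/G_r$--invariants remains Noetherian-preserving (using Corollary \ref{Cor: resolution by modules with a good filtration} applied to the untwisted algebra). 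The resulting Noetherian map lands in $H^\ast(G_r,\mathrm{gr}A)^{G/G_r}$ and is seen to factor through $\varepsilon$, which yields the desired Noetherianity of $\varepsilon$.

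With $\varepsilon$ Noetherian, the equivalence follows. The forward direction is immediate, since $E=\varepsilon\circ\varphi$ is then a composition of Noetherian maps. For the converse, if $E$ is Noetherian then $H^\ast(G_r,\mathrm{gr}A)^{G/G_r}$ is a finitely generated $R$--module, and using the finite LHS filtration on $H^\ast(G,\mathrm{gr}A)$ (whose graded quotients $E_\infty^{s,t}$ are subquotients of the $H^s(G/G_r, H^t(G_r,\mathrm{gr}A))$, each controlled as an $R$--module by $H^\ast(G_r,\mathrm{gr}A)^{G/G_r}$ via the good-filtration analysis already in place for the Chevalley group $G/G_r$) one descends Noetherianity to $H^\ast(G,\mathrm{gr}A)$.

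The main obstacle is exactly the Noetherianity of $\varepsilon$: this is not a general feature of LHS edge morphisms, and its verification here relies on a happy coincidence between the Grosshans hull machinery of Section \ref{section:GrosshansFiltrations}, the full strength of Friedlander--Suslin, and Touz\'e's universal classes (through the untwisting map $\Xi$). Once this piece is in place, the lemma becomes a formal statement about chaining Noetherian module structures along a factorization of $E$ through $\varepsilon$.
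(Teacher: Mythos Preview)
Your factorisation $E=\varepsilon\circ\varphi$ is correct and is a clean way to organise the argument; the gap lies in the justification of the Noetherianity of $\varepsilon$. You assert that the Friedlander--Suslin map $(\upphi_{FS})^{G/G_r}$ ``is seen to factor through $\varepsilon$'', but this amounts to lifting the Friedlander--Suslin classes from $H^\ast(G_r,k)$ to $H^\ast(G,-)$. That lift is precisely the content of Touz\'e's Theorem~\ref{Thm Touze} and the construction of $\upphi_{vdK}$, which in the paper's logical structure comes \emph{after} this lemma and is used to show that $E$ is Noetherian, not to prove the equivalence. Your remark that the factorisation goes ``through the untwisting map $\Xi$'' is not right: $\Xi$ is a map of coefficient $G$--algebras (Equation~\eqref{definition of the map Xi}) and has no bearing on whether $G_r$--cohomology classes lift along the restriction from $G$.

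The paper's proof of the lemma avoids Touz\'e entirely by exploiting that $k$ has characteristic $p$. Since the LHS spectral sequence for $\mathrm{gr}A$ collapses at a finite page (from the proof of Theorem~\ref{cohomology of grA is finitely generated}), the $p^{r}$-th power of any class in ${}^{LHS}E^{0,\ast}_2(\mathrm{gr}A)$ survives to ${}^{LHS}E^{0,\ast}_\infty(\mathrm{gr}A)$, yielding a Noetherian factorisation
\[
{}^{LHS}E^{0,\ast}_2(\mathrm{gr}A)^{(r)} \longrightarrow {}^{LHS}E^{0,\ast}_\infty(\mathrm{gr}A) \hookrightarrow {}^{LHS}E^{0,\ast}_2(\mathrm{gr}A).
\]
This makes the inclusion on the right Noetherian, and combined with the surjective edge $H^\ast(G,\mathrm{gr}A)\twoheadrightarrow {}^{LHS}E^{0,\ast}_\infty(\mathrm{gr}A)$ gives the Noetherianity of $\varepsilon$ without lifted classes. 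With this device in place your route becomes essentially equivalent to the paper's, and your converse argument (descending through the finite LHS filtration, using that $E^{\ast,\ast}_2(\mathrm{gr}A)$ is Noetherian over $E^{0,\ast}_2(\mathrm{gr}A)$ from the good-filtration analysis) can be made precise along the lines of the commutative squares in the paper.
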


\begin{proof}
    First, note that the terms $H^\ast(G_r,\mathcal{A})^{G/G_r}$ and $H^\ast(G_r,\mathrm{gr}A)^{G/G_r}$ correspond to ${}^{LHS}E^{0,\ast}_2(\mathcal{A})$ and ${}^{LHS}E^{0,\ast}_2(\mathrm{gr} A)$ from the spectral sequence associated to \ref{Extension of group schemes by Frobenius kernels}, respectively. Here we are using that $G_r$ acts trivially on $H^\ast(G_r,M)$ for any $G$--module $M$. 
    
    Moreover, the map $\mathcal{A}\to \mathrm{gr}A$ induces a morphism of spectral sequences ${}^{LHS}E^{\ast,\ast}(\mathcal{A})\to {}^{LHS}E^{\ast,\ast}(\mathrm{gr}A)$ from where we obtain the following commutative diagram 
\[\begin{tikzcd}
    H^\ast(G,\mathcal{A}) \arrow[r,twoheadrightarrow] \arrow[d] & {}^{LHS}E^{0,\ast}_2(\mathcal{A}) \arrow[d,"f"] \arrow[r,hookrightarrow] & {}^{LHS}E^{\ast,\ast}_\infty(\mathcal{A}) \arrow[d]  \\
    H^\ast(G,\mathrm{gr}A) \arrow[r,twoheadrightarrow] & {}^{LHS}E^{0,\ast}_2(\mathrm{gr} A)\arrow[r,hookrightarrow] & {}^{LHS}E^{\ast,\ast}_\infty(\mathrm{gr} A)
\end{tikzcd}\]
and since the bottom right map is Noetherian by the proof of Theorem \ref{cohomology of grA is finitely generated},  then all vertical maps are Noetherian if and only if any of the vertical maps is Noetherian. 

   The point is that for an integer $r$  such that ${}^{LHS}E^{\ast,\ast}_r(\mathrm{gr} A)={}^{LHS}E^{\ast,\ast}_\infty(\mathrm{gr} A)$ one obtains that the  map of raising to the $p^r$-power 
   \[
   p^r\colon {}^{LHS}E^{0,\ast}_2(\mathrm{gr} A)^{(r)}\to {}^{LHS}E^{0,\ast}_2(\mathrm{gr} A) 
   \]
    must factors as 
\[
{}^{LHS}E^{0,\ast}_2(\mathrm{gr} A)^{(r)}\to {}^{LHS}E^{0,\ast}_\infty(\mathrm{gr} A)\hookrightarrow {}^{LHS}E^{0,\ast}_2(\mathrm{gr} A)
\]
where both maps are Noetherian, and hence the composite is Noetherian as well. Here we used that $k$ has characteristic $p$, and hence the $p$-powers of elements in the page $s$ are cycles in the same page of the spectral sequence. Now, consider the following commutative diagram
\[\begin{tikzcd}
    H^\ast(G,\mathcal{A}) \arrow[r] \arrow[rd] & {}^{LHS}E^{0,\ast}_2(\mathcal{A})  \arrow[r] & {}^{LHS}E^{0,\ast}_2(\mathrm{gr}A)   \\
     & {}^{LHS}E^{0,\ast}_\infty(\mathcal{A})\arrow[r,"f"] \arrow[u,hookrightarrow] & {}^{LHS}E^{\ast,\ast}_\infty(\mathrm{gr} A) \arrow[u,hookrightarrow]
\end{tikzcd}\]
Hence the conclusion follows since $E$ is the composition of the top maps. 
\end{proof}

We sketch a strategy to prove that the map $E$ is Noetherian. Consider the following diagram 
\begin{center}
\begin{tikzcd}
            & H^\ast(G,\mathcal{A}) \arrow[r,"E"]&H^\ast(G_r,\mathrm{gr}A)^{G/G_r}  \\
            & & (\$_r \otimes_{\mathbb{F}_p} H^0(G_r,\mathcal{A}))^{G/G_r} \arrow[u,"(\upphi_{FS})^{G/G_r}"']\arrow[lu, dashrightarrow,"\upphi_{vdK}"]
        \end{tikzcd}
    \end{center}
\noindent where $\upphi_{FS}$ denotes Friedlander-Suslin map which is Noetherian. Since $G/G_r$ satisfies (FG), we obtain that $(\upphi_{FS})^{G/G_r}$ is Noetherian as well. Note that if $\upphi_{vdK}$ exists, then $E$ is a Noetherian map. 

The map $\upphi_{vdK}$ is analogous to $\upphi_{FS}$, but \textit{why $\upphi_{vdK}$ exists?}  For this, we need classes in $H^\ast(\GL_n)$ generalizing Friedlander-Suslin's classes. The following result is due to Touz\'e, see \cite{Tou10}.

\begin{Th}\label{Thm Touze}
There are classes 
\[
c[i]\in H^{2i}(\GL_n,\Gamma^i(\mathfrak{gl}_n^{(1)}))
\]
where $\Gamma^i(\mathfrak{gl}_n^{(1)})$ denotes $((\mathfrak{gl}_n^{(1)})^\otimes i)^{\mathfrak{S}_i}$, satisfying the following properties. 
\begin{enumerate}
    \item $c[1]=e_1$ of Friedlander-Suslin.
    \item For every $i,j$ we have 
    \begin{align*}
        H^{2(i+j)}(\GL_n,\Gamma^{i+1}(\mathfrak{gl}_n^{(1)})) & \to H^{2(i+j)}(\GL_n, \Gamma^i(\mathfrak{gl}_n^{(1)})\otimes \Gamma^j(\mathfrak{gl}_n^{(1)}))\\ 
        c[i+j] & \mapsto c[i]\cup c[j].
    \end{align*}
\end{enumerate}
\end{Th}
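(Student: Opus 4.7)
The plan is to work in the category $\mathcal{P}_d$ of strict polynomial functors of degree $d$ over $k$ in the sense of Friedlander--Suslin, rather than directly with the category of $\GL_n$-representations. The point is that $\mathcal{P}_d$ carries a rich internal homological algebra, there is an evaluation functor $\mathrm{ev}_{k^n}\colon \mathcal{P}_d\to\mathbf{Rep}(\GL_n)$ that sends $F\mapsto F(k^n)$, and for $n\geq d$ this functor induces an isomorphism on $\mathrm{Ext}$-groups between suitable functors. A universal class produced in $\mathcal{P}$ therefore descends uniformly, for every $n$, to a class in $H^\ast(\GL_n,-)$, and the natural transformations between strict polynomial functors give the required $\GL_n$-equivariant comparison maps.

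First, I would identify $\mathfrak{gl}_n^{(1)}$ with the evaluation $\mathrm{ev}_{k^n}\bigl(I^{(1)}\otimes I^{(1)\sharp}\bigr)$, where $I$ is the identity functor in $\mathcal{P}_1$ and $I^{(1)}$ its first Frobenius twist. Since $I^{(1)\sharp}\cong I^{(1)}$ in the relevant range, and since the adjoint representation $\mathfrak{gl}_n$ is $\mathrm{ev}_{k^n}(\mathfrak{gl}\circ I)$ where $\mathfrak{gl}(V)=V\otimes V^\sharp$, I would rewrite the target of the sought class as $\mathrm{ev}_{k^n}\Gamma^i(\mathfrak{gl}^{(1)})$. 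The Friedlander--Suslin class $e_1$ arises from a canonical nontrivial class in $\mathrm{Ext}^2_{\mathcal{P}}(I^{(1)},I^{(1)})$; the strategy is to produce $c[i]$ as the image, under an appropriate Yoneda/cup-product construction together with the comultiplication on $\Gamma^\ast$, of $e_1^{\otimes i}$ in the group $\mathrm{Ext}^{2i}_{\mathcal{P}}\bigl(\Gamma^i,\Gamma^i(I^{(1)})\bigr)$, and then to evaluate on $k^n$.

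Concretely, I would proceed as follows. The exponential property of the divided power functor gives a coproduct $\Gamma^{i+j}\to \Gamma^i\otimes\Gamma^j$ which is compatible with cup products under $\mathrm{Ext}^\ast_{\mathcal{P}}$. Starting from $e_1\in\mathrm{Ext}^2_{\mathcal{P}}(I,I^{(1)})$ (after a twist), one iterates the external cup product to obtain a class in $\mathrm{Ext}^{2i}_{\mathcal{P}}(I^{\otimes i},(I^{(1)})^{\otimes i})$, symmetrizes using the $\mathfrak{S}_i$-action to land in the divided power, and pulls back along $\Gamma^i\hookrightarrow I^{\otimes i}$. Evaluating on $k^n$ produces $c[i]$, and $c[1]=e_1$ by construction. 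For compatibility (2), I would observe that the diagram witnessing $c[i+j]\mapsto c[i]\cup c[j]$ under the map $\Gamma^{i+j}(\mathfrak{gl}_n^{(1)})\to\Gamma^i(\mathfrak{gl}_n^{(1)})\otimes\Gamma^j(\mathfrak{gl}_n^{(1)})$ is, at the level of $\mathcal{P}$, the combinatorial compatibility of the coproduct on $\Gamma^\ast$ with iterated cup product; this is a formal consequence of the cocommutative bialgebra structure on $\bigoplus_i\Gamma^i$.

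The main obstacle is the construction of the candidate class in $\mathcal{P}$ itself, namely exhibiting an explicit nonzero element of $\mathrm{Ext}^{2i}_{\mathcal{P}}\bigl(\Gamma^i,\Gamma^i(I^{(1)})\bigr)$ with the desired coproduct behaviour. The na\"ive cup-product $e_1^{\cup i}$ lives in $\mathrm{Ext}^{2i}(I^{\otimes i},(I^{(1)})^{\otimes i})$; factoring it through $\Gamma^i$ on both sides requires a careful choice of a resolution and a control of the $\mathfrak{S}_i$-action on the bar-type resolution computing these Ext groups, which is the technical heart of \cite{Tou10}. Once this functorial input is in place, the passage to $H^\ast(\GL_n,-)$ via evaluation, and the verification of properties (1) and (2), are formal.
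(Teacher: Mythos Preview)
The paper does not give its own proof of this theorem: it is stated with the attribution ``The following result is due to Touz\'e, see \cite{Tou10}'' and then used as a black box in the construction of the map $\upphi_{vdK}$. There is therefore nothing in the paper to compare your argument against.

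That said, your sketch is a reasonable outline of the strategy of \cite{Tou10}: work in the category $\mathcal{P}$ of strict polynomial functors, exploit that evaluation at $k^n$ is fully faithful on Ext in the stable range, and use the bialgebra structure on $\bigoplus_i\Gamma^i$ to encode the compatibility $c[i+j]\mapsto c[i]\cup c[j]$. You are also correct that the genuine difficulty is the construction step. Two cautions. First, the na\"ive symmetrisation of $e_1^{\otimes i}$ along $\Gamma^i\hookrightarrow I^{\otimes i}$ does not directly produce the class: the point of \cite{Tou10} is precisely that this route fails, and one must instead build an explicit $\mathfrak{S}_i$-equivariant resolution (the ``Troesch-type'' complexes) to get a class that survives the passage to divided powers. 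Your proposal names this as ``the technical heart'' but does not indicate how it is resolved; without that, the argument is not a proof but a reduction to the actual content of \cite{Tou10}. Second, the identification $I^{(1)\sharp}\cong I^{(1)}$ is not the right way to phrase the passage to $\mathfrak{gl}_n^{(1)}$; one rather uses parametrised functors or the internal Hom in $\mathcal{P}$ to model $\mathfrak{gl}^{(1)}=\mathrm{Hom}(-,-)^{(1)}$, and then the relevant Ext group is $\mathrm{Ext}^{2i}_{\mathcal{P}}(\Gamma^{i}\circ I^{(1)},\Gamma^{i}\circ I^{(1)})$ (or an equivalent bifunctor formulation), not quite what you wrote.
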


\pagebreak

\section{Provisional CFG}\label{section:provisionalCFG}

Let $k$ be a commutative Noetherian ring. 
For a $\GL_n$--algebra $A$ finitely generated as $k$--algebra, we will show that the existence of a uniform bound on the torsion submodule of $H^*(\GL_n, A)$ implies that it is in fact a finitely generated algebra. In fact, the same arguments work for a Chevalley group scheme $G$.  This is Theorem \ref{torsion}, but first we need some intermediate results. 

\begin{Lemma}\label{lemma:reduction-mn}
    Let $m, n>1$. Consider the reduction  map $f\colon  A/mnA\to A/nA$. Then the induced map  $f^\ast\colon H^{\even}(G, A/mnA) \to H^{\even}(G, A/nA)$ is power surjective.
\end{Lemma}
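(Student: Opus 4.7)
My plan is to combine a Chinese Remainder decomposition with a Bockstein-type derivation argument along a nilpotent filtration. The main obstacle is that the kernel $I = nA/mnA$ of $f$ is generally not a square-zero ideal in $A/mnA$, so the connecting homomorphism $\delta\colon H^\ast(G, A/nA) \to H^{\ast+1}(G, I)$ is not a priori a derivation.

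First, I would decompose into prime-power cases. For distinct primes $p$ dividing $mn$, the ideals $p^{e_p(mn)}A$ are pairwise coprime in $A$, so CRT gives an isomorphism of $G$--algebras $A/mnA \cong \prod_p A/p^{e_p(mn)} A$ (and similarly for $A/nA$), under which $f$ decomposes as a product of reduction maps. Since $H^\ast(G, -)$ commutes with finite products and power surjectivity on each factor implies power surjectivity on the product (by taking a common power), I am reduced to the case $m = p^a$, $n = p^b$ with $a, b \geq 1$; the remaining primes contribute trivially, either with zero target or as the identity.

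In this prime-power case, set $B = A/p^{a+b} A$, $C = A/p^b A$, and $I = \ker f = p^b A / p^{a+b} A$. The ideal $I$ is annihilated by $p^a$ and, crucially, is now nilpotent: one checks $I^j = p^{jb}A/p^{a+b}A$, which vanishes for $J := \lceil a/b \rceil + 1$. I will filter by $B \supset I \supset I^2 \supset \cdots \supset I^J = 0$, so that each successive quotient $I^j/I^{j+1}$ is a square-zero ideal in $B/I^{j+1}$, hence naturally a module over $C = (B/I^{j+1})/(I/I^{j+1})$, and is still annihilated by $p^a$. By the standard Leibniz property for short exact sequences with square-zero kernel, the connecting map
\[
\delta_j\colon H^\ast(G, B/I^j) \to H^{\ast+1}(G, I^j/I^{j+1})
\]
is a derivation of the $H^\ast(G, B/I^j)$-module structure on the target.

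The key computation: for $u \in H^{\even}(G, B/I^j)$, the derivation formula together with $p^a \cdot H^\ast(G, I^j/I^{j+1}) = 0$ gives
\[
\delta_j(u^{p^a}) = p^a u^{p^a - 1} \delta_j(u) = u^{p^a - 1} \cdot \bigl(p^a \delta_j(u)\bigr) = 0,
\]
so $u^{p^a}$ admits a lift to $H^{\even}(G, B/I^{j+1})$. Starting from $y \in H^{\even}(G, C)$, I iteratively produce lifts $y_j \in H^{\even}(G, B/I^{j+1})$ of $y^{p^{ja}}$; after $J-1$ steps, $y_{J-1} \in H^{\even}(G, B/I^J) = H^{\even}(G, B)$ lifts $y^{p^{(J-1)a}}$. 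Combining across primes concludes the proof. The hardest part to verify carefully is the Leibniz rule for $\delta_j$ in the square-zero setting, which requires a clean cochain-level argument, but the genuine conceptual obstacle is that CRT reduction is essential: without it, $I$ may fail to be nilpotent and the filtration argument would not terminate.
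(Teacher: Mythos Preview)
Your argument is correct, but it is considerably more elaborate than the paper's. The paper works directly with the single short exact sequence $0\to I\to A/mnA\to A/nA\to 0$, where $I=nA/mnA$ satisfies $mI=0$; it then asserts a Leibniz rule for the connecting map $\partial$ and computes $\partial(x^m)=mx^{m-1}\partial(x)=0$, so $x^m$ lifts. No CRT, no filtration, and the exponent is just $m$ rather than your $p^{(J-1)a}$.

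Your worry that the Leibniz rule requires $I^2=0$ is legitimate as stated: $I$ is generally \emph{not} an $A/nA$--module, so the formula $\partial(xy)=\partial(x)\cdot y\pm x\cdot\partial(y)$ with the $H^*(G,A/nA)$--module structure on $H^*(G,I)$ is not available. However, one does not need that module structure to get the conclusion. Work on the Hochschild cochain level: pick a cochain lift $\tilde x\in C^{\even}(G,A/mnA)$ of $x$; then $d\tilde x$ lies in $C^{*}(G,I)$, and since the differential on $C^{*}(G,A/mnA)$ is a derivation for the cup product and $\tilde x$ has even degree,
\[
d(\tilde x^{\,m})=m\,\tilde x^{\,m-1}\cdot d\tilde x=\tilde x^{\,m-1}\cdot(m\,d\tilde x)=0
\]
because $m\,d\tilde x\in m\cdot C^{*}(G,I)=0$. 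Thus $\tilde x^{\,m}$ is already a cocycle in $C^{*}(G,A/mnA)$ lifting $x^m$. This is the content behind the paper's Leibniz assertion, and it makes your CRT reduction and nilpotent filtration unnecessary. What your approach buys is a cleaner cohomological statement at each step (genuine derivations into square-zero ideals), at the cost of a longer argument and a worse exponent.
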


\begin{proof}
    Let $x\in H^{\even}(G, A/nA)$ be homogeneous. We want to show that some power of $x$ lifts along $f^\ast$. Let $I$ be the kernel of $f$.  Clearly $mI=0$, and hence also $mH^*(G, I)=0$. The long exact sequence on invariants gives us a connecting homomorphism 
    \[
    \partial_i :H^i(G, A/nA)\to H^{i+1}(G, I)
    \]
    which satisfies the Leibniz rule with respect to the graded $H^*(G, A/nA)$ multiplication on $H^*(G, I)$. Therefore 
    \[
    \partial_{2nm}(x^m)=\partial_1(x)x^{m-1}+x(\partial_1(x)x^{m-2}+x(\dots))=mx^{m-1}\partial_{2n}(x)=0
    \]
    where we use the fact that $x$ has even degree to eliminate minus signs appearing from the $(-1)^{|x|}$ part of the Leibniz rule. In other words $x^m\in \ker \partial_{2nm}$, which implies $x^m\in \im H^{2nm}(f)$ by exactness.
\end{proof}

\begin{Def}
    Let $A$ be an abelian group, we will say that \textit{$A$ has bounded torsion} if there is a positive integer that annihilates the torsion subgroup $A_{\tors}$ of $A$.
\end{Def}

\begin{Prop}\label{Prop: power surjectivity on cohomology at a prime}
    If $H^*(G, A)$ has bounded torsion, then the reduction map 
    \[
    H^{\even}(G, A) \to H^{\even}(G, A/pA)
    \]
    is power surjective for every prime number $p$.
\end{Prop}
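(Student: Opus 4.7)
The plan is to combine Proposition \ref{lemma:reduction-mn} with a Leibniz-type analysis of the connecting homomorphism for the short exact sequence $0 \to A \xrightarrow{pN} A \to A/pNA \to 0$, where $N$ is a positive integer annihilating the torsion subgroup of $H^\ast(G,A)$.

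First I would apply Lemma \ref{lemma:reduction-mn} with $m = N$ and $n = p$: the kernel of the projection $A/pNA \to A/pA$ is $pA/pNA$, which is annihilated by $N$, so the induced map $H^{\even}(G, A/pNA) \to H^{\even}(G, A/pA)$ is power surjective. Hence any given $x \in H^{\even}(G, A/pA)$ satisfies $x^\ell = \bar y$ for some $\ell \geq 1$ and some $y \in H^{\even}(G, A/pNA)$. It now suffices to show that some power of $y$ lifts to $H^{\even}(G, A)$.

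For the lifting, use the connecting homomorphism $\partial \colon H^\ast(G, A/pNA) \to H^{\ast+1}(G, A)$ attached to $0 \to A \xrightarrow{pN} A \to A/pNA \to 0$. Since $pN$ acts as zero on $A/pNA$, the class $\partial y$ is $pN$-torsion in $H^\ast(G, A)$, and the bounded-torsion hypothesis then forces $N \partial y = 0$. Because $A$ is a $G$-algebra, the connecting homomorphism satisfies a Leibniz rule with respect to the cup product; in even degree this yields $\partial(y^N) = N y^{N-1} \partial y = 0$. By exactness, $y^N$ lifts to some class $z \in H^{\even}(G, A)$, and the image of $z$ in $H^{\even}(G, A/pA)$ is $x^{\ell N}$. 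This proves power surjectivity.

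The main technical obstacle will be justifying the Leibniz rule in this setting, since the product $y^{N-1} \cup \partial y$ does not, a priori, live in $H^\ast(G, A)$: the class $y$ belongs to the cohomology of $A/pNA$ while $\partial y$ belongs to the cohomology of $A$. I would resolve this at the level of the Hochschild cochain complex, by picking a cochain lift $\tilde y \in C^\ast(G, A)$ of a representing cocycle of $y$. Then $\tilde y^{N-1} \cup \partial y$ is a well-defined element of $C^\ast(G, A)$, and its cohomology class is independent of the choice of lift: any two lifts differ by an element of $pN \cdot C^\ast(G, A)$, which modifies $\tilde y^{N-1} \cup \partial y$ by a multiple of $N \partial y = 0$. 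The usual cochain-level Leibniz identity for $\partial$ then delivers $\partial(y^N) = N y^{N-1} \partial y$ as an honest identity in $H^\ast(G, A)$, completing the argument.
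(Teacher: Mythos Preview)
Your argument has a genuine gap: the sequence $0 \to A \xrightarrow{\,pN\,} A \to A/pNA \to 0$ is \emph{not} exact in general, because multiplication by $pN$ on $A$ need not be injective. The hypothesis bounds the torsion of $H^*(G,A)$, not of $A$ itself, and nothing prevents $A$ from having $pN$--torsion. Without exactness you do not get a well-defined connecting homomorphism landing in $H^{*+1}(G,A)$, and the cochain-level computation breaks down at the very first step: from $d\tilde y \in pN\cdot C^*(G,A)$ you cannot extract a \emph{unique} cocycle $z$ with $pN z = d\tilde y$.

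The paper addresses exactly this. Since $A$ is Noetherian, $A_{\tors}$ is finitely generated, so one may enlarge $N$ to an $n$ that also annihilates $A_{\tors}$; then the $n^2$--torsion of $A$ equals $A_{\tors}$ and one has a genuine short exact sequence $0 \to A/A_{\tors} \xrightarrow{\,n^2\,} A \to A/n^2A \to 0$. The Leibniz rule is then made rigorous not by a cochain argument but by introducing a second sequence $0 \to A/(n^2A + A_{\tors}) \xrightarrow{\,n^2\,} A/n^4A \to A/n^2A \to 0$: here the kernel squares to zero inside $A/n^4A$, so its connecting map $\partial'$ is an honest derivation into the $H^*(G,A/n^2A)$--module $H^*(G, A/(n^2A+A_{\tors}))$, and $\partial'(x^{n^2}) = n^2 x^{n^2-1}\partial'(x)$ vanishes because the target is $n^2$--torsion. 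A separate injectivity statement $H^*(G, A/A_{\tors}) \hookrightarrow H^*(G, A/(n^2A+A_{\tors}))$ then transports this vanishing back to the first sequence.

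A secondary point: even granting exactness, your independence-of-lift argument is looser than it looks. The assertion ``any two lifts differ by an element of $pN\cdot C^*(G,A)$, which modifies $\tilde y^{N-1}\cup \partial y$ by a multiple of $N\partial y = 0$'' conflates the cohomological identity $N[z]=0$ with a cochain identity, and does not by itself show the modification is a coboundary. One can push this through with more care when $A$ is torsion-free, but the torsion in $A$ has to be dealt with first.
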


\begin{proof}
    Assume $H^*(G, A)$ has bounded torsion. Note that $A_{\tors}$ is an ideal of the finitely generated algebra $A$, and $A$ itself is Noetherian since it is a finitely generated algebra over a Noetherian base. Therefore we can see that $A_{\tors}$ is finitely generated and thus also has bounded torsion. Now let $p$ be any prime number, and pick $k$ large enough such that multiplication by $n=kp$ annihilates both $H^{>0}(G, A)$ and $A_{\tors}$. 

    The short exact sequence $0\to A_{\tors} \to A\xrightarrow{\pi}{} A/A_{\tors}\to 0$ gives us a long exact sequence on cohomology:
    \[
    \dots \to H^i(G, A) \xrightarrow[]{H^i(\pi)} H^i(G, A/A_{\tors}) \xrightarrow[]{\partial} H^{i+1}(G, A_{\tors}) \to \dots
    \]
Since the sequence is exact, we have $\im H^i(\pi)=\ker \partial$. In particular, both $\im H^i(\pi)$ and $\im \partial$ are annihilated by $n$ (since module homomorphisms are linear and $n$ annihilates $H^i(G, A)$ and $H^{i+1}(G, A_{\tors})$ respectively). Further, 
\[
\coker H^i(\pi) = H^i(G, A/A_{\tors})/\im H^i(\pi)=  H^i(G, A/A_{\tors})/\ker \partial \simeq \im \partial
\]
where the last isomorphism comes from the first isomorphism theorem. Thus we have that $\coker H^i(\pi)$ is also annihilated by $n$. We can write $H^i(G, A/A_{\tors})\simeq \coker H^i(\pi) \oplus \im H^i(\pi)$, and both of these summands are annihilated by $n$, thus we see that $H^i(G, A/A_{\tors})$ is itself annihilated by $n$. 

We have another short exact sequence $0\to n^2A \to A/A_{\tors}\to A/(n^2A+A_{\tors})\to 0$, which induces a map in cohomology 
\[
H^i(G, A/A_{\tors}) \xrightarrow{}{} H^i(G, A/(n^2A +A_{\tors}))
\]
whose kernel is equal to the image of the map $H^i(G, n^2A)\to H^i(G, A/A_{\tors})$, but this map in cohomology is zero since $H^i(G, A/A_{\tors})$ is annihilated by $n^2$. Thus we see that the map $H^i(G, A/A_{\tors}) \hookrightarrow H^i(G, A/(n^2A +A_{\tors}))$ is actually an inclusion.

Now we have two more short exact sequences:
\begin{align*}
    0 \to A/A_{\tors} \xrightarrow[]{\times n^2}  A\to A/n^2A \to 0,  \textrm{ and } \\
    0 \to A/(n^2A +A_{\tors}) \xrightarrow[]{\times n^2} A/n^4A \to A/n^2A \to 0.
\end{align*}

Which induce a commutative diagram on cohomology:
\begin{center}
   \begin{tikzcd}
    &H^{2i}(G, A) \arrow[d]\arrow[r]&H^{2i}(G, A/n^2A) \arrow[d, equals]\arrow[r, "\partial_{2i}"]&H^{2i+1}(G, A/A_{\tors})\arrow[d, hook] \\
    &H^{2i}(G, A/n^4A) \arrow[r]&H^{2i}(G, A/n^2A) \arrow[r, "\partial_{2i}'"]&H^{2i+1}(G, A/n^2A + A_{\tors}) 
\end{tikzcd} 
\end{center}
Now let $x\in H^{2j}(G, A/n^2A)$, and put $i=jn^2$. The element $x^{n^2}$ is then in degree $2jn^2=2i$, and we hit it with the differential $\partial'_{2i}$ to see that $\partial'_{2i}(x^{n^2})=n^2x^{n^2-1}\partial_{2j}'(x)$, which is zero in $H^{2i+1}(G, A/n^2A + A_{\tors})$ because of the $n^2$ torsion. In particular, using the above commutative diagram, this means that $\partial_{2i}(x^{n^2})$ is also zero, and the exactness of the top row allows us to pull $x^{n^2}$ back to an element of $H^{2i}(G, A)$. Thus we have shown that $H^{\even}(G, A)\to H^{\even}(G, A/n^2A)$ is power surjective.

We deduce that $H^{\even}(G, A/n^2A) \to H^{\even}(G, A/pA)$ is power surjective. Indeed,  apply  Lemma \ref{lemma:reduction-mn} with $m'=k^2p$ and $n'=p$ so that $m'n'=n^2$. The composite of power surjective maps is clearly power surjective, so we are done.
\end{proof}

\begin{Th}\label{torsion}
    Let $G$ be a Chevalley group scheme over a Noetherian ring $k$. If $H^*(G, A)$ has bounded torsion, then $H^*(G, A)$ is a finitely generated algebra $k$--algebra.
\end{Th}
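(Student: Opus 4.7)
The plan is to exploit bounded torsion to reduce the problem to the primes appearing in the torsion bound, and for each such prime to invoke the already established (CFG) property over rings containing a field (Theorem \ref{Th:CFG when k contains a field}). Let $N$ be a positive integer annihilating the torsion of $H^{>0}(G,A)$, and let $p_1,\ldots,p_r$ be the prime divisors of $N$.

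First I would reduce to showing that $H^{\even}(G,A)$ is a finitely generated $k$-algebra. Graded commutativity forces $2x^2 = 0$ for every odd-degree class $x$, and $x^2$ lies in $H^{\even}(G,A)$, so each odd class satisfies the monic equation $t^2 - x^2 = 0$ over $H^{\even}(G,A)$. Combined with the fact that $H^{\odd}(G,A)\cdot H^{\odd}(G,A) \subseteq H^{\even}(G,A)$ and the Noetherianness of $H^{\even}(G,A)$ (once established), this lets one transfer finite generation from the even part to the whole algebra by a standard Artin--Tate style argument.

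Next, for each prime $p_i$, the quotient ring $k/p_ik$ is a Noetherian ring containing the field $\mathbb{F}_{p_i}$, and $A/p_iA$ is a finitely generated $G$-algebra over it; Theorem \ref{Th:CFG when k contains a field} then yields that $H^{*}(G,A/p_iA)$ is a finitely generated $k/p_ik$-algebra, hence a finitely generated $k$-algebra. Proposition \ref{Prop: power surjectivity on cohomology at a prime} shows that the reduction map
\[
\pi_{p_i}\colon H^{\even}(G,A) \longrightarrow H^{\even}(G,A/p_iA)
\]
is power surjective. Lemma \ref{integral1} makes its target integral over its image, and then the Artin--Tate Lemma \ref{Lemma Artin-Tate} forces the image of $\pi_{p_i}$ to be a finitely generated $k$-subalgebra. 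Assembling these reductions into the diagonal map
\[
\pi = (\pi_{p_1},\ldots,\pi_{p_r})\colon H^{\even}(G,A) \longrightarrow \prod_{i=1}^{r} H^{\even}(G,A/p_iA)
\]
gives a map whose image is still a finitely generated $k$-algebra (the full product is integral over this image).

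The main obstacle is the control of $\ker\pi$. In degree zero, $\ker\pi \cap A^G$ is an ideal of the Noetherian ring $A^G$ (finitely generated by Corollary \ref{FG for Chevalley}), so is itself finitely generated. In positive even degree, the kernel consists of classes annihilated by $N$ and vanishing modulo each $p_i$; here I would iterate Lemma \ref{lemma:reduction-mn} and Proposition \ref{Prop: power surjectivity on cohomology at a prime} along the towers $A \twoheadrightarrow A/p_i^jA \twoheadrightarrow A/p_iA$ to promote the known power surjectivity mod $p_i$ to information mod $p_i^j$, and then use the long exact sequences attached to $0 \to p_i^jA \to A \to A/p_i^jA \to 0$ together with the uniform annihilator $N$ to show that $\ker\pi$ is a finitely generated ideal of $H^{\even}(G,A)$. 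Once this is achieved, $H^{\even}(G,A)$ is an extension of a finitely generated $k$-algebra (the image of $\pi$) by a finitely generated ideal, hence finitely generated, and the first step finishes the proof.
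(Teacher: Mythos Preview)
Your overall strategy---reduce modulo each relevant prime, invoke (CFG) over rings containing a field, and use power surjectivity---matches the paper's, but the paper organizes the argument differently and in a way that avoids the gap in your final step.

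The genuine problem is your treatment of $\ker\pi$. You propose to show that $\ker\pi$ is a finitely generated ideal of $H^{\even}(G,A)$, but you give no mechanism for this that does not presuppose Noetherianity of $H^{\even}(G,A)$, which is exactly what you are trying to prove. Iterating Lemma~\ref{lemma:reduction-mn} and the long exact sequences for $A/p_i^jA$ gives you information about images of reduction maps, not about finite generation of their kernels as ideals. Moreover, your choice of primes (those dividing the torsion bound $N$) does not by itself guarantee that $\ker\pi$ is small in any useful sense: bounded torsion does not say that $H^{>0}(G,A)$ is torsion, so classes not divisible by any $p_i$ could still land in $\ker\pi$ for reasons unrelated to $N$.

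The paper sidesteps this entirely. It first invokes Corollary~\ref{Cor: A1/m is acyclic for some m} to find $m$ with $H^{>0}(G,A)\otimes\mathbb{Z}[1/m]=0$, so it suffices to prove finite generation after localizing at $\mathbb{Z}_{(p)}$ for each prime $p\mid m$, one at a time. Over $k\otimes\mathbb{Z}_{(p)}$ it then uses power surjectivity (Proposition~\ref{Prop: power surjectivity on cohomology at a prime}) and (CFG) for $A/pA$ to produce a finitely generated $A^G$-subalgebra $\mathscr{A}\subseteq H^\ast(G,A)$ over which $H^\ast(G,A/pA)$ is Noetherian. The crucial step you are missing is then a Nakayama-type argument: using the short exact sequence $0\to A/A_{\tors}\xrightarrow{p}A/A_{\tors}\to A/(pA+A_{\tors})\to 0$ and the bounded-torsion hypothesis, one shows that $H^{>0}(G,A/A_{\tors})$ is a Noetherian $\mathscr{A}$-module; the finite $p$-adic filtration on $A_{\tors}$ handles $H^{>0}(G,A_{\tors})$. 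This module-theoretic approach (prove Noetherianity over a known finitely generated subalgebra) replaces your attempt to control a kernel directly, and is where the real work lies.
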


\begin{Ex}
    If either $k$ contains $\mathbb{Q}$ or if $H^i(G, A)=0$ for $i>>0$, then $H^*(G, A)$ has bounded torsion.
\end{Ex}

\begin{proof}[Proof of Theorem \ref{torsion}]
    Recall that in  Corollary \ref{Cor: A1/m is acyclic for some m}, we showed that there exists $m>0$ such that $H^\ast(G,A)$ localized away from $m$ is trivial. Hence we only need to show that $H^\ast(G,A)$ is a finitely generated after localization at a finite number of primes, namely the primes dividing $m$. In other words, there is no loss of generality by focusing on the $p$-primary part of $H^\ast(G,A)$, one prime at the time. Fix a prime $p$. Then the $p$-primary part of $H^\ast(G,A)$ corresponds to $H^\ast(G,A)\otimes \mathbb{Z}_{(p)}$. Moreover, using the Base Change Lemma  \ref{base change}, we obtain that the latter is isomorphic to $H^\ast(G_{k\otimes \mathbb{Z}_{(p)}},A\otimes \mathbb{Z}_{(p)})$, and now $A\otimes \mathbb{Z}_{(p)}$ is a finitely generated $k\otimes \mathbb{Z}_{(p)}$--algebra. Therefore, we may assume that $k=\mathbb{Z}_{(p)}$ and $G=G_{k\mathbb{Z}_{(p)}}$ which remains a Chevalley group scheme.  

    By Proposition \ref{Prop: power surjectivity on cohomology at a prime}, we obtain that 
    \[
    H^{\even}(G, A) \to H^{\even}(G, A/pA)
    \]
    is power surjective, in particular, $H^{\even}(G, A/pA)$ is integral over $H^{\even}(G, A)$; see Lemma \ref{integral1}. On the other hand, note that $A/pA$ has now a structure of $k/pk$--algebra, and hence (CFG) for $\GL_n$ over a ring containing a field (see Theorem \ref{CFG for GLn}) together with other layer of the Base Change Lemma \ref{base change} give us that $H^\ast(G,A/pA)$ is a finitely generated $A^G$--algebra. Here we use also (FG) for $G$, see Theorem \ref{Hilberts 14th}. Thus we can choose a $A^G$--subalgebra of $H^\ast(G,A)$, say $\mathscr{A}$, such that it is finitely generated by homogeneous elements and the map $\mathscr{A}\to H^\ast(G,A/pA)$ is Noetherian. 

    A similar reasoning by applying the Base Change Lemma together with (CFG) for $\GL_n$, gives us that the map $H^\ast(G,A/pA)\to H^\ast(G,A/pA+A_{tors})$ is Noetherian, and hence the composition $\mathscr{A}\to H^\ast(G,A/pA+A_{tors})$ is Noetherian as well. 

    Assume that $H^{>0}(G,A/A_{tors})$ is a Noetherian $\mathscr{A}$--module. 
    Again, by (CFG) we obtain that $H^{>0}(G,p^iA/p^{i+1})$ is a Noetherian $H^\ast(G,A/pA)$--module for each $i\geq 0$, and hence it is also a Noetherian $\mathscr{A}$--module.  Since $A_{tors}$ has bounded torsion, we have a finite filtration 
    \[
    A_{tors}\supseteq pA_{tors}\supseteq p^2A_{tors}\supseteq\ldots \supseteq p^sA_{tors}=0
    \]
     and hence we deduce that $H^{>0}(G,A_{tors})$ is a Noetherian $\mathscr{A}$--module. It follows that $H^\ast(G,A)$ is also a Noetherian $\mathscr{A}$--module. Therefore, $H^\ast(G,A)$ is a finitely generated $A^G$--algebra, and by (FG) for $G$, we deduce that it is also a finitely generated $k$--algebra. 

    It remains to show that $H^{>0}(G,A/A_{tors})$ is a Noetherian $\mathscr{A}$--module. Indeed, choose homogeneous elements $v_i\in H^{>0}(G,A/A_{tors})$ generating such that its image generate 
    \[
    W=\mathrm{Im}(H^{>0}(G,A/A_{tors})\to H^{>0}(G,A/pA+A_{tor})).
    \]
    Now, let $V$ denote the $\mathscr{A}$--span of the $v_i$. We claim that $H^{>0}(G,A/A_{tors})=V$.  Since $W$ agrees with $H^{>0}(G,A/A_{tors})/pH^{>0}(G,A/A_{tors})$ as $\mathscr{A}$--modules, we deduce that 
    \[
    H^{>0}(G,A/A_{tors})+V\subseteq p^rH^{>0}(G,A/A_{tors}) +V 
    \]
    for each $r>0$. Finally,  note that $H^{>0}(G,A/A_{tors})$ has bounded torsion since both $H^\ast(G,A)$ and $H^\ast(G,A_{tors})$ have bounded torsion. Then the claim follows. 
\end{proof}

\pagebreak

\section{Bounded Torsion}\label{section:boundedtorsion}

In this section we will establish the existence of a uniform bound on the torsion of the graded cohomology for all finite  group schemes, as in the following theorem:

\begin{Th}\label{bounded torsion}
      Let $G$ be a finite group scheme over Noetherian ring $k$. Then there is a positive integer $n$ that annihilates $H^i(G, M)$ for all $i>0$ and for all $G$--modules $M$.
\end{Th}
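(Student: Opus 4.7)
The plan is to reduce to the case of a local base and then construct a norm-type endomorphism realizing multiplication by the rank of $k[G]$ on cohomology. Since $k$ is Noetherian and $k[G]$ is finitely generated projective over $k$, the rank function $\mathfrak{p} \mapsto \mathrm{rk}_{k_\mathfrak{p}}(k[G]_\mathfrak{p})$ is locally constant on $\Spec(k)$. A Noetherian scheme has only finitely many connected components, so this rank takes only finitely many values $r_1, \ldots, r_s$; let $n = r_1 r_2 \cdots r_s$ (or their least common multiple). Annihilation by $n$ of any $k$-module can be checked after localization at each prime, and by flat base change (Corollary \ref{Flat base change}) localization commutes with taking group cohomology. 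So it suffices to prove the local statement: if $k$ is local and $k[G]$ is free of rank $r$, then $r$ annihilates $H^i(G, M)$ for all $i > 0$ and all $G$-modules $M$.

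Over a local $k$, projective $k$-modules are free, so the Hopf algebra $H = k[G]^*$ satisfies $(H^*)^{\mathrm{co}H} \simeq k$ and hence is a Frobenius algebra by Theorem \ref{some Hopf algebras are Frobenius}. The Frobenius structure (equivalently: $k[G] \simeq k[G]^*$ as $G$-modules via a chosen Frobenius isomorphism) yields a natural isomorphism between the induction and coinduction functors $\ind_1^G \simeq \mathrm{coind}_1^G$ along the trivial inclusion $1 = \Spec(k) \hookrightarrow G$. Consequently, $\ind_1^G$ is both left and right adjoint to the restriction $\mathrm{res}_1^G$. The composite of the unit $V \to \ind_1^G \mathrm{res}_1^G V$ of one adjunction with the counit $\ind_1^G \mathrm{res}_1^G V \to V$ of the other is a natural self-map of $V$, and a direct computation shows it equals multiplication by the scalar $r = \mathrm{rk}_k(k[G])$. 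Applying $H^i(G, -)$, this endomorphism factors through $H^i(G, \ind_1^G \mathrm{res}_1^G V) = H^i(1, V)$ by Shapiro's lemma (Proposition \ref{frobenius reciprocity}), which vanishes for $i > 0$. Hence multiplication by $r$ factors through zero on $H^{>0}(G, V)$, proving that $r$ annihilates positive cohomology.

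The main obstacle in this plan is verifying that the scalar in the composite unit-counit identity is exactly $r$. This amounts to computing a trace: the image of the left integral $\Lambda \in H^H$ (which freely generates the rank-one $k$-summand of left integrals, via Corollary \ref{cor:left-invariants-freesummand}) under the counit pairing. Concretely, one must establish, in the Hopf-algebraic setting, the analog of the group-algebra identity $\varepsilon(\sum_{g \in \Gamma} g) = |\Gamma|$ --- equivalently, that the trace of the identity map on the rank-$r$ free $k$-module $k[G]$ equals $r$, and that this trace is precisely the scalar implementing the composite unit-counit map on trivial modules. Once this Frobenius-trace identity is established, the rest of the argument is a formal consequence of the adjunction formalism and the acyclicity of induced modules, and the theorem follows by combining the local annihilation result with the global rank bound from the first paragraph.
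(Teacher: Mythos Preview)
Your plan is essentially the alternative proof due to Lau, recorded in Section~\ref{sec: alternative bounded}, rather than the paper's main argument in Section~\ref{section:boundedtorsion}. Both factor multiplication by the rank through the acyclic module $V\otimes k[G]$ and finish via Corollary~\ref{vanishing of cohomology for induced modules}. Lau's version is slightly cleaner than yours: the trace-of-multiplication map $\mathrm{tr}_{k[G]/k}\colon k[G]\to k$ is defined for any finite projective algebra without first reducing to the local case, the composite $k\xrightarrow{\eta}k[G]\xrightarrow{\mathrm{tr}}k$ is tautologically multiplication by the (locally constant) rank, and what remains is precisely your ``main obstacle'' --- proving that $\mathrm{tr}_{k[G]/k}$ is $G$-equivariant --- which Lau dispatches by a short base-change/cartesian-square argument (Lemma~\ref{lemma-trace-equivariant}).

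The paper's main proof takes a genuinely different route past that same obstacle. It fixes an arbitrary generator $\psi$ of the rank-one module of left integrals in $k[G]^*$, reduces to characteristic zero, and checks at each geometric point of $\Spec(\mathbb{Q}\otimes k)$ that $\psi(1)$ is a unit: by Cartier's theorem the base-changed group scheme is constant, so $\psi$ becomes a unit multiple of $\sum_{g\in G(F)}g$ and hence $\psi(1)=s\,|G(F)|\in F^\times$. From $\psi(1)\in(\mathbb{Q}\otimes k)^\times$ one clears denominators to obtain an integer $n$ and $t\in k$ with $t\psi(1)=n$; the element $\phi=t\psi$ then plays the role of your counit map. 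The trade-off: this argument stays within the Hopf-algebraic material already developed in Section~\ref{Sec: Hopf algebras} and never needs to prove trace-equivariance, but it pays for this by invoking Cartier's theorem and the classification of \'etale group schemes.

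One caution on your phrasing of the obstacle: writing it as the identity $\varepsilon(\Lambda)=r$ is misleading, since the integral $\Lambda$ is only determined up to a unit of $k$ and carries no canonical normalization once you leave the group-algebra case. The content is exactly that \emph{some} $G$-equivariant functional $k[G]\to k$ sends $1$ to an integer; exhibiting one is the whole game --- either by taking the trace and proving equivariance (Lau), or by taking an arbitrary integral and proving its value at $1$ is a rational unit via Cartier (the paper).
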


Firstly, we show that it suffices to check that $H^1(G, M)$ has bounded torsion:

\begin{Lemma}\label{reduction to H1}
    Suppose there is a positive integer $n$ such that $n$ annihilates $H^1(G, M)$ for all $G$--modules $M$, then $n$ also annihilates $H^i(G, M)$ for all $i>0$.
\end{Lemma}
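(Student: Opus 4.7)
The plan is to establish the lemma by a standard dimension-shifting argument, using the fact (recorded in the paper as Corollary \ref{vanishing of cohomology for induced modules}) that modules of the form $M \otimes k[G]$ are $G$-acyclic.

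First I would observe that for any $G$-module $M$, the coaction $\Delta_M \colon M \to M \otimes k[G]$ provides an embedding of $G$-modules, where $M \otimes k[G] \cong \operatorname{ind}_1^G(M)$ is endowed with the $G$-action on the right factor by the right regular representation. Setting $N \coloneqq \operatorname{coker}(\Delta_M)$ yields a short exact sequence of $G$-modules
\[
0 \to M \to M \otimes k[G] \to N \to 0.
\]
Applying the long exact sequence in group cohomology, and using Corollary \ref{vanishing of cohomology for induced modules} which gives $H^i(G, M \otimes k[G]) = 0$ for all $i > 0$, I obtain connecting isomorphisms
\[
H^{i-1}(G, N) \xrightarrow{\;\cong\;} H^i(G, M) \qquad \text{for all } i \geq 2,
\]
together with a surjection $H^0(G, N) \twoheadrightarrow H^1(G, M)$ from the low-degree portion of the sequence.

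I would then proceed by induction on $i$. The base case $i = 1$ is exactly the hypothesis applied to $M$. For the inductive step, assume $n$ annihilates $H^{i-1}(G, M')$ for every $G$-module $M'$. Applying this to $M' = N$ and using the isomorphism $H^i(G, M) \cong H^{i-1}(G, N)$, we conclude that $n$ also annihilates $H^i(G, M)$. Since $M$ was arbitrary, this completes the induction.

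There is no real obstacle here: the argument is completely formal and relies only on the acyclicity of induced modules and the functoriality of the embedding $M \hookrightarrow M \otimes k[G]$, both of which are available in the setting of flat affine group schemes over a Noetherian base. The crucial point is that the hypothesis on $H^1$ is universal in the module variable, so it applies equally well to the auxiliary modules $N$ arising in the dimension shift.
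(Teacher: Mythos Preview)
Your proof is correct and follows essentially the same dimension-shifting argument as the paper. The only difference is cosmetic: the paper embeds $M$ into an abstract injective $G$--module, whereas you use the explicit acyclic embedding $\Delta_M\colon M \hookrightarrow M\otimes k[G]$; either choice makes the higher cohomology of the middle term vanish, and the induction proceeds identically.
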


\begin{proof}
    This is a standard dimension shift argument. Embed $M$ into an injective $G$--module $A$, and let $B$ be the cokernel of this embedding, that is, we have a short exact sequence $0\to M\to A\to B\to 0$ of $G$--modules. Taking cohomology, we get a long exact sequence 
    \[
    0\to M^G\to A^G\to B^G\to H^1(G, M)\to H^1(G, A)\to H^1(G, B)\to H^2(G, M)\to H^2(G, A) \to \dots
    \]
Since $A$ is injective, we see that the $H^i(G, A)$ for $i>0$ all vanish, and thus that $H^i(G, B)\simeq H^{i+1}(G, M)$. By assumption $n$ kills all the first cohomologies in the above sequence.
Thus we see that $H^2(G, M)\simeq H^1(G, B)$ is killed by $n$. Since $M$ was arbitrary, we also see that $H^2(G, B)$ is killed by $n$, and thus that $H^3(G, M)$ also is since these are isomorphic, and so on inductively.
\end{proof}

\subsection*{Reduction to the local case}

The full proof of Theorem \ref{bounded torsion} consists of a few reductions to simpler settings, in the following we reduce to the case where we are working with free modules over local rings:

\begin{Lemma}\label{open cover}
    Let $f_1, \dots, f_s \in k$ have the property that the distinguished open sets $D(f_i)$ cover $\Spec(k)$. If $M_{f_i}$ has bounded torsion for all $i$, then $M$ has bounded torsion.
\end{Lemma}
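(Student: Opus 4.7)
The plan is to exploit the covering condition $\bigcup_i D(f_i) = \Spec(k)$ in the form that it translates into the algebraic statement $(f_1,\dots,f_s) = k$, i.e., there exist $a_i \in k$ with $\sum_i a_i f_i = 1$. Given bounds $N_i$ for the torsion of each $M_{f_i}$, I will take $N = \mathrm{lcm}(N_1,\dots,N_s)$ (or simply the product) and show that $N$ annihilates every torsion element of $M$.

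The key step will be to fix $m \in M_{\mathrm{tors}}$ and observe that for each $i$, the image $m/1 \in M_{f_i}$ remains $\mathbb{Z}$-torsion, so by hypothesis $N_i \cdot (m/1) = 0$, hence $(Nm)/1 = 0$ in $M_{f_i}$. By the definition of localization this yields integers $t_i \geq 0$ (which will generally depend on $m$) such that $f_i^{t_i} \cdot Nm = 0$ in $M$. This is exactly the place where the naive argument stumbles, since the exponents $t_i$ are not a priori uniform in $m$; the point, however, is that we do not need uniformity in $t_i$, only the resulting equation $Nm = 0$ in $M$.

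To close the argument I will then pass from the individual vanishings $f_i^{t_i} \cdot Nm = 0$ to the vanishing of $Nm$ itself using the covering property: since $(f_1,\dots,f_s) = k$, a standard expansion of $(\sum_i a_i f_i)^{T}$ for $T$ large enough shows that $(f_1^{t_1},\dots,f_s^{t_s}) = k$ as well, so we may write $1 = \sum_i b_i f_i^{t_i}$ for some $b_i \in k$. Applying this to $Nm$ gives $Nm = \sum_i b_i f_i^{t_i} Nm = 0$, proving that $N$ annihilates $M_{\mathrm{tors}}$ and hence $M$ has bounded torsion.

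The main (and essentially only) obstacle is the non-uniformity of the exponents $t_i$ in $m$; the observation that the unit ideal property of $(f_i)$ passes to $(f_i^{t_i})$ no matter what the $t_i$ are is precisely what defeats this apparent difficulty. Equivalently, one can phrase the last step as the injectivity of the sheaf-theoretic map $M \hookrightarrow \prod_i M_{f_i}$ for the Zariski cover $\{D(f_i)\}$, which is the same content. No further ingredients beyond elementary commutative algebra are required.
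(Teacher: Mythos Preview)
Your proof is correct and follows essentially the same approach as the paper: take the product (or lcm) of the individual bounds, observe that for any torsion element $m$ each $f_i^{t_i}$ annihilates $Nm$ for suitable $t_i$, and then use that the $f_i^{t_i}$ still generate the unit ideal to conclude $Nm=0$. The paper phrases the last step as ``repeatedly multiplying by $1=\sum a_i f_i^{b_i}$'' rather than expanding $(\sum a_i f_i)^T$, but this is the same argument.
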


\begin{proof}
    Suppose that for each $i$, $n_i$ kills the torsion submodule of $M_{f_i}$. Let $m$ be a torsion element of $M$, then clearly $m/1$ is torsion in $M_{f_i}$ for all $i$, and there exists a $k_i$ such that $f_i^{k_i}n_im=0$ in $M$. Now for each $i$ we have that $f_i^{k_i}$ is in the annihilator of $n_1\cdots n_s m$. Since $D(f_i)$ cover $\Spec(k)$, we can write $1=\sum_{i=1}^s a_i f_i^{b_i}$. Repeatedly multiplying by $1$, we can see that $1\in \Ann(n_1\cdots n_s m)$, and hence that  $n_1\cdots n_s m=0$.
\end{proof}

If a finitely generated $R$--module $M$ is free at some stalk, then we can find an open set containing this point on which $M$ is still free.
\begin{Lemma}\label{free over open set}
    Let $\mathscr{F}$ be a coherent sheaf on a scheme $X$ such that $\mathscr{F}_{x}$ a free $\mathcal{O}_x$--module for some $x\in X$. Then there exists an open set $U\subseteq X$ containing $x$ such that $\mathscr{F}|_U$ is free.
\end{Lemma}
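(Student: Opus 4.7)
The plan is to reduce the statement to a standard commutative algebra fact about finitely generated modules that are free at a prime. First, since the statement is local around $x$, I would replace $X$ by an affine open neighborhood of $x$. Thus I may assume $X = \Spec(R)$ and $\mathscr{F} = \widetilde{M}$ for some finitely generated $R$--module $M$; moreover, because $\mathscr{F}$ is coherent, $M$ is in fact of finite presentation on this neighborhood (this is where coherence, as opposed to mere quasi-coherence, is used). Let $\mathfrak{p} \subset R$ be the prime corresponding to $x$, and let $n$ denote the rank of the free $R_\mathfrak{p}$--module $M_\mathfrak{p}$.

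Next, I would pick elements $m_1,\dots,m_n \in M$ whose images in $M_\mathfrak{p}$ form an $R_\mathfrak{p}$--basis, and consider the $R$--linear map
\[
\varphi\colon R^n \longrightarrow M, \qquad e_i \longmapsto m_i.
\]
Let $K = \ker\varphi$ and $C = \coker\varphi$. Both are finitely generated $R$--modules: $C$ is a quotient of $M$, and $K$ is finitely generated because $M$ is finitely presented (equivalently, $\mathscr{F}$ is coherent). By construction, $\varphi_\mathfrak{p}\colon R_\mathfrak{p}^n \to M_\mathfrak{p}$ is an isomorphism, so $C_\mathfrak{p} = 0$ and $K_\mathfrak{p} = 0$.

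Now I would invoke the standard fact that for a finitely generated module $N$, the locus $\{\mathfrak{q} \in \Spec(R) : N_\mathfrak{q} = 0\}$ is open, being the complement of the support $V(\Ann(N))$. Applying this to both $C$ and $K$ yields an element $f \in R \setminus \mathfrak{p}$ such that $C_f = 0$ and $K_f = 0$. Equivalently, $\varphi_f\colon R_f^n \to M_f$ is an isomorphism, so $\mathscr{F}|_{D(f)} \cong \mathcal{O}_{D(f)}^n$, and $U \coloneqq D(f)$ is the required open neighborhood of $x$.

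There is no serious obstacle here, essentially by design: the only subtle point is ensuring that $K$ is finitely generated, which follows from coherence of $\mathscr{F}$ (in the Noetherian setting this is automatic for any finitely generated $M$). Everything else reduces to the openness of the vanishing locus of a finitely generated module, which is immediate from the description of the support.
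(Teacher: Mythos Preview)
Your proof is correct and follows the same initial reduction to the affine case as the paper, but the execution differs. The paper argues by hand: it takes free generators $x_1,\dots,x_s$ of $M_\mathfrak{p}$, clears denominators to find a principal open $D(g)$ on which the $x_i$ are defined, then writes each of a chosen set of generators $m_1,\dots,m_n$ of $M$ as an $R_\mathfrak{p}$--linear combination of the $x_i$, clears those denominators to get $D(gh)$, and concludes that the $x_i$ generate $M_{gh}$; freeness is then asserted because the $x_i$ were free at the stalk. Your approach packages both surjectivity and injectivity symmetrically via the kernel and cokernel of $\varphi\colon R^n\to M$ and the openness of the vanishing locus of a finitely generated module. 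This is cleaner and makes the role of finite presentation (needed so that $K$ is finitely generated) explicit, whereas the paper's argument is more concrete but leaves the injectivity step (that no new relations among the $x_i$ appear over $R_{gh}$) somewhat implicit.
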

\begin{proof}
 
This is question can be answered on an affine open set. More precisely, suppose that $M_{\mf{p}}$ is a free $k_\mf{p}$--module for some $\mf{p}$, then we can find an open set $D(f) \subseteq \Spec(k)$ on which $M_f$ is a free $k_f$--module. Let $m_1,\dots, m_n$ be generators for $M$, and let $x_1, \dots, x_s$ be free generators for $M_{\mf{p}}$, and let $D(g)$ be the distinguished open set over which the $x_i$ are all simultaneously sections of (just take the $g$ to be the product over $i$ of the denominators of the $x_i$). Clearly the images of the $m_i$ generate $M_{\mf{p}}$ as an $k_\mf{p}$--module. Write $m_j=\sum_{i=1}^s \alpha_{ij} x_i$, where $\alpha_{ij}=a_{ij}/b_{ij}$ with $a_{ij}\in M, b_{ij}\notin \mf{p}$. Let $h=\prod_{i,j} b_{ij}$, then on $D(gh)$ the expressions above for the $m_j$ make sense, and this shows that these $m_j$ are in the span of $x_1,\dots, x_s$. That is to say that the collection $x_1,\dots, x_s$ does indeed generate $M_{gh}$ as an $k_{gh}$--module. In particular the $x_j$ are free generators for $M_{gh}$, since they  were free generators for $M_\mf{p}$. 
\end{proof}

For the following we adopt the notation of Section \ref{Sec: Hopf algebras}. Let $G$ be a finite flat group scheme, and $H=k[G]$ be its coordinate Hopf algebra, which is finitely generated and projective over the Noetherian ring $k$, in particular, its dual $H^*$ is also a Hopf algebra which is a right Hopf module over $H$ by Lemma \ref{H* is a right Hopf Module}. From Lemma \ref{summand} and \ref{coinvariants are rank 1} we see that the module of coinvariants $(H^*)^{\text{co}H}$ is a rank 1 projective summand of $H^*$ as a $k$--module. Moreover, since $H$ is projective, we see from Lemma \ref{invariants and coinvariants} that  $(H^*)^{\text{co}H}=(H^*)^{H^*}$. In particular from Lemma \ref{right mult}, we see that $(H^*)^{\text{co}H} M\subseteq M^G$ for any $G$--module $M$.

Of course, the $k$--module $(H^*)^{\text{co}H}$ is free of rank 1 at every stalk, that is, $(H^*)^{\text{co}H}_\mf{p} \simeq k_\mf{p}$, this is since it is a rank 1 projective module over a local ring. Using Lemma \ref{free over open set} we can then see that there is a finite cover of $\Spec(k)$ over which $(H^*)^{\text{co}H}$ is free of rank $1$. Therefore, if for any $G$--module $M$ we can show that $M$ has bounded torsion whenever $(H^*)^{\text{co}H}$ is free, we can use the combination of Lemma \ref{open cover} and Lemma \ref{free over open set} to conclude the result in general.

This is to say that when we move on to the proof of Theorem \ref{bounded torsion} we may always assume that $(H^*)^{\text{co}H}$ is free of rank $1$.

\subsection*{Reduction to characteristic $0$}

We now may also reduce to the case of characteristic zero. Of course if $1\in k$ has finite additive order $n$, then $n$ is sufficient to bound the torsion for $H^i(G, M)$. So we may now always suppose that $k$ contains $\mathbb{Z}$ and consider $\mathbb{Q}\otimes k$.

\subsection*{Reduction to geometric points}

There is a crucial step in the proof of Theorem \ref{bounded torsion} where our goal is to show that an element $a\in \mathbb{Q}\otimes k $  is a unit. What we show here is that it suffices to check that $a$ is a unit at every geometric point of $\Spec (\mathbb{Q}\otimes k)$.

Let $k'$ denote a commutative ring. Recall that a geometric point of a $\Spec(k')$ is a morphism $\varphi \colon \Spec F\to \Spec(k')$, where $F$ is an algebraically closed field containing the residue field of the unique point in the image of this map. More precisely, let $\mf{p}\in \Spec(k')$ be the unique point in the image of this morphism, i.e., the image of the $0$ ideal under the morphism $\varphi$, it has residue field $k'_\mf{p}/\mf{m}_\mf{p}$, and we demand that $F$ contains $k'_\mf{p}/\mf{m}_\mf{p}$. We get the commutative diagram: 
\begin{center}
    \begin{tikzcd}[column sep=small]
        & &\Spec (k'_\mf{p}/\mf{m}_\mf{p}) \arrow[dr] & \\
        &\Spec (F) \arrow[ur]\arrow[rr, "\varphi"']& &\Spec (k') 
    \end{tikzcd}
\end{center}

Now let $a\in k'$. To check that $a$ is a unit, it suffices to check that $a$ is a unit at every geometric point of $\Spec(k')$. Clearly, $a$ is a unit in $k'$ if and only if it is not contained in any maximal ideal of $k$, which is the same as the element $a$ being a unit in every local ring $k'_\mf{p}$, which is then clearly the same as $a$ being non-zero in every residue field $k'_\mf{p}/\mf{m}_\mf{p}$. Of course, if $a$ is non-zero in each residue field, then this is equivalent to it being be non-zero at every geometric point, since maps of fields are always injective. Therefore, to check that some element $a\in k'$ is a unit, it suffices to check that it is a unit at an arbitrary geometric point of $\Spec(k')$.

\begin{proof}[Proof of Theorem \ref{bounded torsion}]
    Let $H=k[G]$. Our \textit{reduction to characteristic $0$} allow us to assume that $k$ contains $\mathbb{Z}$ and work with $k_1=\mathbb{Q}\otimes k$. Our \textit{reduction to the local case} allows us to assume that $(H^*)^{\text{co}H}$ is free of rank 1 and is a summand of $H^*$. 
    
    Since $(H^*)^{\text{co}H}$ is free of rank $1$ we may choose a generator $\psi \in (H^*)^{\text{co}H}$.

    Firstly we will prove that $\psi(1)$ is a unit in $k_1$. By the \textit{reduction to geometric points} it suffices to show this for any geometric point $\Spec F$ of $\Spec (k_1)$. Now since we are in characteristic $0$ and $F$ is assumed to be algebraically closed we see from Theorem \ref{Thm:Cartier} and Proposition \ref{Prop:etale in charc 0 are constant} that all finite group schemes are equivalent to constant group schemes associated to the finite group $G(F)=\Hom_{\textbf{CAlg}_F}(F[G], F)$. Clearly $G(F)$ embeds into $F[G]^*=\Hom_{\textbf{Mod}_F}(F[G], F)$ since maps of $F$--algebras are also maps of the underlying $F$--modules. Let $\psi_0=\sum_{g\in G(F)} g$. From the description of $(H^*)^{\text{co}H}$ it follows that we may write $(H^*)^{\text{co}H}=F\sum_{g\in G(F)} g$.  Thus we conclude that $\psi=s\psi_0$ for some $s\in F$. Since each $g\in G(F)$ is a map of algebras, we have that $g(1)=1$, and thus that $\psi_0(1)=|G(F)|$, which is invertible in $F$. In particular $\psi(1)=s|G(F)|$ is also invertible in $F$. So now we see that $\psi(1)$ is a unit in $k_1$ from our \textit{reduction to geometric points}.

     Let $\psi(1)^{-1}=m/n \otimes r \in \mathbb{Q}\otimes k$. Now set $\psi_1=\psi(1)^{-1}\psi$. Clearly $\psi_1(1)=1$. Therefore we conclude that $1\in (\mathbb{Q}\otimes k)\psi(1)$. Now we have that:
    \[
    \left(\frac{m}{n}\otimes r\right)\psi(1)=1 \implies (1\otimes mnr)\psi(1)=n
    \]
    That is to say that there is an element $t\in k$ such that $t\psi(1)$ is a positive integer $n$. If we put $\phi=t\psi$, then the map $\phi-n$ sends $1$ to $0$, in particular, it is zero on $k \subseteq k[G]$.

    Now, let $M$ be any $k[G]$--comodule with a comultiplication $v$, as in Theorem \ref{equivalence}, we have seen that we can view it as a $k[G]^*$--module via the action $f\cdot m := (\id_M\otimes f)(v(m))$. Moreover, if $m\in M^G$, then we see that $\phi \cdot m=nm$. It is then easy to see that multiplication by $\phi-n$ annihilates $M^G$.

    Now embed $M$ into any injective $G$--module $A$, and let $B$ be the cokernel of this embedding. Since $A$ is injective all the higher cohomology vanish, and in particular from the long exact sequence we get that 
    \[
    \coker(A^G\xrightarrow{\pi}{} B^G) \simeq H^1(M, G).
    \]
    Now take some $b\in B^G$ and some lift $a\in A$. Note that we are not assuming that $a$ is an invariant.  Then 
    \[\phi \cdot b =nb =\pi(na)=\pi(\phi\cdot a)\]
    Moreover, since $\phi$ is left invariant, we have that $\phi \cdot A \subseteq A^G$ from Lemma \ref{right mult}. In particular, $\phi \cdot a\in A^G$.  Thus we see that $nb$  is in the image of $\pi\colon A^G\to B^G$. We deduce that $n$ annihilates 
    $\coker(A^G\xrightarrow{\pi}{} B^G)\cong H^1(G,M). $
    Since $M$ was arbitrary, we have shown that $n$ kills $H^1(M, G)$ for all $M$, now applying Lemma \ref{reduction to H1} we are done.
\end{proof}

\pagebreak

\section{CFG for Finite Group Schemes}

We now have all the ingredients needed to prove van der Kallen's (CFG) theorem over a Noetherian base, we have done the hard work so here is the payoff:

\begin{Th}
    Let $k$ be a commutative Noetherian ring. Let $G$ be a finite group scheme over $k$. Then $G$ satisfies the cohomological finite generation property. 
\end{Th}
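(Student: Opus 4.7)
The plan is to assemble the ingredients already established into the final deduction, using the embedding of $G$ into a general linear group, the Reduction Lemma, the bounded-torsion theorem, and the provisional (CFG) for Chevalley groups.

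First, fix a finitely generated $G$--algebra $A$. By the Embedding Lemma (Proposition \ref{embedding of G}), I can realise $G$ as a closed flat subgroup scheme $G \hookrightarrow \GL_n$ for some $n \geq 1$. Since $G$ is finite, the quotient $\GL_n/G$ is an affine scheme, as recalled in Remark \ref{flat subgroup scheme}. Moreover, Corollary \ref{FG for Chevalley} tells us that the finite flat group scheme $G$ satisfies (FG) over the Noetherian base $k$. Thus both hypotheses of the Reduction Lemma \ref{vdK's Reduction Lemma} are in place, and I may apply it with $\mathbb{G} = \GL_n$.

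Next, applying the Reduction Lemma I obtain simultaneously that $\mathrm{ind}_G^{\GL_n} A$ is a finitely generated $\GL_n$--algebra over $k$, and that there is a natural isomorphism of graded $k$--algebras
\[
H^*(G, A) \;\cong\; H^*\!\bigl(\GL_n,\, \mathrm{ind}_G^{\GL_n} A\bigr).
\]
This reduces the finite generation of $H^*(G, A)$ to the finite generation of $H^*(\GL_n, \mathrm{ind}_G^{\GL_n} A)$. The point now is that although (CFG) for $\GL_n$ over an arbitrary Noetherian base is not directly in our hands, the provisional form (Theorem \ref{torsion}) is: for a Chevalley group scheme $G'$ and a finitely generated $G'$--algebra $B$, the cohomology $H^*(G', B)$ is a finitely generated $k$--algebra as soon as it has bounded $\mathbb{Z}$--torsion. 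So the task is to verify the bounded torsion hypothesis for $H^*(\GL_n, \mathrm{ind}_G^{\GL_n} A)$.

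Here the isomorphism above does the real work: it transfers the torsion question back to $H^*(G, A)$. By the Bounded Torsion Theorem (Theorem \ref{bounded torsion}), applied to the finite group scheme $G$ over the Noetherian ring $k$, there exists a positive integer $N$ annihilating $H^i(G, M)$ for every $i > 0$ and every $G$--module $M$; in particular $N$ annihilates $H^{>0}(G, A)$, so $H^*(G, A)$ certainly has bounded $\mathbb{Z}$--torsion. Transporting this across the Reduction Lemma isomorphism, $H^*(\GL_n, \mathrm{ind}_G^{\GL_n} A)$ has bounded torsion, and Theorem \ref{torsion} concludes that it is a finitely generated $k$--algebra. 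The isomorphism then returns the result for $H^*(G, A)$, completing the proof. The genuine obstacles in this story are precisely the two black boxes invoked above: Theorem \ref{bounded torsion}, which required a delicate local-to-global argument reducing to geometric points in characteristic zero, and Theorem \ref{torsion}, whose proof rests on the full strength of (CFG) for $\GL_n$ over a base containing a field together with the power surjectivity machinery for the mod $p$ reduction maps.
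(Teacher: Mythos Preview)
Your proof is correct and follows essentially the same route as the paper's. One small point the paper makes explicit that you glossed over: knowing that $N$ annihilates $H^{>0}(G,A)$ does not by itself give bounded torsion for all of $H^*(G,A)$ --- you also need that $H^0(G,A)=A^G$ has bounded torsion, which holds because $A^G$ is a finitely generated $k$--algebra (by (FG), which you already invoked) and hence Noetherian.
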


\begin{proof}
    Let $A$ be a finitely generated $G$--algebra.  We have seen that finite group schemes satisfy (FG) over a Noetherian base, see Corollary \ref{FG for Chevalley}, so we can apply the Reduction Lemma. We embed $G$ into $\GL_n$ as in Proposition  \ref{embedding of G}, and we see that the quotient is affine by Remark \ref{flat subgroup scheme}. The conclusion of the Reduction Lemma gives us that 
    \[
    H^*(G, A)=H^\ast(\text{GL}_n,\mathrm{ind}_G^{\text{GL}_n} A).
    \]
    Theorem \ref{bounded torsion} shows us that $H^{>0}(G, A)$ has bounded torsion. Note that $A^G$ has bounded torsion since it is Noetherian. Thus we conclude that $H^*(G, A)$ also has bounded torsion. Now since $\GL_n$ is a Chevalley group scheme we can apply Theorem \ref{torsion} and conclude that $H^*(G, A)=H^\ast(\text{GL}_n,\mathrm{ind}_G^{\text{GL}_n} A)$ is a finitely generated algebra.
\end{proof}

By Proposition \ref{Prop: CFG implies finite genarion of modules} we obtain the following corollary: 

\begin{Cor}
    Let $G$ be a finite group scheme over $k$, $A$ be a finitely generated $G$--algebra, and $M$ be an $AG$--module finitely generated over $A$. Then $H^\ast(G,M)$ is finitely generated over $H^\ast(G,A)$. 
\end{Cor}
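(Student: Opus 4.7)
The plan is to reduce the statement about modules to the (CFG) property for algebras by exploiting the semidirect product construction highlighted in Proposition \ref{Prop: CFG implies finite genarion of modules}, which is applicable here since van der Kallen's theorem, established in this section, gives us (CFG) for the finite group scheme $G$ over a Noetherian base.

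First, I would form the $k$-algebra $A \rtimes M$, defined as $A \oplus M$ as a $k$-module, with multiplication given by $(a,m)\cdot (a',m')=(aa', am'+a'm)$. The $G$-action on $A$ and the compatible $G$-action on $M$ extend diagonally to a $G$-algebra structure on $A \rtimes M$. A standard observation is that $A \rtimes M$ is finitely generated as a $k$-algebra if and only if $A$ is finitely generated as a $k$-algebra and $M$ is finitely generated as an $A$-module; our hypotheses therefore guarantee that $A \rtimes M$ is a finitely generated $G$-algebra.

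Next, I would invoke the functoriality of $H^\ast(G,-)$ applied to the split short exact sequence $0 \to M \to A \rtimes M \to A \to 0$ of $G$-modules. Because the splitting $A \to A \rtimes M$ is a map of $G$-algebras, we obtain a natural isomorphism of graded $k$-algebras
\[
H^\ast(G, A\rtimes M) \;\cong\; H^\ast(G,A) \rtimes H^\ast(G,M),
\]
where the right-hand side is the semidirect product of the graded $H^\ast(G,A)$-algebra $H^\ast(G,A)$ with the graded $H^\ast(G,A)$-module $H^\ast(G,M)$. Since $G$ satisfies (CFG) by the theorem proved immediately above, the left-hand side is a finitely generated $k$-algebra, hence the right-hand side is too.

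Finally, finite generation of $H^\ast(G,A) \rtimes H^\ast(G,M)$ as a $k$-algebra translates into finite generation of $H^\ast(G,M)$ as a module over $H^\ast(G,A)$: any finite set of homogeneous $k$-algebra generators of the semidirect product can be split into generators lying in the $H^\ast(G,A)$ summand and generators lying in the $H^\ast(G,M)$ summand, and because $H^\ast(G,M) \cdot H^\ast(G,M) = 0$ inside the semidirect product, the latter generate $H^\ast(G,M)$ as an $H^\ast(G,A)$-module. There is no serious obstacle here; the only point requiring any care is verifying the ring-theoretic identity $H^\ast(G, A\rtimes M) \cong H^\ast(G,A) \rtimes H^\ast(G,M)$ at the level of cup products, and this is a routine consequence of the naturality of the cup product together with the fact that the ideal $M \subset A \rtimes M$ squares to zero.
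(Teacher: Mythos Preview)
Your proposal is correct and follows exactly the paper's approach: the paper derives this corollary by invoking Proposition~\ref{Prop: CFG implies finite genarion of modules}, whose proof is precisely the semidirect product argument you spell out, combined with the (CFG) theorem for finite group schemes just established. You have simply unpacked the details of that proposition in situ.
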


\pagebreak 

\section{Alternative Approach to Bounded Torsion}\label{sec: alternative bounded}

Let $G$ be a finite group scheme over a commutative Noetherian base $k$, and let $A$ be a $G$--algebra finitely generated over $k$. In this section, we present an alternative argument to show that $H^\ast(G,A)$ has bounded torsion; giving a different proof of Theorem \ref{bounded torsion}. 

The argument we present is due to Eike Lau \cite{Lau2023}. While the result holds in greater generality, as explained in \textit{loc. cit.}, it requires familiarity with algebraic stacks. Thus, this section can be viewed as a translation of Lau's approach to the context of representations over the group scheme $G$. We are deeply grateful to Lau for generously taking the time to explain his approach to us. We would like to emphasize that any mistakes in this section are entirely our own.   

\begin{Claim}\label{claim-trace-equivariat}
    Let $G$ be a finite group scheme over a Noetherian ring $k$. Then there is $G$--equivariant morphism $\mathrm{tr}\colon k[G]\to k$, such that the composite $k\xrightarrow[]{\eta} k[G] \xrightarrow[]{\mathrm{tr}} k$ is multiplication by $\mathrm{rk}_k(k[G])$. 
\end{Claim}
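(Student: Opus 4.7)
The plan is to take $\mathrm{tr}$ to be the trace of left multiplication on $k[G]$. Since $G$ is finite and flat over the Noetherian ring $k$, the coordinate algebra $k[G]$ is a finitely generated projective $k$-module, so for each $a \in k[G]$ the $k$-linear endomorphism $m_a\colon k[G] \to k[G]$, $x \mapsto ax$, has a well-defined trace $\mathrm{tr}_k(m_a) \in k$. I would define
\[
\mathrm{tr}\colon k[G] \longrightarrow k, \qquad a \mapsto \mathrm{tr}_k(m_a).
\]
This is $k$-linear in $a$, and since $m_{\eta(1)} = m_1 = \mathrm{id}_{k[G]}$, one immediately obtains $\mathrm{tr}\circ\eta = \mathrm{tr}_k(\mathrm{id}_{k[G]}) = \mathrm{rk}_k(k[G])$, which handles the normalization part of the claim for free.

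The bulk of the argument is then to verify $G$-equivariance, where $k[G]$ carries the right regular representation (i.e.\ the $k[G]$-coaction $\Delta\colon k[G] \to k[G] \otimes k[G]$) and $k$ the trivial representation. The key observation is that the right regular action is by $k$-algebra automorphisms: this is nothing more than the fact that $\Delta$ is a map of algebras. Concretely, for any $k$-algebra $A$ and any $g \in G(A)$, the induced $A$-linear automorphism $R_g$ of $k[G] \otimes_k A$ satisfies $R_g(ab) = R_g(a) R_g(b)$. From this I obtain the intertwining identity
\[
m_{R_g(a)} \;=\; R_g \circ m_a \circ R_g^{-1} \qquad \text{in } \mathrm{End}_A(k[G] \otimes_k A),
\]
whose verification is immediate from the definition of $m_a$. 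Since trace is invariant under conjugation by an invertible operator, $\mathrm{tr}_A(m_{R_g(a)}) = \mathrm{tr}_A(m_a)$. Specializing to the universal case $A = k[G]$, $g = \mathrm{id}_{k[G]}$, and unwinding via Sweedler notation gives the comodule-equivariance identity
\[
\sum \mathrm{tr}(a_{(1)}) \, a_{(2)} \;=\; \mathrm{tr}(a) \cdot 1_{k[G]} \quad \text{in } k[G],
\]
which is precisely the statement that $\mathrm{tr}$ is a morphism from $k[G]$ (with coaction $\Delta$) to the trivial comodule $k$.

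There is no serious obstacle in this argument. The one small point to be mildly careful about is the compatibility of trace with the base change $k \to A$, so that the $A$-trace of $m_a \otimes \mathrm{id}_A$ on $k[G]\otimes_k A$ equals the image in $A$ of $\mathrm{tr}_k(m_a)$; this is a standard property of the trace for finitely generated projective modules and is needed to pass back and forth between the functor-of-points viewpoint and the comodule viewpoint. Everything else reduces to the elementary linear-algebra fact that trace is conjugation invariant.
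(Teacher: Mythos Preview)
Your proposal is correct and uses the same trace map as the paper: $\mathrm{tr}_{k[G]/k}(a)$ is defined as the trace of left multiplication by $a$ on the finitely generated projective $k$--module $k[G]$, and the normalization $\mathrm{tr}\circ\eta=\mathrm{rk}_k(k[G])$ is immediate for both. The only difference is in the packaging of the equivariance argument. The paper argues geometrically: it observes that the square
\[
\begin{tikzcd}
G\times_S G \arrow[r,"m"] \arrow[d,"\mathrm{pr}_2"'] & G \arrow[d]\\
G \arrow[r] & S
\end{tikzcd}
\]
is cartesian (via the shearing isomorphism $(g,h)\mapsto(gh,h)$), and then invokes compatibility of the trace with base change along $\eta\colon k\to k[G]$ to obtain the comodule identity. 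Your argument unwinds the same content in the functor-of-points language: that $\Delta$ is an algebra map is exactly what makes the shearing map an isomorphism, and your conjugation identity $m_{R_g(a)}=R_g\, m_a\, R_g^{-1}$ together with conjugation-invariance of the trace is the hands-on counterpart of the paper's base-change compatibility. Both routes are short; yours has the mild advantage of invoking only the familiar linear-algebra fact that trace is conjugation invariant, while the paper's version makes the underlying cartesian-square structure explicit.
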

 
We need some preparation.  All rings are assumed to be commutative and with unity unless we specify otherwise. 

\begin{Rec}
Let $A$ be a ring and $P$ be a finitely generated projective $A$--module. Recall that  in this case, the canonical map 
\[
\varphi\colon  P^\ast\otimes_A P \to \mathrm{End}_A(P)
\]
is an isomorphism. The \textit{trace map of $P$} is defined as the composite 
\[
\mathrm{tr}_P\colon  \mathrm{End}_A(P) \xrightarrow[]{\varphi^{-1}} P^\ast\otimes_A P \xrightarrow[]{\mathrm{ev}} A
\]
where $\mathrm{ev}$ is simply the evaluation map associated to the adjunction induced by tensoring with $P$. In fact, it is not too difficult to verify that this construction agrees with the usual trace whenever $P$ is a free module.  
\end{Rec}

\begin{Rec}\label{rec-def-trace}
    Let $A\to B$ be a finite flat map of Noetherian rings. In other words, $B$ is a finite projective  $A$--module. In this case, we write   $\mathrm{tr}_{B/A}$ to denote the trace of $B$ from the previous recollection. Moreover, for any $b\in B$, we write $\mathrm{tr}_{B/A}(b)$ to denote the trace of the endomorphism $B\to B$ given by multiplying by $b$. In this way, we obtain an $A$--linear map $\mathrm{tr}_{B/A}\colon B\to A$. 
\end{Rec}

Interestingly, one can characterize the trace map as follows. We refer to \cite[Tag 0BSY]{stacks-project} for a proof. 

\begin{Lemma}\label{lemma-trace-universalprop}
    Let $A\to B$ be a finite flat map of Noetherian rings. Then the trace map $\mathrm{tr}_{B/A}$ is the unique map in $\omega_{B/A}\coloneqq\Hom_A(B,A)$ with the following  property. 
    \begin{itemize}
        \item[($\star$)] Let $A'$ be a Noetherian $A$--algebra such that $B'=B\otimes_AA'$   comes with a product decomposition $B'=C\times D$, with $A'\to C$ finite. Then the image of $\mathrm{tr}_{B/A}$ under the composite $\omega_{B/A} \to \omega_{B'/A'}\to \omega_{C/A'}$ is precisely $\mathrm{tr}_{C/A'}$.      
    \end{itemize}
\end{Lemma}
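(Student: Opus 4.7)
The plan is to establish the lemma by treating existence and uniqueness separately. Existence---that $\mathrm{tr}_{B/A}$ itself satisfies $(\star)$---rests on two standard compatibility properties of the trace; uniqueness will be obtained by descending along the henselization of a localization of $A$, where the finite algebra $B$ acquires enough product decompositions to detect vanishing.

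For existence, let $A'$ be a Noetherian $A$-algebra and $B' = B\otimes_A A'$. The canonical isomorphism $B^{\ast}\otimes_A B \cong \mathrm{End}_A(B)$ together with the definition of the trace in Recollection~\ref{rec-def-trace} commute with $-\otimes_A A'$, which yields
\[
\mathrm{tr}_{B'/A'} \;=\; \mathrm{tr}_{B/A}\otimes_A A'\colon B' \to A'.
\]
Hence the base-change arrow $\omega_{B/A}\to\omega_{B'/A'}$ sends $\mathrm{tr}_{B/A}$ to $\mathrm{tr}_{B'/A'}$. If now $B' = C\times D$ with $A'\to C$ finite, then $C$ and $D$ are direct summands of the finite projective $A'$-module $B'$, so each is itself finite projective over $A'$. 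For $c\in C$, multiplication by $(c,0)$ on the $A'$-module $B' = C\oplus D$ acts as $c\cdot -$ on $C$ and as $0$ on $D$; a block computation of the trace gives $\mathrm{tr}_{B'/A'}(c,0) = \mathrm{tr}_{C/A'}(c)$, so restriction along $C\hookrightarrow B'$ recovers $\mathrm{tr}_{C/A'}$, which is property $(\star)$.

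For uniqueness, suppose $\tau\in\omega_{B/A}$ also satisfies $(\star)$ and set $\sigma \coloneqq \tau - \mathrm{tr}_{B/A}$. By $A$-linearity the image of $\sigma$ in $\omega_{C/A'}$ vanishes for every admissible triple $(A',C,D)$, and it is enough to prove $\sigma = 0$. Since the hypothesis is stable under base change to any localization $A_{\mf{p}}$, and $\sigma = 0$ can be checked after localizing at each prime $\mf{p}\subset A$, I would first reduce to the case where $A$ is Noetherian local. Write $A^{h}$ for the henselization of $A$, which is again Noetherian local and faithfully flat over $A$. The finite projective $A^{h}$-algebra $B^{h}\coloneqq B\otimes_A A^{h}$ decomposes, by the Hensel property, as a finite product $B^{h} = \prod_{i=1}^{n} B_i$ of finite local $A^{h}$-algebras. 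Applying $(\star)$ with $A' = A^{h}$, $C = B_j$ and $D = \prod_{i\neq j} B_i$ for each index $j$ shows that $\sigma\otimes_A A^{h}$ vanishes on each summand $B_j$, hence on the whole of $B^{h}$; faithful flatness of $A\to A^{h}$ then forces $\sigma = 0$.

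The main obstacle is really the uniqueness step: over $A$ itself there need not be any nontrivial product decomposition of $B$ on which to test $\sigma$, and the characterization $(\star)$ risks being vacuous. One must therefore fabricate such decompositions via a faithfully flat cover of $\mathrm{Spec}(A)$ over which $B$ splits into local pieces, and the Noetherian henselization is exactly the right size of cover---large enough to provide the Hensel decomposition of finite algebras into local ones, and small enough to preserve Noetherianity and faithful flatness.
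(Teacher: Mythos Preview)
Your argument is correct. Note, however, that the paper does not actually supply a proof of this lemma: it simply cites \cite[Tag 0BSY]{stacks-project} and moves on. Your existence step is the routine base-change-plus-block-decomposition computation, and your uniqueness step---localize, pass to the henselization so that $B^h$ splits as a product of local algebras, test $(\star)$ on each factor, and descend by faithful flatness---is exactly the standard strategy one finds in the Stacks Project treatment (there one ultimately reduces further to the case where each local piece has a section over a residue field, but your henselian decomposition already suffices). One small point worth making explicit: when you say ``the hypothesis is stable under base change to any localization $A_{\mf p}$,'' you are using that any Noetherian $A_{\mf p}$--algebra $A'$ is \emph{a fortiori} a Noetherian $A$--algebra with $B\otimes_A A' = B_{\mf p}\otimes_{A_{\mf p}} A'$, so $(\star)$ for $\tau$ over $A$ immediately yields $(\star)$ for $\tau_{\mf p}$ over $A_{\mf p}$; this is true but deserves a sentence. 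You also implicitly use that the henselization of a Noetherian local ring is Noetherian and faithfully flat, both of which are standard.
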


We will need the following easy formal consequence of the definition of trace maps.  

\begin{Lemma}\label{lemma-basechange-trace}
      Let $A\to B$ be a finite flat map of Noetherian rings. Let $\varphi\colon A\to A' $ be a ring map, and write $\psi$ to denote the canonical map $B\to B'\coloneqq B\otimes_AA'$. Then 
      \begin{equation}\label{eq-trace}
          \varphi\circ \mathrm{tr}_{B/A} =\mathrm{tr}_{B'/A'}\circ \psi.
      \end{equation}
\end{Lemma}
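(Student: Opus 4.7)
The plan is to verify the identity by reducing to the universal property of the trace given in Lemma \ref{lemma-trace-universalprop}. Since both sides of \eqref{eq-trace} are $A$-linear maps $B \to A'$, by the tensor-Hom adjunction they correspond to $A'$-linear maps $B' = B \otimes_A A' \to A'$, that is, to elements of $\omega_{B'/A'}$. On one side, $\psi$ followed by $\mathrm{tr}_{B'/A'}$ evidently corresponds to $\mathrm{tr}_{B'/A'}$ itself. On the other side, $\varphi \circ \mathrm{tr}_{B/A}$ is exactly the image of $\mathrm{tr}_{B/A}$ under the canonical base change map $\omega_{B/A} \to \omega_{B'/A'}$ given by tensoring with $A'$ along $\varphi$. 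So the problem reduces to showing that this base-changed element coincides with $\mathrm{tr}_{B'/A'}$.

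First I would check that both elements in $\omega_{B'/A'}$ satisfy the universal property ($\star$) from Lemma \ref{lemma-trace-universalprop}. Let $A''$ be any Noetherian $A'$-algebra such that $B'' := B' \otimes_{A'} A''$ decomposes as $C \times D$ with $A'' \to C$ finite. Composing with $\varphi$, we may regard $A''$ as a Noetherian $A$-algebra, and the key observation is that there is a canonical identification
\[
B'' \;=\; B' \otimes_{A'} A'' \;=\; (B \otimes_A A') \otimes_{A'} A'' \;=\; B \otimes_A A'',
\]
so the hypotheses of ($\star$) for $\mathrm{tr}_{B/A}$ with respect to the $A$-algebra $A''$ are fulfilled. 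Applying property ($\star$) to $\mathrm{tr}_{B/A}$ then yields that its image in $\omega_{C/A''}$ equals $\mathrm{tr}_{C/A''}$. On the other hand, the image of $\mathrm{tr}_{B'/A'}$ in $\omega_{C/A''}$ is also $\mathrm{tr}_{C/A''}$ by applying property ($\star$) directly to $\mathrm{tr}_{B'/A'}$.

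Next I would invoke the uniqueness clause of Lemma \ref{lemma-trace-universalprop}: since the elements of $\omega_{B'/A'}$ corresponding to $\varphi \circ \mathrm{tr}_{B/A}$ and to $\mathrm{tr}_{B'/A'} \circ \psi$ both have the characterizing property ($\star$), they must coincide. Undoing the adjunction identification gives the desired equality of $A$-linear maps $B \to A'$.

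The main obstacle I anticipate is purely bookkeeping: tracking how the identification $B'' = B \otimes_A A''$ interacts with the two candidate maps in $\omega_{B'/A'}$ under the base-change operation $\omega_{B'/A'} \to \omega_{B''/A''} \to \omega_{C/A''}$. If this becomes too formal, an alternative is a direct computation: exploit that $B$ is a finitely generated projective $A$-module to reduce, Zariski-locally on $\Spec(A)$, to the case where $B$ is free of finite rank; then multiplication by $b \in B$ has a matrix representation whose trace is $\mathrm{tr}_{B/A}(b)$, and after tensoring with $A'$ along $\varphi$ the matrix representing multiplication by $\psi(b) = b \otimes 1$ on the free $A'$-module $B'$ is obtained by applying $\varphi$ entrywise, whose trace is $\varphi(\mathrm{tr}_{B/A}(b))$. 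This gives \eqref{eq-trace} directly without appealing to the universal property.
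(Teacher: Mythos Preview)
Your proposal is correct, and both routes you sketch would work. However, the paper takes a much shorter and more direct path: it simply observes that finiteness and flatness are preserved under base change (so $\mathrm{tr}_{B'/A'}$ is defined), and then remarks that the maps entering the definition of the trace, namely the canonical isomorphism $B^\ast \otimes_A B \xrightarrow{\sim} \mathrm{End}_A(B)$ and the evaluation map $B^\ast \otimes_A B \to A$, are compatible with base change along $\varphi$. That is the entire argument.

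By contrast, your primary approach routes through the characterizing property $(\star)$ of Lemma~\ref{lemma-trace-universalprop}, which is logically sound but introduces an extra layer: you must track the factorization $\omega_{B/A} \to \omega_{B'/A'} \to \omega_{B''/A''} \to \omega_{C/A''}$ and identify it with $\omega_{B/A} \to \omega_{B''/A''} \to \omega_{C/A''}$, exactly the bookkeeping you flag. Your fallback (Zariski-localize to the free case and compute with matrices) is closer in spirit to the paper's argument, since it is really a concrete instance of ``the definition behaves well under base change,'' but the paper avoids even this localization step by working directly with the abstract dualizability data. The upshot: your methods are correct and perhaps more hands-on, but the paper's one-line proof is the most economical once one accepts that the canonical isomorphism and evaluation for a finitely generated projective module are natural in the base ring.
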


\begin{proof}
   Recall that finiteness and flatness are preserved under base change, and hence talking about  $\mathrm{tr}_{B'/A'}$ makes sense. Now, note that the maps involved in the definition of the trace map are well behaved under base change. Thus the result follows. 
\end{proof}

\begin{Lemma}\label{lemma-trace-equivariant}
    Let $G$ be a finite group scheme over a Noetherian ring $k$. Then the trace map 
    \[
    \mathrm{tr}_{k[G]/k}\colon k[G]\to k
    \]
    from Recollection \ref{rec-def-trace}, is a map of $k[G]$--comodules. 
\end{Lemma}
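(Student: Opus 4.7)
The plan is to recast the $G$-equivariance of $\mathrm{tr}_{k[G]/k}$ as the Sweedler identity
\[
\sum \mathrm{tr}(b_{(1)})\, b_{(2)} \;=\; \mathrm{tr}(b)\cdot 1_{k[G]},\qquad b \in k[G],
\]
which is exactly the condition that the composite $(\mathrm{tr}\otimes \mathrm{id})\circ \Delta$ agrees with $\eta\circ \mathrm{tr}$ as $k$-linear maps $k[G]\to k[G]$ (with $k$ carrying the trivial coaction). The strategy is to interpret both sides as evaluations of trace maps for two different $k[G]$-algebra structures on $B'=k[G]\otimes_k k[G]$, and to compare them via the shear isomorphism together with the universal characterization of Lemma \ref{lemma-trace-universalprop}.

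First I would reduce to the local case. Both sides of the identity commute with flat base change on $k$ — the trace by Lemma \ref{lemma-basechange-trace} and the comultiplication by its functoriality — so the identity can be checked after localization at each prime of $k$. We may therefore assume $k$ is local, in which case $k[G]$ is free of rank $r=\mathrm{rk}_k(k[G])$ and carries a Frobenius structure by Theorem \ref{some Hopf algebras are Frobenius}.

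Next I would interpret each side as a value of a trace map on $B'$. Equipping $B'$ with the $k[G]$-algebra structure $\psi_1(b)=b\otimes 1$, a direct basis computation identifies $\mathrm{tr}_{B'/k[G],\,\psi_1}$ with $\mathrm{tr}\otimes\mathrm{id}$, so Lemma \ref{lemma-basechange-trace} applied to $\eta\colon k\to k[G]$ rewrites the right-hand side as
\[
\eta\circ \mathrm{tr}(b) \;=\; \mathrm{tr}_{B'/k[G],\,\psi_1}(b\otimes 1).
\]
Equipping $B'$ instead with the $k[G]$-algebra structure $\Delta$, the shear map $\phi(a\otimes c)=\sum a_{(1)}\otimes a_{(2)}c$ is a $k$-algebra automorphism of $B'$ and a $k[G]$-algebra isomorphism $(B',\psi_1)\to (B',\Delta)$ with $\phi\circ \psi_1=\Delta$; since $B'$ with the $\Delta$-structure is thereby finite free of rank $r$ over $k[G]$, its trace is defined, and transport through $\phi$ gives
\[
\mathrm{tr}_{B'/k[G],\,\Delta}(\Delta(b)) \;=\; \mathrm{tr}_{B'/k[G],\,\psi_1}(b\otimes 1) \;=\; \mathrm{tr}(b)\cdot 1.
\]

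The heart of the proof is then to identify the left-hand side $(\mathrm{tr}\otimes\mathrm{id})(\Delta(b)) = \mathrm{tr}_{B'/k[G],\,\psi_1}(\Delta(b))$ with the quantity $\mathrm{tr}_{B'/k[G],\,\Delta}(\Delta(b))$ just computed. For this I would invoke the uniqueness in the characterization ($\star$) of Lemma \ref{lemma-trace-universalprop}: both trace maps, when evaluated along $\Delta\colon k[G]\to B'$, satisfy ($\star$) with respect to the finite flat extension $k\to k[G]$, so after faithfully flat base change over which the connected-étale sequence of $G$ splits, the two product decompositions of $B'$ induced by $\psi_1$ and $\Delta$ coincide along $\mathrm{im}\,\Delta$, forcing the desired equality. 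The main obstacle is making this matching step rigorous in the presence of an infinitesimal part of $G$, where no splitting of $B'$ is available; the resolution is that on infinitesimal components the trace vanishes identically (as in $\alpha_p$ and $\mu_p$), so the identity holds trivially there and one concludes by faithfully flat descent from the combined étale–infinitesimal analysis.
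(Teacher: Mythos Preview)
Your proposal contains the right key ingredient (the shear map) but misapplies it, and this creates a spurious gap that you then try to fill with an unjustified connected--\'etale argument.

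The confusion is about which map gives the $k[G]$-algebra structure on $B'=k[G]\otimes_k k[G]$. When you apply Lemma~\ref{lemma-basechange-trace} to $\varphi=\eta\colon k\to k[G]$, the $A'$-algebra structure on $B'=B\otimes_A A'$ is via the \emph{second} factor, i.e.\ via $j_2(b)=1\otimes b$, while the canonical map $\psi$ is $b\mapsto b\otimes 1$. Your shear map $\phi(a\otimes c)=a_{(1)}\otimes a_{(2)}c$ satisfies $\phi\circ j_2=j_2$ and $\phi(b\otimes 1)=\Delta(b)$; hence $\phi$ is a $k[G]$-algebra \emph{automorphism} of $(B',j_2)$, not an isomorphism from $(B',\psi_1)$ to $(B',\Delta)$ as you wrote. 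Since a $k[G]$-algebra automorphism preserves trace, one gets immediately
\[
(\mathrm{tr}\otimes\id)(\Delta(b))=\mathrm{tr}_{B'/k[G],\,j_2}(\Delta(b))=\mathrm{tr}_{B'/k[G],\,j_2}(\phi(b\otimes 1))=\mathrm{tr}_{B'/k[G],\,j_2}(b\otimes 1)=\eta(\mathrm{tr}(b)),
\]
and the proof is finished. This is exactly the paper's argument: the shear isomorphism says the square with $m$ and $\mathrm{pr}_2$ is cartesian, so dually the square with $\Delta$ and $j_2$ is cocartesian, and Lemma~\ref{lemma-basechange-trace} applies directly with $\psi=\Delta$.

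Your step~4 is therefore unnecessary, and its justification is not sound as stated: invoking the uniqueness in Lemma~\ref{lemma-trace-universalprop} does not let you compare the two trace maps along $\Delta$ without further input, and the appeal to a connected--\'etale splitting together with ``trace vanishes on infinitesimal components'' is vague (and the latter fails, e.g., over bases where $p$ is not nilpotent). The reduction to the local case is also unneeded.
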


\begin{proof}
    Recall that the $k[G]$-coaction on $k$ is given by the unit map $\eta\colon k\to k[G]$. Hence we need to verify that the following square is commutative. 
\begin{equation}\label{eq-squares-multiplication}
    \begin{tikzcd}
            k[G] \arrow[r,"\Delta"] \arrow[d,"\mathrm{tr}_{k[G]/k}"'] & k[G]\otimes k[G] \arrow[d,"\mathrm{tr}_{k[G]/k}\otimes \id"] \\
            k \arrow[r,"\eta"] & k[G].
        \end{tikzcd}
\end{equation}
Now, via the equivalence $(\textbf{AffSch}_k)\simeq (\textbf{CAlg}_k)^{\text{op}}$ we have the corresponding diagrams 
\begin{center}
    \begin{tikzcd}
      G\underset{S}{\times} G \arrow[d,"\mathrm{pr}_2"'] \arrow[r,"m"] & G \arrow[d]               \\
        G \arrow[r,""] & S  
    \end{tikzcd}
$\leftrightsquigarrow
$
    \begin{tikzcd}
        k[G]\otimes  k[G] &  k[G] \arrow[l,"\Delta"'] 
 \\  
  k[G] \arrow[u] &  k \arrow[u,"j_2"'] \arrow[l,"\eta"']
    \end{tikzcd}
\end{center}
where $S$ is short for $\Spec(k)$.  Note that the left hand side square is cartesian. Indeed, consider the following commutative diagram 
\begin{center}
    \begin{tikzcd}
         G\underset{S}{\times} G \arrow[r,"\varphi"] &  G\underset{S}{\times} G \arrow[r,"\mathrm{pr}_1"] \arrow[d,"\mathrm{pr}_2"'] & G \arrow[d] \\ 
                  &  G   \arrow[r]               & S
    \end{tikzcd}
\end{center}
where $\varphi$ is the map given by $(g,h)\mapsto (m(g,h),h)$. The square is cartesian. Moreover,  the left top arrow is an isomorphism. Hence the left hand square in \ref{eq-squares-multiplication} is cartesian. We deduce that the  the right hand square in \ref{eq-squares-multiplication} is cocartesian. Then we can invoke Lemma \ref{base change} for base change along the map $\eta$. In particular, for $a\in k[G]$, Equation \ref{eq-trace} gives us 
\[
\eta(\mathrm{tr}_{k[G]/k}(a)) = \mathrm{tr}_{k[G]\otimes k[G]/k[G]}(\Delta (a)) = (\mathrm{tr}_{k[G]/k}\otimes \id_{k[G]})(\Delta(a))
\]
where the second equality follows by unpacking $j_2$. 
\end{proof}


\begin{Prop}
Let $G$ be a finite group scheme defined over a commutative Noetherian ring $k$. Let $A$ be a $G$--algebra finitely generated over $k$. Then $H^\ast(G,A)$ has bounded torsion.     
\end{Prop}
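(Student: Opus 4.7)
The plan is to use the $G$-equivariant trace map from Claim \ref{claim-trace-equivariat} to exhibit multiplication by $N \coloneqq \mathrm{rk}_k(k[G])$ on cohomology as factoring through an acyclic module, and then to convert this scalar annihilator into a positive integer. First, I would tensor the $G$-equivariant morphisms $\eta\colon k\to k[G]$ and $\mathrm{tr}\colon k[G]\to k$ over $k$ with $\id_M$, equipping $M\otimes k[G]$ with the diagonal $G$-action, to obtain a composition
\[
M \xrightarrow{\id_M\otimes \eta} M\otimes k[G] \xrightarrow{\id_M\otimes \mathrm{tr}} M
\]
of $G$-equivariant maps which is multiplication by $N\in k$. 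By the tensor identity (as recalled in the proof of Corollary \ref{vanishing of cohomology for induced modules}), $M\otimes k[G]$ with the diagonal $G$-action is isomorphic to $\mathrm{ind}_1^G M$, which is acyclic. Applying $H^\ast(G,-)$ will therefore show that $N$ annihilates $H^{>0}(G,M)$ for every $G$-module $M$, and in particular for $M = A$.

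Next, I would replace the scalar $N\in k$ by a positive integer. By Lemma \ref{lemma-basechange-trace}, the image of $N = \mathrm{tr}_{k[G]/k}(\id_{k[G]})$ under any base change $k\to k'$ equals $\mathrm{tr}_{k'[G]/k'}(\id_{k'[G]})$. Since $k$ is Noetherian, $\Spec(k)$ is a Noetherian topological space and hence has only finitely many connected components; these correspond to a ring decomposition $k = k_1\times\cdots\times k_s$ with each $k_i$ of connected spectrum, on which the projective module $k_i[G]$ has constant positive integer rank $n_i$. The image of $N$ in $k_i$ is then exactly $n_i$, and on the corresponding $k_i$-summand of $H^{>0}(G,A)$ the element $N$ acts as the integer $n_i$. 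Setting $\tilde N \coloneqq \mathrm{lcm}(n_1,\dots,n_s)$ therefore produces a positive integer annihilating $H^{>0}(G,A)$.

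It remains to bound the $\mathbb{Z}$-torsion of $H^0(G,A)=A^G$, which is not controlled directly by the trace argument. For this I would observe that $(A^G)_{\mathrm{tors}} \subseteq A_{\mathrm{tors}}$ and use Hilbert's basis theorem to see that $A$, being finitely generated over the Noetherian ring $k$, is itself a Noetherian ring; hence $A_{\mathrm{tors}}$ is a finitely generated ideal. A common multiple of the annihilators of a finite generating set then bounds $A_{\mathrm{tors}}$ by a single positive integer, and combining this bound with $\tilde N$ yields an integer annihilating the whole torsion subgroup of $H^\ast(G,A)$.

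The cohomological heart of the argument, namely the factorization through $\mathrm{ind}_1^G M$, is essentially formal once the $G$-equivariance of the trace has been granted. The main subtlety specific to the arithmetic, non-field setting is the conversion of the scalar annihilator $N\in k$ into an honest integer $\tilde N$: this is the step that genuinely requires $k$ to be Noetherian, via the finiteness of the connected-component decomposition of $\Spec(k)$, and it is what I expect to be the main obstacle — not because it is deep, but because it is the only nontrivial point at which the global structure of $\Spec(k)$ must be invoked.
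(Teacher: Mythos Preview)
Your proof is correct and takes the same approach as the paper: factor multiplication by the rank through the acyclic module $A\otimes k[G]$ via the equivariant trace, so that the rank annihilates $H^{>0}$. You are in fact more careful than the paper on two points it glosses over---converting the locally constant rank into a single positive integer via the connected-component decomposition of $\Spec(k)$, and bounding $(A^G)_{\mathrm{tors}}\subseteq A_{\mathrm{tors}}$ directly from the Noetherianity of $A$ rather than invoking (FG) for $A^G$---but these are refinements of the same strategy, not a different route.
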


\begin{proof}
    First, by Corollary  \ref{FG for Chevalley} we have that $A^G$ is finitely generated $k$--algebra, and hence it is enough to show that $H^{>0}(G,A)$ has bounded torsion. Now,  Lemma \ref{lemma-trace-equivariant} tells us that the trace map $\mathrm{tr}_{k[G]/k}$ is $G$--equivariant, and one verifies that the composite  
    \begin{equation}\label{eq-trace-unit}
        k\xrightarrow[]{\eta}k[G]\xrightarrow[]{\mathrm{tr}_{k[G]/k}}k
    \end{equation}
    is indeed multiplication by $d$, where $d$ is the rank of $k[G]$ over $k$. In other words, this verifies Claim \ref{claim-trace-equivariat}. 
    Tensoring Equation \ref{eq-trace-unit}, and applying the cohomology functor we obtain a diagram 
    \begin{center}
        \begin{tikzcd}
           H^{>0}(G, A) \arrow[rr,"\cdot d"] \arrow[rd] && H^{>0}(G, A) \\
               &  H^{>0}(G,A\otimes k[G]) \arrow[ru] & 
        \end{tikzcd}
    \end{center}
    But $H^{>0}(G,A\otimes k[G])=0$ by Corollary \ref{vanishing of cohomology for induced modules}. In particular, multiplication by $d$ on $H^{>0}(G, A)$ factors though zero, and so is itself zero. In other words, $H^{>0}(G,A)$ has bounded torsion, and by our previous observation so does $H^\ast(G,A)$. 
\end{proof}

\pagebreak

\bibliographystyle{alpha}
\bibliography{bibfile}

\end{document}